\title{Interaction between skew-representability, tensor products, extension properties, and rank inequalities}
\newlength{\bibitemsep}\setlength{\bibitemsep}{.1\baselineskip plus .05\baselineskip minus .05\baselineskip}
\newlength{\bibparskip}\setlength{\bibparskip}{1.2pt}
\let\oldthebibliography\thebibliography
\renewcommand\thebibliography[1]{%
  \oldthebibliography{#1}%
  \setlength{\parskip}{\bibitemsep}%
  \setlength{\itemsep}{\bibparskip}%
}
\renewcommand{\paragraph}{%
  \@startsection{paragraph}{4}%
  {\z@}{1.6ex \@plus 1ex \@minus .2ex}{-0.5em}%
  {\normalfont\normalsize\bfseries}%
}
\theoremstyle{plain}
\newtheorem{thm}{Theorem}[section]
\newtheorem{lem}[thm]{Lemma}
\newtheorem{cor}[thm]{Corollary}
\newtheorem{cla}[thm]{Claim}
\newtheorem{prop}[thm]{Proposition}
\theoremstyle{definition}
\newtheorem{rem}[thm]{Remark}
\newcommand*{\claimproofname}{Proof of claim.}
\newenvironment{claimproof}[1][\claimproofname]{\begin{proof}[#1]}{\end{proof}}
\newcommand{\leqnomode}{\tagsleft@true}
\newcommand{\reqnomode}{\tagsleft@false}
 \newcommand{\linkdest}[1]{\Hy@raisedlink{\hypertarget{#1}{}}}
\def\final{0}  
\def\iflong{\iffalse}
\newcommand{\kristof}[1]{{\color{red}[{ \textbf{Kristóf:}  #1}]\marginpar{\color{red}*}}}
\newcommand{\tamas}[1]{{\color{blue}[{ \textbf{Tamás:}  #1}]\marginpar{\color{blue}*}}}
\newcommand{\andris}[1]{{\color{magenta}[{ \textbf{Andris:}  #1}]\marginpar{\color{magenta}*}}}
\newcommand{\bogi}[1]{{\color{teal}[{ \textbf{Bogi:}  #1}]\marginpar{\color{teal}*}}}
\newcommand{\laci}[1]{{\color{purple}[{ \textbf{Laci:}  #1}]\marginpar{\color{purple}*}}}
\newcommand{\balazs}[1]{{\color{orange}[{ \textbf{Balázs:}  #1}]\marginpar{\color{orange}*}}}
\newcommand{\carles}[1]{{\color{green}[{ \textbf{Carles:}  #1}]\marginpar{\color{green}*}}}
\newcommand{\kristof}[1]{}
\newcommand{\tamas}[1]{}
\newcommand{\andris}[1]{}
\newcommand{\bogi}[1]{}
\newcommand{\laci}[1]{}
\newcommand{\balazs}[1]{}
\newcommand{\carles}[1]{}
\newcommand\prd[2]{%
  {\vphantom{#2}}^{#1}\!#2%
}
    \newcommand\tpm[1]{{#1}_{\bullet}}
\DeclareMathOperator\si{si}
\newcommand{\bP}{\mathbb{P}}
\newcommand{\bR}{\mathbb{R}}
\newcommand{\bQ}{\mathbb{Q}}
\newcommand{\bZ}{\mathbb{Z}}
\newcommand{\bF}{\mathbb{F}}
\newcommand{\cB}{\mathcal{B}}
\newcommand{\cI}{\mathcal{I}}
\newcommand{\cL}{\mathcal{L}}
\newcommand{\cM}{\mathcal{M}}
\newcommand{\cP}{\mathcal{P}}
\newcommand{\cQ}{\mathcal{Q}}
\newcommand{\cR}{\mathcal{R}}
\newcommand{\cT}{\mathcal{T}}
\def\cl{\mathrm{cl}}
\let\Right\bigr
\let\Left\bigl
\def\bigr#1{\Right#1\@ifnextchar){\!\bigr}{}}
\def\bigl#1{\Left#1\@ifnextchar({\!\bigl}{}}
\author{
Kristóf Bérczi\thanks{MTA-ELTE Matroid Optimization Research Group and HUN-REN–ELTE Egerváry Research Group, Department of Operations Research, ELTE Eötvös Loránd University, and HUN-REN Alfréd Rényi Institute of Mathematics, Budapest, Hungary. Email: \texttt{kristof.berczi@ttk.elte.hu}.}
\and
Boglárka Gehér\thanks{Department of Applied Analysis and Computational Mathematics, ELTE Eötvös Loránd University, and HUN-REN Alfréd Rényi Institute of Mathematics, Budapest, Hungary. Email: \texttt{bogigeher@gmail.com}.}
\and
András Imolay\thanks{Department of Operations Research, ELTE Eötvös Loránd University, Budapest, Hungary. Email: \texttt{andras.imolay@ttk.elte.hu}.}
\and
László Lovász\thanks{HUN-REN Alfréd Rényi Institute of Mathematics, Budapest, Hungary. Email: \texttt{laszlo.lovasz@ttk.elte.hu}.}
\and Carles Padró\thanks{Universitat Polit\`ecnica de Catalunya, Barcelona, Spain. Email: \texttt{carles.padro@upc.edu}.}
\and
Tamás Schwarcz\thanks{Department of Mathematics, London School of Economics and Political Science, London, England, United Kingdom. Email: \texttt{t.b.schwarcz@lse.ac.uk}.} 
}
\date{}
\begin{document}
\maketitle

\thispagestyle{empty}
\begin{abstract} 
Skew-representable matroids form a fundamental class in matroid theory, bridging combinatorics and linear algebra. They play an important role in areas such as coding theory, optimization, and combinatorial geometry, where linear structure is crucial for both theoretical insights and algorithmic applications. Since deciding skew-representability is computationally intractable, much effort has been focused on identifying necessary or sufficient conditions for a matroid to be skew-representable.

In this paper, we introduce a novel approach to studying skew-representability and structural properties of matroids and polymatroid functions via tensor products. We provide a characterization of skew-representable matroids, as well as of those representable over skew fields of a given prime characteristic, in terms of tensor products. As an algorithmic consequence, we show that deciding skew-representability, or representability over a skew field of fixed prime characteristic, is co-recursively enumerable: that is, certificates of non-skew-representability -- in general or over a fixed prime characteristic -- can be verified. We also prove that every rank-3 matroid admits a tensor product with any uniform matroid and give a construction yielding the unique freest tensor product in this setting. Finally, as an application of the tensor product framework, we give a new proof of Ingleton's inequality and, more importantly, derive the first known linear rank inequality for folded skew-representable matroids that does not follow from the common information property.

\medskip

\noindent \textbf{Keywords:} Extension properties, Freest products, Matroids, Polymatroid functions, Rank inequalities, Skew-representability, Tensor product 

\end{abstract}
 \newpage
\pagenumbering{roman}
\tableofcontents
\newpage
\pagenumbering{arabic}
\setcounter{page}{1}
\section{Introduction}
\label{sec:intro}

The class of representable matroids serves as a fundamental bridge between linear algebra and combinatorial structures. While linear and algebraic representations have attracted most of the attention, alternative forms of matroid and polymatroid representation have gained increasing relevance, particularly due to applications in information theory. Accordingly, several notions of representability have been developed. A matroid is {\it linearly representable}, or simply {\it representable}, over a field $\bF$ if the elements of its ground set correspond to vectors in a vector space over $\bF$ in such a way that the rank of every set equals the dimension of the spanned subspace. Some polymatroids admit similar representations via collections of vector subspaces. More generally, {\it skew-representable} matroids and polymatroids are defined via analogous representations over skew fields.\footnote{Skew fields are sometimes called {\it division rings} in the literature.} Given a positive integer $k$, a matroid with rank function $r$ is said to be {\it $k$-folded representable} or {\it $k$-folded skew-representable} if the polymatroid with rank function $k\cdot r$ is representable or skew-representable, respectively. In an {\it algebraic representation} of a matroid over a field $\bF$, the ground set is mapped to elements of a field extension, and the rank of each set corresponds to its transcendence degree over $\bF$. A matroid admitting such a representation is called {\it algebraic}, and every representable matroid is known to be algebraic~\cite{oxley2011matroid}. A polymatroid function is \textit{entropic} if its rank function arises from the joint Shannon entropies of random variables~\cite{fujishige1978entropy,fujishige1978polymatroidal}. Both representation by {\it partitions}~\cite{Matus1999partition} and by {\it almost affine codes}~\cite{Simonis1998affine} characterize entropic matroids, and representable polymatroids -- in particular, representable and folded representable matroids -- are entropic~\cite{dougherty2009linear}. A polymatroid function is \textit{almost entropic} if it can be approximated arbitrarily closely by entropic ones. Almost entropic matroids are particularly important in the study of secret sharing schemes. Moreover, it was recently shown that algebraic matroids are also almost entropic~\cite{matus2024algebraic}, extending the relevance of the extension properties of almost entropic polymatroids to the algebraic setting.

Given the broad range of representability notions and the nontrivial connections among them, several natural questions arise: {\it Can one decide algorithmically whether a given matroid admits a particular type of representation? If such a representation does not exist, is there, in some sense, an elegant proof of non-representability? If a representation exists, can its structural or algebraic complexity be effectively bounded or described?} While these questions are interesting on their own, their broader significance lies in the structural understanding they may offer of the matroid classes discussed above, with potentially far-reaching algorithmic implications. A prime example of this phenomenon is the {\it matroid parity problem}, which is generally intractable~\cite{jensen1982complexity}; yet, Lovász's celebrated result~\cite{lovasz1978matroid,lovasz1980matroid,lovasz1980selecting} provides an efficient solution in the case of representable matroids. As observed in~\cite{lovasz1980selecting}, the proof in fact extends to matroids satisfying the so-called {\it double circuit property}. Dress and Lovász~\cite{dress1987some} showed that algebraic matroids have this property, and Hochst\"attler and Kern~\cite{hochstattler1989matroid} later extended this to pseudomodular matroids. However, designing a polynomial-time algorithm requires a compact representation of such matroids -- once again highlighting the crucial role of representations.

Assuming a rank or independence oracle access to a matroid, one can decide whether it is representable over every field using Seymour's decomposition theorem~\cite{seymour1980decomposition}. However, Truemper~\cite{truemper1982efficiency} showed that most representability problems -- such as determining representability over a specific field or fields with a given characteristic -- are computationally intractable. As a result, extensive research has focused on identifying necessary or sufficient conditions for representability~\cite{ingleton1971representation,dougherty2009linear,kinser2011new,bamiloshin2021common}. Given two matroids $M_1=(S_1,r_1)$ and $M_2=(S_2,r_2)$, a matroid $M=(S_1\times S_2,r)$ is a {\it tensor product} of $M_1$ and $M_2$ if $r(Y_1\times Y_2)=r_1(Y_1)\cdot r_2(Y_2)$ holds for all $Y_1\subseteq S_1$ and $Y_2\subseteq S_2$. Since any two matroids representable over the same field admit a tensor product via the Kronecker product of their representing matrices, this operation appears to be a promising tool for studying representability. Yet research in this direction remained limited after Las Vergnas~\cite{las1981products} showed that tensor products do not always exist -- for example, the Vámos matroid and the rank-2 uniform matroid on three elements admit none. Progress resumed only recently, when Bérczi, Gehér, Imolay, Lovász, Maga, and Schwarcz~\cite{berczi2025matroid} observed a rather surprising connection between matroid representability and the existence of certain tensor products. Motivated by this observation, the present paper studies the interplay between skew-representability, tensor products, extension properties, and rank inequalities in matroids.

\subsection{Related work and motivation}
\label{sec:prevwork}

A full overview of results and applications related to the representability of matroids and polymatroids is beyond the scope of this paper. Instead, we briefly outline the main motivations for our work.

\paragraph{Information and rank inequalities.}

For each positive integer $n$, polymatroid functions on a ground set $S$ with $n$ elements form a polyhedral convex cone  $\Gamma_n \subseteq \bR^{2^S}$. It is determined by the {\it Shannon inequalities}, that is, the linear inequalities derived from the polymatroid axioms. The {\it entropy region} $\Gamma^*_n$ is the subset of $\Gamma_n$ corresponding to entropic polymatroids. Its closure $\overline{\Gamma}^*_n$, consisting of the rank functions of almost entropic polymatroids, is a convex cone~\cite[Theorem 1]{zhang1997nonshannon}. This cone is characterized by the so-called {\it linear information inequalities}. The first non-Shannon linear information inequality, discovered by Zhang and Yeung~\cite{zhang1998characterization}, proved that $\overline{\Gamma}^*_n \subsetneq \Gamma_n$ for every $n \ge 4$. Moreover, $\overline{\Gamma}^*_n$ is not a polytope for $n \ge 4$, that is, it cannot be described by any finite set of linear inequalities~\cite{matus2007infinitely}.

{\it Linear rank inequalities} characterize the convex cone $L_n$ spanned by the rank functions of representable polymatroids on $n$ elements. The first non-Shannon linear rank inequality was discovered by Ingleton~\cite{ingleton1971representation}, providing  a necessary condition for a matroid to be representable.  Mayhew, Newman, and Whittle~\cite{mayhew2009excluded} used Ingleton's inequality to show that there are infinitely many excluded minors for the class of matroids representable over any given infinite field. Since then, infinite families of linear rank inequalities have been identified~\cite{dougherty2009linear,kinser2011new}. More recently, Bérczi, Gehér, Imolay, Lovász, Maga, and Schwarcz~\cite{berczi2025matroid} showed that the existence of a tensor product with the uniform matroid $U_{2,3}$ is not only a necessary condition for representability, but also implies Ingleton's inequality for polymatroid functions. 

{\it Characteristic-dependent linear rank inequalities} -- that is, inequalities that hold for matroids representable over fields of fixed characteristic -- have also attracted considerable interest. The first such result is due to Blasiak, Kleinberg, and Lubetzky, who used the dependencies of the Fano and non-Fano matroids~\cite{blasiak2011lexicographic}. Additional inequalities were introduced by Dougherty, Freiling, and Zeger~\cite{dougherty2014characteristic}, motivated in part by applications in network coding. Since then, the topic has been further developed in several works, including~\cite{dougherty2015achievable, dougherty2015characteristic, macias2022theorem, pena2019characteristic, pena2021characteristic, pena2019find, pena2023access}. The area involves many open questions: {\it Can we give a complete characterization of certain classes of matroids using rank inequalities? More generally, how can new linear rank inequalities -- either general or characteristic-dependent -- be generated?}

\paragraph{Extension properties.}

Interestingly, all of the currently known linear rank inequalities that are not characteristic-specific can be derived from the so-called {\it common information property}~\cite{dougherty2009linear,bamiloshin2021common}, or more generally, from related {\it extension properties}. These properties appear in multiple contexts. In information theory, they capture the idea that two sources share a structural core; in matroid theory, they describe when two matroids on the same ground set can be seen as projections of a joint matroid; and for polymatroids and entropy functions, they concern the existence of joint functions extending given marginals. Extension properties often yield necessary conditions for representability, whether of matroids or polymatroids. Many known information inequalities arise from such extension properties of almost entropic polymatroids, including the {\it copy lemma} and {\it Ahlswede-Körner} extensions~\cite{bamiloshin2023note,kaced2013equivalence}. The Ingleton-Main and Dress-Lovász extension properties, introduced in~\cite{bollen2018frobenius} based on earlier results~\cite{ingletonmain1975nonalg,dress1987some}, provide necessary conditions for algebraic matroid representability. The above motivates to the following problem: {\it Are there non characteristic-specific linear rank or information inequalities that do not follow from known extensions properties?}

\paragraph{Rank-$3$ matroids.}

Rank-3 matroids play a key role in questions related to representability. One of the classical examples of nonrepresentable matroids is the rank-3 non-Desargues matroid~\cite{ingleton1971representation} (see also \cite[Chapter~6.1]{oxley2011matroid}): it follows from Desargues's theorem in projective geometry that this matroid is not representable over any skew field. Later, Lindström~\cite{lindstrom1985desarguesian} also showed it to be non-algebraic. Another classical example is the rank-3 non-Pappus matroid: it follows from Pappus's theorem that it is not representable over any field, while it is representable over a skew field~\cite{ingleton1971representation}. In general, the proof of Kühne, Pendavingh, and Yashfe~\cite{kuhne2023von} shows that skew-representability of rank-3 matroids is already undecidable (see~\cref{thm:undecidable}). Every matroid of rank 3 satisfies all known extension properties, so no previously known linear rank inequality can certify the non-representability of any rank-3 matroid. It is also known that the characteristic set of any matroid coincides with the characteristic set of some matroid of rank three~\cite{kahn1982characteristic,mason1977geometric}. Thus the question naturally appears: {\it Is there any alternative to linear rank inequalities for distinguishing non-skew-representable rank-3 matroids from representable ones?}


\paragraph{Symmetric powers.}

Tensor products of matroids can also be considered in higher orders. When all $k$ components in the product coincide with a fixed matroid $M$, the resulting structure on ordered $k$-tuples from $S$ defines the {\it $k$-th power} of $M$. Restricting to unordered $k$-tuples leads to the study of {\it symmetric powers}, where the matroid structure respects coordinate permutations. This idea, first studied by Lovász~\cite{lovasz1977flats} and Mason~\cite{mason1981glueing}, revealed that not every matroid admits such symmetric constructions. A connection between symmetric powers and rigidity theory was recently identified by Brakensiek, Dhar, Gao, Gopi, and Larson~\cite{brakensiek2024rigidity}, who showed that symmetric tensor matroids are dual to certain rigidity matroids. Building on this relationship, Jackson and Shin-ichi~\cite{jackson2025symmetric} defined abstract symmetric tensor matroids as a dual concept to abstract rigidity matroids and studied their structural properties. In particular, they verified Graver's maximality conjecture~\cite{graver1991rigidity} for the generic $d$-dimensional rigidity matroid on $K_n$ when $n-d \leq 6$. This conjecture asserts that among all abstract $d$-rigidity matroids on the complete graph $K_n$, there is a unique maximal element under the weak order of matroids, namely the generic $d$-dimensional rigidity matroid. The conjecture raises the following question: {\it Which matroids admit a $k$-th (respectively, symmetric) power that is maximal in the weak order, for every nonnegative integer $k$?}

\paragraph{Applications in information theory.}

Linear programming problems involving rank and information inequalities have been used to address various questions in information theory. One example is the search for lower bounds in secret sharing~\cite{padro2013finding,metcalf2011improved,beimel2008matroids}. Nevertheless, it is not clear how to select the appropriate inequalities for such problems, and, moreover, many relevant inequalities are still unknown. Better results have been obtained using the strategy proposed in~\cite{farras2020improving}, where, instead of linear and rank inequalities, constraints derived from extension properties are used in linear programming problems. This approach has yielded several results in secret sharing, matroid representation, and related areas~\cite{bamiloshin2021common,bamiloshin2023note,gurpinar2024bounds,gurpinar2019how}. Though these methods have led to the solution of some long-standing open problems, their applicability is quite limited due to computational constraints. Thus one may ask: \emph{Is it possible to find better results by using constraints derived from other properties of representable or entropic polymatroids?}

\subsection{Our results and techniques}
\label{sec:results}

In this paper, we introduce a fundamentally new approach to studying representability and extension properties of matroids and polymatroids. Our method builds on tensor products, offering a perspective distinct from earlier techniques and unifying seemingly unrelated results. It is truly remarkable that such a simple operation captures deep structural properties and reveals previously unexplored connections. Several natural questions remain, suggesting a promising direction for further research.

As a first step toward understanding skew-representable matroids, \cref{sec:fme} focuses on modular extensions. Roughly speaking, a matroid is {\it fully modular extendable} if it can be embedded into a -- possibly infinite -- matroid whose pairs of flats satisfy the submodular rank inequality with equality. The main result of this section is a characterization of matroids whose connected components are either of rank~$3$ or skew-representable, not necessarily over the same field (Theorem~\ref{thm:equiv}). The proof relies on the Veblen--Young theorem on representations of projective spaces. As a corollary, we show that a connected matroid of rank at least~$4$ is skew-representable if and only if it is fully modular extendable (Corollary~\ref{cor:rank4}).

In \cref{sec:prep}, we prove several statements related to tensor products, modular extensions, and quotients of matroids and polymatroid functions, presented as a series of shorter lemmas (Lemmas~\ref{lem:minor} -- \ref{lem:skew_large}). Among other topics, we analyze how the tensor product behaves under basic operations such as direct sums and minors, and we also prove key results that will help extend matroid results to polymatroids. Since this section is quite technical, we suggest that first-time readers skip the proofs and simply skim the statements.

\cref{sec:tensor} is devoted to one of the main results of the paper: a characterization of skew-representability via tensor products -- a connection that is far from obvious and highlights the surprising power of tensor products in this context. We establish this characterization in several steps. In \cref{sec:tba}, we start by showing a link between the existence of a tensor product of a matroid with the uniform matroid $U_{2,3}$ and the existence of an extension in which a given pair of flats forms a modular pair (Theorem~\ref{thm:1modular}). In particular, such an extension exists for any pair of flats whenever the corresponding tensor product exists (Corollary~\ref{cor:1me}).\footnote{A different proof by the fifth author appeared independently as a preprint~\cite{padro2025tensor}.} Then, in \cref{sec:cip}, we study the case when a matroid admits a $k$-fold tensor product with $U_{2,3}$, and show that the extension process can be repeated accordingly (Theorem~\ref{thm:fme}). As a result, we show that if a matroid admits a $k$-fold tensor product with $U_{2,3}$ for every $k \in \bZ_+$, then it is fully modular extendable (Corollary~\ref{cor:fme}). We prove our main result in \cref{sec:skewfield}. First, we show that a matroid $M$ is a direct sum of matroids, each representable over a skew field whose characteristic lies in a given set $C$ satisfying certain constraints, if and only if a $k$-fold tensor product of $M$ with a certain matroid depending on $C$ exists for every $k$ (Theorem~\ref{thm:char}). As a corollary, we obtain characterizations of connected skew-representable matroids, as well as matroids representable over a skew field of a fixed prime characteristic $p$ (Corollary~\ref{cor:char_spec}), and we extend this to the more general setting where representability over a skew field of characteristic in a given set $C$ is required (Corollary~\ref{cor:char_general}). From an algorithmic perspective, we further conclude that deciding whether a connected matroid is skew-representable, or whether it is representable over some skew field of a fixed prime characteristic $p$, or more generally over a skew field of characteristic in a set $C$, are co-recursively enumerable (Corollaries~\ref{cor:core} and~\ref{cor:char_general_set}). Finally, we extend some of these results to polymatroid functions in \cref{sec:polymatroids}. Polymatroid functions resemble matroid rank functions in many ways, but the lack of subcardinality and integrality poses additional challenges. We address these using Helgason's characterization of polymatroid functions and a linear program for iterated tensor products (Theorem~\ref{thm:fme} and Corollary~\ref{cor:ffme}).

As rank-$3$ matroids satisfy all known extension properties, distinguishing non skew-representable rank-$3$ matroids from skew-representable ones is of particular interest; \cref{sec:uniform} focuses on this problem. The results of Section~\ref{sec:tensor} show that if a matroid $M$ is not representable over any skew field, then there exists some $k\in\bZ_+$ such that no $k$-fold tensor product of $M$ and $U_{2,3}$ exists. We prove that every rank-$3$ matroid has a tensor product with every uniform matroid (Theorem~\ref{thm:uniform}), implying that the value of such a $k$ is always at least $2$. Furthermore, our construction yields the unique {\it freest} tensor product: any set that is dependent in our construction is necessarily dependent in every tensor product of the matroids.

Motivated by their fundamental role in studying matroid and polymatroid function representations, \cref{sec:rank} focuses on linear rank inequalities. Since Ingleton's foundational work, there has been extensive research aimed at finding new inequalities. Here, we introduce a novel method that can be applied broadly to derive both existing and new inequalities. As an illustration, in \cref{sec:ing}, we give a simpler proof of Ingleton's inequality (Theorem~\ref{thm:ingleton}). Using a similar approach, we derive two inequalities satisfied by polymatroid functions that admit a tensor product with the rank function of the Fano and non-Fano matroids, respectively, in \cref{sec:two} (Theorem~\ref{thm:Fano_non_Fano}). As a consequence, we obtain linear rank inequalities satisfied by polymatroid functions that are skew-representable over some skew field of characteristic 2, and by those representable over some skew field of characteristic different from 2, respectively (Corollary~\ref{cor:characteristic_dependent}). Finally, we prove the other main result of the paper: the first known linear rank inequality valid for folded skew-representable matroids that does not follow from the common information property (Theorem~\ref{thm:new} and Corollary~\ref{cor:skdf}).  This result represents the culmination of our observations obtained through the tensor product framework and demonstrates its power to reveal structural properties beyond existing methods.

\subsection{Organization}
\label{sec:organization}

The rest of of the paper is organized as follows. In \cref{sec:prelim}, we provide essential definitions and background on matroids, polymatroids, skew fields, and modular extensions. \cref{sec:fme} presents a characterization of matroids with connected components of rank 3 or skew-representable. Technical lemmas concerning tensor products and related operations appear in \cref{sec:prep}, serving as a toolkit for subsequent sections. The characterization of skew-representability via tensor products is developed in \cref{sec:tensor}. \cref{sec:uniform} addresses tensor products involving rank-3 and uniform matroids, and provides a construction for the unique freest tensor product. \cref{sec:rank} focuses on linear rank inequalities, including new proofs and inequalities derived through our tensor-based framework. Finally, we conclude the paper with a list of open problems in \cref{sec:open}.

\section{Preliminaries}
\label{sec:prelim}

\paragraph{General notation.}

We denote the sets of {\it reals} and {\it integers} by $\bR$ and $\bZ$, respectively, and add $+$ as a subscript when considering positive values only. For $k\in\bZ_+$, we use $[k]\coloneqq \{1,\dots,k\}$ while $[0] = \emptyset$ by convention. Given a ground set $S$, a set $X\subseteq S$ and an element $y\in S$, the sets $X\setminus \{y\}$ and $X\cup \{y\}$ are abbreviated as $X-y$ and $X+y$, respectively. If $S=S_1\times S_2$ for some sets $S_1$ and $S_2$, then for any $X\subseteq S_1$, $a\in S_1$, $Y\subseteq S_2$ and $b\in S_2$, we use $X^b=X\times \{b\}$ and $^aY=\{a\}\times Y$. 

\paragraph{Set functions and tensor product.}

A set function $\varphi \colon 2^S \to \bR$ is called {\it increasing} if $\varphi(X) \leq \varphi(Y)$ whenever $X \subseteq Y$, and {\it submodular} if $\varphi(X) + \varphi(Y) \geq \varphi(X \cap Y) + \varphi(X \cup Y)$ for all $X, Y \subseteq S$. The latter is equivalent to the inequality $\varphi(X \cup Z) - \varphi(X) \geq \varphi(Y \cup Z) - \varphi(Y)$ for all $X \subseteq Y \subseteq S$ and $Z \subseteq S \setminus Y$. We call $\varphi$ a {\it polymatroid function} if it is increasing, submodular, and $\varphi(\emptyset)=0$. If furthermore $\varphi(X) \leq k \cdot |X|$ holds for some $k \in \bR_+$ and for all $X \subseteq S$, then $\varphi$ is a {\it $k$-polymatroid function}. 

Let $\varphi_1\colon 2^{S_1}\to\bR$ and $\varphi_2\colon 2^{S_2}\to\bR$ be polymatroid functions defined over ground sets $S_1$ and $S_2$, respectively. Then $\varphi\colon 2^{S_1\times S_2}\to\bR$ is a {\it tensor product} of $\varphi_1$ and $\varphi_2$ if $\varphi$ is also a polymatroid function and $\varphi(X_1\times X_2)=\varphi_1(X_1)\cdot\varphi_2(X_2)$ holds for all $X_1\subseteq S_1$ and $X_2\subseteq S_2$. Since the tensor product may not be uniquely defined, we denote the {\it set of tensor products} of $\varphi_1$ and $\varphi_2$ by $\varphi_1\otimes \varphi_2$. It is worth noting that the tensor product may not be integer-valued even if $\varphi_1$ and $\varphi_2$ are.

Given polymatroid functions $\varphi$ and $\psi$, we define the set $T_k(\varphi,\psi)$ to consist of all polymatroids that can be obtained by performing a sequence of $k$ (left-associative) tensor products of $\varphi$ with $\psi$. That is, $T_1(\varphi,\psi)=\varphi\otimes \psi$, and $T_k(\varphi,\psi)=\bigcup_{\varphi'\in T_{k-1}(\varphi,\psi)}\varphi'\otimes \psi$ for $k\geq 2$. We say that $\varphi$ is {\it $k$-tensor-compatible with $\psi$} if $T_k(\varphi, \psi)$ is non-empty. We denote by $\mathcal{T}_k(\psi)$ the {\it set of all polymatroid functions that are $k$-tensor-compatible with $\psi$}, that is, $\cT_k(\psi)=\{\varphi\mid T_k(\varphi,\psi)\neq\emptyset\}$.

Let $\psi\colon 2^T\to\bR$ be a set function over some finite ground set $T$, and let $\cQ=(T_1,\dots,T_q)$ be a partition of $T$ into $q$ (possibly empty) parts. The {\it quotient} of $\psi$ with respect to $\cQ$ is the set function $(\psi/\cQ)$ over $T_\cQ=\{t_1,\dots,t_q\}$ defined by $(\psi/\cQ)(X)=\psi(\bigcup_{t_i\in X} T_i)$ for $X\subseteq T_\cQ$. Observe that taking a quotient preserves the properties of being zero on the empty set, increasing, and submodular; hence, the quotient of a matroid rank function is an integer-valued polymatroid function. Helgason~\cite{helgason2006aspects} showed the converse: every integer-valued polymatroid function can be obtained as the quotient of a matroid rank function.

\begin{prop}[Helgason]\label{prop:helgason}
Let $\varphi\colon 2^S \to \bZ$ be an integer-valued polymatroid function. Let $S_\varphi$ denote the set obtained from $S$ by taking $\varphi(s)$ copies of each $s \in S$, and let $\theta\colon S_\varphi \to S$ be the natural mapping that assigns each copy to its original element. For $Z \subseteq S_\varphi$, define $r_\varphi(Z)=\min\{\varphi(\theta(X))+|Z\setminus X|\mid X\subseteq Z\}$. Then $M_\varphi = (S_\varphi, r_\varphi)$ is a matroid, and $\varphi(Y) = r_\varphi(\theta^{-1}(Y))$ for all $Y \subseteq S$; that is, $\varphi=r_\varphi/\cQ$, where $\cQ=(\theta^{-1}(s_1),\dots,\theta^{-1}(s_q))$.
\end{prop}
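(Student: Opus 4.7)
The plan is to first verify that the formula for $r_\varphi$ defines a matroid rank function on $S_\varphi$, and then establish the identity $\varphi(Y)=r_\varphi(\theta^{-1}(Y))$ for every $Y\subseteq S$. I would begin by introducing the auxiliary set function $\tilde\varphi\colon 2^{S_\varphi}\to\bZ$ given by $\tilde\varphi(X)=\varphi(\theta(X))$, and checking that it is a polymatroid function on $S_\varphi$. The conditions $\tilde\varphi(\emptyset)=0$ and monotonicity are immediate; submodularity follows from $\theta(X\cup Y)=\theta(X)\cup\theta(Y)$, $\theta(X\cap Y)\subseteq\theta(X)\cap\theta(Y)$, and monotonicity of $\varphi$. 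With this in hand, the expression $r_\varphi(Z)=\min_{X\subseteq Z}\{\tilde\varphi(X)+|Z\setminus X|\}$ fits the classical recipe for producing a matroid from a monotone integer submodular function.

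Next, I would check the matroid axioms for $r_\varphi$ directly. Taking $X=\emptyset$ yields $r_\varphi(\emptyset)=0$ and $r_\varphi(Z)\le|Z|$. Monotonicity follows by restricting an optimizer for the larger set to its intersection with the smaller one, and unit increase $r_\varphi(Z+z)\le r_\varphi(Z)+1$ comes from extending any optimal $X$ for $Z$ to $Z+z$ without including $z$. For submodularity, given $A,B\subseteq S_\varphi$ with optimizers $X_A\subseteq A$ and $X_B\subseteq B$, submodularity of $\tilde\varphi$ applied to $X_A,X_B$ together with the identity
\[
|A\setminus X_A|+|B\setminus X_B|=|(A\cup B)\setminus(X_A\cup X_B)|+|(A\cap B)\setminus(X_A\cap X_B)|
\]
and the inclusions $X_A\cup X_B\subseteq A\cup B$, $X_A\cap X_B\subseteq A\cap B$ yield $r_\varphi(A)+r_\varphi(B)\ge r_\varphi(A\cup B)+r_\varphi(A\cap B)$. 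Integrality of $\tilde\varphi$ makes $r_\varphi$ integer valued, so $M_\varphi$ is indeed a matroid.

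For the identity $r_\varphi(\theta^{-1}(Y))=\varphi(Y)$, the upper bound is immediate from plugging $X=\theta^{-1}(Y)$ into the defining minimum. For the lower bound, I would fix an arbitrary $X\subseteq\theta^{-1}(Y)$, set $Y'=\theta(X)\subseteq Y$, and observe that $X\subseteq\theta^{-1}(Y')$, so
\[
|\theta^{-1}(Y)\setminus X|\ge|\theta^{-1}(Y\setminus Y')|=\sum_{s\in Y\setminus Y'}\varphi(s)\ge\varphi(Y\setminus Y'),
\]
where the last inequality is subadditivity of $\varphi$, a direct consequence of submodularity and $\varphi(\emptyset)=0$. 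Combining with submodularity of $\varphi$, which gives $\varphi(Y')+\varphi(Y\setminus Y')\ge\varphi(Y)$, finishes the proof since $\tilde\varphi(X)=\varphi(Y')$.

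The only delicate step I anticipate is this final lower bound, where the cardinality of $\theta^{-1}(Y)\setminus X$ has to be converted into a lower bound on $\varphi(Y)-\varphi(\theta(X))$ via subadditivity applied to singletons rather than any direct estimate; the remaining parts of the argument reduce to standard manipulations associated with the matroid-from-polymatroid construction.
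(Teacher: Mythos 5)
Your argument is correct, and a small caveat is in order about the comparison: the paper does not actually prove Proposition~\ref{prop:helgason}; it simply cites Helgason. So there is no in-paper argument to compare against, and your write-up supplies the missing proof.

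Your construction is the standard one for extracting a matroid from an integer-valued submodular function (sometimes phrased via the Dilworth truncation $r(Z)=\min_{X\subseteq Z}\{\tilde\varphi(X)+|Z\setminus X|\}$), applied to the pulled-back function $\tilde\varphi=\varphi\circ\theta$. The verification of the rank axioms is sound: the cardinality identity
\[
|A\setminus X_A|+|B\setminus X_B|=|(A\cup B)\setminus(X_A\cup X_B)|+|(A\cap B)\setminus(X_A\cap X_B)|
\]
indeed holds for $X_A\subseteq A$, $X_B\subseteq B$ (check element by element), and combined with submodularity of $\tilde\varphi$ it yields submodularity of $r_\varphi$. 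The identity $r_\varphi(\theta^{-1}(Y))=\varphi(Y)$ is correctly argued in both directions; the lower bound via $Y'=\theta(X)$, the disjointness $\theta^{-1}(Y)=\theta^{-1}(Y')\sqcup\theta^{-1}(Y\setminus Y')$, the count $\sum_{s\in Y\setminus Y'}\varphi(s)\ge\varphi(Y\setminus Y')$, and the final submodularity step $\varphi(Y')+\varphi(Y\setminus Y')\ge\varphi(Y)$ are all correct. One small point worth making explicit: for the upper bound you implicitly use $\varphi(\theta(\theta^{-1}(Y)))=\varphi(Y)$, which requires noting that $\theta(\theta^{-1}(Y))=Y\setminus\{s:\varphi(s)=0\}$ and that dropping rank-zero singletons does not change $\varphi$ (subadditivity again); this is a one-line remark but should be there.
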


For a polymatroid function $\varphi$, we denote the corresponding matroid provided by Proposition~\ref{prop:helgason} by $M_\varphi=(S_\varphi,r_\varphi)$.

\paragraph{Matroids.} 

For basic definitions on matroids such as independent sets, bases, circuits, loops, and rank, we refer the reader to~\cite{oxley2011matroid}; see also~\cite{berczi2025matroid} for a quick introduction. Recall that a {\it uniform matroid} of rank $r$ over a ground set of size $n$ has rank function $r(X)=\min\{|X|,r\}$ and is denoted by $U_{r,n}$.

A matroid is {\it simple} if it has no loops and parallel elements. Given a matroid $M=(S,r)$ on a finite ground set $S$, we denote by $\si(M)$ the matroid obtained by deleting all loops and retaining exactly one element from each parallel class; we refer to $\si(M)$ as the {\it simplification of $M$} A {\it flat} is a set $F\subseteq S$ such that $r(F+e)=r(F)+1$ holds for every $e\in S\setminus F$. We refer to a flat of rank $2$ as a {\it line}. For a set $X\subseteq S$, the unique smallest flat containing $X$ is called the {\it closure} of $X$ and is denoted by $\cl_M(X)$; we dismiss the subscript $M$ when the matroid is clear from the context. A pair $F_1,F_2$ of flats is called {\it modular} if $r(F_1)+r(F_2)=r(F_1\cap F_2)+r(F_1\cup F_2)$. A matroid is {\it modular} if any pair of its flats is modular. For sets $X,Y\subseteq S$, we say that $X$ {\it spans} $Y$ in $M$ if $r(X\cup Y)=r(X)$, or equivalently, $Y\subseteq\cl_M(X)$. 

For a subset $S' \subseteq S$, the {\it restriction} of $M$ to $S'$ and the {\it deletion} of $S\setminus S'$ from $M$ yield the same matroid, denoted by $M|S' = M \backslash (S\setminus S') = (S', r')$, where $r'$ is the restriction of $r$ to subsets of $S'$. The {\it contraction of $S'$} and the {\it contraction to $S\setminus S'$} likewise result in the same matroid, denoted by $M/S' = M.(S\setminus S') = (S\setminus S', r')$, with rank function $r'(X)=r(X\cup S')-r(S')$ for each $X \subseteq S\setminus S'$. A matroid $N$ that can be obtained from $M$ by a sequence of restrictions and contractions is called a {\it minor} of $M$. 

Let $M_1=(S_1,r_1)$ and $M_2=(S_2,r_2)$ be matroids on disjoint ground sets. Their {\it direct sum}  $M_1\oplus M_2$ is the matroid $M=(S_1\cup S_2,r)$, where $r(X)=r_1(X\cap S_1)+r_2(X\cap S_2)$ for all $X\subseteq S_1\cup S_2$. For a matroid $M=(S,r)$, let $S=S_1\cup\dots\cup S_q$ be the partition of its ground set into the connected components of the hypergraph of circuits of $M$; it is known that two elements fall in the same $S_i$ if and only if the matroid has a circuit containing both of them. Then the matroids $M_i=M|S_i$ for $i\in[q]$ are called the {\it components} of $M$ and $M=M_1\oplus\dots\oplus M_q$. We say that $M$ is {\it connected} if $q=1$, or in other words, there exists a circuit through any two elements.

We adapt the terminology and notation from polymatroid functions to matroids as follows. We call a matroid $M=(S_1\times S_2,r)$ a {\it tensor product} of $M_1$ and $M_2$ if $r$ is a tensor product of $r_1$ and $r_2$, that is, $r(X\times Y)=r_1(X)\cdot r_2(Y)$ holds for all $X\subseteq S_1,Y\subseteq S_2$. In particular, this implies that if $x\in S_1$ and $y\in S_2$ are nonloops, then $M|S^y_1$ and $M|^xS_2$ are isomorphic to $M_1$ and $M_2$, respectively. The {\it set of tensor products} of $M_1$ and $M_2$ is denoted by $M_1\otimes M_2$. Then, the sets $T_k(M, N)$ and $\mathcal{T}_k(N)$ can be defined analogously as for polymatroid functions. It is not difficult to verify that if $N$ has rank $1$ or is a rank-$2$ matroid on two elements, then $T_k(M,N)\neq\emptyset$ for all matroids $M$ and all $k\in\bZ_+$. Thus, the smallest matroid $N$ for which the set $T_k(M,N)$ is nontrivial is $U_{2,3}$. This matroid is also of particular interest because it is a minor of every connected matroid of rank at least $2$. Las Vergnas~\cite{las1981products} gave the following characterization of tensor products.

\begin{prop}[Las Vergnas] \label{prop:tensor}
Let $M_1=(S_1, r_1)$, $M_2=(S_2, r_2)$ and $M=(S, r)$ be matroids such that $S=S_1 \times S_2$. Then, $M$ is a tensor product of $M_1$ and $M_2$ if and only if $r(\prd{e_1}{Y_2}) = r_1(\{e_1\})\cdot r_2(Y_2)$ for each $Y_2\subseteq S_2$ and $e_1 \in S_1$, $r(Y_1^{e_2}) = r_1(Y_1)\cdot r_2(\{e_2\})$ for each $Y_1\subseteq S_1$ and $e_2 \in S_2$, and $r(S) = r_1(S_1)\cdot r_2(S_2)$.
\end{prop}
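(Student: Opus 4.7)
Necessity is immediate by specializing the defining identity $r(X_1 \times X_2) = r_1(X_1) \cdot r_2(X_2)$: setting $X_1 = \{e_1\}$ yields the first condition, setting $X_2 = \{e_2\}$ yields the second, and taking $X_1 = S_1, X_2 = S_2$ yields the third. The substantive content lies in the converse direction.

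For sufficiency, my plan is to exhibit, for arbitrary $A \subseteq S_1$ and $B \subseteq S_2$, a basis of $A \times B$ in $M$ of the rectangular form $B_1 \times B_2$, where $B_1 \subseteq A$ and $B_2 \subseteq B$ are bases of $A$ and $B$ in $M_1$ and $M_2$, respectively. This would immediately give $r(A \times B) = |B_1 \times B_2| = r_1(A) \cdot r_2(B)$, as required.

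The main technical ingredient is a closure-propagation lemma derived from the two slice conditions: I would prove that if $a \in \cl_{M_1}(Y)$ for some $Y \subseteq S_1$, then $\prd{a}{Z} \subseteq \cl_M(Y \times Z)$ for every $Z \subseteq S_2$, and symmetrically $Y^b \subseteq \cl_M(Y \times Z)$ whenever $b \in \cl_{M_2}(Z)$. The first statement follows by fixing $z \in Z$ and invoking the second condition to compute $r((Y + a)^z) = r_1(Y + a) \cdot r_2(\{z\}) = r_1(Y) \cdot r_2(\{z\}) = r(Y^z)$, so that $(a, z) \in \cl_M(Y^z) \subseteq \cl_M(Y \times Z)$; the symmetric statement uses the first condition analogously. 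Applying these two facts successively, first moving from $B_1 \times B_2$ out to an entire row $\prd{a}{B_2}$ via (i), and then picking up an arbitrary $(a, b) \in A \times B$ via (ii), establishes $A \times B \subseteq \cl_M(B_1 \times B_2)$, and hence $r(A \times B) = r(B_1 \times B_2)$.

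It remains to verify that $B_1 \times B_2$ is independent in $M$. I would extend $B_1$ and $B_2$ to bases $B_1' \supseteq B_1$ of $S_1$ in $M_1$ and $B_2' \supseteq B_2$ of $S_2$ in $M_2$, and prove independence of the larger product $B_1' \times B_2'$; independence of $B_1 \times B_2$ is then automatic. Iterated submodularity over the rows $\prd{b}{B_2'}$ for $b \in B_1'$, combined with the first condition, gives $r(B_1' \times B_2') \leq \sum_{b \in B_1'} r_1(\{b\}) \cdot r_2(B_2') = |B_1'| \cdot |B_2'|$. For the matching lower bound, the closure-propagation lemma applied with $A = S_1$ and $B = S_2$ yields $S \subseteq \cl_M(B_1' \times B_2')$, so the third condition gives $r(B_1' \times B_2') \geq r(S) = r_1(S_1) \cdot r_2(S_2) = |B_1'| \cdot |B_2'|$, forcing equality and hence independence. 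The only delicate point, I expect, is recognizing that the full-rank identity of condition~3 is exactly the glue required to convert the one-sided row/column bounds into an equality for every rectangle; without it the closure-propagation step alone yields only inclusions in one direction.
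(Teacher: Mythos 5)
Your proof is correct and self-contained. Note, however, that the paper states this proposition as a citation of Las~Vergnas~\cite{las1981products} without reproducing a proof, so there is no in-paper argument to compare against; what follows is an assessment of your argument on its own terms.

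The overall plan is sound and the pieces fit together. Necessity is indeed immediate. For sufficiency, the closure-propagation lemma is the right engine: from $r_1(Y+a)=r_1(Y)$ the slice condition~2 gives $r((Y+a)^z)=r(Y^z)$, hence $(a,z)\in\cl_M(Y^z)\subseteq\cl_M(Y\times Z)$, and symmetrically with condition~1. Iterating this over the two coordinates correctly yields $A\times B\subseteq\cl_M(B_1\times B_2)$, so $r(A\times B)=r(B_1\times B_2)$. For independence, extending to full bases $B_1',B_2'$ and computing
\[
r(B_1'\times B_2')\ \le\ \sum_{e\in B_1'} r(\prd{e}{B_2'})\ =\ \sum_{e\in B_1'} r_1(\{e\})\,r_2(B_2')\ =\ |B_1'|\cdot|B_2'|
\]
by iterated submodularity, then using closure propagation with $A=S_1,\,B=S_2$ together with condition~3 to get the matching lower bound $r(B_1'\times B_2')= r(S)=r_1(S_1)\,r_2(S_2)=|B_1'|\cdot|B_2'|$, correctly forces $B_1'\times B_2'$ (hence $B_1\times B_2$) to be independent. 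The loop and empty-set edge cases ($r_1(A)=0$, $B_1=\emptyset$, etc.) are also handled automatically by the closure-propagation argument. Your closing observation is also accurate: conditions~1 and~2 alone give only an upper bound on $r(B_1'\times B_2')$, and it is precisely condition~3 that supplies the lower bound needed to conclude.
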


We will also use the following properties of tensor products, also from~\cite{las1981products}. 

\begin{prop}[Las Vergnas] \label{prop:crosses}
    Let $M=(S,r)$ be a tensor product of the matroids $M_1 = (S_1, r_1)$ and $M_2=(S_2, r_2)$, and let $X_1 \subseteq X_2 \subseteq S_1$ and $Y_2 \subseteq Y_1 \subseteq S_2$.
    Then, 
    \[ r(X_1\times Y_1)+r(X_2\times Y_2)=r((X_1\times  Y_1)\cap (X_2\times Y_2))+r((X_1\times Y_1)\cup (X_2\times Y_2)).\]
    Moreover, if $X_1$ and $X_2$ are flats of $M_1$ and $Y_1$ and $Y_2$ are flats of $M_2$, then $(X_1\times Y_1)\cup (X_2\times Y_2)$ is a flat of $M$. 
\end{prop}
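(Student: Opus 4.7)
My plan is to reduce the proposition to proving a single rank identity and then to bootstrap the flat claim from the same combinatorial construction. Write $A = X_1 \times Y_1$ and $B = X_2 \times Y_2$; the nesting $X_1 \subseteq X_2$ and $Y_2 \subseteq Y_1$ gives $A \cap B = X_1 \times Y_2$, and the tensor-product hypothesis yields
\[
r(A) = r_1(X_1) r_2(Y_1), \qquad r(B) = r_1(X_2) r_2(Y_2), \qquad r(A \cap B) = r_1(X_1) r_2(Y_2).
\]
Setting $R \coloneqq r_1(X_1) r_2(Y_1) + r_1(X_2) r_2(Y_2) - r_1(X_1) r_2(Y_2)$, the proposition's modular identity is equivalent to $r(A \cup B) = R$, and the inequality $r(A \cup B) \leq R$ is immediate from submodularity. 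The real content is therefore to produce an independent subset of $A \cup B$ of size $R$.

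For that, I would pick nested bases $B_1^1 \subseteq B_1^2$ of $M_1|X_1$ and $M_1|X_2$, and $B_2^2 \subseteq B_2^1$ of $M_2|Y_2$ and $M_2|Y_1$. Applying the tensor-product hypothesis to the outer rectangle $B_1^2 \times B_2^1$ gives $r(B_1^2 \times B_2^1) = |B_1^2| \cdot |B_2^1|$, so this rectangle is itself an independent set of $M$, and every subset of it is independent too. In particular, the subset
\[
I \coloneqq (B_1^1 \times B_2^1) \cup (B_1^2 \times B_2^2) \subseteq A \cup B
\]
is independent, and a short inclusion--exclusion using $B_1^1 \subseteq B_1^2$ and $B_2^2 \subseteq B_2^1$ shows that $|I| = r_1(X_1) r_2(Y_1) + r_1(X_2) r_2(Y_2) - r_1(X_1) r_2(Y_2) = R$. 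Thus $r(A \cup B) \geq R$ and the identity follows.

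For the flat claim, assume additionally that $X_1, X_2, Y_1, Y_2$ are flats and (harmlessly, as the statement is about closures) that $M_1, M_2$ are loopless. Given $(p, q) \notin A \cup B$, I would enlarge the above construction so that $(p, q)$ sits inside an extended independent rectangle. Concretely, I would arrange $p \in B_1^1$ if $p \in X_1$ and $p \in B_1^2 \setminus B_1^1$ if $p \in X_2 \setminus X_1$; if $p \notin X_2$, set $B_1^+ \coloneqq B_1^2 + p$, which remains independent in $M_1$ because $X_2 = \cl_{M_1}(B_1^2)$ and $p \notin X_2$; otherwise set $B_1^+ \coloneqq B_1^2$. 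Make the analogous choice for $q$ to obtain $B_2^+$. The tensor-product hypothesis again shows that $B_1^+ \times B_2^+$ is independent in $M$ and, by construction, contains $(p, q)$. Finally, $(p, q) \notin I$: otherwise $(p, q)$ would lie in $B_1^1 \times B_2^1 \subseteq A$ or in $B_1^2 \times B_2^2 \subseteq B$, contradicting $(p, q) \notin A \cup B$. Hence $I \cup \{(p, q)\}$ is an independent subset of $(A \cup B) + (p, q)$ of size $R + 1$, proving $(p, q) \notin \cl_M(A \cup B)$.

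The main obstacle I expect is the case bookkeeping in the flat claim: one must track the three possible positions of $p$ relative to $X_1 \subseteq X_2$ (and separately of $q$ relative to $Y_2 \subseteq Y_1$), and make the nested choices of $B_1^1, B_1^2, B_1^+$ compatible so that $(p, q) \in B_1^+ \times B_2^+$ but $(p, q) \notin I$, with the latter forced by $(p, q) \notin A \cup B$. Once the rectangle-basis viewpoint is identified, the modular identity itself reduces to the one-line inclusion--exclusion computation above.
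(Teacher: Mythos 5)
The paper states this as a result of Las Vergnas and cites it without proof, so your proposal supplies a self-contained argument where the paper gives none. Your proof of the modular identity is correct and clean: you pick nested bases $B_1^1 \subseteq B_1^2$ (of $M_1|X_1$ and $M_1|X_2$) and $B_2^2 \subseteq B_2^1$ (of $M_2|Y_2$ and $M_2|Y_1$); the tensor-product hypothesis makes the rectangle $B_1^2 \times B_2^1$ independent in $M$; and the L-shaped subset $I = (B_1^1 \times B_2^1) \cup (B_1^2 \times B_2^2) \subseteq A \cup B$ yields $r(A \cup B) \ge |I| = R$, matching the submodular upper bound. Note that this part of the argument does not require looplessness.

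For the flat claim, your three-way construction of $B_1^+$ and $B_2^+$ is correct, but the parenthetical remark that looplessness may be assumed ``harmlessly, as the statement is about closures'' is not justified. In fact the flat assertion is false as written if $M_1$ or $M_2$ has a loop: if $p$ is a loop of $M_1$, then $p$ lies in every flat, so $p \in X_1 \subseteq X_2$, and for any element $q \in S_2 \setminus Y_1$ the pair $(p,q)$ lies outside $(X_1 \times Y_1) \cup (X_2 \times Y_2)$ yet is a loop of $M$ (because $r(\{(p,q)\}) = r_1(\{p\})\, r_2(\{q\}) = 0$), hence belongs to the closure of every set; so $(X_1\times Y_1)\cup(X_2\times Y_2)$ cannot be a flat. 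Looplessness is therefore a genuine hypothesis for the second assertion, not a cost-free reduction, and you should state it as such (or reformulate the conclusion as ``$\cl_M(A \cup B)$ equals $A \cup B$ together with the loops of $M$''). Once looplessness is assumed, the remainder of your flat argument is correct: the facts $X_1 = \cl_{M_1}(B_1^1)$ and $X_2 = \cl_{M_1}(B_1^2)$, which hold because $X_1$ and $X_2$ are flats, justify each branch of the case split, and $(p,q) \notin I$ is forced by $(p,q) \notin A \cup B$, giving the required independent set of size $R+1$.
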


In some cases, we consider matroids over infinite ground sets. Given an infinite ground set $S$, a family $\cI\subseteq 2^S$ forms the independent sets of a {\it finitary matroid} if (I1) $\emptyset\in\cI$, (I2) if $I\in\cI$ and $I'\subseteq I$, then $I'\in\cI$, (I3) for all $I,I'\in\cI$ with $|I|<|I'|$ and $I'$ finite, there exists $e\in I'\setminus I$ such that $I+e\in\cI$, and (I4) $I\in\cI$ if and only if every finite subset of $I$ belongs to $\cI$. In other words, the independence system of a finitary matroid behaves locally like that of a finite matroid: dependence can always be witnessed on finite subsets. Similarly to the finite case, we call a finitary matroid {\it modular} if any pair of its flats is modular.

Throughout the paper, we use `matroid' to refer to a matroid over a finite ground set, and explicitly write `finitary matroid' when the ground set may be infinite.

\paragraph{Complexity theory.}

We refer the reader to \cite{papadimitriou1994computational} for the basics of computational complexity. A language $L$ is {\it recursively enumerable}\footnote{The recursively enumerable property is sometimes called {\it semi-decidable} or {\it computably enumerable} in the literature.} if there exists a Turing machine that, given an input string $x$, outputs ``yes'' if $x \in L$ and never halts if $x \notin L$. Similarly, $L$ is {\it co-recursively enumerable} if there exists a Turing machine that, given an input string $x$, outputs ``no'' if $x \notin L$ and never halts if $x \in L$. It is not difficult to see that a language $L$ that is both {\it recursively enumerable} and {\it co-recursively enumerable} is {\it decidable}; that is, there exists a Turing machine that, given an input string $x$, outputs ``yes'' if $x \in L$ and ``no'' if $x \notin L$.

\paragraph{Skew fields and representability.}  

We refer the reader to \cite{cohn1995skew} on skew fields. Let $\bF$ be a skew field. The {\it center} $Z(\bF)$ of $\bF$ is the subfield of $\bF$ consisting of the elements $\{x \in \bF \mid xy = yx \text{ for all }y\in \bF\}$. The subfield generated by $1$ is either isomorphic to $\bQ$ or to $\bF_p$, the integers modulo $p$ for some prime number $p$. Accordingly, $\bF$ is said to have characteristic $0$ or $p$.

Let $\bF$ be a skew field. The \emph{rank} $r(A)$ of a matrix $A \in \bF^{m \times n}$ is the dimension of its right column space. Most basic linear algebra carries over from fields to the more general skew field setting; in particular, $r(A)$ also equals the dimension of the left row space of $A$, as well as the largest number $r$ such that $A$ has an $r\times r$ invertible submatrix (left and right inverses of square submatrices coincide), see e.g.~\cite[Theorem~1.4.7]{cohn2006free} for a more general statement.
Given matrices $A \in \bF^{m \times n}$ and $B\in \bF^{p \times q}$, their {\it Kronecker product} $A\otimes B \in \bF^{mp \times nq}$ is the matrix
\[A \otimes B = \begin{bmatrix} a_{11}B & \dots & a_{1n}B  \\ \vdots & \ddots & \vdots\\ a_{1m}B & \dots & a_{mn}B \end{bmatrix}.\]
Since some properties of Kronecker products do not carry over from the commutative to the skew field setting, we will review some of their properties in \cref{subsec:skew}.

For ease of reading, we recall some of the definitions from the introduction. A matroid is {\it representable over a skew field $\bF$} (or {\it $\bF$-representable} for short) if there exists a family of vectors from a right vector space over $\bF$ whose linear independence relation matches the matroid's, or equivalently, there exists a matrix $A\in \bF^{m \times n}$ whose columns correspond to the elements of the ground set and the rank of a set equals the rank of the submatrix formed by the corresponding columns. It is known that $\bF$-representability is preserved under taking minors. Similarly, a polymatroid function $\varphi\colon S \to \bZ$ is {\it representable over $\bF$} if there exists a family of subspaces from a right vector space over $\bF$ such that for each subset $X \subseteq S$, $\varphi(X)$ equals the dimension of the subspace spanned by the union of the corresponding subspaces. Equivalently, $\varphi$ is representable over $\bF$ if there is a matrix $A\in \bF^{m \times n}$ and a partition $\cP = \{P_s \mid s \in S\}$ of $[n]$ such that for each $X\subseteq S$, $\varphi(X)$ equals the rank of the submatrix of $A$ formed by the columns with index set $\bigcup_{x \in X} P_x$. Note that a matroid is $\bF$-representable exactly when its rank function is an $\bF$-representable polymatroid function. A matroid or  polymatroid function is {\it representable} if it is representable over some field, and {\it skew-representable} if it is representable over some skew field. A matroid is {\it regular} if it is representable over all fields.  A polymatroid function $\varphi$ or a matroid with rank function $\varphi$ is called {\it $k$-folded representable} (or {\it $k$-folded skew-representable}) if $k \cdot \varphi$ is representable, (or skew-representable) for a fixed positive integer $k$. It is {\it folded representable} (or {\it folded skew-representable}), if this holds for some positive integer $k$.

The {\it (skew) characteristic set} of a matroid $M$ is the set of integers $p$ for which $M$ is representable over a (skew) field of characteristic $p$. The following statement summarizes several results on the characteristic sets of matroids~\cite{rado1957indep,vamos1975necessary,kahn1982characteristic,brylawski1980matroids}.

\begin{prop}[Rado, Vámos, Kahn, and Reid] \label{prop:charsets}
    Let $\bP$ denote the set of primes and let $C \subseteq \bP \cup \{0\}$. If $C$ is the characteristic set of a matroid, then $C$ satisfies one of the following conditions:
    \begin{enumerate}[label=(\arabic*)] \itemsep0em
        \item $0 \not \in C$ and $|C| < \infty$, \label{it:finite_charset}
        \item $0 \in C$ and $|\bP \setminus C| < \infty$. \label{it:cofinite_charset}
    \end{enumerate}
    If $C$ satisfies \ref{it:finite_charset} or \ref{it:cofinite_charset}, then there exists a connected rank-3 matroid $M$ such that $C$ is both the characteristic and the skew characteristic set of $M$, and for each $p \in C$, $M$ is representable over all infinite fields of characteristic $p$. Moreover, given $C$ in case \ref{it:finite_charset} and $\bP \setminus C$ in case \ref{it:cofinite_charset}, such a matroid $M$ is computable.
\end{prop}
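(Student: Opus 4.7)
The plan is to address this proposition in four parts, since it bundles several classical results.

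\textbf{Necessity of \ref{it:finite_charset} or \ref{it:cofinite_charset}.} Suppose $M$ is representable over a field $\bF$ of characteristic $0$. Fix a matrix representation of $M$ over $\bF$ and, after a suitable basis change, clear denominators in the entries of the finitely many maximal square submatrices of interest to obtain an equivalent representation by a matrix $A$ over the ring of integers of some number field. A finite subset $X$ of the ground set is dependent iff all its maximal square subdeterminants vanish. Reducing $A$ modulo a prime $\mathfrak p$ preserves the matroid provided $\mathfrak p$ does not divide any of the finitely many nonzero subdeterminants that are supposed to stay nonzero; hence $M$ is representable over $\bF_q$ (with $q$ a prime power of the residue characteristic) for all but finitely many primes, giving \ref{it:cofinite_charset}. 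Conversely, if $M$ is representable over fields of infinitely many prime characteristics, a compactness/ultraproduct argument (Vámos's theorem) produces a representation over a field of characteristic~$0$; equivalently, if $0 \notin C$ then $|C| < \infty$, giving \ref{it:finite_charset}.

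\textbf{Construction in case \ref{it:finite_charset}.} For a given finite $C = \{p_1,\dots,p_k\} \subset \bP$, the plan is to invoke Reid's and Kahn's construction of rank-$3$ matroids with a prescribed characteristic set. The idea is to start from the ``Reid geometries'' $R_p$: rank-$3$ configurations whose unique (up to projectivity) coordinatization over any skew field forces the identity $p \cdot 1 = 0$, so that $R_p$ is representable over a (skew) field $\bF$ iff $\mathrm{char}\,\bF = p$. One assembles a single connected rank-$3$ matroid realizing $C$ by ``gluing'' copies of $R_{p_1},\dots,R_{p_k}$ along a common line, using the generalized parallel connection for rank-$3$ matroids, and checking that (i) the result is still rank $3$ and connected, (ii) any representation restricts to a representation of each $R_{p_i}$, forcing $\mathrm{char}\,\bF \in C$, and conversely (iii) for each $p_i \in C$ the geometry can be coordinatized over $\bF_{p_i}$, hence over any infinite field of characteristic $p_i$ by base change. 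The skew-representability statement follows because rank-$3$ configurations of the Reid type can be made to have the same coordinatization theory over skew fields as over fields, so the skew characteristic set equals the characteristic set.

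\textbf{Construction in case \ref{it:cofinite_charset}.} For $\bP \setminus C = \{p_1,\dots,p_k\}$ the strategy is dual: glue (again by generalized parallel connection) rank-$3$ geometries whose coordinatization identities \emph{exclude} precisely each $p_i$ while remaining representable over $\bQ$ and over $\bF_p$ for every $p \notin \bP \setminus C$. Concretely one uses configurations of Brylawski/Vámos type whose nontrivial subdeterminant, when the geometry is coordinatized, equals a specified integer $m$; choosing $m$ divisible exactly by $p_1,\dots,p_k$ forbids those characteristics while allowing all others. Again one verifies connectivity, rank~$3$, and that all infinite fields of each allowed characteristic yield representations via extension of scalars. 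The extension to skew fields proceeds as in case \ref{it:finite_charset}.

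\textbf{Representability over all infinite fields of characteristic $p$ and computability.} For any $p \in C$ the explicit construction above gives a representation over a fixed field (such as $\bF_p$ or $\bQ$); tensoring this representation with any infinite field $\bF$ of the same characteristic yields a representation over $\bF$, since the nonzero subdeterminants remain nonzero. Computability is automatic because every step of the construction -- enumerate the primes in $C$ (or $\bP \setminus C$), write down the Reid/Brylawski gadget for each, and perform the gluings -- is explicit and terminates. The main obstacle in this proof plan is case~\ref{it:finite_charset}--\ref{it:cofinite_charset} itself, namely verifying both directions of the characteristic-set computation for the glued matroid, and in particular ensuring that no extraneous primes creep into or out of $C$ as a side effect of the gluing; this requires a careful analysis of how coordinatizations of the pieces determine a coordinatization of the whole, and here the restriction to rank~$3$ (where the Veblen--Young obstruction does not yet intervene) is essential.
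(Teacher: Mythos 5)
The proposal takes a different route from the paper: the paper's own ``proof'' is essentially a careful assembly of citations (Rado, V\'amos, Kahn, Reid, plus Evans and Cartwright for the skew statement, plus the elementary observation that disconnected rank-$3$ matroids are regular, which lets one enforce connectivity), whereas you try to reprove the underlying classical results from scratch. The necessity direction (clearing denominators and reducing modulo almost all primes; compactness/ultraproduct for the converse) is a reasonable sketch of the Rado--V\'amos argument. But the construction in case \ref{it:finite_charset} contains a fatal logical error.

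You propose to glue copies of ``Reid geometries'' $R_{p_1},\dots,R_{p_k}$, where by design $R_{p_i}$ is representable over a (skew) field $\bF$ if and only if $\operatorname{char}\bF = p_i$, and you assert that because ``any representation restricts to a representation of each $R_{p_i}$'', the characteristic set of the glued matroid is $C=\{p_1,\dots,p_k\}$. This gets the logic backwards. If a representation of the glued matroid restricts to a representation of \emph{every} $R_{p_i}$, then the characteristic must lie in $\bigcap_i\{p_i\}$, not $\bigcup_i\{p_i\}$; for $k\ge 2$ this intersection is empty, so the glued matroid would be representable over no skew field at all. Kahn's actual construction for case \ref{it:finite_charset} is substantially more delicate: instead of imposing an absolute constraint $p\cdot 1 = 0$ on each gadget, it encodes the requirement that a single integer polynomial (built from $C$) have a root in the field, so that the allowed characteristics are exactly those $p$ for which the polynomial has a root in $\overline{\bF_p}$. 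That is a fundamentally different kind of coordinatization constraint, and simply gluing one $R_{p_i}$ per prime cannot reproduce it. The sketch for case \ref{it:cofinite_charset} (one integer divisibility constraint excluding finitely many primes) is closer in spirit to Reid's construction, but the case \ref{it:finite_charset} argument as written does not prove anything. You should instead cite Kahn~\cite{kahn1982characteristic} and Reid's result in~\cite{brylawski1980matroids}, as the paper does, and only supply the short auxiliary observations about connectivity, extension to all infinite fields of the allowed characteristics, and the skew characteristic set.
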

It was first shown by Rado~\cite{rado1957indep} and Vámos~\cite{vamos1975necessary} that the characteristic set of a matroid satisfies \ref{it:finite_charset} or \ref{it:cofinite_charset}. Kahn~\cite{kahn1982characteristic} showed that each set $C$ satisfying \ref{it:finite_charset} is the characteristic set of a matroid. In their proof, given the set $C$, they construct, in a computable manner, a rank-3 matroid $M$ whose skew characteristic set is $C$, and is representable over all sufficiently large fields of characteristic $p$ for all $p\in C$.
As each matroid of rank at most two is regular, the direct sum of such matroids is also regular, showing that disconnected rank-3 matroids are regular as well. Therefore, the connectivity of $M$ can further be imposed. It was proved by Reid, published in \cite[pages 101--102]{brylawski1980matroids} that any set $C$ satisfying \ref{it:cofinite_charset} is the characteristic set of a matroid, see also \cite[Lemma~6.8.5]{oxley2011matroid}. The proof is constructive (given the set $\bP\setminus C$), and it provides a rank-3 matroid $M$ with characteristic set $C$ such that $M$ is representable over all infinite fields with characteristic in $C$. The condition that $C$ also coincides with the skew characteristic set of $M$ can further be imposed; this also follows from the combination of \cite[Lemma~3.4.1]{evans1991projective} and \cite[Lemma~10]{cartwright2024characteristic}. Finally, as disconnected rank-3 matroids are regular, the connectivity of $M$ can also be imposed. 

Recently, Kühne, Pendavingh, and Yashfe~\cite{kuhne2023von} showed the following hardness result. While they did not include the rank-3 condition in their theorem, their proof shows that it holds in this restricted case as well. Again, as disconnected rank-3 matroids are regular, we could further include the connectivity condition in the statement. 

\begin{prop}[see Kühne, Pendavingh, Yashfe] \label{thm:undecidable}
    The following problems are undecidable.
    \begin{enumerate}[label = (\alph*)]\itemsep0em
        \item Given a connected rank-3 matroid, decide whether it is skew-representable.
        \item Let $p$ be a prime number or zero. Given a connected rank-3 matroid, decide whether there exists a skew field of characteristic $p$ over which it is representable.
    \end{enumerate}
\end{prop}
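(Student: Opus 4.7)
My plan is to leverage the undecidability result of Kühne, Pendavingh, and Yashfe and verify that their reduction can be refined to output connected rank-3 matroids, with a further gadget to control characteristic for part (b). I would proceed in three stages: extract the rank-3 content from their construction, enforce connectivity, and then adapt the construction to fix the characteristic.

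The starting point is the Kühne-Pendavingh-Yashfe reduction, which is built on a von Staudt-style encoding: arithmetic operations in a skew field $\bF$ correspond to incidence relations on a line in the projective plane over $\bF$, so a finite presentation of a (would-be) skew field translates into a finite set of incidence constraints that can be realized within a single projective plane, i.e.\ a rank-3 combinatorial structure. Given a finite presentation of a skew field by generators and relations, their construction produces a finite matroid whose skew-representability is equivalent to the existence of a skew field satisfying those relations; the first step of the plan is to inspect this construction and confirm that the output is already a rank-3 matroid at every step, which is implicit in their proof though not stated explicitly. Since the word problem for finitely presented skew fields is undecidable, undecidability of rank-3 skew-representability follows.

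To enforce connectivity, I would observe that every disconnected rank-3 matroid decomposes as a direct sum whose components have ranks summing to 3 and hence each have rank at most 2. Matroids of rank at most 2 are regular and direct sums of regular matroids are regular, so every disconnected rank-3 matroid is representable over every skew field. Hence, disconnected rank-3 instances are always trivially ``yes'' instances of both problems (a) and (b), and the reduction above may be assumed -- after, if necessary, amalgamating components along a common line of the ambient projective plane -- to output connected rank-3 matroids, preserving the reduction.

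For part (b), I would combine the rank-3 reduction with the characteristic-forcing matroids provided by Proposition~\ref{prop:charsets}: for each prime $p$ or $p=0$, there is a computable connected rank-3 matroid $N_p$ whose skew characteristic set is $\{p\}$ (when $p$ is prime) or contains $0$ (when $p=0$). A suitable rank-3 amalgam of $N_p$ with the matroid produced by the reduction for (a), glued along a shared generic line so that representability of the amalgam is equivalent to simultaneous representability of the two pieces over a single skew field, then yields a connected rank-3 matroid representable over a characteristic-$p$ skew field if and only if the original instance is skew-representable. The main obstacle I expect is precisely this amalgamation step: the gadget must glue two rank-3 matroids along a common line so that the resulting matroid is still of rank 3, still connected, and representable over $\bF$ exactly when both pieces are simultaneously representable over $\bF$. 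Verifying that the Kühne-Pendavingh-Yashfe construction is internally rank-3 is largely bookkeeping, but carefully building the amalgam so that it faithfully enforces joint representability while preserving rank 3 and connectivity is the delicate technical point of the plan.
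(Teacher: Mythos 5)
Your treatment of part~(a) matches the paper's own reasoning, which is essentially: cite Kühne--Pendavingh--Yashfe, observe that their construction is already rank~3, and use the fact that disconnected rank-3 matroids are regular to freely assume connectivity. That part is fine.

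For part~(b), however, you depart from the paper and the departure contains a genuine gap. The paper obtains (b) the same way as (a): the Kühne--Pendavingh--Yashfe theorem already covers representability over a skew field of any fixed characteristic~$p$, and their construction can be inspected to be rank~3 in that case too. You instead propose a reduction from (a) to (b) by amalgamating the (a)-instance $M_0$ with a characteristic-forcing matroid $N_p$ along a shared line. The biconditional you claim --- ``the amalgam is representable over a characteristic-$p$ skew field if and only if $M_0$ is skew-representable'' --- fails in the backward direction. If $M_0$ is representable over some skew field $\bG$, nothing forces $\bG$ (or any other skew field over which $M_0$ is representable) to have characteristic~$p$; but the $N_p$ component of the amalgam requires characteristic~$p$ for any representation of the amalgam. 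So your reduction would answer ``yes'' to the constructed (b)-instance exactly when $M_0$ is representable over a characteristic-$p$ skew field, not when it is skew-representable \emph{tout court}, and these are not the same condition. To salvage this route you would need to argue that the (a)-reduction's ``yes''-instances are representable over skew fields of every characteristic (or at least of characteristic~$p$), which is an additional unproved claim about the internals of the Kühne--Pendavingh--Yashfe construction.

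There is also a secondary unsubstantiated step: you assert the existence of a rank-3 amalgam glued along a ``shared generic line'' whose representability is equivalent to simultaneous representability of the pieces. This is not automatic. For rank-3 matroids sharing a line, a generalized parallel connection exists only under modularity hypotheses, and even then the equivalence of representability of the amalgam with simultaneous representability of the pieces over a common field is not a formal consequence --- each piece may impose incompatible projective coordinates on the identified line. You flag this as ``the delicate technical point,'' but without resolving it the argument does not close. Since the paper gets (b) directly from the cited reference with no amalgamation needed, the cleanest fix is to drop the amalgamation idea and instead verify (as for (a)) that the rank-3 and connectivity refinements apply to the characteristic-$p$ version of the Kühne--Pendavingh--Yashfe theorem.
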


\paragraph{Modular extensions.}  

We call every integer-valued polymatroid function $\varphi\colon 2^S\to\bZ$ {\it $0$-modular extendable}. For $A,B\subseteq S$, an {\it extension step} with respect to $A$ and $B$ results in a polymatroid function $\varphi'\colon 2^{S'}\to\bZ$ with $S\subseteq S'$ such that there exists $Z\subseteq S'$ satisfying $\varphi'(X)=\varphi(X)$ for all $X\subseteq S$, $\varphi'(A\cup Z)=\varphi(A)$, $\varphi'(B\cup Z)=\varphi(B)$, and $\varphi'(Z)=\varphi(A)+\varphi(B)-\varphi(A\cup B)$. For simplicity, we call $\varphi'$ a {\it one-step extension} of $\varphi$. It is worth observing that an extension step does not increase the function value on the entire ground set, so $\varphi'(S') = \varphi(S)$. Moreover, we can assume that $|S' \setminus S| \leq 1$, since elements in $S' \setminus (S \cup Z)$ are irrelevant, and we may take the quotient of $\varphi'$ with respect to the partition consisting of the singletons in $S$ and the set $Z \setminus S$.

For $k\in\bZ_+$, $\varphi$ is said to be {\it $k$-modular extendable} if, for all subsets $A,B\subseteq S$, there exists an integer-valued $(k-1)$-modular extendable polymatroid function that is obtained from $\varphi$ by an extension step with respect to $A$ and $B$. A polymatroid function is {\it fully modular extendable} if it is $k$-modular extendable for all $k\in\mathbb{Z}_+$. If the integrality assumption is dropped throughout, the corresponding properties are called {\it $k$-fractionally modular extendable} and {\it fully fractionally modular extendable}, respectively. 

We adapt the notation from set functions to matroids in the following sense: define a matroid $M=(S,r)$ to be {\it $k$-modular}, {\it fully modular}, {\it $k$-fractionally modular} or {\it fully fractionally modular extendable} when $r$ has these properties and, furthermore, the extension $r'$ is also subcardinal, that is, it satisfies $r'(X)\leq |X|$ for all $X\subseteq S'$.\footnote{For polymatroid functions, the $k$-fractionally modular and fully fractionally modular extension properties are sometimes referred to as {\it $k$-CI-compliant} and {\it common information} properties, respectively. Meanwhile, for matroids, $k$-modular and fully modular extensions are also called {\it $k$-complete Euclidean} and {\it complete Euclidean} extensions, respectively. To emphasize that both cases use the same modular extension construction, with the difference between the integer and fractional cases indicated by adding ``fractionally'', and that, in the matroid case, the extended function must be subcardinal, we decided to adopt the new terminology. For further details, see~\cite{bamiloshin2023note}.} It is not difficult to see that a matroid $M=(S',r')$ is a one-step extension of $M=(S,r)$ with respect to $A$ and $B$ if and only if the closures $\cl_{M'}(A)$ and $\cl_{M'}(B)$ form a modular pair. Indeed, the `if' direction follows by setting $Z=\cl_{M'}(A)\cap\cl_{M'}(B)$. For the `only if' direction, observe that 
$r'(\cl_{M'}(A)\cap \cl_{M'}(B))
\geq  r'(Z)
= r(A)+r(B)-r(A\cup B)
=r'(\cl_{M'}(A))+r'(\cl_{M'}(B))-r'(\cl_{M'}(A)\cup \cl_{M'}(B))$.
Since the reverse inequality holds by submodularity of $r'$, equality holds throughout. Unlike in the polymatroid case, we cannot assume that $|S'\setminus S| \leq 1$. However, we may assume that $S'\setminus S$ is independent. Indeed, taking a maximal independent set of $S' \setminus S$ and deleting the rest of the elements yields a matroid that is still an extension of $M$. Consequently, we may assume $|S'\setminus S| \leq r(S)$.

\paragraph{Projective spaces.}  

Let $\cP$ be a -- not necessarily finite -- set, and $\mathbb{P}=(\cP, \mathcal{L})$ be a pair where  $\mathcal{L}\subseteq 2^{\cP}$. We call the elements of $\cP$ {\it points}, the elements of $\mathcal{L}$ {\it lines} and we say that a point $p$ and a line $e$ are {\it incident} if $p\in e$. We call $(\cP,\mathcal{L})$ a {\it projective space} if it satisfies the following axioms:
 \begin{enumerate}[label=(P\arabic*)]\itemsep0em
     \item For any two lines, there is at most one point that is incident to both of them. \label{ax:p1}
     \item For any two distinct points $p,q\in\cP$, there is exactly one line that is incident to both of them, denoted $\overline{pq}$. \label{ax:p2}
     \item If the lines $\overline{ab}$ and $\overline{cd}$ intersect for distinct points $a$, $b$, $c$ and $d$, then so do the lines $\overline{ac}$ and $\overline{bd}$. \label{ax:p3}
     \item Every line is incident to at least three points. \label{ax:p5}
 \end{enumerate}
Note that the above definition allows for the degenerate case where $\cP$ consists of a single element and $\cL$ is empty. Two projective spaces $\bP_1,\bP_2$ are {\it isomorphic}, denoted by $\bP_1\cong\bP_2$, if there is a bijection between their point sets that preserves lines when applied in either direction. Let $\mathbb{F}$ be a skew field and $n\in\bZ_+$. For $v,w \in \mathbb{F}^{n+1}$ let $v\sim w$ if there exists a scalar $\lambda\in\mathbb{F}$ such that $\lambda \cdot v=w$. Let $\mathbb{F}\mathbb{P}^n$ denote the projective space whose lines are the images of the two-dimensional subspaces of $\mathbb{F}^{n+1}$ under $\sim$, called the {\it $n$-dimensional projective space over $\mathbb{F}$}. A projective space is said to be {\it skew-representable} if there exist a skew field $\mathbb{F}$ and an integer $n$ such that $\mathbb{P} \cong \mathbb{F}\mathbb{P}^n$. The {\it projective geometry} $PG(n, p)$ is the matroid whose ground set consists of the 1-dimensional subspaces of $\mathbb{F}_p^{n+1}$, and whose independent sets are those collections of points that correspond to linearly independent vectors in $\mathbb{F}_p^{n+1}$. Equivalently, $PG(n, p)$ is the matroid represented by the columns of a matrix over $\mathbb{F}_p$ whose columns form a set of representatives of the 1-dimensional subspaces of $\mathbb{F}_p^{n+1}$.
 
If $(\cP, \mathcal{L})$ satisfies axioms \ref{ax:p1} - \ref{ax:p3} and any line contains at least two points, then we call $(\cP, \mathcal{L})$ a {\it generalized projective space}. Given a -- possibly infinite -- family of projective spaces $\bP_j=(\cP_j,\cL_j)$ for $j\in J$ over pairwise disjoint point sets, their {\it direct sum} $\bigoplus_{j\in J}\bP_j$ is defined as the pair $(\cP,\cL)$ where $\cP=\bigcup_{j\in J}\cP_j$ and $\cL=(\bigcup_{j\in J} \mathcal{L}_j) \cup (\bigcup_{j \neq k\in J} \cP_j \times \cP_k$), where $\cP_j \times \cP_k \coloneqq \{\{a_j, a_k\} \ | \ a_j \in \cP_j, a_k \in \cP_k$\}. Note that the direct sum of projective spaces is not necessarily a projective space, but it is always a generalized projective space. Conversely, every generalized projective space arises as such a direct sum, as formalized in the following result, which summarizes two exercises from~\cite{beutelspacher}. We include a proof to keep the paper self-contained.
 
 \begin{prop}\label{prop:projsum} \cite[Chapter 1, Exercises 22 and 23]{beutelspacher}
    For every generalized projective space $\bP=(\cP,\cL)$, there exists a collection $\{\mathbb{P}_j\}_{j\in J}$ of projective spaces such that $\mathbb{P}= \bigoplus_{j\in J} \mathbb{P}_j$. 
 \end{prop}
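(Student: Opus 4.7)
The plan is to define an equivalence relation $\sim$ on $\cP$ by declaring $p \sim q$ iff $p = q$ or the line $\overline{pq}$ contains at least three points, and then verify that the resulting equivalence classes carry the structure of projective spaces whose direct sum recovers $\bP$.

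First I would check that $\sim$ is an equivalence relation. Reflexivity and symmetry are immediate, so the content is transitivity. Assume $p \sim q$, $q \sim r$, and that $p, q, r$ are pairwise distinct (otherwise the claim is trivial). If $r \in \overline{pq}$, then $\overline{pr} = \overline{pq}$ has at least three points by $p \sim q$, and we are done. Otherwise, pick $s \in \overline{pq} \setminus \{p, q\}$ and $t \in \overline{qr} \setminus \{q, r\}$; axiom \ref{ax:p1} together with the non-collinearity of $p, q, r$ guarantees that $p, s, t, r$ are pairwise distinct. The lines $\overline{sp} = \overline{pq}$ and $\overline{tr} = \overline{qr}$ meet at $q$, so axiom \ref{ax:p3} applied to $a=s, b=p, c=t, d=r$ yields a common point $v$ of $\overline{st}$ and $\overline{pr}$. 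I would then verify $v \notin \{p, r\}$: if $v = p$, then $p \in \overline{st}$, forcing $t \in \overline{ps} = \overline{pq}$ and hence $\overline{qr} = \overline{qt} \subseteq \overline{pq}$, contradicting non-collinearity; the case $v = r$ is symmetric. Thus $\overline{pr}$ contains the three distinct points $p, r, v$, so $p \sim r$.

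Next, let $\{\cP_j\}_{j \in J}$ be the equivalence classes of $\sim$, and set $\cL_j = \{\ell \in \cL : \ell \subseteq \cP_j\}$. I would observe that each line of $\bP$ with at least three points is contained in a single class (any two of its points lie on a line with $\geq 3$ points, namely the line itself), while each two-point line $\{p, q\}$ satisfies $p \not\sim q$ and therefore joins two different classes. This dichotomy gives exactly
\[
\cL = \Bigl(\bigcup_{j \in J} \cL_j\Bigr) \cup \bigl\{\{a, b\} : a \in \cP_j,\ b \in \cP_k,\ j \neq k\bigr\},
\]
which matches the line set of the direct sum $\bigoplus_j (\cP_j, \cL_j)$, while the point set clearly decomposes as $\cP = \bigsqcup_j \cP_j$.

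Finally I would verify that each $\bP_j = (\cP_j, \cL_j)$ is a projective space. Axioms \ref{ax:p1} and \ref{ax:p3} are inherited from $\bP$ since the relevant lines lie in $\cL_j \subseteq \cL$. For \ref{ax:p2}, given distinct $p, q \in \cP_j$, the line $\overline{pq}$ in $\bP$ has at least three points by $p \sim q$, and each of those points is equivalent to $p$, so $\overline{pq} \subseteq \cP_j$ and hence $\overline{pq} \in \cL_j$. Axiom \ref{ax:p5} follows because every line in $\cL_j$ has at least three points, for otherwise the two endpoints would witness $p \sim q$ across distinct classes; singleton classes correspond to the degenerate projective space explicitly permitted by the definition. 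The main obstacle is the transitivity step, where producing a third point on $\overline{pr}$ requires choosing the four points to which \ref{ax:p3} is applied so that they are distinct and the resulting intersection is genuinely new.
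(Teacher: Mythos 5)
Your proof is correct and takes essentially the same approach as the paper: define $p \sim q$ by requiring $\overline{pq}$ to have at least three points, prove transitivity via axiom \ref{ax:p3} applied to a third point on each of $\overline{pq}$ and $\overline{qr}$, and decompose $\bP$ along the resulting classes. You spell out a few distinctness checks and the verification that each $\bP_j$ satisfies \ref{ax:p1}--\ref{ax:p5} in more detail than the paper does, but the underlying argument is identical.
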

 \begin{proof} 
    Given two points $a,b\in\cP$, we say that $a\sim b$ if $a=b$ or the line $\overline{ab}$ contains at least three points. We claim that this induces an equivalence relation on $\cP$. To see this, assume that $a\sim b$ and $b\sim c$ for distinct $a,b$ and $c$. We need to show that $a\sim c$ holds as well. If $\overline{ab}=\overline{bc}$ then the statement is trivial, hence we may assume that $\overline{ab} \neq \overline{bc}$.
    
    Let $d$ be a third point on $\overline{ab}$ and $e$ be a third point on $\overline{bc}$. If $d=e$, then \ref{ax:p2} implies $\overline{ab}=\overline{bd}=\overline{be}=\overline{ac}$, a contradiction. Thus $d\neq e$. Now the lines $\overline{ad}$ and $\overline{ce}$ intersect in $b$, so by axiom \ref{ax:p3}, $\overline{ac}$ and $\overline{de}$ must also intersect. We denote the intersection by $f$. As $\overline{ab} \neq \overline{bc}$, the line $\overline{de}$ contains neither $a$ nor $c$, hence $f$ is different from both $a$ and $c$, and $a\sim c$ follows. 
 
    By the above, $\sim$ gives a partition of $\cP$ into equivalence classes $(\cP_j)_{j\in J}$. For any two points $p,q$ that are not in the same class, $\overline{pq}$ must be the two-element set $\{p,q\}$. Therefore, $\mathbb{P}$ is indeed the direct sum of projective spaces $\bP_j=(\cP_j,\cL_j)$, where $\cL_j$ consists of the lines containing at least two points from $\cP_j$.
\end{proof}

Let $\bF=(\cP,\cL)$ be a generalized projective space. We say that a subset $W$ of $\cP$ is a {\it subspace} of $\mathbb{P}$ if, for any pair $p,q\in W$ of distinct points, the line $\overline{pq}$ is in $W$. Note that, for example, the subspaces of $\mathbb{F}\mathbb{P}^n$ are exactly the images of the subspaces of $\mathbb{F}^{n+1}$ under the equivalence relation being a multiple by a scalar from $\bF$. For any maximal chain $\emptyset=W_{-1}\subsetneq W_0\subsetneq W_1\subsetneq \dots \subsetneq W_n=\cP$ of subspaces, the length of the chain is the same number $n$, which we call the {\it dimension} of the space. The dimension of a subspace $W$ is defined to be $k$ if there exists a maximal chain of subspaces for which $W$ is the $k$-th member; we write $\dim(W)=k$. A subspace {\it generated} by a subset $T$ of $\cP$ is the smallest subspace containing it, written $\langle T\rangle$. The function $r(T)=\dim(\langle T\rangle)+1$ is the rank function of a finitary matroid on $\cP$ whose flats are exactly the subspaces of $\mathbb{P}$; we denote this finitary matroid by $M_\bP$. It is not difficult to check that if $\mathbb{P}= \bigoplus_{j\in J} \mathbb{P}$, then $M_\bP=\bigoplus_{j\in J}M_{\bP_j}$.
 
We will also use the following classical result of Veblen and Young~\cite{veblen1918projective} on representations of projective spaces; see also~\cite[Theorem 3.4.2]{beutelspacher} for a more recent treatment.

\begin{prop}[Veblen and Young]\label{prop:vy}
Any projective space of dimension at least $3$ is isomorphic to a projective space defined over a skew field. 
\end{prop}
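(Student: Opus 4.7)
The plan is to follow the classical coordinatization strategy, which extracts a skew field $\mathbb{F}$ from the incidence geometry of $\mathbb{P}$ itself and then exhibits an isomorphism $\mathbb{P}\cong \mathbb{F}\mathbb{P}^n$, where $n=\dim(\mathbb{P})\geq 3$. The first step is to establish Desargues' theorem inside $\mathbb{P}$. When two triangles in perspective from a point $O$ lie in distinct planes $\pi_1,\pi_2$ (which is possible precisely because $\dim(\mathbb{P})\geq 3$), pairs of corresponding sides span coplanar lines and hence intersect, and all three intersection points lie in $\pi_1\cap\pi_2$, so they are collinear. The planar case is reduced to the spatial one by lifting one triangle off the common plane through a point outside it and projecting back, using axiom \ref{ax:p3} to ensure the required intersections exist.

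Once Desargues is available, I would coordinatize a fixed line $\ell\subseteq\mathbb{P}$. Choose three distinct points on $\ell$ and label them $0$, $1$, $\infty$; by axiom \ref{ax:p5} the line contains at least these three. Using auxiliary points off $\ell$ and incidence constructions (intersecting suitable lines with $\ell$), define binary operations $+$ and $\cdot$ on $\ell\smallsetminus\{\infty\}$. The routine axioms (commutativity of addition, existence of inverses, the distributive law on one side) follow from axioms \ref{ax:p1}--\ref{ax:p3} by direct incidence arguments, while the associativity of multiplication and the second distributive law follow from the Desargues configuration erected in the previous step. The outcome is a skew field $\mathbb{F}$ encoded geometrically in $\mathbb{P}$.

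The third step is to promote coordinates on $\ell$ to coordinates on the whole space. Pick a projective frame $e_0,\dots,e_n$ together with a unit point and assign to each $p\in\mathbb{P}$ a tuple of homogeneous coordinates in $\mathbb{F}^{n+1}$ by intersecting $p$ with the coordinate hyperplanes and reading off scalars via the construction above. The resulting map $\mathbb{P}\to\mathbb{F}\mathbb{P}^n$ is injective because distinct points have distinct coordinates up to a common right scalar, surjective because each coordinate tuple can be realized by successive intersections, and line-preserving in both directions because the coordinate arithmetic was defined through incidence. This gives the desired isomorphism.

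The main obstacle is the second step: certifying that the geometrically defined $+$ and $\cdot$ actually satisfy the skew-field axioms, particularly associativity of multiplication. This is the classical "Desargues implies associativity" argument, which requires a careful diagram chase in which the hypothesis $\dim(\mathbb{P})\geq 3$ enters only indirectly, through the availability of Desargues. Once this bookkeeping is in place, the remaining verifications — that the coordinate map is a well-defined bijection and preserves the line set in both directions — reduce to tracking how the operations were defined.
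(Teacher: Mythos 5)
The paper does not prove this proposition; it is quoted as a classical theorem with references to Veblen--Young and to Beutelspacher's textbook, so there is no in-paper argument to compare against. Your outline reproduces the standard coordinatization proof of that theorem, and as a high-level sketch it is sound: (i) the spatial Desargues argument and the lift-and-project reduction of the planar case to the spatial one is exactly how $\dim \ge 3$ is used; (ii) extracting a coordinatizing algebraic structure on a line from Desargues and then (iii) passing to homogeneous coordinates on a frame is the classical route.

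One inaccuracy worth flagging is the division of labor you assign between the ``routine'' incidence axioms and Desargues. In the ternary-ring coordinatization of a projective plane, axioms \ref{ax:p1}--\ref{ax:p3} alone only give a planar ternary ring; none of the field-like properties of $+$ and $\cdot$ (not even associativity or commutativity of $+$, let alone inverses or a distributive law) come for free. What Desargues buys is precisely that the ternary ring linearizes and becomes an associative division ring --- so commutativity of addition, both distributive laws, and associativity of multiplication all ultimately depend on the Desargues configuration (or specializations of it such as the translation/little Desargues theorems), not just on the incidence axioms. The sketch's conclusion is unaffected, since step one makes the full Desargues theorem available, but attributing commutativity of addition and one distributive law to ``direct incidence arguments'' understates what is actually being used. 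Beyond that, the remaining work you defer (well-definedness and bijectivity of the coordinate map, line-preservation in both directions, and, in the paper's setting, handling possibly infinite dimension by choosing a well-ordered basis/frame) is substantial bookkeeping but raises no conceptual obstruction.
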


\section{Fully modular extendable matroids}
\label{sec:fme}

The goal of this section is to establish a connection between fully modular extendability and skew-rep\-re\-sen\-ta\-bi\-li\-ty of matroids. While every projective space $\bP$ naturally gives rise to a matroid $M_\bP$ whose flats are its subspaces, our first result shows a partial converse: the flats of any modular matroid can be identified with the subspaces of a generalized projective space.

\begin{lem}\label{lem:projflat} 
Let $M=(S,r)$ be a simple, modular, finitary matroid of rank at least 3, and let $\cL$ denote the lines of $M$. Then, $\bP=(S,\cL)$ is a generalized projective space, the subspaces of which are exactly the flats of the matroid.
\end{lem}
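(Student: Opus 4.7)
The plan is to verify axioms (P1)--(P3) and the minimum-two-points condition using simplicity and modularity, and then show that the flats of $M$ and the subspaces of $\bP$ coincide.

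For the axioms, I would note that a line of $M$ is a rank-$2$ flat, hence contains at least two elements by simplicity, and that $\cl_M(\{a,b\})$ is the unique such flat through any two distinct points $a,b$, yielding (P2). For (P1), distinct lines $L_1\neq L_2$ satisfy $r(L_1\cup L_2)\geq 3$, so modularity gives $r(L_1\cap L_2) = r(L_1)+r(L_2)-r(L_1\cup L_2)\leq 1$. For (P3), if $\overline{ab}$ and $\overline{cd}$ meet at a point with $a,b,c,d$ distinct, the case $\overline{ab}=\overline{cd}$ is trivial; otherwise modularity forces $r(\overline{ab}\cup\overline{cd})=3$, so $F=\cl_M(\{a,b,c,d\})$ is a rank-$3$ flat containing both $\overline{ac}$ and $\overline{bd}$. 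A second application of modularity yields $r(\overline{ac}\cap\overline{bd})\geq 2+2-3=1$, producing the required common point.

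For the correspondence between flats and subspaces, one direction is immediate: every flat $F$ is a subspace since $\overline{pq}=\cl_M(\{p,q\})\subseteq F$ for any distinct $p,q\in F$. The converse is more delicate. Given a subspace $W$ and any $e\in\cl_M(W)$, the finitary property of $M$ guarantees a finite $T\subseteq W$ of minimum size $k$ with $e\in\cl_M(T)$. I would argue by induction on $k$ that $e\in W$. The cases $k\leq 2$ follow from simplicity (for $k\leq 1$) and the defining property of a subspace (for $k=2$, since $e\in\cl_M(\{p,q\})=\overline{pq}\subseteq W$). For $k\geq 3$, minimality of $|T|$ forces $T$ to be independent; picking any $t_k\in T$ and writing $T'=T\setminus\{t_k\}$ and $F'=\cl_M(T')$, a short modularity computation on the line $L=\cl_M(\{e,t_k\})$ --- using that $r(L\cup F')=k$, because $L\cup F'$ spans $\cl_M(T)$ and lies inside it --- yields $r(L\cap F')=2+(k-1)-k=1$, and hence a unique point $p\in F'\cap L$. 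The inductive hypothesis applied to $p\in\cl_M(T')$ gives $p\in W$, and then $e\in\overline{pt_k}\subseteq W$ by the subspace property, contradicting $e\notin W$.

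The main technical obstacle is the bookkeeping in the inductive step: verifying that $T$ is independent (otherwise minimality is violated), that $e\neq t_k$ so $L$ is a genuine rank-$2$ flat (which follows from $e\notin W\ni t_k$), and that $p\neq t_k$ so $\overline{pt_k}=L$ is well-defined and contains $e$ (which follows from $t_k\notin F'$). Each check is short, but together they make the induction close, and this is where the modularity hypothesis is used most essentially.
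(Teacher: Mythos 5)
Your proof is correct. For the axioms it is close to the paper's argument, with one variation in (P3): you first determine $r(\cl_M(\{a,b,c,d\}))=3$ (after disposing of the collinear case) and then apply modularity to the pair $(\overline{ac},\overline{bd})$ inside that rank-$3$ flat, whereas the paper runs a single symmetric chain of modular equalities showing $r(\overline{ab}\cap\overline{cd})=r(\overline{ac}\cap\overline{bd})$ outright. The genuine divergence is in proving that every subspace is a flat. The paper argues at the level of rank functions: the matroid $M_\bP$ attached to the generalized projective space is modular and agrees with $r$ on flats of rank at most two, hence on all flats of $M$; and a subspace that were not a flat would sit strictly between two flats of consecutive rank, contradicting the maximal-chain definition of dimension. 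You instead prove $\cl_M(W)\subseteq W$ directly by a well-founded induction on the size of a finite witness $T\subseteq W$ with $e\in\cl_M(T)$ (this is exactly where the finitary hypothesis enters), peeling off one element $t_k$ at a time and using modularity to locate the intermediate point $p\in\cl_M(\{e,t_k\})\cap\cl_M(T\setminus\{t_k\})$ so that $e\in\overline{pt_k}\subseteq W$. Your route is longer but more self-contained: it avoids the auxiliary facts about $M_\bP$ stated in the preliminaries and makes fully explicit where modularity and finitariness drive the descent. Both arguments are sound.
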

\begin{proof} 
For any pair of distinct $F_1,F_2\in\cL$, we have $2+2=r(F_1)+r(F_2)\geq r(F_1\cap F_2)+r(F_1\cup F_2)\geq r(F_1\cap F_2)+3$, which implies $r(F_1\cap F_2)\leq 1$. Since $M$ is simple, $|F_1\cap F_2|\leq 1$ follows, showing~\ref{ax:p1}. To see~\ref{ax:p2}, by the simplicity of $M$, observe that the unique rank-$2$ flat that contains a given pair $x,y\in S$ is $\cl(\{x,y\})$. Now consider two pairs of distinct elements $x,y\in S$ and $z,w\in S$, and assume that $\cl(\{x,y\})\cap\cl(\{z,w\})\neq\emptyset$. By the modularity and simplicity of $M$, we get 
\begin{align*}
    r(\cl(\{x,y,z,w\}))+r(\cl(\{x,y\})\cap \cl(\{z,w\}))
    &=
    r(\cl(\{x,y\})\cup\cl(\{z,w\}))+r(\cl(\{x,y\})\cap \cl(\{z,w\}))\\
    &=
    r(\cl(\{x,y\})+r(\cl(\{z,w\})\\
    &=
    r(\cl(\{x,z\})+r(\cl(\{y,w\})\\
    &=
    r(\cl(\{x,z\})\cup\cl(\{y,w\}))+r(\cl(\{x,z\})\cap \cl(\{y,w\}))\\
    &=
    r(\cl(\{x,y,z,w\}))+r(\cl(\{x,z\})\cap \cl(\{y,w\})).
\end{align*}
Therefore, $1\leq r(\cl(\{x,y\})\cap \cl(\{z,w\}))=r(\cl(\{x,z\})\cap \cl(\{y,w\}))$, and so \ref{ax:p3} is satisfied.  

For any flat $F$ and any $x, y \in F$, the closure $\cl(\{x, y\})$ is also contained in $F$, so the flats of the matroid form subspaces of $\mathbb{P}$. The rank function of the matroid associated with the generalized projective space agrees with $r$ on rank-1 and rank-2 flats and is modular, so it matches $r$ on all flats. If there were a projective subspace that is not a flat, it would lie strictly between two flats whose ranks differ by one -- that is, between two subspaces of consecutive ranks -- which contradicts the definition of rank via maximal chains of subspaces.
\end{proof}

Recall that a matroid is called fully modular extendable if it is $k$-modular extendable for every $k\in\bZ_+$. Roughly speaking, this means that for every $k$, the matroid can be extended $k$ times while satisfying the required rank conditions. However, note that the definition does not guarantee that the extensions for different values of $k$ are compatible in the sense that any matroid obtained after $k$ extensions can always be further extended by one more step. In principle, it could happen that for each finite $k$ a different sequence of extensions is required. The main message of the following lemma is that there exists a universal sequence of extensions that works simultaneously for all $k$. 

\begin{lem}\label{lem:closure}
Let $M=(S,r)$ be a fully modular extendable matroid. Then, there exists a finitary matroid $M'=(S',r')$ with $S \subseteq S'$ such that $M$ is the restriction of $M'$ to $S$ and $M'$ is modular.
\end{lem}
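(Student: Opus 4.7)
The plan is to construct $M'$ as the directed union of an increasing sequence $M = M_0 \subseteq M_1 \subseteq M_2 \subseteq \cdots$ of fully modular extendable finite matroids, where each $M_{i+1}$ is obtained from $M_i$ by a single extension step designed to make a prescribed pair of subsets into a modular pair. By dovetailing through all such demands, every pair of finite subsets of $\bigcup_i S_i$ will eventually be processed.

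The crucial ingredient is the following compactness-style observation: \emph{if $N = (T, s)$ is fully modular extendable and $(A, B)$ is any pair of subsets of $T$, then there exists a one-step extension $N^*$ of $N$ with respect to $(A, B)$ that is itself fully modular extendable.} Recall from the preliminaries that for matroids one may assume $|T' \setminus T| \leq s(T)$ with $T' \setminus T$ independent, so with canonical labels for the new elements there are only finitely many candidate one-step extensions with respect to $(A, B)$. For each $k \geq 1$, full modular extendability of $N$ supplies some one-step extension $N^{(k)}$ that is $(k-1)$-modular extendable. By pigeonhole, some candidate $N^*$ coincides with $N^{(k)}$ for infinitely many $k$. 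Since $k$-modular extendability trivially implies $(k-1)$-modular extendability, this $N^*$ is $m$-modular extendable for every $m \in \bZ_+$.

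With this tool in hand, build the sequence as follows. Fix an enumeration $\{(A_i, B_i)\}_{i \in \bZ_+}$ of demands such that for every $j$ and every pair $(A, B)$ of subsets of $S_j$ (once $S_j$ has been constructed) the demand $(A, B)$ appears at some index $i > j$; this can be arranged by a standard diagonal scheme. At step $i$, apply the observation above to produce $M_i$ as a fully modular extendable one-step extension of $M_{i-1}$ with respect to $(A_i, B_i)$. Define $M' = (S', r')$ by $S' = \bigcup_i S_i$ and $r'(X) = r_i(X)$ for any $i$ with $X \subseteq S_i$, extending $r'$ to infinite subsets via suprema over finite subsets; standard arguments show that $M'$ is then a finitary matroid with $M' | S = M$.

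It remains to verify modularity. Consider any finite $X, Y \subseteq S'$; they lie in some $S_j$, and by construction $(X, Y)$ is handled at some step $i > j$, so $\cl_{M_i}(X)$ and $\cl_{M_i}(Y)$ form a modular pair in $M_i$. Using that rank values stabilize on finite sets and that $\cl_{M_i}(X) \subseteq \cl_{M'}(X)$ (and likewise for $Y$), one obtains
\[ r'(X) + r'(Y) = r'(\cl_{M_i}(X) \cap \cl_{M_i}(Y)) + r'(X \cup Y) \leq r'(\cl_{M'}(X) \cap \cl_{M'}(Y)) + r'(X \cup Y), \]
and the reverse inequality is submodularity in $M'$, so equality holds throughout. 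Hence every pair of finite subsets satisfies the modular equation, and modularity of arbitrary pairs of flats of $M'$ follows by witnessing each flat with finite subsets of appropriate rank. The main technical hurdle is the compactness argument in the second paragraph; once the finite count of candidate one-step extensions is observed, the remainder is a routine König-style diagonalization.
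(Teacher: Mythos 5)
Your proof is correct and takes essentially the same approach as the paper. The paper builds a locally finite tree of all candidate one-step extensions across the enumeration of demands and applies K\H{o}nig's lemma to extract an infinite path, whereas you factor out the underlying pigeonhole observation as a standalone lemma -- a fully modular extendable matroid admits a fully modular extendable one-step extension with respect to any prescribed pair -- and then apply it greedily; these are two phrasings of the same compactness argument.
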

\begin{proof} 
Since $M$ is $1$-modular extendable, for any pair $A,B \subseteq S$, there exists a set $Z$ and a matroid $M'=(S \cup Z,r')$ such that $r'(A \cup Z) = r(A)$, $r'(B \cup Z) = r(B)$, and $r'(Z) = r(A) + r(B) - r(A \cup B)$. That is, there is an extension that is modular on the flats generated by $A$ and $B$ and whose rank equals the rank of the original matroid. Recall that we may assume that $|Z|\leq r(S)$. The basic idea is that, given there are only finitely many matroids on at most $|S|+r(S)$ elements and $M$ is fully modular extendable, at least one of these extensions is fully modular extendable as well, allowing us to continue the extension process. We formalize this as follows.

Let $S_0 = S$, let $\cP_0$ be the list of all pairs of subsets of $S_0$, and set $\cM_0 = {M}$. For $i\in\bZ_+$, let $(A_i, B_i)$ be the $i$-th pair in the list $\cP_{i-1}$, and let $S_{i} \supseteq S_{i-1}$ be a set such that $|S_{i} \setminus S_{i-1}| = r(S)$. Define $\cM_{i}$ to be the collection of all one-step extensions of matroids in $\cM_{i-1}$ on the ground set $S_{i}$ with respect to the pair $(A_i, B_i)$. Set $\cP_{i}$ to be the list obtained from $\cP_{i-1}$ by adding all pairs in $2^{S_{i}}\times 2^{S_{i}}\setminus 2^{S_{i-1}}\times 2^{S_{i-1}}$ to the end of the list $\cP_{i-1}$.

Now let $\cP=\bigcup_{i=0}^\infty\cP_i$, where the union is taken so that each pair in $\cP$ retains its  unique index received upon first appearing on a list. We define an infinite rooted tree $T$ whose vertices are partitioned into levels, with the $i$-th level consisting of the matroids in $\cM_i$. Two vertices $N_1\in\cM_i$ and $N_2\in\cM_{i+1}$ are connected by an edge if and only if $N_2$ is a one-step extension of $N_1$. The tree $T$ is locally finite, that is, all vertices of $T$ have finite degree, as each level contains only finitely many matroids. So by K\H{o}nig's lemma~\cite{konig1927schlussweise}, there exists an infinite path in $T$ starting at $M$. Let $M_0=M$ together with $M_j=(S_j,r_j),$ where $j\in\bZ_+$, be an infinite path in $T$, where $M_j\in\cM_j$. Set $S'=\bigcup_{i=0}^\infty S_i$ and define $r'(Y)=\sup\{r_j(X)\mid j\in\bZ_+,\,X\subseteq Y \cap S_j\}$ for all $Y\subseteq S'$. The rank of the ground set remains unchanged throughout the process, so $r'(S')=r(S)$. To prove that $r'$ is modular on flats, take any two flats and choose a pair of finite generating sets. This pair must occur in $\cP$ at some index $i$, so by the construction in step $i$, the rank function is already modular on the corresponding flats, and this property is maintained under further extensions.
\end{proof}

Finally, we will use the fact that, for modular extensions, it suffices to study the components of the matroid.

\begin{lem}\label{lem:compkme}
For a nonnegative integer $k$, a matroid is $k$-modular extendable if and only if each of its components is $k$-modular extendable.
\end{lem}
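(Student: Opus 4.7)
The plan is to induct on $k$, proving the slightly stronger statement that for any direct sum decomposition $M = N_1 \oplus \cdots \oplus N_q$ (not necessarily the one into connected components), with ground sets $S_i$, $M$ is $k$-modular extendable if and only if each $N_i$ is. The case $k = 0$ is immediate from the definition.

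For the backward implication of the inductive step, fix $A, B \subseteq S$ and set $A_i \coloneqq A \cap S_i$, $B_i \coloneqq B \cap S_i$. By $k$-modular extendability of each $N_i$, pick a one-step extension $N_i'$ of $N_i$ with respect to $A_i, B_i$ and witness set $Z_i$ that is $(k-1)$-modular extendable. Let $M' \coloneqq N_1' \oplus \cdots \oplus N_q'$ and $Z \coloneqq Z_1 \cup \cdots \cup Z_q$. Since ranks decompose additively over direct summands, each of the four identities defining a one-step extension splits across the summands, so $M'$ is a one-step extension of $M$ with respect to $A, B$. The inductive hypothesis applied in the backward direction to $M' = N_1' \oplus \cdots \oplus N_q'$ yields that $M'$ is $(k-1)$-modular extendable.

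For the forward implication, fix a summand $N_i$ and a pair $A, B \subseteq S_i$, and use $k$-modular extendability of $M$ to obtain a one-step extension $M' = (S', r')$ with witness $Z$ that is $(k-1)$-modular extendable. A straightforward separate induction on $k$ shows that restriction preserves $k$-modular extendability (any one-step extension of $M$ with respect to a pair in $S_0 \subseteq S$ is automatically a one-step extension of $M|_{S_0}$), so after replacing $M'$ by $M'|_{S \cup Z}$ we may assume $S' = S \cup Z$. The defining identities force $Z \subseteq \cl_{M'}(A) \cap \cl_{M'}(B)$; since $A, B \subseteq S_i$ and $M'|_S = M$, this gives $Z \cap S \subseteq \cl_M(A) \cap \cl_M(B) \subseteq S_i$, hence $S_i \cup Z \subseteq \cl_{M'}(S_i)$ and so $r'(S_i \cup Z) = r'(S_i) = r(S_i)$. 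Combined with $r'(\bigcup_{j \neq i} S_j) = r(\bigcup_{j \neq i} S_j)$ and $r'(S') = r(S) = r(S_i) + r(\bigcup_{j \neq i} S_j)$, the ranks of the complementary sets $S_i \cup Z$ and $\bigcup_{j \neq i} S_j$ sum to $r'(S')$. This upgrades to a direct sum decomposition $M' = M'|_{S_i \cup Z} \oplus M'|_{\bigcup_{j \neq i} S_j}$ by the standard argument that the union of bases from two complementary parts whose ranks sum to $r'(S')$ is itself a basis of $S'$, which forces additivity of $r'$ on every subset split between the two parts. Applying the inductive hypothesis in the forward direction to this decomposition yields that $M'|_{S_i \cup Z}$ is $(k-1)$-modular extendable, and by construction it is a one-step extension of $N_i$ with respect to $A, B$.

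The main obstacle is the forward direction, specifically the closure calculation showing that the witness $Z$ can only interact with the summand indexed by $i$. Once this is in hand, the direct sum decomposition of $M'$ is forced, and the induction closes cleanly by invoking the strengthened inductive hypothesis on that decomposition of $M'$. The auxiliary fact that restriction preserves $k$-modular extendability is the only ingredient needed to normalize $S' = S \cup Z$.
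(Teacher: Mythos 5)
Your backward direction (direct sum of $k$-modular extendable matroids is $k$-modular extendable) matches the paper's proof essentially verbatim: split $A,B$ across summands, take one-step extensions on each, form the direct sum, and invoke the induction hypothesis.

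Your forward direction is correct but takes a substantial detour. You observe, correctly, that restriction preserves $k$-modular extendability — if $A,B \subseteq S_0 \subseteq S$ and $M'$ is a one-step extension of $M$ with witness $Z$, then $M'|(S_0 \cup Z)$ is a one-step extension of $M|S_0$, and the claim closes by a separate induction on $k$. That fact alone finishes the forward direction, since $N_i = M|S_i$; this is exactly how the paper disposes of it, in one sentence. Instead, you use the restriction fact only to normalize $S' = S \cup Z$, and then run a longer argument: show $Z \subseteq \cl_{M'}(A) \cap \cl_{M'}(B)$, deduce $Z \cap S \subseteq S_i$, establish $r'(S_i \cup Z) + r'(\bigcup_{j\neq i} S_j) = r'(S')$, invoke the separator/direct-sum criterion to split $M'$, and finally apply the strengthened induction hypothesis to the resulting decomposition. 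Every step there is valid (the closure inclusions, the rank arithmetic, and the standard fact that complementary parts whose ranks sum to $r'(S')$ are separators are all correct), but the machinery is unnecessary — it reproves a special case of the restriction fact you already have in hand.
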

\begin{proof}
It is clear that any restriction of a $k$-modular extendable matroid is $k$-modular extendable, so it suffices to show that the direct sum $M=(S,r)$ of any two $k$-modular extendable matroids $M_1=(S_1,r_1)$ and $M_2=(S_2,r_2)$ is $k$-modular extendable. We proceed by induction on $k$. The case $k=0$ is immediate; assume that $k\ge 1$. Let $A,B\subseteq S$ and define $A_i\coloneqq A\cap S_i$ and $B_i\coloneqq B\cap S_i$ for $i\in\{1,2\}$. Since each $M_i$ is $k$-modular extendable, there exists a $(k-1)$-modular extendable matroid $M'_i=(S'_i,r'_i)$ and a subset $Z_i\subseteq S'_i$ such that $S_i\subseteq S'_i$, $M'_i|S_i=M_i$, $r'_i(A_i\cup Z_i)=r_i(A_i)$, $r'_i(B_i\cup Z_i)=r_i(B_i)$, and $r'_i(Z_i)=r_i(A_i)+r_i(B_i)-r_i(A_i\cup B_i)$. Then $M'\coloneqq M'_1\oplus M'_2=(S',r')$ is $(k-1)$-modular extendable by the induction hypothesis, and $M'|S=M$. For the subset $Z\coloneqq Z_1\cup Z_2$, we have $r'(A\cup Z)=r(A)$, $r'(B\cup Z)=r(B)$, and $r'(Z)=r(A)+r(B)-r(A\cup B)$. This shows that $M$ is $k$-modular extendable.
\end{proof}

Now we are ready to prove the main result of the section, which establishes a connection between fully modular extendability and skew-representability.

\begin{thm}\label{thm:equiv}
Given a matroid $M=(S,r)$, the following are equivalent.
\begin{enumerate}[label=(\roman*)]\itemsep0em
    \item $M$ is fully modular extendable. \label{it:i}
    \item $\si(M)\cong (M_{\bP_1}\oplus\dots\oplus M_{\bP_q})|S$ for some projective spaces $\bP_1,\dots,\bP_q$ and finite subset $S$ of the ground set of $M_{\bP_1}\oplus\dots\oplus M_{\bP_q}$. \label{it:ii}
    \item Each connected component of $M$ is either of rank $3$ or skew-representable. \label{it:iii}
\end{enumerate}
\end{thm}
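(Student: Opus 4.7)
The plan is to establish the equivalences via the cycle (i) $\Rightarrow$ (ii) $\Rightarrow$ (iii) $\Rightarrow$ (i), each step building on the lemmas and propositions already proved in this section.

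For (i) $\Rightarrow$ (ii), I would first apply \cref{lem:closure} to $M$ to obtain a modular finitary matroid $M'=(S',r')$ with $M'|S=M$, and then pass to its simplification $\si(M')$, which remains modular. Since parallel and loop relations of $M$ inside $S$ are inherited from $M'$, the matroid $\si(M)$ embeds as a restriction of $\si(M')$. When $r(M)\geq 3$, \cref{lem:projflat} identifies $\si(M')$ with $M_\bP$ for some generalized projective space $\bP$; \cref{prop:projsum} then decomposes $\bP$ as $\bP_1\oplus\cdots\oplus\bP_q$ for genuine projective spaces $\bP_j$, so $\si(M)$ becomes a restriction of $M_{\bP_1}\oplus\cdots\oplus M_{\bP_q}$ to a finite subset, as required by (ii). The small-rank cases $r(M)\in\{0,1,2\}$ fall outside the hypothesis of \cref{lem:projflat} and I would handle them by direct inspection: $\si(M)$ is then empty, a single point, a subset of a line, or a disjoint union of points, each of which is readily realized as a restriction of a direct sum of $0$- and $1$-dimensional projective spaces.

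For (ii) $\Rightarrow$ (iii), the key observation is that the ground sets of the $M_{\bP_j}$ are pairwise disjoint, so every circuit of $\si(M)$ lies within a single $M_{\bP_j}$, and hence every connected component of $\si(M)$ is a restriction of some fixed $M_{\bP_j}$. If such a component has rank at least $4$, the corresponding $\bP_j$ has dimension at least $3$, and by the Veblen--Young theorem (\cref{prop:vy}), $\bP_j\cong\bF\bP^n$ for some skew field $\bF$, making the component representable over $\bF$. Components of rank at most $2$ are trivially skew-representable, while rank-$3$ components are accommodated directly by the corresponding clause of (iii). These properties transfer from $\si(M)$ back to $M$, since simplification preserves connectivity, rank, and (skew-)representability of each component.

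For (iii) $\Rightarrow$ (i), \cref{lem:compkme} reduces the task to proving that every skew-representable matroid and every rank-$3$ matroid is fully modular extendable; I would handle both by induction on $k$. For $M$ representable over a skew field $\bF$ of rank $r$, one may assume $M$ is simple and embed it as a restriction of $M_{\bF\bP^{r-1}}$, which is a modular finitary matroid; for any pair $A,B\subseteq S$, taking $Z$ to be the intersection of the closures of $A$ and $B$ inside $M_{\bF\bP^{r-1}}$ produces a one-step extension $M'$ that is again $\bF$-representable, so the inductive hypothesis applies to $M'$. For a rank-$3$ matroid and any pair $A,B$, either $(\cl(A),\cl(B))$ is already a modular pair (so $Z=\cl(A)\cap\cl(B)$ works without adding new elements), or $\cl(A)$ and $\cl(B)$ are two distinct lines with empty intersection, in which case the single-element extension determined by the modular cut $\{\cl(A),\cl(B),S\}$ adds one new point lying on both lines and yields a rank-$3$ extension $M'$ to which the inductive hypothesis applies.

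The most delicate step is (i) $\Rightarrow$ (ii): one must carefully track the interaction between simplification and the modular closure produced by \cref{lem:closure}, treat the low-rank cases outside the scope of \cref{lem:projflat}, and invoke \cref{prop:projsum} to split a generalized projective space into genuine ones. The deepest non-elementary ingredient is the Veblen--Young theorem, used in (ii) $\Rightarrow$ (iii), which is what ultimately forces skew-representability of every connected component of rank at least $4$.
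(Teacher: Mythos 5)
Your proposal follows the same $\mathrm{(i)}\Rightarrow\mathrm{(ii)}\Rightarrow\mathrm{(iii)}\Rightarrow\mathrm{(i)}$ cycle as the paper, and the skeleton of each step matches: \cref{lem:closure} plus \cref{lem:projflat} plus \cref{prop:projsum} for $\mathrm{(i)}\Rightarrow\mathrm{(ii)}$, Veblen--Young for $\mathrm{(ii)}\Rightarrow\mathrm{(iii)}$, and \cref{lem:compkme} plus an embedding into $\bF\bP^{n-1}$ for $\mathrm{(iii)}\Rightarrow\mathrm{(i)}$. There are two places where you diverge from the paper in a way worth flagging. First, in $\mathrm{(iii)}\Rightarrow\mathrm{(i)}$ the paper dispatches the rank-$3$ case by citing Proposition~3.18 of Bamiloshin et al.; you instead give a self-contained argument via a single-element extension by the modular cut $\{\cl(A),\cl(B),S\}$, observing that in a rank-$3$ matroid the only non-modular pairs of flats are two disjoint lines, and that the new point keeps the rank at $3$ so the induction on $k$ closes within the class of rank-$3$ matroids. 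That argument is correct and arguably preferable, since it keeps the theorem self-contained. Second, you explicitly treat the small-rank cases $r(M)\le 2$ in $\mathrm{(i)}\Rightarrow\mathrm{(ii)}$, which the paper glosses over even though \cref{lem:projflat} requires rank at least $3$; your direct inspection of those cases tightens the argument. One small spot where the paper is more careful than you: the phrase ``one may assume $M$ is simple'' in $\mathrm{(iii)}\Rightarrow\mathrm{(i)}$ needs a sentence of justification -- the cleanest route, which the paper takes, is to embed $\si(M)$ in $M_{\bF\bP^{n-1}}$ and then add back loops and parallel elements to the \emph{ambient} modular finitary matroid, so that $M$ itself (not just $\si(M)$) is a restriction of a modular finitary matroid; passing directly from ``$\si(M)$ is fully modular extendable'' to ``$M$ is fully modular extendable'' is true but is not among the lemmas you have invoked. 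Similarly, when you ``take $Z$ to be the intersection of the closures'', you should take a finite basis of that flat so the extension remains a finite matroid.
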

\begin{proof}
We prove the implications $\ref{it:i}\Rightarrow\ref{it:ii}\Rightarrow\ref{it:iii}\Rightarrow\ref{it:i}$.
\smallskip

\noindent $\ref{it:i}\Rightarrow\ref{it:ii}$: The matroid $\si(M)$ is also fully modular extendable. Hence, by \cref{lem:closure}, there exists a finitary modular matroid $M'=(S',r')$ such that $\si(M)$ is a restriction of $M'$ to a finite subset. By taking the simplification, we can assume that $M'$ is also simple. By Lemma~\ref{lem:projflat}, the elements and rank-$2$ flats of $M'$ form a generalized projective space $\bP$, whose subspaces are precisely the flats of $M'$. Proposition~\ref{prop:projsum} then gives a decomposition $\bP = \bigoplus_{j \in J} \bP_j$, where each $\bP_j$ is a projective space. Since $\si(M)$ is the restriction of $M'$ to a finite set $S$ that intersects only finitely many of these projective spaces, the implication follows.
\smallskip

\noindent $\ref{it:ii}\Rightarrow\ref{it:iii}$: Clearly, if each connected component of $\si(M)$ is either of rank $3$ or skew-representable, then the same holds for $M$ as well. Therefore, the implication directly follows from the Veblen-Young theorem, Proposition~\ref{prop:vy}.
\smallskip

\noindent $\ref{it:iii}\Rightarrow\ref{it:i}$: By Lemma~\ref{lem:compkme}, it suffices to show that each component of $M$ is fully modular extendable. For rank-$3$ components, this holds by~\cite[Proposition 3.18]{bamiloshin2021common}. Consider now a connected, skew-representable matroid $M$ with a representation over $\mathbb{F}^n$. We construct a representation of $\si(M)$ by modifying the given representation: remove all copies of the zero vector, and from each set of parallel vectors -- that is, vectors lying in the same 1-dimensional subspace -- retain only one representative. The resulting set contains no zero vector and includes at most one representative from each 1-dimensional subspace of $\mathbb{F}^n$. Therefore, the natural projection map from $\mathbb{F}^n \setminus {0}$ to the projective space $\mathbb{F}\mathbb{P}^{n-1}$ is injective on this set. Hence, $\si(M)$ can be identified with a restriction of the finitary matroid represented by $\mathbb{F}\mathbb{P}^{n-1}$. By adding back one loop for each removed zero vector and parallel elements for each deleted nonzero vector to this matroid, we obtain a modular finitary matroid of which $M$ is a restriction, proving that $M$ is fully modular extendable.
\end{proof}

As a corollary, we get that for connected matroids of rank at least $4$, skew-representability is equivalent to fully modular extendability.

\begin{cor}\label{cor:rank4}
Let $M$ be a connected matroid of rank at least $4$. Then, $M$ is skew-representable if and only if it is fully modular extendable. 
\end{cor}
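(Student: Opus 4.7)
The plan is to derive this immediately from \cref{thm:equiv}, specifically the equivalence between \ref{it:i} and \ref{it:iii}. Since $M$ is connected, it has exactly one component, namely $M$ itself. By \cref{thm:equiv}, $M$ is fully modular extendable if and only if this single component is either of rank $3$ or skew-representable. The rank assumption $r(M)\geq 4$ rules out the first option, so fully modular extendability collapses to skew-representability.

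More concretely, I would first note the forward direction: if $M$ is skew-representable, then its unique connected component (itself) is skew-representable, so \ref{it:iii} of \cref{thm:equiv} holds, and therefore $M$ is fully modular extendable by the implication $\ref{it:iii}\Rightarrow\ref{it:i}$ of that theorem. For the converse, suppose $M$ is fully modular extendable. By \cref{thm:equiv}, each connected component of $M$ is either of rank $3$ or skew-representable. Since $M$ is connected, $M$ itself is this unique component, and since $r(M)\geq 4$, $M$ cannot be of rank $3$. Hence $M$ must be skew-representable.

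There is essentially no obstacle here: the corollary is a direct specialization of \cref{thm:equiv} to the connected, higher-rank case, and the only content added beyond quoting the theorem is observing that the ``rank $3$'' exception in condition \ref{it:iii} is vacuous under the hypothesis $r(M)\geq 4$. No additional lemma or computation is required.
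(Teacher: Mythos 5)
Your proof is correct and follows exactly the route the paper intends: Corollary~\ref{cor:rank4} is stated as an immediate consequence of Theorem~\ref{thm:equiv}, and your argument simply makes explicit that for a connected matroid the unique component is $M$ itself and that the rank-$3$ alternative in condition~\ref{it:iii} is excluded by the hypothesis $r(M)\geq 4$. Nothing is missing.
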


\begin{rem}
Ingleton~\cite{ingleton1971representation} observed that every simple rank-3 matroid can be embedded in some projective plane, and Welsh~\cite{welsh1971combinatorial} asked whether this plane can always be taken to be finite; see also~\cite[Problem15.9.2]{oxley2011matroid}. In our setting, this question translates to asking whether the procedure described in the proof of Lemma\ref{lem:closure} terminates after finitely many steps.
\end{rem}

\section{Properties of tensor products and modular extensions}
\label{sec:prep}

The goal of this section is to establish several technical results concerning tensor products and modular extensions. While these lemmas support later arguments, they also capture structural properties that are of independent interest. For this reason, we present them together in a dedicated section, where they may also serve as a standalone toolkit for future work.

\subsection{Tensor products}

This section aims to analyze how the tensor product behaves under basic operations on matroid and polymatroid functions. Using \cref{prop:crosses}, we show that the existence of a tensor product is preserved under taking minors.

\begin{lem}\label{lem:minor}
If two matroids admit a tensor product, then every minor of one has a tensor product with every minor of the other.
\end{lem}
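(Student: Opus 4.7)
The plan is to reduce to a single-element deletion or contraction at a time, and exploit symmetry: since the tensor product is symmetric in its two factors, it suffices to show that if $M=(S_1\times S_2,r)$ is a tensor product of $M_1=(S_1,r_1)$ and $M_2=(S_2,r_2)$, then for every $e\in S_1$ both $M_1\setminus e$ and $M_1/e$ have a tensor product with $M_2$. Iterating yields the general statement.

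For deletion, the natural candidate is $M'\coloneqq M\setminus(\{e\}\times S_2)$ on the ground set $(S_1-e)\times S_2$. Since its rank function is the restriction of $r$, the identity $r_{M'}(Y_1\times Y_2)=r_1(Y_1)\cdot r_2(Y_2)=(r_1\setminus e)(Y_1)\cdot r_2(Y_2)$ for $Y_1\subseteq S_1-e$ and $Y_2\subseteq S_2$ is immediate from the tensor product hypothesis. By \cref{prop:tensor}, $M'\in(M_1\setminus e)\otimes M_2$.

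For contraction, the candidate is $M''\coloneqq M/(\{e\}\times S_2)$, again on $(S_1-e)\times S_2$. Here the main technical point arises: we must check that $r_{M''}(Y_1\times Y_2)=(r_1/e)(Y_1)\cdot r_2(Y_2)$. The key tool is the cross identity in \cref{prop:crosses}, applied with $X_1=\{e\}\subseteq X_2=Y_1+e$ and $Y_2\subseteq Y_1'=S_2$. The intersection $(X_1\times Y_1')\cap(X_2\times Y_2')$ equals $\{e\}\times Y_2$ and the union equals $(\{e\}\times S_2)\cup(Y_1\times Y_2)$, so the identity becomes
\[ r_1(\{e\})\cdot r_2(S_2)+r_1(Y_1+e)\cdot r_2(Y_2) = r_1(\{e\})\cdot r_2(Y_2)+r\bigl((\{e\}\times S_2)\cup(Y_1\times Y_2)\bigr). \]
Subtracting $r(\{e\}\times S_2)=r_1(\{e\})\cdot r_2(S_2)$ from both sides and factoring gives exactly $r_{M''}(Y_1\times Y_2)=\bigl(r_1(Y_1+e)-r_1(\{e\})\bigr)\cdot r_2(Y_2)=(r_1/e)(Y_1)\cdot r_2(Y_2)$, which by \cref{prop:tensor} shows $M''\in(M_1/e)\otimes M_2$.

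The main obstacle is the contraction case, since deletion is essentially free; however, the cross identity from \cref{prop:crosses} is tailor-made for this computation, so the verification reduces to bookkeeping. After handling the single-element deletion and contraction for minors in the first coordinate, symmetry gives the analogous statements for minors in the second coordinate, and iterating across a sequence of minor operations completes the proof.
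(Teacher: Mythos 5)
Your proof is correct and uses essentially the same approach as the paper: deletion is immediate, and for contraction you invoke Proposition~\ref{prop:crosses} with $X_1=\{e\}$, $X_2=Y_1+e$, and $Y_2\subseteq S_2$ to evaluate $r$ on $(\{e\}\times S_2)\cup(Y_1\times Y_2)$, which is exactly the paper's computation (the paper contracts an arbitrary set $A$ in one step rather than element-by-element, but the argument is identical). One tiny remark: what you verify is the full tensor-product identity $r''(Y_1\times Y_2)=(r_1/e)(Y_1)\cdot r_2(Y_2)$ for all $Y_1,Y_2$, which is the definition itself, so the appeal to Proposition~\ref{prop:tensor} at the end is superfluous though harmless.
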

\begin{proof}
Let $M_1=(S_1,r_1)$ and $M_2=(S_2,r_2)$ be matroids and $M=(S_1\times S_2,r)$ be a tensor product of $M_1$ and $M_2$. It suffices to show that every minor $M_1'$ of $M_1$ has a tensor product with $M_2$; the statement then follows by repeating the proof with $M_2$ and $M_1'$. If $M_1'$ is a restriction of $M_1$, then the assertion is trivial. So it suffices to consider the case when $M'_1=M_1/A$ for some $A\subseteq S_1$.

Let $M'= M/(A\times S_2)$ and let $r'$ denote the rank function of $M'$. We claim that $M'$ is a tensor product of $M'_1$ and $M_2$. To see this, we need to verify that $r'(X\times Y)=r'_1(X)\cdot r_2(Y)$ holds for all $X\subseteq S_1\setminus A$ and $Y\subseteq S_2$, where $r'_1$ denotes the rank function of $M'_1$. By \cref{prop:crosses}, when applied to $X_1=A$, $X_2=A\cup X$, $Y_1=S_2$ and $Y_2=Y$, we get
\begin{align*}
    r'(X\times Y)
    &=
    r((X\times Y)\cup (A\times S_2))-r(A\times S_2)\\
    &=
    r((A\cup X)\times Y)-r(A\times Y)\\
    &=
    r_1(A\cup X)\cdot r_2(Y)-r_1(A)\cdot r_2(Y)\\
    &=
    r'_1(X)\cdot r_2(Y),
\end{align*}
concluding the proof of the lemma.
\end{proof}

The next operation we consider is the direct sum of matroids.

\begin{lem}\label{lem:direct_sum}
Let $M_1^1=(S_1^1,r_1^1)$, $M_1^2=(S_1^2,r_1^2)$ and $M_2=(S_2,r_2)$ be matroids. Then, $M_1^1\oplus M_1^2$ has a tensor product with $M_2$ if and only if $M_1^i$ has a tensor product with $M_2$ for $i=1,2$.
\end{lem}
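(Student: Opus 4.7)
The plan is to prove the two directions separately: the forward direction will be a one-liner appeal to \cref{lem:minor}, while the backward direction will proceed by an explicit construction, namely the direct sum of the two given tensor products. The verification then amounts to checking the defining identity $r(X\times Y)=r_1(X)\cdot r_2(Y)$ on rectangles, which splits nicely because the two ground sets $S_1^1\times S_2$ and $S_1^2\times S_2$ are disjoint.

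For the ``only if'' direction, I would observe that $M_1^i$ is the restriction of $M_1^1\oplus M_1^2$ to $S_1^i$, hence a minor, and $M_2$ is trivially a minor of itself. Therefore, by \cref{lem:minor}, if $M_1^1\oplus M_1^2$ admits a tensor product with $M_2$, then so does each $M_1^i$.

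For the ``if'' direction, let $N^i=(S_1^i\times S_2,\rho_i)$ be a tensor product of $M_1^i$ with $M_2$ for $i=1,2$. Since $S_1^1$ and $S_1^2$ are disjoint, so are $S_1^1\times S_2$ and $S_1^2\times S_2$, and their union is $(S_1^1\cup S_1^2)\times S_2$. Set $N\coloneqq N^1\oplus N^2=((S_1^1\cup S_1^2)\times S_2,\rho)$. It remains to verify the tensor-product identity. Fix any $X\subseteq S_1^1\cup S_1^2$ and $Y\subseteq S_2$, and let $X_i\coloneqq X\cap S_1^i$. Since $X\times Y=(X_1\times Y)\cup(X_2\times Y)$ is a disjoint union with $X_i\times Y\subseteq S_1^i\times S_2$, the direct-sum structure yields
\[
\rho(X\times Y)=\rho_1(X_1\times Y)+\rho_2(X_2\times Y)=r_1^1(X_1)\cdot r_2(Y)+r_1^2(X_2)\cdot r_2(Y)=(r_1^1\oplus r_1^2)(X)\cdot r_2(Y),
\]
which is exactly the required identity for $N$ to be a tensor product of $M_1^1\oplus M_1^2$ and $M_2$.

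I do not expect any real obstacle here: the forward direction is subsumed by \cref{lem:minor}, and the backward direction is essentially a bookkeeping check on rectangles, with the crucial point being that both the ground sets of the $N^i$ and the evaluations of the rank function split cleanly along the direct-sum decomposition of $S_1^1\cup S_1^2$.
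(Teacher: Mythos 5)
Your proof is correct and follows essentially the same strategy as the paper's: restriction for the ``only if'' direction and the direct sum $N^1\oplus N^2$ with a rectangle-by-rectangle rank computation for the ``if'' direction. The only difference is cosmetic: for the forward direction you invoke \cref{lem:minor}, whereas the paper directly observes that $N|(S_1^i\times S_2)$ satisfies the tensor-product identity, avoiding the appeal to the stronger minor lemma; both are valid since \cref{lem:minor} appears earlier.
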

\begin{proof}
Assume first that $N=((S_1^1\cup S_1^2)\times S_2,r)$ is a tensor product of $M_1^1\oplus M_1^2$ and $M_2$. Then, for every $X\subseteq S_1^i$ and $Y\subseteq S_2$, we have $r(X\times Y)=r_1^i(X)\cdot r_2(Y)$, hence  $N|(S_1^i\times S_2)$ is a tensor product of $M_1^i$ and $M_2$ for $i=1,2$.

For the other direction, assume that $N_i$ is a tensor product of $M_1^i$ and $M_2$ for $i=1,2$. Let $N=N_1\oplus N_2=((S_1^1\cup S_1^2)\times S_2,r)$ denote their direct sum. Then, for every $X\subseteq S_1^1\cup S_1^2$ and $Y\subseteq S_2$, we have $r(X\times Y)=r((X\cap S_1^1)\times S_2)+r((X\cap S_1^2)\times S_2)=r_1^1(X\cap S_1^1)\cdot r_2(Y)+r_1^2(X\cap S_1^2)\cdot r_2(Y)=(r_1^1(X\cap S_1^1)+r_1^2(X\cap S_1^2))\cdot r_2(Y)$, hence $N$ is a tensor product of $M_1^1\oplus M_1^2$ and $M_2$. 
\end{proof}

While the previous lemma focused on tensor products of non-connected matroids, the next addresses the connected case and shows that the tensor product of two connected matroids is connected.

\begin{lem}\label{lem:connected2}
The tensor product of two connected matroids is connected.
\end{lem}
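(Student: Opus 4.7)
The plan is to argue by contradiction, exploiting the fact that rows $S_1^y=S_1\times\{y\}$ and columns ${}^xS_2=\{x\}\times S_2$ inside a tensor product faithfully realize copies of $M_1$ and $M_2$. Assume $M_1=(S_1,r_1)$ and $M_2=(S_2,r_2)$ are both connected, that $M=(S_1\times S_2,r)$ is a tensor product, and -- for contradiction -- that $M$ admits a nontrivial decomposition $M=M|A\oplus M|B$ with $A\sqcup B=S_1\times S_2$ and $A,B\neq\emptyset$.

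First I would reduce to the case $r_1(S_1),r_2(S_2)\geq 1$; the remaining possibilities ($M_1$ or $M_2$ empty, or a single loop paired with a single-element matroid) are either vacuous or checked by direct inspection. In this regime neither $M_1$ nor $M_2$ contains a loop, since a loop in a connected matroid on at least two elements would by itself form a circuit-component. Then \cref{prop:tensor} gives $r(X^y)=r_1(X)\cdot r_2(\{y\})=r_1(X)$ for every $y\in S_2$, so $M|S_1^y\cong M_1$ is connected, and symmetrically $M|{}^xS_2\cong M_2$ is connected for every $x\in S_1$.

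The core step is to restrict the hypothesized direct sum $M=M|A\oplus M|B$ to a single row: this yields $M|S_1^y=M|(S_1^y\cap A)\oplus M|(S_1^y\cap B)$, and since $M|S_1^y$ is connected one of the two summands must be empty. Hence each row lies entirely in $A$ or entirely in $B$, which forces a ``rectangular'' partition $S_2=Y_A\sqcup Y_B$ with $A=S_1\times Y_A$ and $B=S_1\times Y_B$. Both $Y_A$ and $Y_B$ are non-empty, because $A,B\neq\emptyset$ and $S_1\neq\emptyset$.

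The main obstacle -- really the only remaining thing to verify -- is then to close the argument using columns. Picking $x\in S_1$, $y_A\in Y_A$, and $y_B\in Y_B$, the elements $(x,y_A)\in A$ and $(x,y_B)\in B$ both lie on the column ${}^xS_2$, which by the analogous reasoning applied to $M|{}^xS_2\cong M_2$ must be entirely contained in $A$ or in $B$. This contradiction completes the proof.
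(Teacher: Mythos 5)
Your proof is correct, but takes a genuinely different route from the paper's. The paper argues directly: to connect $(x_1,x_2)$ and $(y_1,y_2)$, it produces a circuit through $(x_1,x_2)$ and $(y_1,x_2)$ inside the row $M|S_1^{x_2}\cong M_1$, then a circuit through $(y_1,x_2)$ and $(y_1,y_2)$ inside the column $M|{}^{y_1}S_2\cong M_2$, and concludes by transitivity in the circuit hypergraph. You instead argue by contradiction through the separator characterization of connectivity: a nontrivial decomposition $M=M|A\oplus M|B$ restricts to a decomposition of each row, and since rows are connected copies of $M_1$ each must lie entirely in $A$ or in $B$; this forces $A$ and $B$ to be ``rectangular,'' and then any column meets both, contradicting the same monochromaticity applied to columns. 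Both arguments are sound. The paper's is more elementary and shorter, producing the witnessing circuits explicitly; yours is a clean repackaging via the direct-sum characterization and makes the two-dimensional rigidity visible at a glance.

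One small inaccuracy in your handling of degenerate cases: after reducing to $r_1(S_1),r_2(S_2)\ge 1$, you list the excluded cases as ``$M_1$ or $M_2$ empty, or a single loop paired with a single-element matroid,'' but if $M_1$ is a single loop (which is connected under the paper's convention) there is no constraint on $|S_2|$. Taking $M_1$ a single loop and $M_2=U_{1,2}$, the tensor product is two loops, which is disconnected -- so this excluded case is not merely left ``to direct inspection'' but is actually an exception to the lemma as literally stated. That said, the paper's own proof has exactly the same blind spot (it invokes $M|S_1^{x_2}\cong M_1$, which fails when $x_2$ is a loop), and in the lemma's only use in the paper both factors have rank at least two, so the point is harmless; I mention it only because you explicitly claim to have dispatched the degenerate cases.
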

\begin{proof}
Let $M_1=(S_1,r_1)$ and $M_2=(S_2,r_2)$ be connected matroids and $M=(S_1\times S_2,r)$ be a tensor product of $M_1$ and $M_2$. We need to show that for any $x_1,y_1\in S_1$ and $x_2,y_2\in S_2$, $(x_1, x_2)$ and $(y_1, y_2)$ lie in the same connected component of the circuit hypergraph of $M$. Since $M_1$ is connected and $M|S^{x_2}_1$ is isomorphic to $M_1$, there exists a circuit $C_1$ in $M$ such that $(x_1, x_2), (y_1, x_2)\in C_1$. Similarly, since $M_2$ is connected and $M|^{y_1}S_2$ is isomorphic to $M_2$, there exists a circuit $C_2$ in $M$ such that $(y_1, x_2),(y_1, y_2)\in C_2$. This concludes the proof of the lemma.
\end{proof}

With respect to the existence of a tensor product, a simple reduction allows us to assume that all matroids are simple, that is, without loops or parallel elements.

\begin{lem} \label{lem:simple}
    Let $M_1 = (S_1, r_1)$ and $M_2 =(S_2, r_2)$ be matroids. For $i=1,2$, let $L_i$ denote the set of loops of $M_i$ and let $\cP_i = \{P_1^i, \dots, P_{q_i}^i\}$ denote the partition of $S_i \setminus L_i$ into parallel classes of $M_i$. For any matroid $N$ on ground set $\cP_1 \times \cP_2$, let $\widetilde{N}$ denote the matroid on $S_1\times S_2$ obtained from $N$ by replacing each element $(P_j^1, P_k^2)\in \cP_1 \times \cP_2$ by a parallel class on $P^1_j \times P^2_k$, and adding the elements of $(S_1 \times L_2) \cup (L_1 \times S_2)$ as loops. Then, \[M_1 \otimes M_2 = \{\widetilde{N} \mid N \in \si(M_1) \otimes \si(M_2)\}.\] 
    In particular, two matroids admit a tensor product if and only if their simplifications do.
\end{lem}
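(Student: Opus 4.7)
The plan is to prove the set equality by two inclusions, using \cref{prop:tensor} as the main tool in both directions.

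For $\{\widetilde N : N\in \si(M_1)\otimes\si(M_2)\}\subseteq M_1\otimes M_2$, I would verify the three rank conditions of \cref{prop:tensor} for $\widetilde N$. If $e_1\in L_1$ then $\{e_1\}\times Y_2$ consists of loops of $\widetilde N$, so $r_{\widetilde N}(\{e_1\}\times Y_2)=0=r_1(\{e_1\})\cdot r_2(Y_2)$, and the case $e_2\in L_2$ is symmetric. For a non-loop $e_1$ with parallel class $P_j^1$, I would first discard the loops of $Y_2$ and then pick one representative from each nonempty intersection $Y_2\cap P_k^2$; since $\{e_1\}\times (Y_2\cap P_k^2)$ lies in the parallel class $P_j^1\times P_k^2$ of $\widetilde N$, the rank of $\{e_1\}\times Y_2$ in $\widetilde N$ reduces to $r_N(\{P_j^1\}\times\{P_k^2 : Y_2\cap P_k^2\neq\emptyset\})$. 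By the tensor-product property of $N$ this equals $r_{\si(M_2)}(\{P_k^2 : Y_2\cap P_k^2\neq\emptyset\})= r_2(Y_2)=r_1(\{e_1\})\cdot r_2(Y_2)$. The symmetric condition and the ground-set identity $r_{\widetilde N}(S_1\times S_2)=r_1(S_1)\cdot r_2(S_2)$ follow by the same reduction.

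For the reverse inclusion, given $M\in M_1\otimes M_2$ with rank function $r$, I would first identify the loops and parallel classes of $M$. The loops are immediate: if $e_1\in L_1$ then $r(\{e_1\}\times S_2)=0\cdot r_2(S_2)=0$, and similarly for $L_2$. For the parallel classes, take non-loops $e_1,e_1'\in P_j^1$ and $e_2,e_2'\in P_k^2$. Each singleton in $\{e_1,e_1'\}\times\{e_2,e_2'\}$ has rank $1$ by \cref{prop:tensor}, while the same proposition gives $r(\{e_1,e_1'\}\times\{e_2,e_2'\})=r_1(\{e_1,e_1'\})\cdot r_2(\{e_2,e_2'\})=1$, so all four elements lie in a common rank-$1$ flat and are pairwise parallel in $M$. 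Hence $M$ has exactly the loop set $(S_1\times L_2)\cup(L_1\times S_2)$ and the parallel classes $P_j^1\times P_k^2$, so $M=\widetilde N$ for the matroid $N$ on $\cP_1\times\cP_2$ obtained from $\si(M)$ via the natural identification. It remains to check $N\in\si(M_1)\otimes\si(M_2)$, which follows from $r_N(X\times Y)=r((\bigcup_{P\in X}P)\times(\bigcup_{Q\in Y}Q))=r_1(\bigcup_{P\in X}P)\cdot r_2(\bigcup_{Q\in Y}Q)=r_{\si(M_1)}(X)\cdot r_{\si(M_2)}(Y)$ for all $X\subseteq\cP_1$ and $Y\subseteq\cP_2$.

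The ``in particular'' statement is then immediate from the set equality. The argument is essentially bookkeeping, and the only point requiring care is the ``$2\times 2$'' parallel identification: knowing that each row and each column of $P_j^1\times P_k^2$ is a parallel class is not sufficient -- one needs $(e_1,e_2)$ and $(e_1',e_2')$ to be parallel even when they differ in both coordinates, and this is precisely what \cref{prop:tensor} delivers through the rank of the full $2\times 2$ rectangle.
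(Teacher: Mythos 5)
Your overall structure matches the paper's: verify both inclusions, identify the loops and parallel classes of an arbitrary $M\in M_1\otimes M_2$, and reduce to the simplifications. There is, however, a gap in the step where you identify the parallel classes. Your $2\times 2$ computation shows that any two elements of $P_j^1\times P_k^2$ are parallel, i.e.\ that the box is \emph{contained in} a single parallel class of $M$. You then write ``Hence $M$ has exactly \ldots the parallel classes $P_j^1\times P_k^2$,'' but that conclusion also needs the converse: no element outside the box is parallel to an element inside it, equivalently two distinct boxes never merge into one parallel class. Without this, $\si(M)$ need not live on $\cP_1\times\cP_2$ and the ``natural identification'' you appeal to is not available.

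The paper sidesteps this by invoking \cref{prop:crosses}: $P_j^1\times P_k^2$ is the product of a flat of $M_1$ and a flat of $M_2$, hence a flat of $M$, and a rank-$1$ flat (together with the loops) is precisely a parallel class, which gives both directions simultaneously. Your route can also be completed with a little more work from \cref{prop:tensor}: if $(e_1,e_2)\in P_j^1\times P_k^2$ and $(f_1,f_2)\in P_{j'}^1\times P_{k'}^2$ with $j\neq j'$ and $k\neq k'$, then $r(\{e_1,f_1\}\times\{e_2,f_2\})=r_1(\{e_1,f_1\})\cdot r_2(\{e_2,f_2\})=4$, so the four points are independent and in particular not parallel; for the mixed case $j\neq j'$, $k=k'$ one has $r(\{e_1,f_1\}\times\{e_2\})=2$, so $(e_1,e_2)\not\sim(f_1,e_2)$, and combining this with the in-box parallelism $(f_1,f_2)\sim(f_1,e_2)$ and transitivity rules out $(e_1,e_2)\sim(f_1,f_2)$. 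Either fix closes the gap; as written, the argument is incomplete. The rest of the proof, including the forward inclusion via \cref{prop:tensor} and the quotient computation, is sound.
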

\begin{proof}
    Assume first that $P = (S_1 \times S_2, r)$ is a tensor product of $M_1$ and $M_2$. Then, for any pair $(e_1, e_2) \in S_1 \times S_2$ such that $e_1 \in L_1$ or $e_2\in L_2$, $r(\{(e_1,e_2)\}) = r_1(\{e_1\}) \cdot r_2(\{e_2\}) = 0$, thus $(e_1, e_2)$ is a loop of $P$. Similarly, for $j \in [q_1]$ and $k \in [q_2]$, $r(P_j^1 \times P_k^2) = r_1(P_j^1) \cdot  r_2(P_k^2) = 1$, thus $P_j^1 \times P_k^2$ is a parallel class of $P$ as the direct product of a flat of $M_1$ and a flat of $M_2$ is a flat of $P$ by \cref{prop:crosses}. These together with the fact that $P \in M_1 \otimes M_2$ imply that $N \coloneqq \si(P) \in \si(M_1) \otimes \si(M_2)$ and $P = \widetilde{N}$.

    To see the other direction, let $N \in \si(M_1) \otimes \si(M_2)$, $X_1 \subseteq S_1$, and $X_2 \subseteq S_2$. For $i \in \{1,2\}$, define $X'_i \coloneqq \{P^i_j \in \cP_i \mid P^i_j \cap X_i \ne \emptyset \}$. Then,  $r_{\widetilde{N}}(X_1 \times X_2) = r_N(X'_1 \times X'_2) = r_1(X'_1) \cdot r_2(X'_2) = r_1(X_1) \cdot r_2(X_2)$, thus $\widetilde{N}$ is a tensor product of $M_1$ and $M_2$.
\end{proof}

Our next lemma shows that if $\varphi$ is a tensor product of $\varphi_1$ and $\varphi_2$, and $\varphi_1$ is modular on some set, then the corresponding part of $\varphi$ can be expressed as a direct sum.

\begin{lem}\label{lem:direct}
Let $\varphi\colon2^{S_1\times S_2}\to\bR$ be a tensor product of polymatroid functions $\varphi_1\colon 2^{S_1}\to\bR$ and $\varphi_2\colon 2^{S_2}\to\bR$. If $\varphi_1(X)=\sum_{x\in X}\varphi_1(x)$ for some $X\subseteq S_1$, then $\varphi(W)=\sum_{x\in X}\varphi(W\cap \prd{x}{S_2})$ for all $W\subseteq X\times S_2$.
\end{lem}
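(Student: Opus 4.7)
The plan is to first use the hypothesis to extract an additivity statement for $\varphi$ across the disjoint ``slices'' $\prd{x}{S_2}$, and then promote this additivity from the whole slices to arbitrary subsets via a short submodularity argument.

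By the tensor product identity together with the hypothesis on $\varphi_1$,
$$\varphi(X\times S_2)=\varphi_1(X)\cdot\varphi_2(S_2)=\sum_{x\in X}\varphi_1(\{x\})\cdot\varphi_2(S_2)=\sum_{x\in X}\varphi(\prd{x}{S_2}).$$
Thus $\varphi$ is additive across the pairwise disjoint slices $\{\prd{x}{S_2}\}_{x\in X}$. The rest of the proof reduces to showing that this kind of slicewise additivity automatically propagates to arbitrary subsets of the slices.

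More precisely, I would prove the following auxiliary statement, which encapsulates all the work: if $\psi\colon 2^T\to\bR$ is a polymatroid function and $T_1,\dots,T_k\subseteq T$ are pairwise disjoint subsets with $\psi(T_1\cup\dots\cup T_k)=\sum_{i=1}^k\psi(T_i)$, then for every choice of $W_i\subseteq T_i$ one has $\psi(W_1\cup\dots\cup W_k)=\sum_{i=1}^k\psi(W_i)$. I would argue by induction on $k$, with the heart of the matter being the case $k=2$. Here submodularity, applied to $T_1$ and $W_1\cup T_2$, gives
$$\psi(T_1)+\psi(W_1\cup T_2)\geq \psi(T_1\cap(W_1\cup T_2))+\psi(T_1\cup T_2)=\psi(W_1)+\psi(T_1)+\psi(T_2),$$
where I used $T_1\cap T_2=\emptyset$ to simplify the intersection and the additivity hypothesis to expand $\psi(T_1\cup T_2)$. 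This yields $\psi(W_1\cup T_2)\geq \psi(W_1)+\psi(T_2)$, and combined with subadditivity (submodularity with $\psi(\emptyset)=0$) one gets equality. Repeating the same argument with the pair $T_2$ and $W_1\cup W_2$, invoking the newly established identity $\psi(W_1\cup T_2)=\psi(W_1)+\psi(T_2)$, promotes the additivity to $\psi(W_1\cup W_2)=\psi(W_1)+\psi(W_2)$. The inductive step from $k-1$ to $k$ then follows by grouping the first $k-1$ sets together, noticing that the additivity hypothesis forces the $k=2$ decomposition with respect to $(T_1\cup\dots\cup T_{k-1},\,T_k)$ as well, and applying the $k=2$ case together with the induction hypothesis.

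The lemma then falls out by applying the auxiliary claim to $\psi=\varphi$, $T_x=\prd{x}{S_2}$ and $W_x=W\cap\prd{x}{S_2}$ for $x\in X$. The only mild subtlety is in the $k=2$ case of the auxiliary claim: one has to pick the correct submodular pair, namely $T_1$ and $W_1\cup T_2$ (rather than $W_1$ and $T_2$, which only recovers subadditivity), so that the given additivity identity can actually be invoked; everything else is routine bookkeeping.
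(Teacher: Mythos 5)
Your proof is correct. The approach shares its engine with the paper's — both are submodularity squeezes that start from the observation $\varphi(X\times S_2)=\varphi_1(X)\cdot\varphi_2(S_2)=\sum_{x\in X}\varphi(\prd{x}{S_2})$ and then sandwich $\varphi(W)$ between the easy upper bound $\sum_x\varphi(W\cap\prd{x}{S_2})$ and a matching lower bound. Where you diverge is the bookkeeping: the paper gets the lower bound in one pass by summing, over $i\in[k]$, the single submodular inequality
\[
\varphi(\prd{x_{i}}{S_2})\geq \varphi(W\cap \prd{x_{i}}{S_2})+\varphi(W\cup(X_{i}\times S_2))-\varphi(W\cup (X_{i-1}\times S_2)),
\]
which telescopes to exactly $\varphi(X\times S_2)-\varphi(W)$ on the right. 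You instead extract a reusable auxiliary lemma (additivity of a polymatroid function on pairwise disjoint sets propagates to arbitrary subsets of those sets) and prove it by induction, the key being the correct choice of submodular pair $(T_1, W_1\cup T_2)$ in the base case. Both arguments are sound and of comparable length; yours is more modular and exposes the general principle at work, while the paper's is a compact telescoping that avoids the induction scaffolding. One minor point worth noting: the paper's chain $\varphi_1(X)\cdot\varphi_2(S_2)=\sum_i\varphi_1(x_i)\cdot\varphi(\prd{x_i}{S_2})/\varphi_1(x_i)$ is slightly awkward when $\varphi_1(x_i)=0$; your direct rewrite $\sum_x\varphi_1(\{x\})\cdot\varphi_2(S_2)=\sum_x\varphi(\prd{x}{S_2})$ sidesteps that.
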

\begin{proof}
Let $X=\{x_1,\dots,x_k\}$ denote the elements of $X$, and for $i\in[k]$, define $X_i=\{x_1,\dots,x_i\}$. For ease of discussion, we define $X_0=\emptyset$. Since $\varphi$ is a polymatroid function, we have $\varphi(W)\leq\sum_{i=1}^k\varphi(W\cap\prd{x_i}{S_2})$. For the reverse inequality, observe that, since $\varphi$ is the tensor product of $\varphi_1$ and $\varphi_2$, we have $\varphi(X\times S_2)=\varphi_1(X)\cdot \varphi_2(S_2)=\sum_{i=1}^k\varphi_1(x_i)\cdot \varphi(\prd{x_i}{S_2})/\varphi_1(x_i)=\sum_{i=1}^k\varphi(\prd{x_i}{S_2})$. By submodularity, for each $i\in[k]$, we get \[\varphi(\prd{x_{i}}{S_2})\geq \varphi(W\cap \prd{x_{i}}{S_2})+\varphi(W\cup(X_{i}\times S_2))-\varphi(W\cup (X_{i-1}\times S_2)).\]
Since $W\subseteq X\times S_2$, summing these inequalities over $i\in[k]$ yields
\[\varphi(X\times S_2)=\sum_{i=1}^k\varphi(\prd{x_i}{S_2})\geq \sum_{i=1}^k\varphi(W\cap \prd{x_i}{S_2})+\varphi(X\times S_2)-\varphi(W),\]
which concludes the proof of the lemma.
\end{proof}

In our proofs, we will frequently use the following statement, which roughly states that if we extend the ground set in one factor of the tensor product without increasing its rank in the given matroid, then the rank in the tensor product remains unchanged.

\begin{lem}\label{lem:gen}
    Let $\varphi\colon2^{S_1\times S_2}\to\bR$ be a tensor product of polymatroid functions $\varphi_1\colon 2^{S_1}\to\bR$ and $\varphi_2\colon 2^{S_2}\to\bR$. 
    Then, for all subsets $X, X' \subseteq S_1$, $Y, Y'\subseteq S_2$, and $Z \subseteq S_1\times S_2$,
    \begin{align*}
    \varphi(((X\cup X')\times Y)\cup Z) & \le \varphi((X\times Y)\cup Z) + (\varphi_1(X\cup X')-\varphi_1(X)) \cdot \varphi_2(Y), \\
    \varphi((X \times (Y\cup Y'))\cup Z) & \le \varphi((X\times Y)\cup Z) + \varphi_1(X)\cdot (\varphi_2(Y\cup Y')-\varphi_2(Y)).
    \end{align*}
     In particular, if $\varphi_1(X\cup X') = \varphi_1(X)$ then 
    \[\varphi(((X\cup X')\times Y)\cup Z) = \varphi((X\times Y)\cup Z),\]
    and if $\varphi_2(Y\cup Y') = \varphi_2(Y)$ then \[\varphi((X\times (Y\cup Y'))\cup Z) = \varphi((X\times Y)\cup Z).\]
\end{lem}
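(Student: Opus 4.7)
The plan is to derive both inequalities directly from the diminishing returns form of submodularity, together with the defining property of a tensor product. Recall that for any submodular function $\varphi$, if $A\subseteq B$ and $C$ is arbitrary, then
\[
\varphi(B\cup C)-\varphi(A\cup C)\le \varphi(B)-\varphi(A).
\]
To prove the first inequality of the lemma, I would apply this with $A=X\times Y$, $B=(X\cup X')\times Y$ and $C=Z$, noting $A\subseteq B$. This yields
\[
\varphi\bigl(((X\cup X')\times Y)\cup Z\bigr)-\varphi\bigl((X\times Y)\cup Z\bigr)\le \varphi\bigl((X\cup X')\times Y\bigr)-\varphi(X\times Y).
\]
Using that $\varphi$ is a tensor product of $\varphi_1$ and $\varphi_2$, the right-hand side simplifies to $\varphi_1(X\cup X')\cdot\varphi_2(Y)-\varphi_1(X)\cdot\varphi_2(Y)=(\varphi_1(X\cup X')-\varphi_1(X))\cdot\varphi_2(Y)$, which gives exactly the claimed bound. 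The second inequality follows by the analogous application of diminishing returns with the roles of the two factors interchanged, taking $A=X\times Y$, $B=X\times(Y\cup Y')$, and $C=Z$.

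For the ``in particular'' part, suppose $\varphi_1(X\cup X')=\varphi_1(X)$. The first inequality already proved gives
\[
\varphi\bigl(((X\cup X')\times Y)\cup Z\bigr)\le \varphi\bigl((X\times Y)\cup Z\bigr).
\]
The reverse inequality is immediate from monotonicity of $\varphi$, since $(X\times Y)\cup Z\subseteq((X\cup X')\times Y)\cup Z$. Equality follows, and the case $\varphi_2(Y\cup Y')=\varphi_2(Y)$ is symmetric.

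I do not anticipate any real obstacle here. The statement is essentially a packaging of two ingredients, submodularity in its diminishing-returns form and the product rule $\varphi(X\times Y)=\varphi_1(X)\cdot\varphi_2(Y)$, so the proof is a direct substitution. The only minor care needed is to choose $A,B,C$ in the submodularity inequality so that $A\cup C$ and $B\cup C$ become precisely the two sets appearing in the lemma; once this is done, the rest is a one-line computation.
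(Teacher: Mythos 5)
Your proof is correct and essentially matches the paper's: both arguments apply submodularity to the pair of sets $(X\cup X')\times Y$ and $(X\times Y)\cup Z$ and then invoke monotonicity, followed by the tensor-product identity $\varphi((X\cup X')\times Y)-\varphi(X\times Y)=(\varphi_1(X\cup X')-\varphi_1(X))\cdot\varphi_2(Y)$. One small point of precision: the diminishing-returns inequality you state, with $C$ \emph{not} required to be disjoint from $B$, is a consequence of submodularity together with monotonicity, not of submodularity alone --- submodularity (applied to $B$ and $A\cup C$) gives $\varphi(B\cup C)-\varphi(A\cup C)\le\varphi(B)-\varphi(A\cup(B\cap C))$, and one then needs $\varphi(A\cup(B\cap C))\ge\varphi(A)$, which uses monotonicity. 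This is exactly the step the paper makes explicit by writing ``since $\varphi$ is increasing and submodular,'' and it is of course available here since $\varphi$ is a polymatroid function, so the argument goes through; just phrase the general fact with the monotonicity hypothesis included.
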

\begin{proof} 
We prove the first inequality; the second follows analogously. As $\varphi$ is the tensor product of $\varphi_1$ and $\varphi_2$, we have $\varphi((X\cup X')\times Y)-\varphi(X\times Y)=(\varphi_1(X\cup X')-\varphi_1(X))\cdot \varphi_2(Y)$. Hence, since $\varphi$ is increasing and submodular, we get
\begin{align*}
    \varphi(((X\cup X')\times Y)\cup Z)
    &\leq
    \varphi((X\times Y)\cup Z) + \varphi((X\cup X')\times Y)-\varphi((X\times Y)\cup((X'\times Y)\cap Z))\\
    &\leq 
    \varphi((X\times Y)\cup Z) + \varphi((X\cup X')\times Y)-\varphi(X\times Y)\\
    &=
    \varphi((X\times Y)\cup Z)+(\varphi_1(X\cup X')-\varphi_1(X))\cdot\varphi_2(Y),
\end{align*}
proving the statement.
\end{proof}

We will need the following connection between the matroid and polymatroid cases. Recall that for an integer valued polymatroid function $\varphi$, we denote by $M_\varphi$ the matroid provided by Proposition~\ref{prop:helgason}.

\begin{lem}\label{lem:kfracom}
Let $\varphi\colon2^{S_1\times S_2}\to\bZ$ be a tensor product of polymatroid functions $\varphi_1\colon 2^{S_1}\to\bZ$ and $\varphi_2\colon 2^{S_2}\to \bZ$. Then, $M_\varphi=(S_\varphi,r_\varphi)$ is a tensor product of $M_{\varphi_1}=(S_{\varphi_1},r_{\varphi_1})$ and $M_{\varphi_2}=(S_{\varphi_2},r_{\varphi_2})$.
\end{lem}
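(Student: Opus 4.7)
The plan is to identify the ground set $S_\varphi$ with $S_{\varphi_1}\times S_{\varphi_2}$ via a natural bijection, and then verify the tensor-product identities using \cref{prop:tensor}. For each pair $(s_1,s_2)\in S_1\times S_2$, the Helgason fiber $\theta^{-1}((s_1,s_2))\subseteq S_\varphi$ has cardinality $\varphi((s_1,s_2))=\varphi_1(s_1)\cdot\varphi_2(s_2)=|\theta_1^{-1}(s_1)\times\theta_2^{-1}(s_2)|$. Picking any bijection between these two sets for each $(s_1,s_2)$ and assembling them yields a bijection $\beta\colon S_\varphi\to S_{\varphi_1}\times S_{\varphi_2}$ satisfying $(\theta_1\times\theta_2)\circ\beta=\theta$.

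By \cref{prop:tensor}, three identities remain to be verified under this identification. The total-rank identity is immediate from Helgason's relation $r_\varphi(\theta^{-1}(Y))=\varphi(Y)$ applied to $Y=S_1\times S_2$: indeed, $r_\varphi(S_\varphi)=\varphi(S_1\times S_2)=\varphi_1(S_1)\cdot\varphi_2(S_2)=r_{\varphi_1}(S_{\varphi_1})\cdot r_{\varphi_2}(S_{\varphi_2})$. By symmetry, the row and column identities both reduce to showing $r_\varphi\bigl(\beta^{-1}(\{e_1\}\times Y_2)\bigr)=r_{\varphi_2}(Y_2)$ for every $e_1\in S_{\varphi_1}$ and $Y_2\subseteq S_{\varphi_2}$, using that $r_{\varphi_1}(\{e_1\})=1$ since $M_{\varphi_1}$ is loopless.

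For the column identity, write $s_1\coloneqq\theta_1(e_1)$ and $Z\coloneqq\beta^{-1}(\{e_1\}\times Y_2)$. Applying Helgason's formula to $r_\varphi(Z)$ and parameterizing subsets $X\subseteq Z$ by $Y_2'\subseteq Y_2$ through $\beta(X)=\{e_1\}\times Y_2'$ yields
\[r_\varphi(Z)=\min_{Y_2'\subseteq Y_2}\bigl(\varphi_1(s_1)\cdot\varphi_2(\theta_2(Y_2'))+|Y_2\setminus Y_2'|\bigr),\]
to be matched against $r_{\varphi_2}(Y_2)=\min_{Y_2'\subseteq Y_2}\bigl(\varphi_2(\theta_2(Y_2'))+|Y_2\setminus Y_2'|\bigr)$. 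The inequality $r_\varphi(Z)\ge r_{\varphi_2}(Y_2)$ follows at once from $\varphi_1(s_1)\ge 1$; the converse is the main technical hurdle, as the prefactor $\varphi_1(s_1)$ can exceed $1$. My plan for it is to first establish the complete-fiber version $r_\varphi\bigl(\beta^{-1}(\theta_1^{-1}(s_1)\times Y_2)\bigr)=\varphi_1(s_1)\cdot r_{\varphi_2}(Y_2)$, which combines Helgason's relation on full fibers with \cref{lem:direct} (applicable since $\varphi_1$ is trivially modular on a single $\theta_1$-fiber), and then reduce the single-column case to it using submodularity of $r_\varphi$ and the fact that $\theta_1^{-1}(s_1)$ is an independent set of size $\varphi_1(s_1)$ in $M_{\varphi_1}$. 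The row identity is symmetric.
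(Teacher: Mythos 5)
Your reduction to Las Vergnas's three identities, your computation of the total-rank identity, and — importantly — your explicit derivation of the minimization
\[r_\varphi(\{e_1\}\times Y_2)=\min_{Y_2'\subseteq Y_2}\bigl(\varphi_1(s_1)\cdot\varphi_2(\theta_2(Y_2'))+|Y_2\setminus Y_2'|\bigr)\]
are correct, and you have put your finger on a genuine issue that the paper's own proof glosses over: the paper's displayed equality $\varphi(\theta(W))+|Y_1^{e_2}\setminus W|=\varphi_1(\theta_1(\pi(W)))+|Y_1\setminus \pi(W)|$ tacitly assumes $\varphi_2(\theta_2(e_2))=1$, and the "similar" dual case needs $\varphi_1(\theta_1(e_1))=1$; neither holds for a general integer polymatroid. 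However, your proposed patch cannot close the gap. The sets $\{e_1\}\times Y_2$, $e_1\in\theta_1^{-1}(s_1)$, \emph{partition} $\theta_1^{-1}(s_1)\times Y_2$, so submodularity of $r_\varphi$ only gives $\sum_{e_1}r_\varphi(\{e_1\}\times Y_2)\ge r_\varphi(\theta_1^{-1}(s_1)\times Y_2)=\varphi_1(s_1)\cdot r_{\varphi_2}(Y_2)$, which — using the evident fiberwise symmetry of $M_\varphi$ — reproduces only the easy lower bound $r_\varphi(\{e_1\}\times Y_2)\ge r_{\varphi_2}(Y_2)$. To get the missing upper bound you would need $\sum_{e_1}r_\varphi(\{e_1\}\times Y_2)\le r_\varphi(\theta_1^{-1}(s_1)\times Y_2)$, i.e.\ precisely the direct-sum behaviour you are trying to establish, so the reduction is circular.

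Worse, the missing inequality is simply false, and so is the lemma as stated. Take $S_1=\{a\}$ with $\varphi_1(a)=2$, and let $\varphi_2$ be the rank function of $U_{2,3}$ on $S_2=\{u,v,w\}$. Since every subset of $S_1\times S_2$ is a rectangle, $\varphi$ is uniquely determined by $\varphi(\{a\}\times T)=2\cdot r_{U_{2,3}}(T)$; one computes $M_{\varphi_1}\cong U_{2,2}$, $M_{\varphi_2}\cong U_{2,3}$, while the Helgason constraints on a set $Z\subseteq S_\varphi$ reduce to $|Z|\le\varphi(S_1\times S_2)=4$, so $M_\varphi\cong U_{4,6}$. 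But $U_{4,6}$ is \emph{not} a tensor product of $U_{2,2}$ and $U_{2,3}$: the row $\{e_1\}\times S_{\varphi_2}$ has rank $3$ in $U_{4,6}$, whereas the definition requires $r_{\varphi_1}(\{e_1\})\cdot r_{\varphi_2}(S_{\varphi_2})=1\cdot 2=2$; indeed the only member of $U_{2,2}\otimes U_{2,3}$ is $U_{2,3}\oplus U_{2,3}$, which is strictly less free than $M_\varphi$. So the Helgason matroid of a polymatroid tensor product is in general strictly freer than any matroid tensor product of $M_{\varphi_1}$ and $M_{\varphi_2}$, and no argument can rescue the statement without an added hypothesis (e.g.\ that both $\varphi_i$ be matroid rank functions, which suffices since then $\varphi_i(\theta_i(e_i))=1$ throughout).
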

\begin{proof}
Let $\theta_1\colon S_{\varphi_1}\to S_1$, $\theta_2\colon S_{\varphi_2}\to S_2$, and $\theta\colon S_{\varphi}\to S_1\times S_2$ denote the mappings corresponding to $\varphi_1$, $\varphi_2$, and $\varphi$, respectively, as provided by Proposition~\ref{prop:helgason}. Note that, by construction, the matroids thus obtained do not contain loops. Since $|\theta^{-1}((x,y))|=\varphi((x,y))=\varphi_1(x)\cdot\varphi_2(y)=|\theta_1^{-1}(x)|\cdot|\theta_2^{-1}(y)|$, the elements of the ground set $S_\varphi$ can naturally be identified with the elements of $S_{\varphi_1}\times S_{\varphi_2}$, where the elements in $\theta^{-1}((x,y))$ are in bijection with the elements in $\theta_1^{-1}(x)\times\theta_2^{-1}(y)$. Therefore, we may think of $M_\varphi$ as a matroid over ground set $S_{\varphi_1}\times S_{\varphi_2}$, and assume that $\theta((x,y))=(\theta_1(x),\theta_2(y))$ for all $x\in S_{\varphi_1}$, $y\in S_{\varphi_2}$.

To verify the statement, it suffices by Proposition~\ref{prop:tensor} to show that the following equalities hold: $r_\varphi(\prd{e_1}{Y_2})=r_{\varphi_1}(e_1)\cdot r_{\varphi_2}(Y_2)$ for each $e_1\in S_{\varphi_1}$ and $Y_2\subseteq S_{\varphi_2}$; $r_\varphi(Y_1^{e_2})=r_{\varphi_1}(Y_1)\cdot r_{\varphi_2}(e_2)$ for each $Y_1\subseteq S_{\varphi_1}$ and $e_2\in S_{\varphi_2}$; and $r_\varphi(S_{\varphi_1}\times S_{\varphi_2})=r_{\varphi_1}(S_{\varphi_1})\cdot r_{\varphi_2}(S_{\varphi_2})$. Recall that $r_\varphi(Y_1^{e_2})=\min\{\varphi(\theta(W))+|Y_1^{e_2}\setminus W|\mid W\subseteq Y_1^{e_2}\}$ and $r_{\varphi_1}(Y_1)=\min\{\varphi_1(\theta_1(W_1))+|Y_1\setminus W_1|\mid W_1\subseteq Y_1\}$. Let $\pi \colon Y_1^{e_2} \to Y_1$ be the natural projection, that is, $\pi((x,e_2))=x$ for all $x\in Y_1$. Clearly, $\varphi(\theta(W))+|Y_1^{e_2}\setminus W|=\varphi_1(\theta_1(\pi(W)))+|Y_1\setminus \pi(W)|$, hence $r_\varphi(Y_1^{e_2})=r_{\varphi_1}(Y_1)\cdot r_{\varphi_2}(e_2)$. The equality for $e_1$ and $Y_2$ can be proved similarly. Finally, since $r_{\varphi_1}(S_1)=\varphi(S_1)$, $r_{\varphi_2}(S_2)=\varphi(S_2)$, and $r_\varphi(S)=\varphi(S)=\varphi_1(S_1)\cdot\varphi_2(S_2)=r_{\varphi_1}(S_1)\cdot r_{\varphi_2}(S_2)$, the lemma follows.
\end{proof}

\subsection{Modular extensions}

As one of our goals is to relate tensor products and modular extendability, it is important to understand how the latter behaves under certain operations. In particular, when extending results from matroids to polymatroid functions, two main difficulties arise. First, by Helgason's result (\cref{prop:helgason}), every integer-valued polymatroid function can be expressed as a quotient of a matroid rank function, making it natural to ask how modular extendability behaves under this operation. It turns out that $k$-modular extendability is indeed preserved under taking quotients.

\begin{lem}\label{lem:reverse}
Let $M=(S,r)$ be a $k$-modular extendable matroid and $\cQ=(S_1,\dots,S_q)$ be a partition of $S$. Then, the polymatroid function $\varphi=r/\cQ$ is $k$-modular extendable.
\end{lem}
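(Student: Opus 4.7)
The plan is to prove the statement by induction on $k$. The base case $k=0$ is immediate since any integer-valued polymatroid function is $0$-modular extendable by definition, and quotients of matroid rank functions are integer-valued polymatroid functions.

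For the inductive step, given $A,B\subseteq T_\cQ$, I would lift them to $A'=\bigcup_{t_i\in A}S_i$ and $B'=\bigcup_{t_i\in B}S_i$ in $S$, and invoke the $k$-modular extendability of $M$ to obtain a $(k-1)$-modular extendable matroid extension $M'=(S',r')$ together with a set $Z'\subseteq S'$ satisfying $r'(A'\cup Z')=r(A')$, $r'(B'\cup Z')=r(B')$, and $r'(Z')=r(A')+r(B')-r(A'\cup B')$. The natural idea is to quotient $M'$ so that $Z'\setminus S$ becomes a single ``new'' element $z$. The main obstacle is that $Z'$ may contain elements of $S$, in which case $r'(Z'\setminus S)$ can be strictly smaller than $r'(Z')$, so naively collapsing $Z'\setminus S$ fails to deliver the required rank value for $z$.

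To bypass this, I would preprocess $M'$ by adding, for each $s\in Z'\cap S$, a new element $y_s$ parallel to $s$, yielding an enlarged matroid $\tilde M'=(\tilde S',\tilde r')$, and setting $\tilde Z'\coloneqq(Z'\setminus S)\cup Y$ with $Y=\{y_s:s\in Z'\cap S\}$. Since each $y_s\in Y$ is parallel to the corresponding $s\in Z'\cap S$ in $\tilde M'$, the sets $\tilde Z'$ and $Z'$ have the same closure, giving $\tilde r'(\tilde Z')=r'(Z')$, $\tilde r'(A'\cup\tilde Z')=r'(A'\cup Z')$, and similarly for $B'$; crucially, $\tilde Z'\cap S=\emptyset$. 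This preprocessing rests on an auxiliary claim: adding a parallel copy of an existing element to a $k$-modular extendable matroid yields a $k$-modular extendable matroid. That claim is proved by its own induction on $k$: given $A,B\subseteq S+y$, replace $y$ by $s$ to obtain $A^*,B^*\subseteq S$, apply $k$-modular extendability of $M$ to get an extension, then add $y$ parallel to $s$ in that extension, invoking the claim at level $k-1$ to conclude that the enlarged extension is $(k-1)$-modular extendable.

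Having obtained $\tilde M'$, which is $(k-1)$-modular extendable by the auxiliary claim, I would take the partition $\tilde\cQ'$ of $\tilde S'$ consisting of $S_1,\dots,S_q$, the set $\tilde Z'$ (whose image in the quotient is denoted $z$), and singletons for the remaining elements of $S'\setminus(S\cup Z')$. By the main induction hypothesis applied to $\tilde M'$ and $\tilde\cQ'$, the quotient $\varphi'\coloneqq\tilde r'/\tilde\cQ'$ is a $(k-1)$-modular extendable polymatroid function. It then remains to verify that $\varphi'$ together with $Z=\{z\}$ witnesses an extension step of $\varphi$ with respect to $A$ and $B$: the identity $\varphi'(X)=\varphi(X)$ for $X\subseteq T_\cQ$ is immediate from the definition of quotient, while $\varphi'(A\cup Z)=\varphi(A)$, $\varphi'(B\cup Z)=\varphi(B)$, and $\varphi'(Z)=\varphi(A)+\varphi(B)-\varphi(A\cup B)$ follow directly from the properties of $\tilde Z'$ established above.
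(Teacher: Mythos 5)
Your proof is correct and follows essentially the same route as the paper: induct on $k$, lift $A,B$ to $A'=\theta^{-1}(A)$ and $B'=\theta^{-1}(B)$, replace the elements of $Z'\cap S$ by parallel copies so the witness set becomes disjoint from $S$, then quotient by the enlarged partition and invoke the inductive hypothesis. The only difference is that you explicitly isolate and prove the auxiliary fact that adding a parallel element preserves $k$-modular extendability, whereas the paper compresses this preprocessing into a single unproved sentence.
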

\begin{proof}
We prove the lemma by induction on $k$. For $k=0$, the result is trivial. Hence, assume that $k \geq 1$ and that the statement holds for all smaller values of $k$. Let $S_\cQ=\{s_1,\dots,s_q\}$ denote the ground set of $\varphi$, and let $\theta\colon S\to S_\cQ$ be the mapping that assigns each $s\in S$ to $s_i$ if $s\in S_i$. Let $A,B\subseteq S_\cQ$. By the assumption, $M$ has a one-step extension $M'=(S',r')$ with respect to $A'=\theta^{-1}(A)$ and $B'=\theta^{-1}(B)$ that is $(k-1)$-modular extendable. That is, $S\subseteq S'$ and there exists $Z'\subseteq S'$ satisfying $S'\setminus S\subseteq Z'$, $r'(X)=r(X)$ for all $X\subseteq S$, $r'(A'\cup Z')=r(A')$, $r'(B'\cup Z')=r(B')$, and $r'(Z')=r(A')+r(B')-r(A'\cup B')$. We may assume that $Z'=S'\setminus S$, as otherwise we can add parallel copies of the elements in $Z\cap S$. 

Let $S_{q+1}=Z'$ and $\cQ'=(S_1,\dots,S_q,S_{q+1})$. We claim that $\varphi'=r'/\cQ'$ is a one-extension of $\varphi$ with respect to $A$ and $B$. To see this, let $Z=\{s_{q+1}\}$. Then $\varphi'(X)=r'(\theta^{-1}(X))=r(\theta^{-1}(X))=\varphi(X)$ for $X\subseteq S$, $\varphi'(A\cup Z)=r'(A'\cup Z')=r(A')=\varphi(A)$, $\varphi'(B\cup Z)=r'(B'\cup Z')=r(B')=\varphi(B)$, and $\varphi'(Z)=r'(Z')=r(A')+r(B')-r(A'\cup B')=\varphi(A)+\varphi(B)-\varphi(A\cup B)$. By induction, $\varphi'$ is $(k-1)$-modular extendable, concluding the proof of the lemma.
\end{proof}

Second, to extend the results to real-valued polymatroid functions, one needs to consider whether the pointwise limit of $k$-modular extendable functions preserves this property. Our next result answers this question in the affirmative.

\begin{lem}\label{lem:convergence}
Let $(\varphi_n \colon n \in \bZ_+)$ be a sequence of $k$-fractionally modular extendable polymatroid functions over the same ground set $S$, and let $\varphi$ be a polymatroid function on $S$ such that $\varphi_n(X) \to \varphi(X)$ for all $X \subseteq S$. Then, $\varphi$ is also $k$-fractionally modular extendable.
\end{lem}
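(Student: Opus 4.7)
The plan is to proceed by induction on $k$. For the base case $k=0$, I would note that the pointwise limit of polymatroid functions is itself a polymatroid function: the defining conditions $\varphi(\emptyset)=0$, monotonicity, and submodularity are (in)equalities on finitely many values and are preserved under pointwise limits, so $\varphi$ is $0$-fractionally modular extendable by definition.

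For the inductive step, fix $A, B \subseteq S$. For each $n$, by $k$-fractional modular extendability of $\varphi_n$, there exists a one-step extension $\varphi'_n$ of $\varphi_n$ with respect to $A,B$ on some ground set $S'_n \supseteq S$, witnessed by a subset $Z_n \subseteq S'_n$, such that $\varphi'_n$ is $(k-1)$-fractionally modular extendable. Using the reduction described in the preliminaries (replacing $\varphi'_n$ by its quotient with respect to the partition of $S'_n$ into the singletons of $S$ and the set $Z_n \setminus S$), I may assume $|S'_n \setminus S| \le 1$. After relabeling the at most one new element, there are only finitely many possibilities for the pair $(S'_n, Z_n)$, so I can pass to a subsequence along which $S'_n = S'$ and $Z_n = Z$ are constant.

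The key step is then to extract a pointwise convergent subsequence of $(\varphi'_n)$ in the finite-dimensional space $\bR^{2^{S'}}$. For this, I would use the bound $\varphi'_n(X) \le \varphi'_n(S') = \varphi_n(S)$, which holds for every $X \subseteq S'$ because an extension step preserves the function value on the entire ground set. Since $\varphi_n(S) \to \varphi(S) < \infty$, the sequence $(\varphi'_n)$ is uniformly bounded, and Bolzano--Weierstrass yields a subsequence $\varphi'_{n_j}$ converging pointwise to some $\varphi' \colon 2^{S'} \to \bR$. Each defining identity of a one-step extension, namely $\varphi'(X) = \varphi(X)$ for $X \subseteq S$, $\varphi'(A \cup Z) = \varphi(A)$, $\varphi'(B \cup Z) = \varphi(B)$, and $\varphi'(Z) = \varphi(A) + \varphi(B) - \varphi(A \cup B)$, is a pointwise equality and survives in the limit, so $\varphi'$ is a one-step extension of $\varphi$ with respect to $A, B$ witnessed by $Z$. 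Applying the inductive hypothesis to the sequence $(\varphi'_{n_j})$ of $(k-1)$-fractionally modular extendable polymatroid functions converging to $\varphi'$ then shows that $\varphi'$ is $(k-1)$-fractionally modular extendable, completing the induction.

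The main technical point I expect to be the real obstacle is the reduction to a common ground set and a common witness $Z$: Bolzano--Weierstrass can only be applied on a fixed finite-dimensional space, and without the $|S' \setminus S| \le 1$ reduction from the preliminaries, the auxiliary ground sets $S'_n$ could grow without bound, so no compactness or diagonal argument would apply. Once that reduction is in hand, the remainder of the proof is a routine passage to the limit in finitely many linear (in)equalities.
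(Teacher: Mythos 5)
Your proof is correct and takes essentially the same approach as the paper's: induction on $k$, reduce the one-step extensions to a common ground set of size at most $|S|+1$ (and, in your version, a common witness $Z$), use the fixed value $\varphi'_n(S') = \varphi_n(S)$ to get uniform boundedness, extract a convergent subsequence by Bolzano--Weierstrass, and pass the defining (in)equalities to the limit before invoking the inductive hypothesis. The only difference is that you make the pigeonhole step (finitely many pairs $(S', Z)$) and the bound $\varphi_n(S)\to\varphi(S)$ explicit, whereas the paper states them slightly more loosely; the substance is identical.
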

\begin{proof}
We prove the lemma by induction on $k$. For $k=0$, the result is trivial. Assume now that $k \geq 1$ and that the claim holds for all smaller values of $k$. For any pair $A,B \subseteq S$, each function $\varphi_n$ in the sequence admits a one-step extension $\varphi_n'$ with respect to $A$ and $B$ that is $(k-1)$-fractionally modular extendable. Recall that for every such one-step extension $\varphi_n'$, we may assume its ground set has size at most $|S|+1$, and the function values are uniformly bounded since the value on the entire ground set remains fixed at $\varphi(S)$. Therefore, from this sequence we can select a subsequence $(\varphi'_{i_n} \colon n \in \bZ_+)$, all defined on the same ground set $S'$, for which the values $\varphi'_{i_n}(X)$ converge for every subset $X \subseteq S'$. Denote the pointwise limit by $\varphi'$. By construction, $\varphi'$ is a polymatroid function. Moreover, the inductive hypothesis implies that $\varphi'$ is $(k-1)$-fractionally modular extendable. 
Finally, because $\varphi'$ is the pointwise limit of one-step extensions with respect to $A$ and $B$, it follows that $\varphi'$ is itself a one-step extension of $\varphi$ with respect to $A$ and $B$. This completes the proof. 
\end{proof}

\subsection{Skew-representability} \label{subsec:skew}

If two matroids are representable over the same field $\bF$, then they admit a tensor product representable over $\bF$, which can be constructed via the Kronecker product of their representing matrices. Surprisingly, the analogous statement does not hold for skew-representable matroids; see Remark~\ref{rem:quat} for an example. Nevertheless, a similar result holds if one of the matroids is representable over the center of $\bF$. This follows essentially from the fact that $r(A \otimes B) = r(A) \cdot r(B)$ whenever the entries of $B$ lie in the center of the skew field. To see this, we sketch one of the standard proofs of this result in the commutative setting. We omit the proof of the following result, as the argument given in, e.g.,~\cite{graham1981kronecker} carries over to our setting without modification.

\begin{lem}[Mixed-product property] \label{lem:mpp}
    Let $\bF$ be a skew field, and $A \in \bF^{m \times n}$, $B \in \bF^{p \times q}$, $C \in \bF^{n \times k}$, and $D \in \bF^{q \times r}$ be matrices such that each entry of $B$ commutes with each entry of $C$. Then,
    \[(A\otimes B)(C \otimes D) = (AC) \otimes (BD).\]
\end{lem}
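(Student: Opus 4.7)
The plan is to prove the identity by a direct entrywise calculation, isolating the one place where the commutativity hypothesis is actually needed. I would first fix the standard indexing: rows and columns of $A\otimes B$ are indexed by pairs $(i,\alpha)\in[m]\times[p]$ and $(j,\beta)\in[n]\times[q]$ with $(A\otimes B)_{(i,\alpha),(j,\beta)} = a_{ij}b_{\alpha\beta}$; analogously, rows and columns of $C\otimes D$ are indexed by $(j,\beta)\in[n]\times[q]$ and $(l,\gamma)\in[k]\times[r]$ with $(C\otimes D)_{(j,\beta),(l,\gamma)} = c_{jl}d_{\beta\gamma}$. Both sides of the claimed equality are then $mp\times kr$ matrices whose rows are indexed by $(i,\alpha)$ and columns by $(l,\gamma)$, so it suffices to check that they agree entrywise.

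Expanding the left-hand side by the definition of matrix multiplication gives
\[((A\otimes B)(C\otimes D))_{(i,\alpha),(l,\gamma)}=\sum_{j\in[n]}\sum_{\beta\in[q]} a_{ij}\,b_{\alpha\beta}\,c_{jl}\,d_{\beta\gamma}.\]
The key step is to use the hypothesis that each $b_{\alpha\beta}$ commutes with each $c_{jl}$ to swap these two factors inside every summand, producing $\sum_{j,\beta} a_{ij}\,c_{jl}\,b_{\alpha\beta}\,d_{\beta\gamma}$. Setting $x_j=a_{ij}c_{jl}$ and $y_\beta=b_{\alpha\beta}d_{\beta\gamma}$, the distributive law in $\bF$ gives $\sum_{j,\beta} x_j y_\beta = \bigl(\sum_j x_j\bigr)\bigl(\sum_\beta y_\beta\bigr)$; this factorisation uses only ring axioms and requires no further commutation. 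The two factors are exactly $(AC)_{il}$ and $(BD)_{\alpha\gamma}$, so the entry equals $((AC)\otimes(BD))_{(i,\alpha),(l,\gamma)}$, which closes the argument.

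The only substantive ingredient beyond routine matrix algebra is the single swap of $b_{\alpha\beta}$ past $c_{jl}$, and this is where the hypothesis enters; it is the step I would expect a careful reader to scrutinise in the skew setting. A related subtlety is that no commutation is available to pull $a_{ij}$ past $b_{\alpha\beta}$, nor $d_{\beta\gamma}$ past the preceding factors, so the regrouping into a product of two independent sums must be organised exactly as above -- with the $a$ and $c$ factors kept together on the left and the $b$ and $d$ factors together on the right. Since the remainder of the proof only invokes the ring structure of $\bF$, the argument carries over from the commutative case without modification, matching the paper's remark.
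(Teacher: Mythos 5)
Your entrywise computation is correct and is exactly the standard argument the paper chooses to omit (it cites Graham's book and notes that the commutative proof carries over unchanged); you have simply spelled it out, correctly isolating the single use of the hypothesis at the swap $b_{\alpha\beta}c_{jl}=c_{jl}b_{\alpha\beta}$ and noting that the final factorisation $\sum_{j,\beta}x_jy_\beta=(\sum_j x_j)(\sum_\beta y_\beta)$ needs only distributivity. No gap.
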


The proof of the following lemma is essentially the same as in the commutative case. However, since we could not find a suitable reference and the case over skew fields can sometimes behave in surprising ways, we include a proof here as well.

\begin{lem} \label{lem:Kronecker}
    Let $\bF$ be a skew field and $A \in \bF^{m \times n}$ and $B \in Z(\bF)^{p \times q}$ be matrices. Then, $r(A\otimes B) = r(A)\cdot r(B)$.
\end{lem}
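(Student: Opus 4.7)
The plan is to reduce to Kronecker products of normal-form matrices and then compute the rank by direct inspection. Set $r = r(A)$ and $s = r(B)$. Choose invertible $U \in \bF^{m \times m}$ and $V \in \bF^{n \times n}$ with
\[UAV = E_{r,m,n} \coloneqq \begin{pmatrix} I_r & 0 \\ 0 & 0 \end{pmatrix},\]
an $m \times n$ matrix. Since $Z(\bF)$ is a (commutative) field, Gaussian elimination over $Z(\bF)$ likewise produces invertible $U' \in Z(\bF)^{p \times p}$ and $V' \in Z(\bF)^{q \times q}$ with $U'BV' = E_{s',p,q}$ for some $s'$. Viewing $U'$ and $V'$ as invertible matrices over $\bF$, we obtain $s = r(B) = r(U'BV') = r(E_{s',p,q}) = s'$, so the rank of $B$ over $\bF$ coincides with its rank over $Z(\bF)$ and equals $s$.

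Next I would factor the Kronecker product via Lemma~\ref{lem:mpp}, which can be applied because every matrix we need to commute past the others has central entries. A first application gives $(UAV) \otimes (U'BV') = (UA \otimes U'B)(V \otimes V')$, since $U'B$ has entries in $Z(\bF)$ and therefore commutes with the entries of $V$; a second application gives $UA \otimes U'B = (U \otimes U')(A \otimes B)$, since the entries of $U'$ commute with those of $A$. Combining these,
\[(UAV) \otimes (U'BV') = (U \otimes U')(A \otimes B)(V \otimes V').\]
Using the same commutativity arguments once more, $U \otimes U'$ is invertible with inverse $U^{-1} \otimes U'^{-1}$, and similarly for $V \otimes V'$. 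Multiplying by invertible matrices does not change the rank, hence
\[r(A \otimes B) = r(E_{r,m,n} \otimes E_{s,p,q}).\]

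Finally, a direct inspection of the block structure finishes the proof. Writing the Kronecker product as an $m \times n$ array of $p \times q$ blocks, the $(i,j)$ block equals $(E_{r,m,n})_{ij} \cdot E_{s,p,q}$, which is a copy of $E_{s,p,q}$ when $i = j \le r$ and is the zero block otherwise. The nonzero entries of the full matrix are therefore $rs$ ones, each lying in a distinct row and a distinct column; hence its nonzero columns are $rs$ distinct standard basis vectors, and $r(E_{r,m,n} \otimes E_{s,p,q}) = rs$. This yields $r(A \otimes B) = r(A)\cdot r(B)$, as required.

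The main obstacle is managing the non-commutativity carefully when invoking Lemma~\ref{lem:mpp}: each factorization step requires one of the two inner factors to have entries that commute with everything, which is exactly why the hypothesis $B \in Z(\bF)^{p \times q}$ is needed and why the reduction must use $Z(\bF)$-invertible matrices on the $B$-side. A related subtlety that must be addressed explicitly is that the rank of $B$ computed in $\bF^{p\times q}$ coincides with its rank computed in $Z(\bF)^{p\times q}$; this is what justifies using Gaussian elimination over $Z(\bF)$ as a valid normal-form reduction.
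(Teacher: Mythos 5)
Your proof is correct, but it takes a genuinely different route from the paper. The paper proves the two inequalities separately: for $r(A\otimes B) \ge r(A)\cdot r(B)$ it exhibits an invertible $r(A)r(B)\times r(A)r(B)$ submatrix (using Lemma~\ref{lem:mpp} to show the Kronecker product of invertible matrices is invertible), and for the reverse inequality it shows by a direct spanning argument that a specific set of $r(A)\cdot r(B)$ columns of $A\otimes B$ spans all the columns. You instead reduce $A$ and $B$ to the normal form $E_{r,m,n}$ by invertible row and column operations --- over $\bF$ for $A$, over $Z(\bF)$ for $B$ --- and then use Lemma~\ref{lem:mpp} to propagate these equivalences through the Kronecker product, reducing everything to the trivial rank computation for $E_{r,m,n}\otimes E_{s,p,q}$. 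Your use of Lemma~\ref{lem:mpp} is careful and valid: in every application one of the two inner factors has central entries, and you explicitly note that the $B$-side reduction must be performed over $Z(\bF)$ (not $\bF$) so that the resulting $U'$, $V'$ commute with everything. Your approach is the more structural one and is the standard argument in the commutative case; the paper's version is more elementary in that it avoids invoking normal forms over skew fields and instead argues row-by-row and column-by-column. Both rely on the mixed-product property in essentially the same way.
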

\begin{proof}
    Assume first that both $A$ and $B$ are invertible. Let $I_\ell$ denote the $\ell \times \ell$ identity matrix. Then, \cref{lem:mpp} implies that 
    \[I_{mp} = I_m \otimes I_p = (A A^{-1}) \otimes (B B^{-1}) = (A\otimes B)(A^{-1} \otimes B^{-1}),\]
    showing that $A\otimes B$ is invertible well. 
    
    We turn to the proof of the general case. Let $A'$ and $B'$ be invertible submatrices of $A$ and $B$ having size $r(A)\times r(A)$ and $r(B) \times r(B)$, respectively. Then, $A'\otimes B'$ is an invertible submatrix of $A\otimes B$ having size $(r(A)\cdot r(B)) \times (r(A)\cdot r(B))$, implying $r(A\otimes B) \ge r(A)\cdot r(B)$. To see the other direction, we may assume that the first $r(A)$ columns of $A$ form a basis of the right column space of $A$ and the first $r(B)$ columns of $B$ form a basis of the column space of $B$. Consider the set of columns of $A\otimes B$ with index set $H\coloneqq \{(i-1) \cdot q + j \mid i \in [r(A)], j \in [r(B)] \}$. As the entries of $A$ and $B$ commute and the first $r(B)$ columns of $B$ form a basis of the column space of $B$, for each $i \in [n]$ columns $\{(i-1)\cdot q + j \mid j \in [r(B)]\}$  span columns $\{(i-1)\cdot q + j \mid j \in [q]\}$ in the column space of $A\otimes B$, thus columns corresponding to $H$ span columns $\{(i-1)\cdot q + j \mid i \in [r(A)], j \in [q]\}$. Similarly, for each $j \in [q]$ columns $\{(i-1)\cdot q + j \mid i \in [r(A)]\}$ span columns $\{(i-1)\cdot q + j \mid i \in [n]\}$ in the column space of $A\otimes B$, thus columns corresponding to $H$ span all the columns. This shows that $r(A\otimes B) \le |H| = r(A)\cdot r(B)$, finishing the proof.
\end{proof}

While two matroids that are representable over the same skew field $\bF$ do not necessarily have a tensor product representable over $\bF$, the following lemma shows that a slightly weaker statement holds.

\begin{lem}\label{lem:representable}
    Let $\bF$ be a skew field, $\varphi_1\colon S_1 \to \bZ$ an $\bF$-representable polymatroid function, and $\varphi_2\colon 2^{S_2}\to \bZ$ a $Z(\bF)$-representable polymatroid function. Then, $\varphi_1$ and $\varphi_2$ admit a tensor product $\varphi$ which is representable over $\bF$. Moreover, if $\varphi_1$ and $\varphi_2$ are matroid rank functions, then $\varphi$ is the rank function of an $\bF$-representable matroid.  
\end{lem}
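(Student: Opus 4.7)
The plan is to construct $\varphi$ explicitly via the Kronecker product of representing matrices and then invoke \cref{lem:Kronecker} to verify the required rank identity on rectangles. Since $\varphi_1$ is $\bF$-representable, fix a matrix $A \in \bF^{m \times n_1}$ together with a partition $\{P_s \mid s \in S_1\}$ of $[n_1]$ such that for every $X\subseteq S_1$, $\varphi_1(X)$ equals the rank of the submatrix $A[P_X]$ of $A$ formed by the columns indexed by $P_X \coloneqq \bigcup_{s\in X} P_s$. Analogously, fix $B\in Z(\bF)^{p\times n_2}$ and a partition $\{Q_t \mid t \in S_2\}$ of $[n_2]$ representing $\varphi_2$, and write $Q_Y \coloneqq \bigcup_{t\in Y} Q_t$ for $Y\subseteq S_2$.

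Next, consider $C \coloneqq A\otimes B \in \bF^{mp \times n_1n_2}$ with columns naturally indexed by $[n_1]\times[n_2]$, and take the partition $\{P_s \times Q_t \mid (s,t) \in S_1\times S_2\}$ of $[n_1]\times[n_2]$. Define $\varphi\colon 2^{S_1\times S_2}\to \bZ$ by setting $\varphi(Z)$ to be the rank of the submatrix of $C$ on the column set $\bigcup_{(s,t)\in Z} P_s\times Q_t$. As the rank of a submatrix (together with the chosen column partition) always yields an $\bF$-representable integer-valued polymatroid function, $\varphi$ is automatically a polymatroid function representable over $\bF$; it remains only to check the tensor product identity.

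For this, take any $X\subseteq S_1$ and $Y\subseteq S_2$. By construction, the columns of $C$ indexed by $\bigcup_{(s,t)\in X\times Y} P_s\times Q_t$ are precisely those with indices in $P_X \times Q_Y$, and this submatrix of $C$ coincides with the Kronecker product $A[P_X]\otimes B[Q_Y]$. Since $B[Q_Y]$ has its entries in $Z(\bF)$, \cref{lem:Kronecker} gives
\[\varphi(X\times Y) = r\bigl(A[P_X]\otimes B[Q_Y]\bigr) = r(A[P_X])\cdot r(B[Q_Y]) = \varphi_1(X)\cdot \varphi_2(Y),\]
which is exactly the required identity. This shows that $\varphi$ is a tensor product of $\varphi_1$ and $\varphi_2$ and is representable over $\bF$.

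Finally, for the moreover part, observe that if $\varphi_1$ and $\varphi_2$ are matroid rank functions, then by the definition of matroid representability over a skew field the partitions $\{P_s\}$ and $\{Q_t\}$ may be taken to consist of singletons. Consequently, the partition $\{P_s\times Q_t\}$ of the column set of $C$ also consists of singletons, so $C$ directly represents an $\bF$-representable matroid on $S_1\times S_2$ whose rank function is $\varphi$. The only point that might require slightly more care is the use of \cref{lem:Kronecker} with column-submatrices, but since submatrices preserve the property of having entries in $Z(\bF)$, no additional obstacle arises.
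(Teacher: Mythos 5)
Your proposal is correct and follows essentially the same route as the paper's proof: both construct the tensor product via the Kronecker product $A\otimes B$ of the representing matrices, use the column partition given by products of the original partition blocks, and reduce the rank identity on rectangles to Lemma~\ref{lem:Kronecker}. The only difference is cosmetic (you index columns by pairs rather than by a flattened integer index, and you spell out the singleton-partition observation for the matroid case, which the paper leaves implicit).
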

\begin{proof}
    Let $A\in \bF^{m \times n}$ be a matrix and $\cP = \{P_{s_1} \mid s_1 \in S_1\}$ a partition of $[n]$ representing $\varphi_1$, and $B\in Z(\bF)^{p\times q}$ a matrix and $\cQ = \{Q_{s_2} \mid s_2 \in S_2\}$ a partition of $[q]$ representing $\varphi_2$. Define $R_{(s_1, s_2)} \coloneqq \{(q-1)\cdot i + j \mid i \in P_{s_1}, j \in Q_{s_2}\}$ for $(s_1, s_2) \in S_1 \times S_2$ and observe that $\cR = \{R_{(s_1, s_2)} \mid (s_1, s_2) \in S_1 \times S_2\}$ is a partition of $[nq]$. Consider the polymatroid function $\varphi\colon 2^{S_1 \times S_2} \to \bZ$ represented by the matrix $A\otimes B\in \bF^{mp \times nq}$ and the partition $\cR$. Then, for $X_1 \subseteq S_1$ and $X_2 \subseteq S_2$, $\varphi(X_1\times X_2)$ equals the rank of the submatrix $A\otimes B$ formed by the columns with indices from $\bigcup_{(s_1, s_2)\in S_1 \times S_2} R_{(s_1, s_2)}$. This submatrix is the same as $A'\otimes B'$ where $A'$ is the submatrix of $A$ formed by its columns with indices $\bigcup_{s_1 \in X_1} P_{s_1}$ and $B'$ is the submatrix of $B$ formed by its columns with indices $\bigcup_{s_2 \in X_2} Q_{s_2}$. Applying \cref{lem:Kronecker}, we get $\varphi(X_1\times X_2) = r(A'\otimes B') = r(A')\cdot r(B') = \varphi_1(X_1) \cdot \varphi_2(X_2)$, finishing the proof.
\end{proof}

\begin{rem} \label{rem:assoc}
    We observe that the previous proof implies a stronger corollary for the product of multiple polymatroid functions using the associativity of Kronecker products. For a positive integer $k$ and polymatroid functions $\varphi_i\colon 2^{S_i}\to \bR$ for $i \in [k]$, we define the {\it associative tensor product} of the $k$-tuple $(\varphi_1,\dots, \varphi_k)$ recursively. For $k=1$, we define it to be $\varphi_1$, while for $k\ge 2$, we define it to be a polymatroid function $\varphi\colon 2^{S_1\times \dots \times S_k} \to \bR$ such that for each $i \in [k-1]$, $\varphi$ is a tensor product of an associative tensor product of $(\varphi_1,\dots \varphi_i)$ and an associative tensor product of $(\varphi_{i+1},\dots, \varphi_k)$. In particular, if $k=2$, we get back the definition of $\varphi_1\otimes \varphi_2$, while for $k=3$ we get a polymatroid function that is simultaneously a tensor product $(\varphi_1\otimes \varphi_2)\otimes \varphi_3$ and a tensor product $\varphi_1\otimes (\varphi_2 \otimes \varphi_3)$. If $\varphi_1\colon 2^{S_1}\to \bZ$ is $\bF$-representable and $\varphi_i\colon 2^{S_i}\to \bZ$ is $Z(\bF)$-representable for $i \in \{2,\dots, k\}$, then it is not difficult to check that the proof of \cref{lem:representable} implies the existence of an associative tensor product of $(\varphi_1,\dots, \varphi_k)$ since taking the Kronecker product of matrices is an associative operation.  We can also analogously define the associative tensor product of a $k$-tuple of matroids. 
\end{rem}

Finally, we give a sufficient condition for two matroids to have a tensor product that is representable over some skew field of a given characteristic.

\begin{lem} \label{lem:skew_large} 
    Let $\bF$ be a skew field of characteristic $p$, $\varphi_1\colon 2^{S_1}\to \bZ$ a polymatroid function representable over $\bF$, and $\varphi_2\colon 2^{S_2}\to \bZ$ a polymatroid function representable over all infinite fields of characteristic $p$. 
    Then, $\varphi_1$ and $\varphi_2$ admit a tensor product $\varphi$ which is representable over a skew field of characteristic $p$. Moreover, if $\varphi_1$ and $\varphi_2$ are matroid rank functions, then $\varphi$ is a matroid rank function as well. 
\end{lem}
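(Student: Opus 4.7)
My plan is to reduce to \cref{lem:representable} by passing to a larger skew field whose center is infinite of characteristic $p$. The point is that \cref{lem:representable} requires $\varphi_2$ to be representable over the center of the ambient skew field, whereas here we only assume representability over infinite fields of characteristic $p$. So I first enlarge $\bF$ enough to make the center infinite, and then apply the earlier lemma.

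Concretely, I would proceed as follows. Let $t$ be a central indeterminate, and consider the polynomial ring $\bF[t]$ where $t$ commutes with every element of $\bF$. This is a (left and right) Ore domain, so it admits a skew field of fractions $\bK = \bF(t)$. The skew field $\bK$ contains $\bF$, has characteristic $p$, and its center satisfies $Z(\bK) \supseteq Z(\bF)(t)$; in particular, $Z(\bK)$ contains the infinite subfield generated by $1$ and $t$, which is $\bF_p(t)$ if $p > 0$ and $\bQ(t)$ if $p = 0$. Thus $Z(\bK)$ is an infinite field of characteristic $p$.

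Since $\varphi_1$ is representable over $\bF \subseteq \bK$, it is also representable over $\bK$. Since $\varphi_2$ is representable over every infinite field of characteristic $p$, it is in particular representable over $Z(\bK)$. Applying \cref{lem:representable} with $\bF$ replaced by $\bK$ yields a tensor product $\varphi$ of $\varphi_1$ and $\varphi_2$ which is representable over $\bK$, and $\bK$ is a skew field of characteristic $p$. For the moreover part, if $\varphi_1$ and $\varphi_2$ are matroid rank functions, then the conclusion of \cref{lem:representable} guarantees that $\varphi$ is the rank function of a $\bK$-representable matroid.

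The only mildly nontrivial ingredient is the existence of $\bF(t)$ with the stated center, which is a standard fact about Ore extensions (see e.g.~\cite{cohn1995skew}); everything else is bookkeeping. The main conceptual obstacle, namely that a tensor product of two $\bF$-representable matroids need not itself be $\bF$-representable (cf.~Remark~\ref{rem:quat}), is bypassed because we are free to enlarge the skew field, as long as we preserve the characteristic.
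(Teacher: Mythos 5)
Your proof is correct and takes essentially the same route as the paper: pass to the rational function field $\bF(t)$ in a central indeterminate, observe its center is an infinite field of characteristic $p$, and then invoke \cref{lem:representable}. The only (harmless) cosmetic difference is that the paper cites the exact identity $Z(\bF(x)) = Z(\bF)(x)$ from Cohn, whereas you only use the containment $Z(\bK) \supseteq Z(\bF)(t)$ together with the fact that $Z(\bK)$ is automatically a field — which suffices for the application.
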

\begin{proof}
    Let $C$ denote the center of $\bF$. 
    Consider the rational function field $\bF(x)$ in a central indeterminate $x$ (see \cite{cohn1995skew} for definitions). Note that $\bF(x)$ has characteristic $p$ and its center is $C(x)$, see \cite[Proposition~2.1.5]{cohn1995skew} for details. The polymatroid function $\varphi_1$ is representable over $\bF(x)$ since $\bF(x)$ is an extension of $\bF$, and $\varphi_2$ is representable over $Z(\bF(x)) = C(x)$ since $C(x)$ is an infinite field of characteristic $p$. Therefore, \cref{lem:representable} implies that $\varphi_1$ and $\varphi_2$ have a tensor product that is representable over $\bF(x)$.   
\end{proof}

\begin{rem}\label{rem:reg}
    \cref{lem:skew_large} implies that if $M$ and $N$ are regular matroids, then for each field $\bF$ they admit an $\bF$-representable tensor product. However, these tensor products are not necessarily the same across fields: $M$ and $N$ do not necessarily admit a regular tensor product. In particular, it can be shown that any two binary matroids admit a unique binary tensor product, and the unique tensor product $(U_{2,3} \otimes U_{2,3}) \otimes U_{2,3}$ is representable over a field $\bF$ if and only if $\bF$ has characteristic 2. As we will see in \cref{lem:U232_unique}, the tensor product $U_{2,3} \otimes U_{2,3}$ is unique and is the cographic matroid of $K_{3,3}$.
\end{rem}

\begin{rem}\label{rem:quat}
The non-Pappus matroid is a fundamental rank-3 simple matroid on 9 elements that violates Pappus's Theorem in projective geometry and is therefore not representable over any field. However, it is skew-representable over the quaternions. Interestingly, we can show that the non-Pappus matroid admits a tensor product with itself, yet none of the matroids obtained in this way is representable over the quaternions -- we omit the proof due to its length.
\end{rem}

\section{Skew-representability through tensor products}
\label{sec:tensor}

This section is devoted to one of the main results of the paper: a characterization of skew-representability in terms of tensor products. In \cref{sec:tba}, we establish a connection between a matroid being $1$-tensor-compatible with the uniform matroid $U_{2,3}$ and being $1$-modular-extendable. In \cref{sec:cip}, we show that $k$-tensor-compatibility for every $k \in \mathbb{Z}_+$ implies fully modular extendability. \cref{sec:skewfield} presents the main result, providing a tensor product characterization of skew-representable connected matroids, as well as matroids representable over skew fields of fixed prime characteristic. As a consequence, we obtain that both skew-representability and representability over a skew field of fixed prime characteristic are co-recursively enumerable for connected matroids. Finally, in \cref{sec:polymatroids}, we extend parts of this framework to polymatroid functions, relying on Helgason's theorem and a linear program formulation for iterated tensor products.

\subsection{Tensor with \texorpdfstring{$U_{2,3}$}{U23} and 1-modular extendability}
\label{sec:tba}

The next theorem establishes a connection between a matroid having a tensor product with $U_{2,3}$ and being $1$-modular extendable; see~\cite{padro2025tensor} for a different proof.

\begin{thm}\label{thm:1modular}
Assume that $M=(S_1,r_1)$ and $U_{2,3}=(S_2,r_2)$ admit a tensor product $N=(S_1\times S_2,r)$, and let $A,B\subseteq S_1$. Then, $N$ has a minor $N'$ such that $N'$ is a one-step extension of $M$ with respect to $A$ and $B$.
\end{thm}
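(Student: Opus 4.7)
The plan is to realize the desired extension as a minor of $N$ of the form $N' = (N/S_1^c)|(S_1^a \cup Z)$, where we write $S_2 = \{a,b,c\}$, identify $S_1$ with $S_1^a$, and take $Z$ to be a suitably chosen independent subset of $(A\cap B)^b$. The geometric intuition is that contracting the third ``slice'' $S_1^c$ should force, for every $s \in S_1$, the elements $(s,a)$ and $(s,b)$ to lie in a common flat; consequently $(A\cap B)^b$ will end up inside both $\cl(A^a)$ and $\cl(B^a)$ in the contraction, which is exactly the configuration needed for it to certify a modular pair of the images of $A$ and $B$.

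The first step is to establish three rank identities in $N$, each by a single invocation of \cref{prop:crosses}: $r_N(X^a \cup S_1^c) = r_1(X) + r_1(S_1)$ for every $X \subseteq S_1$ (via $X_1 = X,\ Y_1 = \{a,c\},\ X_2 = S_1,\ Y_2 = \{c\}$); $r_N((A\cap B)^b \cup S_1^c) = r_1(A\cap B) + r_1(S_1)$ (via $X_1 = A\cap B,\ Y_1 = \{b,c\},\ X_2 = S_1,\ Y_2 = \{c\}$); and $r_N((A\times S_2) \cup S_1^c) = r_1(A) + r_1(S_1)$ (via $X_1 = A,\ Y_1 = S_2,\ X_2 = S_1,\ Y_2 = \{c\}$). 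Setting $\tilde{N} \coloneqq N / S_1^c$ and subtracting $r_N(S_1^c) = r_1(S_1)$, these yield $r_{\tilde{N}}(X^a) = r_1(X)$, $r_{\tilde{N}}((A\cap B)^b) = r_1(A\cap B)$, and --- by sandwiching $A^a \cup (A\cap B)^b \cup S_1^c$ between $A^a \cup S_1^c$ and $(A\times S_2) \cup S_1^c$, both of rank $r_1(A) + r_1(S_1)$ --- $(A\cap B)^b \subseteq \cl_{\tilde{N}}(A^a)$. The symmetric choice with $B$ gives $(A\cap B)^b \subseteq \cl_{\tilde{N}}(B^a)$, while the first identity shows that $\tilde{N}|S_1^a$ equals $M$ under the identification $s \leftrightarrow (s,a)$.

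Submodularity of $r_1$ gives $k \coloneqq r_1(A) + r_1(B) - r_1(A\cup B) \leq r_1(A\cap B)$, and since $(A\cap B)^b$ has rank $r_1(A\cap B)$ in $\tilde{N}$, it contains an independent set $Z$ of size $k$. Put $N' \coloneqq \tilde{N}|(S_1^a \cup Z)$, which is a minor of $N$. Identifying $S_1$ with $S_1^a$, the four defining conditions of a one-step extension of $M$ with respect to $A$ and $B$ follow at once: the rank function restricts to $r_1$ on $S_1$; $r_{N'}(A \cup Z) = r_1(A)$ and $r_{N'}(B \cup Z) = r_1(B)$ because $Z \subseteq \cl_{\tilde{N}}(A^a) \cap \cl_{\tilde{N}}(B^a)$; and $r_{N'}(Z) = |Z| = k$ by the independence of $Z$.

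The main obstacle is identifying the correct minor at the outset, in particular recognizing the role of the ``third'' element $c$ of $U_{2,3}$: once one observes that contracting $S_1^c$ collapses each pair $\{(s,a),(s,b)\}$ into a common flat, it becomes natural to look for $Z$ inside the $b$-slice restricted to $A \cap B$, and the rest is a disciplined application of \cref{prop:crosses}. A small but important subtlety is ensuring that the rank of $Z$ matches the modular deficit $k$ rather than the a priori larger quantity $r_1(A\cap B)$, which is why we must take $Z$ to be a carefully sized independent subset of $(A\cap B)^b$ rather than the whole set.
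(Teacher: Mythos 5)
Your proof is correct, and it takes a genuinely different route from the paper's. The paper contracts $A_0^v\cup B_0^w$, where $B_0$ is a maximum independent set of $B$ and $A_0$ extends it to a maximum independent set of $A\cup B$, then restricts to $S_1^u\cup C^v$ with $C$ chosen so that $A_0\cup C$ is a maximum independent set of $A$; the set $C^v$ then plays the role of $Z$ and has size $r_1(A)+r_1(B)-r_1(A\cup B)$ by construction, with the verification going through \cref{lem:direct} and \cref{lem:gen}. You instead contract the whole slice $S_1^c$, use three direct applications of \cref{prop:crosses} to compute ranks in $\tilde N=N/S_1^c$, and derive $(A\cap B)^b\subseteq\cl_{\tilde N}(A^a)\cap\cl_{\tilde N}(B^a)$ via a sandwich; you then select $Z$ as an independent subset of $(A\cap B)^b$ of exactly the modular deficit size $k\le r_1(A\cap B)$. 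I checked each step: the three \cref{prop:crosses} computations have the right hypotheses ($X_1\subseteq X_2$, $Y_2\subseteq Y_1$) and give $r_N(X^a\cup S_1^c)=r_1(X)+r_1(S_1)$, $r_N((A\cap B)^b\cup S_1^c)=r_1(A\cap B)+r_1(S_1)$, $r_N((A\times S_2)\cup S_1^c)=r_1(A)+r_1(S_1)$; the sandwich $A^a\cup S_1^c\subseteq A^a\cup(A\cap B)^b\cup S_1^c\subseteq(A\times S_2)\cup S_1^c$ holds with both endpoints equal; and the four defining properties of a one-step extension follow. Your construction is slightly cleaner conceptually — contracting the full slice is a canonical move, and $Z$ is found rather than built — while the paper's produces $Z=C^v$ of exactly the right size by design. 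Both are valid and of comparable length.
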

\begin{proof} 
Let $S_2=\{u,v,w\}$ denote the elements of $U_{2,3}$. We consider the following independent sets in $M$. Let $B_0$ be a maximum independent set of $B$, and let $A_0$ be an independent set of $A$ such that $A_0\cup B_0$ is a maximum independent set of $A\cup B$. Finally, let $C$ be an independent set of $A$ such that $A_0\cup C$ is a maximum independent set of $A$. Consider the minor $N'=(S_1^u\cup C^v,r')$ of $N$ obtained as $N'=N/(A^v_0\cup B_0^w)|(S_1^u\cup C^v)$. Note that, by Lemma~\ref{lem:direct} and by the choice of $A_0$ and $B_0$, we have $r(A_0^v\cup B_0^w)=r(A_0^v)+r(B_0^w)=|A_0|+|B_0|=r(A^x\cup B^x)$ for all $x\in\{u,v,w\}$.

\begin{cla}\label{cl:restriction}
$r'(X)=r(X)$ for all $X\subseteq S_1^u$.
\end{cla}
\begin{claimproof}
By definition, $r'(X)\leq r(X)$ for all $X\subseteq S_1^u$. We first show that it suffices to verify the statement for $X=S_1^u$. Indeed, suppose that $X\subseteq S_1^u$ and that $r'(X)<r(X)$, that is, $r(X\cup A_0^v\cup B_0^w)-r(A_0^v\cup B_0^w)<r(X)$. Then, since $r$ is submodular and increasing, we get
    \begin{align*}
        r'(S_1^u)
        &=
        r(S_1^u\cup A_0^v\cup B_0^w)-r(A_0^v\cup B_0^w)\\
        &\leq 
        r(X\cup A_0^v\cup B_0^w)+r(S_1^u)-r(X)-r(A_0^v\cup B_0^w)\\
        &=
        r(S_1^u)+r'(X)-r(X)\\
        &< 
        r(S_1^u).
    \end{align*}
This would imply $r'(S_1^u) < r(S_1^u)$ as well. Now assume that $X=S_1^u$. Then, by Lemmas~\ref{lem:direct} and~\ref{lem:gen}, we get
\begin{align*}
        r'(S_1^u)
        &=
        r(S_1^u\cup A_0^v\cup B_0^w)-r(A_0^v\cup B_0^w)\\
        &= 
        r(S_1^u\cup A_0^v\cup A_0^w\cup B_0^w)-r(A_0^v\cup B_0^w)\\
        &= 
        r(S_1^u\cup A_0^w\cup B_0^w)-r(A_0^v\cup B_0^w)\\
        &=
        r(S_1^u)+ r(A_0^w\cup B_0^w)-r(A_0^v\cup B_0^w)\\
        &=
        r(S_1^u),
    \end{align*}
concluding the proof of the claim.
\end{claimproof}

Finally, we verify that $r'$ assigns the correct values to the relevant sets.

\begin{cla}\label{cl:modular}
$r'(A^u\cup C^v)=r(A^u)$, $r'(B^u\cup C^v)=r(B^u)$ and $r'(C^v)=r(A^u)+r(B^u)-r(A^u\cup B^u)$.
\end{cla}
\begin{claimproof}
By Lemmas~\ref{lem:direct} and~\ref{lem:gen}, together with the facts that $A_0\cup C$ is a maximum independent set in $A$ and $A_0\cup B_0$ is a maximum independent set in $A\cup B$, we get
\begin{align*}
 r'(A^u\cup C^v)
 &=
 r(A^u\cup C^v\cup A_0^v\cup B_0^w )-r(A_0^v\cup B_0^w)\\
 &=
 r(A^u\cup A^v \cup A^w\cup B_0^w)-r(A_0^v\cup B_0^w)\\
 &=
 r(A^u\cup A_0^w\cup B_0^w)-r(A_0^v\cup B_0^w)\\
 &=
 r(A^u)+r(A_0^w \cup B_0^w)-r(A_0^v\cup B_0^w)\\
 &=
 r(A^u).
\end{align*}
Similarly,
 \begin{align*}
 r'(B^u\cup C^v)
 &=
 r(B^u\cup C^v\cup A_0^v\cup B_0^w )-r(A_0^v\cup B_0^w)\\
 &=
 r(B^u\cup A^v\cup B_0^v\cup B_0^w)-r(A_0^v\cup B_0^w)\\
 &=
 r(B^u\cup A_0^v \cup B_0^v)-r(A_0^v\cup B_0^w)\\
 &=
 r(B^u)+r(A_0^v \cup B_0^v)-r(A_0^v\cup B_0^w)\\
 &=
 r(B^u).
\end{align*}
Finally, using that $B_0$ is a maximum independent set of $B$ as well, we get
\begin{align*}
    r'(C^v)
    &=
    r(C^v\cup A_0^v\cup B_0^w)-r(A_0^v\cup B_0^w)\\
    &=
    r(A^v\cup B_0^w)-r(A_0^v\cup B_0^w)\\
    &=
    r(A^v)+r(B_0^w)-r(A_0^v\cup B_0^w)\\
    &=
    r(A^u)+r(B^u)-r(A^u\cup B^u),
\end{align*}
and the claim follows.
\end{claimproof}

By Claims~\ref{cl:restriction} and~\ref{cl:modular}, $N'$ is a one-step extension of $M$ with respect to $A$ and $B$, concluding the proof of the theorem.
\end{proof}

As a direct consequence of Theorem~\ref{thm:1modular}, we get that having a tensor product with $U_{2,3}$ implies the matroid being $1$-modular extendable.

\begin{cor}\label{cor:1me}
Let $M$ be a matroid that has a tensor product with $U_{2,3}$. Then, $M$ is $1$-modular extendable.
\end{cor}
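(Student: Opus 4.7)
The plan is to derive the corollary as an immediate consequence of Theorem~\ref{thm:1modular}. First, I would unfold the definition of $1$-modular extendability for matroids: $M=(S_1,r_1)$ is $1$-modular extendable precisely when, for every pair $A,B \subseteq S_1$, there exists a $0$-modular extendable polymatroid function $r'$ obtained from $r_1$ by an extension step with respect to $A$ and $B$, with the additional matroidal requirement that $r'$ be subcardinal. Since every integer-valued polymatroid function is $0$-modular extendable by definition, the only task is to exhibit, for every choice of $A$ and $B$, a subcardinal one-step extension of $M$ with respect to $(A,B)$.

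So fix arbitrary $A,B \subseteq S_1$ and let $N=(S_1 \times S_2, r)$ denote the assumed tensor product of $M$ and $U_{2,3}$. Applying Theorem~\ref{thm:1modular} to the pair $A,B$ produces a minor $N'=(S',r')$ of $N$ which is a one-step extension of $M$ with respect to $A$ and $B$. Because $N$ is a matroid and minors of matroids are matroids, $N'$ is itself a matroid, so its rank function $r'$ is automatically subcardinal, i.e.\ $r'(X) \le |X|$ for all $X \subseteq S'$. Thus the required subcardinal one-step extension exists for every pair $(A,B)$, proving that $M$ is $1$-modular extendable.

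Since this is essentially just an invocation of the preceding theorem, there is no real obstacle; the substance of the argument is contained entirely in Theorem~\ref{thm:1modular}. The only subtlety worth flagging is the matroid-versus-polymatroid distinction in the definition of $k$-modular extendability — namely the subcardinality requirement on the extension — but this is handled for free by the fact that $N'$ is built as a minor of the matroid $N$, and so cannot fail to be subcardinal.
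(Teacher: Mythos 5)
Your proof is correct and is essentially the argument the paper intends, since the paper derives \cref{cor:1me} from \cref{thm:1modular} without further comment as a ``direct consequence.'' You supply exactly the routine verification that the paper leaves implicit: unfolding the definition of $1$-modular extendability for matroids, noting that the minor $N'$ produced by the theorem is a matroid (hence subcardinal and integer-valued), and recalling that every integer-valued polymatroid function is $0$-modular extendable by definition.
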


\subsection{\texorpdfstring{$k$}{k}-tensor-compatibility with \texorpdfstring{$U_{2,3}$}{U23} and \texorpdfstring{$k$}{k}-modular extendability}
\label{sec:cip}

\cref{cor:1me} establishes a connection between a matroid having a tensor product with $U_{2,3}$ and being $1$-modular extendable. Using this, first we prove an analogous result for $k$-modular extendability.

\begin{thm}\label{thm:me} 
    Let $M=(S,r)$ be a matroid which is $k$-tensor-compatible with $U_{2,3}$ for some $k \in \bZ_+$. Then, $M$ is $k$-modular extendable.
\end{thm}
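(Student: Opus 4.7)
The plan is to proceed by induction on $k$, with the base case $k=1$ being exactly \cref{cor:1me}.

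For the inductive step, assume the statement holds for $k-1$, and suppose $M$ is $k$-tensor-compatible with $U_{2,3}$. Unfolding the definition of $T_k$, there exists a chain of matroids $M = M_0, M_1, \ldots, M_k$ with $M_i \in M_{i-1} \otimes U_{2,3}$ for every $i \in [k]$; in particular, $M_1 \in M \otimes U_{2,3}$ and the tail chain $M_1, \ldots, M_k$ witnesses that $M_1$ is $(k-1)$-tensor-compatible with $U_{2,3}$. Fix arbitrary $A, B \subseteq S$. Applying \cref{thm:1modular} to the tensor product $M_1$ of $M$ and $U_{2,3}$ yields a minor $N$ of $M_1$ that is a one-step extension of $M$ with respect to $A$ and $B$. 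Since $N$ is a matroid, the required subcardinality condition is automatic. It remains to show that $N$ is $(k-1)$-modular extendable, and by the inductive hypothesis it suffices to prove that $N$ is $(k-1)$-tensor-compatible with $U_{2,3}$.

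To this end, I would establish as an auxiliary lemma that, for every $j \in \bZ_+$, the class of matroids that are $j$-tensor-compatible with $U_{2,3}$ is closed under taking minors. This is proved by induction on $j$. The base case $j=1$ is exactly \cref{lem:minor}. For the inductive step, let $P$ be $j$-tensor-compatible with $U_{2,3}$ via a chain $P = P_0, P_1, \ldots, P_j$, and let $P'$ be any minor of $P$. By \cref{lem:minor} applied to the pair $(P_0, U_{2,3})$ whose tensor product is $P_1$, the minor $P'$ of $P_0$ admits a tensor product $P_1'$ with $U_{2,3}$; moreover, inspection of the proof of \cref{lem:minor} shows that $P_1'$ can be chosen to be a minor of $P_1$. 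Since $P_1$ is $(j-1)$-tensor-compatible via $P_1, \ldots, P_j$, the induction hypothesis (for $j-1$) applied to the minor $P_1'$ of $P_1$ yields that $P_1'$ is $(j-1)$-tensor-compatible with $U_{2,3}$. Concatenating $P' \to P_1'$ with this chain shows that $P'$ is $j$-tensor-compatible with $U_{2,3}$. Applying this auxiliary lemma with $P = M_1$, $j = k-1$, and $P' = N$ completes the inductive step of the main theorem.

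The only delicate point is that \cref{thm:1modular} produces a minor $N$ of $M_1$ rather than a minor of $M$ itself, so one cannot simply reuse the given chain $M_1, \ldots, M_k$ with $N$ in place of $M_1$. The auxiliary lemma above bridges precisely this gap: because minor-closure of $j$-tensor-compatibility works for arbitrary $j$, we can construct a fresh chain starting from $N$ even though $N$ lives inside $M_1$ rather than inside $M$. Beyond this bookkeeping, the argument is entirely a routine inductive combination of \cref{thm:1modular} (the $k=1$ case) with \cref{lem:minor} (minor-closure of tensor compatibility).
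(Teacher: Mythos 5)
Your proof is correct and follows essentially the same inductive route as the paper's: invoke \cref{thm:1modular} to extract a one-step extension $N$ as a minor of the first tensor product $M_1 \in M \otimes U_{2,3}$, argue that $N$ inherits $(k-1)$-tensor-compatibility, and close by the induction hypothesis. The paper compresses the minor-closure step into the terse (and notationally slippery) assertion ``by \cref{lem:minor}, $N' \in T_{k-1}(N, U_{2,3})$,'' whereas you explicitly formulate and prove the auxiliary lemma that $j$-tensor-compatibility with $U_{2,3}$ is minor-closed; this is a genuine and worthwhile clarification, since \cref{lem:minor} alone only covers a single tensor product, and one further needs the observation (visible in its proof) that the tensor product of a minor can be chosen to be a minor of the tensor product in order to iterate.
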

\begin{proof}
    We prove the theorem by induction on $k$. For $k=1$, this follows from \cref{thm:1modular}. Assume that $k\geq 2$ and that the statement holds up to $k-1$. As $T_k(M, U_{2,3}) \ne \emptyset$, there is a matroid $N\in M\otimes U_{2,3}$ such that $T_{k-1}(N, U_{2,3}) \ne \emptyset$. Let $A, B \subseteq S$. By \cref{thm:1modular}, $N$ has a minor $N'$ that is a one-step extension of $M$ with respect to $A$ and $B$. By \cref{lem:minor}, $N' \in T_{k-1}(N, U_{2,3})$, thus $N'$ is $(k-1)$-modular extendable by the induction hypothesis. This completes the proof of the theorem.
\end{proof}

As an immediate corollary, we get the following.

\begin{cor}\label{cor:fme} 
Let $M=(S,r)$ be a matroid which is $k$-tensor-compatible with $U_{2,3}$ for each $k\in \bZ_+$. Then, $M$ is fully modular extendable.
\end{cor}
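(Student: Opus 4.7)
The plan is to derive this as an essentially immediate consequence of Theorem~\ref{thm:me}, invoked once for every positive integer $k$. By the definition given in Section~\ref{sec:prelim}, a matroid is fully modular extendable precisely when it is $k$-modular extendable for every $k \in \bZ_+$, so the only thing to verify is that the hypothesis of Theorem~\ref{thm:me} is available at each fixed level $k$.

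Concretely, I would proceed as follows. Fix an arbitrary $k \in \bZ_+$. By hypothesis, $M$ is $k$-tensor-compatible with $U_{2,3}$, i.e.\ $T_k(M,U_{2,3})\neq\emptyset$. Applying Theorem~\ref{thm:me} to this fixed $k$ yields that $M$ is $k$-modular extendable. Since $k$ was arbitrary, $M$ is $k$-modular extendable for every $k \in \bZ_+$, hence fully modular extendable by definition.

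There is essentially no obstacle here: no compatibility between the extensions at different levels $k$ is required by the definition of fully modular extendability (that stronger compatibility is exactly what Lemma~\ref{lem:closure} separately addresses for the purposes of Section~\ref{sec:fme}). So the corollary really is just the pointwise (in $k$) application of Theorem~\ref{thm:me}, and the proof can be stated in one or two sentences.
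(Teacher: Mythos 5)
Your proof is correct and is essentially identical to the paper's own one-line argument: both simply apply Theorem~\ref{thm:me} at each fixed $k$ and then unwind the definition of fully modular extendability. Your remark that no cross-level compatibility is needed is accurate and matches the paper's intent.
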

\begin{proof}
    It suffices to show that $M$ is $k$-modular extendable for all $k\in\bZ_+$, which follows by Theorem~\ref{thm:me}.
\end{proof}

We will later see in \cref{cor:non_Desargues_not_2_compatible} that there is a fully modular extendable matroid which is not 2-tensor-compatible with $U_{2,3}$. This shows that the converses of \cref{thm:me} for $k \ge 2$ and \cref{cor:fme} do not hold.

\subsection{Characterization of skew-representability}
\label{sec:skewfield}

Using Corollaries~\ref{cor:rank4} and \ref{cor:fme}, we now give a characterization of representability over skew fields of certain characteristic. We will need the following simple observation.

\begin{lem}\label{lem:u23minor}
Any connected matroid $M=(S,r)$ of rank at least $2$ contains $U_{2,3}$ as a minor.
\end{lem}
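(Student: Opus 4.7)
The goal is to exhibit a three-element minor of $M$ isomorphic to $U_{2,3}$, that is, to find a $3$-element circuit sitting in a minor. The plan has two steps: first, produce a circuit $C$ of size at least $3$ in $M$; second, contract $|C|-3$ elements of $C$ and delete everything else, leaving a three-element circuit.

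For the first step, I would argue as follows. Since $r(M)\geq 2$, the elements of $S$ cannot all be pairwise parallel, as otherwise $S$ would be contained in a single rank-$1$ flat and $r(M)$ would be at most $1$. Hence there are two elements $a,b\in S$ that are not parallel. By connectedness, there exists a circuit $C$ of $M$ that contains both $a$ and $b$; since $\{a,b\}$ itself is independent, we must have $|C|\geq 3$.

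For the second step, pick any three elements $c_1,c_2,c_3\in C$, set $I=C\setminus\{c_1,c_2,c_3\}$, and consider the minor $N=(M/I)\mid\{c_1,c_2,c_3\}$. Note that $I$ is a proper subset of the circuit $C$, hence independent in $M$, so contracting $I$ is well-defined and the rank of $\{c_1,c_2,c_3\}$ in $M/I$ equals $r(C)-r(I)=(|C|-1)-(|C|-3)=2$. In $N$, the set $\{c_1,c_2,c_3\}$ is a circuit: it is dependent by the rank computation, while each pair $\{c_i,c_j\}$ is independent in $N$ because $I\cup\{c_i,c_j\}$ is still a proper subset of $C$ and hence independent in $M$. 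A three-element rank-$2$ matroid whose three pairs are all independent is $U_{2,3}$, so $N\cong U_{2,3}$ and $U_{2,3}$ is a minor of $M$.

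I do not expect any real obstacle; the entire content is the observation that connectedness combined with rank at least $2$ forces a circuit of size at least $3$, and standard minor bookkeeping then extracts $U_{2,3}$.
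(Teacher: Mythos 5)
Your proof is correct and follows essentially the same route as the paper's: both arguments find a pair of rank $2$ (you phrase this as a non-parallel pair, which is equivalent in the loopless connected case), invoke connectedness to produce a circuit $C$ through that pair, and conclude $|C| \ge 3$. The only difference is that you explicitly carry out the contraction/restriction bookkeeping to extract $U_{2,3}$ from a circuit of size at least $3$, a step the paper states without proof.
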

\begin{proof}
Let $x,y\in S$ be such that $r(\{x,y\})=2$. Since the matroid is connected, there exists a circuit $C$ such that $\{x,y\}\subseteq C$ and $|C|=r(C)+1\geq r(\{x,y\})+1=3$. Any circuit of size at least $3$ contains $U_{2,3}$ as a minor, so the lemma follows.
\end{proof}

We are ready to state the main result of the section.

\begin{thm} \label{thm:char}
    Let $N$ be a connected skew-representable matroid of rank at least two. Let $C$ be the skew characteristic set of $N$ and assume that for each $p \in C$, $N$ is representable over all infinite fields of characteristic $p$. Then, for any matroid $M$, $M$ is the direct sum of matroids each of which is representable over a skew field with characteristic in $C$ if and only if $M$ is $k$-tensor-compatible with $N$ for each $k\in \bZ_+$.
\end{thm}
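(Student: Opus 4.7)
For the forward direction, the plan is to iterate \cref{lem:skew_large}. Write $M$ as a direct sum $M_1 \oplus \dots \oplus M_m$ with each $M_i$ representable over a skew field of characteristic $p_i \in C$. Since $N$ is representable over every infinite field of characteristic $p_i$, \cref{lem:skew_large} produces $P_i^{(1)} \in M_i \otimes N$ representable over a skew field of characteristic $p_i$, and reapplying \cref{lem:skew_large} to $P_i^{(j)}$ and $N$ yields $P_i^{(j+1)} \in P_i^{(j)} \otimes N$ of the same characteristic. Hence each $M_i$ is $k$-tensor-compatible with $N$ for every $k$. Then $\bigoplus_i P_i^{(k)}$ is a tensor product of $\bigoplus_i P_i^{(k-1)}$ and $N$ by the same computation as in \cref{lem:direct_sum}, giving by induction an element of $T_k(M, N)$ for every $k$.

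For the backward direction, I would first reduce to connected components. Given $P \in T_k(M_1 \oplus M_2, N)$ built along a tower $M_1 \oplus M_2 = Q_0, \dots, Q_k = P$, the restrictions $Q_j | (S_{M_i} \times T_N^j)$ form a tower of tensor products with $N$, so $P | (S_{M_i} \times T_N^k) \in T_k(M_i, N)$. Consequently every connected component $M'$ of $M$ is $k$-tensor-compatible with $N$ for every $k$. Components of rank at most one are regular and therefore representable over any skew field of characteristic in $C$ (the set $C$ is nonempty since $N$ is skew-representable), so it suffices to handle a connected component $M'$ of rank at least two.

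The plan for such an $M'$ is to build a single infinite tower by K\H{o}nig's lemma and transfer structure from a large member of it. Consider the rooted tree whose nodes at level $k$ are the matroids in $T_k(M', N)$, with $Q$ a child of $P$ exactly when $Q \in P \otimes N$. Each node has finitely many children (bounded ground set) and every level is nonempty by hypothesis, so K\H{o}nig's lemma supplies an infinite branch $M' = P_0, P_1, P_2, \dots$ with $P_i \in P_{i-1} \otimes N$. For any fixed $k \ge 1$, the tail of the branch shows that $P_k$ is $j$-tensor-compatible with $N$ for every $j$. Iterating the minor construction of \cref{lem:minor} one $T_N$ coordinate at a time shows that $j$-tensor-compatibility with $N$ transfers to $j$-tensor-compatibility with every minor of $N$, and in particular with $U_{2,3}$ (a minor of $N$ by \cref{lem:u23minor}). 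Hence $P_k$ is $j$-tensor-compatible with $U_{2,3}$ for every $j$, and \cref{cor:fme} implies that $P_k$ is fully modular extendable.

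To conclude, $P_k$ is connected (\cref{lem:connected2}) and has rank $r(M') \cdot r(N)^k \ge 4$, so \cref{cor:rank4} yields a representation of $P_k$ over some skew field $\bF$. Both $N$ and $M'$ sit inside $P_k$ as restrictions: a slice over any single non-loop element of $S_{M'} \times T_N^{k-1}$ recovers $N$ in the last $T_N$ coordinate, while a slice over any single non-loop element of $T_N^k$ recovers $M'$. Consequently $\bF$ has characteristic in $C$ and $M'$ is representable over $\bF$, finishing the proof. The main obstacle I expect is the coordination of the K\H{o}nig's lemma step with the iterated minor extraction: pointwise $k$-tensor-compatibility at each depth only supplies unrelated matroids, and forcing both full modular extendability and the characteristic constraint from $N$ onto the same object requires passing to a large $P_k$ along a single consistent branch, rather than reasoning about $M'$ directly.
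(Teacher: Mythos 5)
Your proof is correct and follows essentially the same route as the paper: iterate \cref{lem:skew_large} and \cref{lem:direct_sum} for the forward direction, then for the converse pass to a connected component $M'$, produce a single tensor product $P' \in M' \otimes N$ that remains $k$-tensor-compatible with $N$ (hence with the minor $U_{2,3}$ of $N$, via \cref{lem:minor} and \cref{lem:u23minor}) for all $k$, and invoke \cref{cor:fme} together with \cref{lem:connected2} and \cref{cor:rank4} to get a skew-field representation of $P'$ whose characteristic lies in $C$ because $N$ is a restriction and which restricts to $M'$. The only stylistic difference is that you reach the intermediate matroid $P_1$ via a K\H{o}nig's-lemma branch through all of $T_k(M', N)$, whereas the paper uses the lighter pigeonhole observation that, since $M' \otimes N$ is finite, some $P' \in M' \otimes N$ occurs as the first step of a length-$k$ tower for infinitely many $k$; both deliver the same object and neither changes the subsequent argument.
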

\begin{proof}
    Assume first that $M=M_1\oplus \dots \oplus M_q$ such that for each $i \in [q]$, $M_i$ is representable over a skew field having characteristic $p_i \in C$. It follows from \cref{lem:skew_large} by induction on $k$ that for each $k \in \bZ_+$ there exists a matroid $P_{k,i}\in T_k(M_i, N)$ which is representable over a skew field of characteristic $p_i$. It follows from \cref{lem:direct_sum} by induction on $k$ that $P_{k,1} \oplus \dots \oplus P_{k,q} \in T_k(M, N)$ for each $k \in \bZ_+$. This shows that $M$ is $k$-tensor-compatible with $N$ for each $k \in \bZ_+$.

    To see the other direction, let $M_0$ be a component of $M$. We need to show that $M_0$ is representable over a skew field having characteristic in $C$. We may assume that $M_0$ has rank at least two as rank one matroids are regular.
    For each $k \in \bZ_+$, there exists a matroid $P_k \in T_k(M_0, N)$. By the definition of $T_k(M_0, N)$, for each $k \in \bZ_+$, there is a matroid $P'_k$ such that $P'_k \in M_0 \otimes N$ and $P_k \in T_{k-1}(P'_k, N)$. As there are only finitely many matroids in $M_0 \otimes N$, there exists a matroid $P'\in M_0 \otimes N$ such that $P'=P'_k$ for infinitely many values of $k \in \bZ_+$. In particular, $T_{k-1}(P',N) \ne \emptyset$ holds for infinitely many values of $k$. Since $T_\ell(P',N) \ne \emptyset$ implies $T_{\ell-1}(P',N) \ne \emptyset$ for each $\ell \ge 2$, it follows that $P'$ is $k$-tensor-compatible with $N$ for each $k \in \bZ_+$. By Lemma~\ref{lem:u23minor}, each connected matroid of rank at least two contains $U_{2,3}$ as a minor, thus $P'$ is also $k$-tensor-compatible with $U_{2,3}$ for each $k\in \bZ_+$ by \cref{lem:minor}. Then, \cref{cor:fme} implies that $P'$ is fully modular extendable. 
    As both $M_0$ and $N$ are connected and have rank at least 2, $P' \in M_0 \otimes N$ is a connected matroid by \cref{lem:connected2} and it has rank at least 4. Therefore, \cref{cor:rank4} shows that $P'$ is representable over a skew field $\bF$. As $P'$ contains $N$ as a restriction, the characteristic of $\bF$ must be in the skew characteristic set $C$ of $N$. Since $M_0$ is a restriction of $P'$, we conclude that $M_0$ is also representable over $\bF$. This finishes the proof of the theorem.
\end{proof}

Choosing $N$ to be $U_{2,3}$ or the projective geometry $PG(2,p)$ over the field $\bF_p$, we obtain the following. 

\begin{cor} \label{cor:char_spec}
    Let $M$ be a connected matroid and $p$ be a prime number.
    \begin{enumerate}[label=(\alph*)]\itemsep0em
        \item \label{it:U23_all_k} $M$ is skew-representable if and only if it is $k$-tensor-compatible with $U_{2,3}$ for each $k \in \bZ_+$.
        \item There is a skew field of characteristic $p$ over which $M$ is representable if and only if $M$ is $k$-tensor-compatible with $PG(2, p)$ for each $k \in \bZ_+$.
    \end{enumerate}
\end{cor}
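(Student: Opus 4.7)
The plan is to deduce both parts directly from Theorem~\ref{thm:char} by choosing $N$ appropriately and verifying that it satisfies the hypotheses. In both cases the connectedness of $M$ will collapse the ``direct sum of components'' conclusion of Theorem~\ref{thm:char} into a statement about $M$ itself, since a connected matroid is its own unique component. The bulk of the work is therefore a routine hypothesis check together with identifying the skew characteristic set of the chosen $N$.

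For part~(a), I take $N = U_{2,3}$. This matroid is connected, has rank $2 \ge 2$, and is representable over every field: over $\bF_2$ the vectors $(1,0),(0,1),(1,1)$ work, and over any larger field any three pairwise non-parallel vectors in a 2-dimensional space do. In particular its skew characteristic set $C$ equals $\bP \cup \{0\}$, and $U_{2,3}$ is representable over every infinite field of every characteristic, so the hypotheses of Theorem~\ref{thm:char} are satisfied. The condition that a matroid be a direct sum of matroids each representable over a skew field of characteristic in $C = \bP \cup \{0\}$ is just the condition that each of its connected components is skew-representable. Since $M$ is connected, this is equivalent to $M$ being skew-representable, and Theorem~\ref{thm:char} yields the equivalence of part~(a).

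For part~(b), I take $N = PG(2, p)$. It has rank $3$, and it is connected because any two distinct points of the projective plane lie on a common line containing at least three points, producing a 3-element circuit through them. It is representable over $\bF_p$, hence over every extension of $\bF_p$, and in particular over every infinite field of characteristic $p$. The skew characteristic set of $PG(2,p)$ equals $\{p\}$: indeed, $PG(2,p)$ is representable over $\bF_p$, and conversely any skew-field representation of $PG(2,p)$ over a skew field $\bF$ must embed $\bF_p$ into $\bF$ via the coordinates along any line of the projective plane, forcing $\bF$ to have characteristic $p$. (This is the subtlest point in the argument, but it is a classical consequence of the Veblen--Young theorem, Proposition~\ref{prop:vy}, applied to the $\bF_p$-structure of the projective plane.) Thus Theorem~\ref{thm:char} applies with $C = \{p\}$ and says that $M$ is a direct sum of matroids each representable over some skew field of characteristic $p$ if and only if $M$ is $k$-tensor-compatible with $PG(2,p)$ for every $k \in \bZ_+$. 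Since $M$ is connected, this is exactly the statement in part~(b).

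The main potential obstacle is justifying the skew characteristic set of $PG(2,p)$, as this is the only nontrivial algebraic input beyond Theorem~\ref{thm:char}. If a more self-contained argument is desired, one could verify it by embedding the coordinatizing skew field of the projective plane into $\bF$ and appealing to the fact that the coordinatizing skew field of $PG(2,p)$ is $\bF_p$. Everything else is immediate from the hypotheses of Theorem~\ref{thm:char} and the observation that for connected $M$, the direct-sum conclusion reduces to the desired single-component statement.
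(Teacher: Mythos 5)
Your proposal is correct and follows essentially the same path as the paper: apply Theorem~\ref{thm:char} with $N = U_{2,3}$ for part~(a) and $N = PG(2,p)$ for part~(b), and use connectedness of $M$ to collapse the direct-sum conclusion. The only cosmetic difference is that the paper simply cites~\cite{kahn1982characteristic} for the fact that $PG(2,p)$ is skew-representable precisely over skew fields of characteristic $p$, whereas you sketch the coordinatization argument, but the substance is identical.
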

\begin{proof}
    As it is noted in \cite{kahn1982characteristic}, $PG(2,p)$ is representable over a skew field $\bF$ precisely when $\bF$ has characteristic $p$. Therefore, both statements directly follow from \cref{thm:char}.
\end{proof}

\begin{rem} \label{rem:assoc2}
    We note that if $M$ is skew-representable, then \cref{rem:assoc} shows that for each $k \in \bZ_+$, the $k$-tuple $(M,U_{2,3},\dots U_{2,3})$ admits an associative tensor product. As this property implies $k$-tensor-compatibility with $U_{2,3}$, \cref{cor:char_spec}\ref{it:U23_all_k} implies that for a connected matroid $M$, $M$ is $k$-tensor compatible with $U_{2,3}$ for each $k \in \bZ_+$ if and only if the $k$-tuple $(M, U_{2,3},\dots, U_{2,3})$ admits an associative tensor product for each $k \in \bZ_+$. It is not clear whether this equivalence holds for a fixed $k \in \bZ_+$. 
\end{rem}

\begin{rem}
    Let $M$ be a connected $\ell$-folded skew-representable matroid with rank function $r$ for some $\ell \in \bZ_+$. Then, by \cref{lem:representable}, $\ell \cdot r$ is $k$-tensor compatible with the rank function of $U_{2,3}$ for each $k \in \bZ_+$. Thus $r$, as a polymatroid function, is also $k$-tensor-compatible with the rank function of $U_{2,3}$ for each $k \in \bZ_+$. As there exists a connected folded skew-representable matroid $M$ which is not skew representable~\cite{pendavingh2013skew}, we get that $k$-tensor compatibility of a matroid with $U_{2,3}$ is not equivalent to $k$-tensor compatibility of the rank function of $M$ with the rank function of $U_{2,3}$.
\end{rem}

\cref{cor:char_spec} has the following algorithmic consequences.

\begin{cor} \label{cor:core}
    The following problems are co-recursively enumerable.
    \begin{enumerate}[label=(\alph*)]\itemsep0em
        \item \label{it:skewrep} Given a connected matroid, decide if it is skew-representable.
        \item \label{it:charp} Given a prime $p$ and a connected matroid, decide if there exists a skew field of characteristic $p$ over which it is representable.
    \end{enumerate}
\end{cor}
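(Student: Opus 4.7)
The plan is to invoke Corollary~\ref{cor:char_spec}, which rephrases the ``no'' side of each decision problem in terms of a finite witness. For part~\ref{it:skewrep}, a connected matroid $M$ fails to be skew-representable if and only if there exists some $k \in \bZ_+$ with $T_k(M, U_{2,3}) = \emptyset$. The co-recursive semi-algorithm I would run therefore iterates $k = 1, 2, 3, \dots$, and at each stage tests whether $T_k(M, U_{2,3})$ is empty; the first $k$ for which this occurs causes the machine to halt and output ``no''. If $M$ is skew-representable then, by the forward direction of Corollary~\ref{cor:char_spec}, no such $k$ exists and the machine runs forever, exactly as required by the definition of co-recursive enumerability. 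Part~\ref{it:charp} is handled identically after replacing $U_{2,3}$ by $PG(2, p)$, which is explicitly computable from the input prime $p$.

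The core subroutine is therefore to decide, for fixed matroids $M$ and $N$ and a fixed integer $k$, whether $T_k(M, N) = \emptyset$. This is decidable because the whole construction is confined to a bounded finite universe. By the recursive definition $T_k(M, N) = \bigcup_{M' \in T_{k-1}(M, N)} M' \otimes N$, every matroid in $T_k(M, N)$ has ground set $S_M \times S_N^k$, whose size $|S_M| \cdot |S_N|^k$ is computable from the input, and the number of matroids on a fixed finite ground set is finite. Moreover, by Proposition~\ref{prop:tensor} the tensor product conditions form a finite list of rank equalities that can be verified in finite time from the rank tables of the two factors. Hence $M' \otimes N$ can be computed by brute-force enumeration for any given $M'$, and then $T_k(M, N)$ can be computed by induction on $k$, which settles the emptiness question.

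The main obstacle is not genuinely mathematical: once Corollary~\ref{cor:char_spec} is in hand, all that is needed is the observation that iterated tensor products can be brute-forced on a bounded ground set. The real cost is hidden in the running time, since the number of candidate matroids on $S_M \times S_N^k$ grows astronomically in $k$; but this is irrelevant to the definition of co-recursive enumerability, which only requires finite-time termination of each test. It is worth noting that Proposition~\ref{thm:undecidable} rules out any strengthening of this result to full decidability, so the semi-algorithmic statement is essentially best possible.
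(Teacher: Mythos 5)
Your argument is correct and follows essentially the same route as the paper: invoke Corollary~\ref{cor:char_spec}, observe that for fixed $k$ the emptiness of $T_k(M,N)$ is decidable by brute-force enumeration over the finitely many matroids on the bounded ground set, and iterate over $k$ to get a co-r.e.\ procedure. The paper's proof is the same observation stated more tersely.
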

\begin{proof}
    As there are only finitely many matroids on a given ground set, for a fixed $k$ it is decidable if a given matroid is $k$-tensor-compatible with a fixed matroid. This implies that checking if a given matroid is $k$-tensor-compatible with a fixed matroid for each $k \in \bZ_+$ is co-recursively enumerable, thus the statement follows from \cref{cor:char_spec}.
\end{proof}

\begin{rem} 
Recall that problems described in \ref{it:skewrep} and \ref{it:charp} are undecidable by \cref{thm:undecidable}, even for connected rank-3 matroids. Together with \cref{cor:core}, this implies that these problems are not even recursively enumerable. This is somewhat surprising, as one might expect there to be some kind of certificate for skew-representability rather than for non-representability. Moreover, combining \cref{thm:undecidable} with \cref{cor:char_spec}, we obtain that for any computable function $f\colon \bZ_+ \to \bZ_+$, there exists an $n$-element matroid $M$ such that $M$ is $k$-tensor-compatible with $U_{2,3}$ for every $k \le f(n)$, but not for some $k > f(n)$. Analogous results hold if $U_{2,3}$ is replaced by any fixed connected regular matroid of rank at least two, or by $PG(2,p)$ for a fixed prime $p$. 
\end{rem}

Using \cref{prop:charsets}, \cref{thm:char} immediately implies the following analogue of \cref{cor:char_spec}. 

\begin{cor} \label{cor:char_general}
    Let $\bP$ denote the set of primes and let $C\subseteq \bP \cup \{0\}$ be a set such that either $0 \not \in C$ and $|C|<\infty$, or $0 \in C$ and $|\bP\setminus C|<\infty$. 
    Then, there exists a connected rank-3 matroid $N$ such that a connected matroid is representable over a skew field having a characteristic in $C$ if and only if it is $k$-tensor-compatible with $N$ for all $k \in \bZ_+$. Moreover, such a matroid $N$ is computable given the finite member of $\{C, \bP\setminus C\}$.
\end{cor}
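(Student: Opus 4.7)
The proof plan is to simply combine \cref{prop:charsets} with \cref{thm:char}, handling the connectedness condition carefully. First, I would invoke \cref{prop:charsets} applied to the set $C$ (which by assumption satisfies condition \ref{it:finite_charset} or \ref{it:cofinite_charset} of the proposition) to obtain a connected rank-3 matroid $N$ such that $C$ is simultaneously the characteristic set and the skew characteristic set of $N$, and such that for every $p \in C$, $N$ is representable over all infinite fields of characteristic $p$. Moreover, the proposition guarantees that $N$ is computable from the finite member of $\{C, \bP \setminus C\}$, which is exactly the computability claim we need.

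Next, I would verify that this $N$ satisfies all the hypotheses of \cref{thm:char}: it is connected, has rank $3 \geq 2$, is skew-representable (being representable over some skew field of each characteristic in $C$), its skew characteristic set equals $C$, and it is representable over all infinite fields of each characteristic $p \in C$. Applying \cref{thm:char} then yields: for any matroid $M$, $M$ is a direct sum of matroids each representable over a skew field with characteristic in $C$ if and only if $M$ is $k$-tensor-compatible with $N$ for every $k \in \bZ_+$.

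Finally, I would specialize to the case where $M$ is connected. A connected matroid has only one connected component (itself), so the unique decomposition of $M$ as a direct sum of matroids forces any such decomposition into summands ``each representable over a skew field with characteristic in $C$'' to consist of the single summand $M$. Consequently, for a connected $M$, the direct-sum condition reduces to $M$ itself being representable over a skew field with characteristic in $C$, which is precisely the desired equivalence. There is no real obstacle here, since both inputs (\cref{prop:charsets} and \cref{thm:char}) have already absorbed the heavy lifting; the corollary is essentially a packaging step combining existence of a suitable ``test matroid'' $N$ with the tensor-product characterization.
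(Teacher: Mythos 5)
Your proposal is correct and takes essentially the same approach as the paper, which states the corollary follows immediately by combining \cref{prop:charsets} with \cref{thm:char}; you simply spell out the routine details (verifying $N$ satisfies the hypotheses of \cref{thm:char}, and observing that for a connected $M$ the direct-sum condition collapses to $M$ itself being representable over a skew field of characteristic in $C$).
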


\cref{cor:char_general} immediately implies the following extension of \cref{cor:core}. 

\begin{cor}\label{cor:char_general_set}
    Let $\cP$ denote the set of primes. The following problems are co-recursively enumerable.
    \begin{enumerate}[label=(\alph*)]\itemsep0em
        \item \label{it:skewrep_set} Given a set $C \subseteq \bP\cup\{0\}$ such that $0\notin C$ and $|C|<\infty$ and a connected matroid, decide if it is representable over a skew field of characteristic in $C$. 
        \item \label{it:charp_set} Given a set $\overline{C} \subseteq \bP\cup\{0\}$ such that $0\notin \overline{C}$ and $|\overline{C}|<\infty$ and a connected matroid, decide if it is representable over a skew field of characteristic in $(\bP\cup \{0\}) \setminus \overline{C}$.
    \end{enumerate}
\end{cor}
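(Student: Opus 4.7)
The plan is to mimic the proof of Corollary~\ref{cor:core}, using Corollary~\ref{cor:char_general} in place of Corollary~\ref{cor:char_spec} to handle general characteristic sets. For part~\ref{it:skewrep_set}, the input already supplies the finite set $C$ directly, while in part~\ref{it:charp_set} the input supplies the finite set $\overline{C}$, which lets us reconstruct $\bP\setminus C = \overline{C} \setminus \{0\}$ (since $0 \in C$). In both cases, we are given the finite member of the pair $\{C,\bP\setminus C\}$, so Corollary~\ref{cor:char_general} applies: we can compute a connected rank-$3$ matroid $N$ such that a connected matroid is representable over a skew field whose characteristic lies in $C$ if and only if it is $k$-tensor-compatible with $N$ for every $k\in\bZ_+$.

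Given the input and the computed matroid $N$, our co-recursive enumeration procedure is as follows. For $k=1,2,\dots$, enumerate all (finitely many) matroid structures on the ground set $S_k$ underlying a putative $k$-fold tensor product and check whether any of them lies in $T_k(M,N)$; this is a finite, decidable test for each $k$. If for some $k$ the set $T_k(M,N)$ is empty, output ``no'' and halt; otherwise continue. By Corollary~\ref{cor:char_general}, if $M$ is not representable over any skew field of characteristic in $C$, then such a $k$ exists and the procedure halts with ``no''; if $M$ is representable over such a skew field, then $T_k(M,N)\neq\emptyset$ for every $k$ and the procedure never halts. This is precisely what it means for the problem to be co-recursively enumerable.

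The only non-routine point is making sure Corollary~\ref{cor:char_general} applies to the input as presented. In part~\ref{it:skewrep_set} this is immediate since $C$ itself is the finite set provided. In part~\ref{it:charp_set}, the input $\overline{C}$ satisfies $0\notin \overline{C}$ and $|\overline{C}|<\infty$, so the target characteristic set $C'\coloneqq (\bP\cup\{0\})\setminus \overline{C}$ contains $0$ and its complement in $\bP\cup\{0\}$ is $\overline{C}$, which is finite. Hence $C'$ falls in the second case of Corollary~\ref{cor:char_general}, and we are given the finite member $\bP\setminus C' = \overline{C}\setminus\{0\}$ of the pair $\{C',\bP\setminus C'\}$, so the matroid $N$ is computable. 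Both parts therefore reduce to the decidable-for-each-$k$ emptiness test for $T_k(M,N)$, yielding co-recursive enumerability.
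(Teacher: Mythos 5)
Your proof is correct and follows the same route the paper takes: invoke Corollary~\ref{cor:char_general} to compute the connected rank-$3$ matroid $N$ from the finite input set, and then run the decidable-for-each-$k$ test of whether $T_k(M,N)$ is nonempty, exactly as in the proof of Corollary~\ref{cor:core}. The paper leaves this as ``immediately implies''; you have correctly spelled out the one nontrivial point, namely that in each part the input supplies the finite member of $\{C,\bP\setminus C\}$ (for part~\ref{it:charp_set}, $\bP\setminus C'=\overline{C}$), which is what Corollary~\ref{cor:char_general} requires for $N$ to be computable.
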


Let us note that part \ref{it:skewrep_set} also follows from \cref{cor:char_general}\ref{it:skewrep} due to $C$ being finite.

\subsection{Polymatroid functions and fractional modular extendability}
\label{sec:polymatroids}

We have seen that modular extendability is closely connected to tensor-compatibility with the uniform matroid $U_{2,3}$. This naturally raises the following question: {\it Do analogous statements hold for fractional modular extendability?} The aim of this section is to answer this in the affirmative. The key tool will be Helgason's result (Proposition~\ref{prop:helgason}), which allows us to reduce the problem to the matroid setting. However, to handle rational-valued functions, we must also use an argument that relies on the polyhedral description of tensor products.

\begin{thm}\label{thm:fme}
Let $\varphi\colon 2^S\to\bR$ be a polymatroid function which is $k$-tensor-compatible with the rank function of $U_{2,3}$ for some $k \in \bZ_+$. Then, $\varphi$ is $k$-fractionally modular extendable.
\end{thm}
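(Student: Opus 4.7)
The plan is to reduce the polymatroid statement to the matroid statement Theorem~\ref{thm:me} through the Helgason correspondence (Proposition~\ref{prop:helgason}) together with Lemma~\ref{lem:kfracom}. Two obstacles arise: Helgason's construction requires integer-valued inputs, yet the intermediate tensor products in a chain witnessing $k$-tensor-compatibility need not be integer-valued even when $\varphi$ is; and $\varphi$ itself is a priori only real-valued. I would handle the first via a scaling trick and the second via a density argument built on the polyhedral nature of tensor-compatibility.

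For rational $\varphi\in\cT_k(r_{U_{2,3}})$, I would first note that for any positive integer $N$, scaling a chain $\varphi=\psi_0,\psi_1,\dots,\psi_k$ with $\psi_i\in\psi_{i-1}\otimes r_{U_{2,3}}$ produces another valid chain $N\varphi,N\psi_1,\dots,N\psi_k$, since the defining identity $\psi_i(Y\times X_2)=\psi_{i-1}(Y)\cdot r_{U_{2,3}}(X_2)$ is preserved under multiplication by $N$, as are the polymatroid axioms. For rational $\varphi$ a rational chain exists (the set of chains is cut out by integer linear relations among $\varphi,\psi_1,\dots,\psi_k$), so a suitable common denominator $N$ makes every member of the chain integer-valued. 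I would then apply Proposition~\ref{prop:helgason} to produce matroids $M_{N\varphi}, M_{N\psi_1},\dots,M_{N\psi_k}$, and apply Lemma~\ref{lem:kfracom} inductively along the chain to conclude $M_{N\psi_k}\in T_k(M_{N\varphi},U_{2,3})$, using the identification $M_{r_{U_{2,3}}}=U_{2,3}$. Theorem~\ref{thm:me} then yields that $M_{N\varphi}$ is $k$-modular extendable, and Lemma~\ref{lem:reverse}, applied to the Helgason partition, transfers this to $k$-modular extendability of $N\varphi=r_{M_{N\varphi}}/\cQ$. A short induction on $k$ shows that dividing any integer-valued one-step extension of $N\varphi$ by $N$ produces a fractional one-step extension of $\varphi$ and preserves the $(k-1)$-fractional extendability assumption, so $\varphi$ itself is $k$-fractionally modular extendable.

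To treat real-valued $\varphi$, I would observe that $\cT_k(r_{U_{2,3}})$ is a rational polyhedron in $\bR^{2^S}$: it is the projection onto the $\varphi$-coordinates of the polyhedron in $(\varphi,\psi_1,\dots,\psi_k)$-space defined by the polymatroid axioms together with the tensor product identities, all of which have integer coefficients since $r_{U_{2,3}}$ is integer-valued. By the Minkowski--Weyl decomposition of rational polyhedra into rational vertices and rays, the rational points of $\cT_k(r_{U_{2,3}})$ are dense in $\cT_k(r_{U_{2,3}})$, so $\varphi$ can be pointwise approximated by rational $\varphi_n\in\cT_k(r_{U_{2,3}})$. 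By the rational case each $\varphi_n$ is $k$-fractionally modular extendable, and Lemma~\ref{lem:convergence} delivers the same conclusion for~$\varphi$.

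The most delicate step is the density argument — verifying that $\cT_k(r_{U_{2,3}})$ is genuinely a rational polyhedron and that an arbitrary real point in it admits a rational approximation inside the polyhedron (and not merely in the ambient space). Once this is in place, everything else is a fairly mechanical assembly of the lemmas collected in Section~\ref{sec:prep}, together with the matroid result Theorem~\ref{thm:me}.
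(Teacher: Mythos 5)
Your proposal is correct and follows essentially the same route as the paper: reduce to the matroid case through Helgason's construction (Proposition~\ref{prop:helgason} and Lemma~\ref{lem:kfracom}), invoke Theorem~\ref{thm:me} and Lemma~\ref{lem:reverse}, then approximate real $\varphi$ by rational ones and close with Lemma~\ref{lem:convergence}. Two small remarks on presentation: first, your decision to scale the \emph{entire chain} $\psi_0,\dots,\psi_k$ by a common denominator $N$ (rather than only $\varphi$) is slightly more careful than the paper's phrasing, and it cleanly resolves the point that the intermediate tensor products $\psi_i$ may fail to be integer-valued even when $\varphi$ is; the paper implicitly absorbs this into the same rescaling step but does not spell it out. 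Second, your density argument — that $\cT_k(r_{U_{2,3}})$ is the projection of a rational polyhedron and hence has dense rational points — is the conceptual content of the paper's explicit linear program \hyperlink{lp}{(LP$(\varepsilon)$)}: the paper's rational vertices are precisely the rational approximants your Minkowski--Weyl argument produces, so the two are interchangeable once one confirms (e.g.\ via Fourier--Motzkin) that the projection remains a rational polyhedron.
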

\begin{proof}
Assume first that $\varphi$ is an integer-valued polymatroid function. Consider the matroid $M_{\varphi}=(S_{\varphi},r_{\varphi})$ and the mapping $\theta\colon S_{\varphi}\to S$ provided by Proposition~\ref{prop:helgason}. By Lemma~\ref{lem:kfracom}, $M_{\varphi}$ is $k$-tensor-compatible with $U_{2,3}$, and therefore $k$-modular extendable by Theorem~\ref{thm:me}. It then follows from Lemma~\ref{lem:reverse} that $\varphi$ is $k$-modular extendable as well. If $\varphi$ is rational-valued, then let $q\in\bZ_+$ be such that $\varphi'=q\cdot\varphi$ is integer-valued. By the previous reasoning, $\varphi'$ is $k$-modular extendable, and so $\varphi$ is $k$-fractionally modular extendable. 

Now consider the case when $\varphi$ is a real-valued polymatroid function. The high-level idea of the proof is to construct a sequence $(\varphi_n\colon n\in\bZ_+)$ of rational-valued, $k$-tensor-compatible polymatroid functions over $S$ that converge to $\varphi$ pointwise. By the previous case, each member of the sequence is $k$-fractionally modular extendable; hence, the $k$-fractional modular extendability of $\varphi$ follows by Lemma~\ref{lem:convergence}. To formalize the argument, we denote the ground set and the rank of $U_{2,3}$ by $T$ and $r$, respectively. Let $S_0=S$, and for $i\in[k]$, let $S_i=S_{i-1}\times T$. For every integer $0\leq i\leq k$, we assign a variable $x^i_A$ to every set $A\subseteq S_i$. Given an $\varepsilon>0$, choose a $0<\varepsilon_A<\varepsilon$ for each $A\subseteq S_0$ such that $\varphi(A)+\varepsilon_A$ is a rational number, and consider the following linear program.

\begin{equation}
\begin{aligned}
  x^i_{\emptyset} &= 0 & \qquad & \text{for } 0\leq i\leq k,\\
  x^i_B &\geq x^i_A & \qquad & \text{for all } A\subsetneq B\subseteq S_i,\ 0\leq i\leq k,\\
  x^i_A + x^i_B &\geq x^i_{A\cap B} + x^i_{A\cup B} & \qquad & \text{for all } A,B\subseteq S_i,\ 0\leq i\leq k,\\
  x^i_{A\times B} &= x^{i-1}_A\cdot r(B) & \qquad & \text{for all } A\subseteq S_{i-1},\ B\subseteq T,\ 1\leq i\leq k,\\
  \varepsilon_A &\geq |x^0_A - \varphi(A)| & \qquad & \text{for all } A\subseteq S_0.
\end{aligned}
\tag*{\text{(LP($\varepsilon$))}}
\linkdest{lp}{}
\label{lp_e}
\end{equation}
The first three constraints ensure that, for a solution, $x^i$ corresponds to a polymatroid function over the ground set $S_i$; the fourth constraint ensures that $x^i$ is a tensor product of $x^{i-1}$ and $r$. Therefore, the $x^0$ components define a set function that is $k$-tensor-compatible with $r$. In addition, the last constraint forces the values of $x^0$ to be close to those of $\varphi$.

By the assumption that $\varphi$ is $k$-tensor-compatible with $r$, the system \ref{lp_e} defines a nonempty bounded polyhedron; in particular, it has a solution for any $\varepsilon > 0$. Let $\varphi_n$ denote the set function corresponding to the $x^0$ components of a vertex of this polyhedron for $\varepsilon = 1/n$. Since \hyperlink{lp}{$(LP(1/n))$} is a rational polytope, $\varphi_n$ is a rational-valued polymatroid function. By construction, $\varphi_n(A) \to \varphi(A)$ for all $A \subseteq S$, hence the theorem follows by Lemma~\ref{lem:convergence}.
\end{proof}

Similarly to the integer setting, we get the following as a corollary.

\begin{cor}\label{cor:ffme} 
Let $\varphi\colon 2^S\to\bR$ be a polymatroid function which is $k$-tensor-compatible with the rank function of $U_{2,3}$ for each $k\in \bZ_+$. Then, $\varphi$ is fully fractionally modular extendable.
\end{cor}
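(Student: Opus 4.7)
The plan is to derive this corollary directly from Theorem~\ref{thm:fme} by unwinding the definitions. Recall that $\varphi$ is fully fractionally modular extendable precisely when it is $k$-fractionally modular extendable for every $k \in \bZ_+$. So the goal is to establish $k$-fractional modular extendability for an arbitrary fixed $k$.

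First, I would fix an arbitrary $k \in \bZ_+$. By assumption, $\varphi$ is $k$-tensor-compatible with the rank function of $U_{2,3}$ (this is exactly the hypothesis applied at the chosen $k$). Then Theorem~\ref{thm:fme} applies verbatim and yields that $\varphi$ is $k$-fractionally modular extendable. Since $k$ was arbitrary, the conclusion follows from the definition of full fractional modular extendability.

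There is really no obstacle here, as the statement is a one-line consequence of Theorem~\ref{thm:fme}: the quantifier ``for each $k \in \bZ_+$'' in the hypothesis is exactly what is needed to upgrade the conclusion of Theorem~\ref{thm:fme} from a single $k$ to all $k$. The only sanity check is that we never need compatibility between the different extensions obtained for different values of $k$, since full fractional modular extendability, as defined in the preliminaries, only requires the existence of a $k$-fractionally modular extension separately for each $k$, not a single universal extension. Thus a proof of two or three lines, simply invoking Theorem~\ref{thm:fme} inside a ``for each $k \in \bZ_+$'' quantifier, suffices.
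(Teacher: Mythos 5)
Your proof is correct and matches the paper's one-line argument exactly: fix $k$, apply Theorem~\ref{thm:fme}, and conclude by the definition of full fractional modular extendability. Your added sanity check about not needing a single universal extension is also accurate, since the definition only quantifies over $k$ separately.
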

\begin{proof}
    It suffices to show that $\varphi$ is $k$-fractionally modular extendable for all $k\in\bZ_+$, which follows by Theorem~\ref{thm:fme}.
\end{proof}

\section{Uniform and rank-3 matroids} 
\label{sec:uniform}

As we have seen in \cref{sec:prevwork}, skew-representability of rank-3 matroids is of particular interest.  \cref{cor:char_spec} shows that for any connected matroid that is not representable over any skew field, there exist some $k \in \bZ_+$ such that $M$ is not $k$-tensor-compatible with $U_{2,3}$. In this section we show that for matroids of rank 3, we always have to take $k \ge 2$ even if $U_{2,3}$ is replaced by any other uniform matroid, that is, any rank-3 matroid is 1-tensor-compatible with any uniform matroid. We also show that any rank-3 matroid $M$ and any uniform matroid $U_{k,n}$ admit a tensor product that is {\it freest} among all such tensor products: that is, there exists $P \in M \otimes U_{k,n}$ such that any set that is independent in any other tensor product $P' \in M \otimes U_{k,n}$ is also independent in $P$. Equivalently, $P$ is maximal under the weak order among all matroids in $M \otimes U_{k,n}$ -- a notion similar to the one appearing in Graver's conjecture.

\begin{thm} \label{thm:uniform}
    Every rank-3 matroid has a freest tensor product with every uniform matroid.
\end{thm}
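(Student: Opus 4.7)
My plan is to construct the freest candidate matroid explicitly via a covering formula, and then verify both the matroid axioms and the tensor-product identity. Given a rank-$3$ matroid $M = (S_1, r_1)$ and a uniform matroid $U_{k,n} = (S_2, r_2)$, define $r_P \colon 2^{S_1 \times S_2} \to \bZ$ by
\[
    r_P(W) = \min \left\{ \sum_{i=1}^{t} r_1(X_i) \cdot r_2(Y_i) \;:\; W \subseteq \bigcup_{i=1}^{t} (X_i \times Y_i),\; X_i \subseteq S_1,\; Y_i \subseteq S_2 \right\}.
\]
Replacing each $X_i$ by $\cl_M(X_i)$ and each $Y_i$ by $\cl_{U_{k,n}}(Y_i)$ does not change the cost but only enlarges the cover, so we may restrict the minimization to covers in which every $X_i$ is a flat of $M$ (namely, a set of loops, a parallel class, a line, or the whole of $S_1$) and every $Y_i$ is a flat of $U_{k,n}$ (either a set of size at most $k-1$ or the whole of $S_2$); this makes the minimum well defined and attained.

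The verification falls into three steps. First, I would check that $r_P$ is a matroid rank function on $S_1 \times S_2$: monotonicity is immediate, subcardinality follows by covering every element of $W$ singly, and integrality and nonnegativity are built into the definition. Second, I would verify the tensor-product identity $r_P(X \times Y) = r_1(X) \cdot r_2(Y)$; the inequality $\le$ follows from the single-block cover, while the matching lower bound asserts that no rectangle cover of $X \times Y$ can be cheaper than the trivial one, a combinatorial inequality that requires the specific structure of $M$ and $U_{k,n}$. Third, freeness is essentially automatic from the definition: for any tensor product $P' \in M \otimes U_{k,n}$ with rank function $r'$, submodularity of $r'$ together with \cref{prop:crosses} gives $r'(W) \le \sum_i r_1(X_i) \cdot r_2(Y_i)$ for every rectangle cover of $W$, hence $r'(W) \le r_P(W)$, so any set independent in $P'$ is also independent in $P$.

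The main obstacle is proving that $r_P$ is submodular. Since it is defined as the minimum of a combinatorial cost over rectangle covers, submodularity is not formal and does not follow from general principles; for arbitrary pairs of matroids the analogous construction can fail, consistent with Las Vergnas's examples of matroid pairs without any tensor product. The rank-$3$ hypothesis should enter decisively: two distinct lines of $M$ meet in at most one point, so given optimal covers $\cC_1$ of $W_1$ and $\cC_2$ of $W_2$, their rectangles can be intersected, split, or merged in a controlled manner to produce covers of $W_1 \cap W_2$ and $W_1 \cup W_2$ whose total cost is at most $r_P(W_1) + r_P(W_2)$. Combined with the trivial flat structure of $U_{k,n}$, this should reduce submodularity to a bounded case analysis based on the four possible ranks ($0$, $1$, $2$, $3$) of the $X_i$-components appearing in the optimal covers.
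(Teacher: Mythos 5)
There is a genuine gap, and the rank-$3$ hypothesis does not save the approach as you hope. The rectangle-cover formula you define is \emph{not} submodular in general, and consequently is not the rank function of a matroid; worse, it strictly overcounts the rank of some sets relative to \emph{every} tensor product of $M$ and $U_{k,n}$.

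Here is a concrete counterexample. Let $M$ be the simple rank-$3$ matroid on $S_1 = \{a,b,c,d,e,f\}$ whose only $3$-point lines are $L_1=\{a,b,c\}$ and $L_2=\{d,e,f\}$ (all other lines are $2$-element sets; this satisfies Oxley's line-structure axioms), and let $N=U_{2,3}$ on $S_2=\{1,2,3\}$. Consider
\[
W \;=\; \{(a,1),(b,1)\}\cup\{(d,2),(e,2)\}\cup\{(a,3),(c,3)\},
\]
i.e.\ $W = A_1^1\cup A_2^2\cup A_3^3$ with $A_1=\{a,b\}$, $A_2=\{d,e\}$, $A_3=\{a,c\}$. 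One checks by exhausting the (few) useful rectangles that the cheapest rectangle cover of $W$ has cost $6$, so your formula gives $r_P(W)=6=|W|$, declaring $W$ independent (indeed a basis, since the tensor product has rank $6$). However, Claim~\ref{cl:freest}\ref{it:cl} in the paper's proof (specialized to condition~\eqref{eq:two_lines}) shows that in \emph{every} tensor product $P'\in M\otimes U_{2,3}$, any basis $A_1^1\cup A_2^2\cup A_3^3$ must have $\{i: L_1\subseteq\cl(A_i)\ \text{or}\ L_2\subseteq\cl(A_i)\}$ independent in $U_{2,3}$ whenever $L_1,L_2$ are disjoint lines. Here $L_1\subseteq\cl(A_1)\cap\cl(A_3)$ and $L_2\subseteq\cl(A_2)$, so that index set is $\{1,2,3\}$, a circuit of $U_{2,3}$. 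Hence $W$ is dependent in every tensor product, with rank at most $5$. So $r_P$ as you define it cannot be a tensor product rank function, and in particular cannot be submodular: if it were, steps (1)–(3) of your outline would make it a tensor product strictly freer than the freest one.

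The missing idea is exactly what this example isolates: disjoint lines of $M$ impose constraints on the tensor product that no rectangle cover can see. Your intuition that ``two distinct lines of $M$ meet in at most one point'' is true of all simple matroids and is not where the rank-$3$ hypothesis bites; the hard case is when lines meet in \emph{zero} points. The paper handles this by embedding $M$ into an augmented rank-$3$ matroid $\tpm M$ that adds a formal intersection point $e_{\{L,L'\}}$ for each pair of disjoint lines $L,L'$, and then defines the basis family via conditions stated in terms of $\tpm{\cl}$ rather than $\cl$. It then verifies the basis-exchange axiom directly, which is substantial work but exploits the new points in an essential way. Any correct proof of Theorem~\ref{thm:uniform} has to build in this projective-completion phenomenon somewhere; a purely covering-based formula on $M$ itself does not.
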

\begin{proof} 
    Consider the rank-$3$ matroid $M$ on ground set $S$  with rank function $r$ and closure operator $\cl$. By \cref{lem:simple} we may assume that $M$ is simple.  Recall that a line in a matroid is a flat of rank $2$. Let $\cL$ denote the set of lines of $M$, let $\cP\coloneqq \{\{L_1, L_2\} \mid L_1, L_2 \in \cL, L_1 \cap L_2 = \emptyset\}$, and consider pairwise distinct new elements $e_P$ for $P \in \cP$ which are also distinct from the elements of $S$. For a line $L\in \cL$, define $\tpm L \coloneqq L\cup \{e_{\{L, L'\}} \mid L' \in \cL, \{L, L'\} \in \cP \}$.
    As $|\tpm{L}| \ge 3$ and $|\tpm{L} \cap \tpm{L}'| = 1$ for any two $L, L' \in \cL$, $L \ne L'$, by \cite[Proposition~1.5.6]{oxley2011matroid} there is a simple rank-3 matroid $\tpm{M}$ on ground set $\tpm S \coloneqq S \cup \{e_P \mid P \in \cP\}$ with family of non-independent lines $\{\tpm{L} \mid L \in \cL\}$. 
    Note that $\tpm{M} | S = M$. Let $\tpm r$ denote the rank function and $\tpm \cl$ the closure operator of $\tpm M$.
    
    To motivate our construction of a tensor product of $M$ with a uniform matroid $N$, first we show the following claim. As the proof of the claim does not use the uniformity assumption, we present it for a general matroid $N$.

    \begin{cla}  \label{cl:freest}
        Let $N$ be a matroid on ground set $[n]$ and $P \in M \otimes N$ be any tensor product of $M$ and $N$. Then, for any basis $A_1^1 \cup \dots \cup A_n^n$ of $P$, the following statements hold:
        \begin{enumerate}[label={\textup{(\alph*)}}]\itemsep0em
            \item $\sum_{i=1}^n |A_i| = 3 \cdot r_N([n])$ and $A_i$ is an independent set of $M$ for each $i \in [n]$,\label{it:triv} 
            \item $\bigcap_{i \in C} \tpm \cl(A_i) = \emptyset$ for any circuit $C$ of $N$, \label{it:cl}
            \item $\bigcup_{i \in C^*} A_i$ is a generator of $M$ for any cocircuit $C^*$ of $N$. \label{it:generator}
        \end{enumerate}
    \end{cla}
    \begin{claimproof}
        It is clear that \ref{it:triv} holds.
        Observe that \ref{it:cl} is equivalent to the conjunction of the conditions
        \begin{align}
            & \bigcap_{i \in C} \cl(A_i) = \emptyset \text{ for each circuit $C$ of $N$}, and\label{eq:cl} \\
            & \{i\in [n] \mid L_1 \subseteq \cl(A_i) \text{ or } L_2 \subseteq \cl(A_i) \} \text{ is independent in $N$ for any disjoint lines $L_1, L_2$ of $M$}. \label{eq:two_lines} 
        \end{align}
        Indeed, \eqref{eq:cl} is equivalent to that $\bigcap_{i \in C} \tpm\cl(A_i) \cap S = \emptyset$ for each circuit $C$ of $N$, while \eqref{eq:two_lines} is equivalent to $\bigcap_{i \in C} \tpm \cl(A_i) \setminus S = \emptyset$ for each $C$ circuit of $N$. To see the latter, observe that it is equivalent to the condition that for any two disjoint lines $L_1$ and $L_2$ of $M$, $\{i\in [n] \mid e_{L_1, L_2} \in \tpm\cl(A_i)\}$ is independent in $N$, and also observe that this set coincides with  $\{i\in [n] \mid L_1 \subseteq \cl(A_i) \text{ or } L_2 \subseteq \cl(A_i) \}$.

        To show \eqref{eq:cl}, assume that there exists a circuit $C$ of $N$ and an element $e \in \bigcap_{i \in C} \cl(A_i)$.
        For each $i \in [n]$, let $A'_i$ be an independent set of $M$ which is maximal in $A_i+e$ and contains $e$.
        Since $e \in \cl(A_i)$, we have $|A'_i| = |A_i|$ and $A_i+e \subseteq \cl(A'_i)$. Pick an index $i_0 \in C$ and define $A'\coloneqq (A'_{i_0}-e)^{i_0} \cup \bigcup_{i \in C\setminus \{i_0\}} (A'_i)^i$.  As $P$ is a tensor product of $M$ and $N$,  $A'$ spans $(e, i_0)$ in $P$ as $C^{i_0}$ is a circuit of this matroid.
        Hence $A'$ spans $\bigcup_{i \in C} (A'_i)^i$ in $N$, thus it also spans $A=\bigcup_{i \in C} A_i^i$ as $A'_i$ spans $A_i$ in $M$. This is a contradiction as $|A'| < |A|$ and $A$ is a basis of $P$.

        We next show that \eqref{eq:two_lines} is satisfied. Let $L_1$ and $L_2$ be two disjoint lines of $M$ and let $I_t \coloneqq \{i \in [n] \mid L_t \subseteq \cl(A_i)\}$ for $t \in [2]$. We have to show that $I_1 \cup I_2$ is independent in $N$. 
        Define $X_1 \coloneqq (L_1 \times (I_1 \cup I_2)) \cup (S \times I_2)$ and $X_2 \coloneqq (L_2 \times (I_1 \cup I_2)) \cup (S \times I_1)$.
        We will use the submodular inequality $r_P(X_1 \cap X_2) + r_P(X_1 \cup X_2) \le r_P(X_1)+r_P(X_2)$. 
        We have     
        \begin{align*}
            r_P(X_1) & = r_P((L_1 \times (I_1 \cup I_2)) \cup (S \times I_2)) \\ & \le r_P(L_1 \times (I_1 \cup I_2)) + r_P(S \times I_2) - r_P(L_1 \times I_2)) \\
            & = 2 r_N(I_1 \cup I_2) + 3r_N(I_2) - 2r_N(I_2) \\
            & = 2 r_N(I_1 \cup I_2) + r_N(I_2),
        \end{align*}
        and similarly $r_P(X_2) \le 2r_N(I_1 \cup I_2)+r_N(I_1)$. Since $X_1 \cup X_2 \supseteq (L_1 \cup L_2) \times (I_1 \cup I_2)$ and $L_1 \cup L_2$ is a generator in $M$, $r_N(X_1 \cup X_2) = 3r_N(I_1 \cup I_2)$. Therefore,
        \begin{align*}
            r_P(X_1 \cap X_2) & \le r_P(X_1) + r_P(X_2) - r_P(X_1 \cup X_2) \\
            & \le (2r_N(I_1 \cup I_2)+r_N(I_2)) + (2r_N(I_1 \cup I_2)+r_N(I_1)) - 3r_N(I_1 \cup I_2) \\
            & = r_N(I_1) + r_N(I_2) + r_N(I_1 \cup I_2).
        \end{align*}
        On the other hand, if $i \in I_1 \cap I_2$, then $L_1 \cup L_2 \subseteq \cl(A_i)$, thus $|A_i| = 3$. If $i \in I_1 \setminus I_2$ then $L_1 \subseteq \cl(A_i)$ and $L_2 \subsetneq \cl(A_i)$, thus $L_1 = \cl(A_i)$ and $|A_1| = 2$. Similarly, if $i \in I_2 \setminus I_1$ then $L_2 = \cl(A_i)$ and $|A_2| = 2$. As $X_1 \cap X_2 = ((I_1 \cap I_2) \times S) \cup ((I_1 \setminus I_2) \times L_1) \cup ((I_2 \setminus I_1) \times L_2)$, these imply that 
        \[|X_1 \cap X_2 \cap A| = 3\cdot|I_1 \cap I_2| + 2\cdot|I_1 \setminus I_2| + 2\cdot|I_2 \setminus I_1| = |I_1| + |I_2| + |I_1 \cup I_2|.\]
        As $A$ is a basis of $P$, we obtain
        \[|I_1| + |I_2| + |I_1 \cup I_2|  = |X_1 \cap X_2 \cap A | \le r_P(X_1 \cap X_2) \le r_N(I_1)+r_N(I_2) + r_N(I_1\cup I_2),
        \]
        thus $I_1$, $I_2$, and $I_1 \cup I_2$ are independent in $P$. This shows \eqref{eq:two_lines}.
    
        Finally, assume that there exists a cocircuit $C^*$ of $N$ such that $\bigcup_{i \in J} A_i$ is not a generator of $M$, that is, $\bigcup_{i \in C^*} A_i \subseteq L$ for a line $L$ of $M$. This implies that $A \subseteq (S \times ([n]\setminus C^*)) \cup (L \times [n])$. Using the submodularity of $r_N$,
        \begin{align*}
            r_N((S \times ([n]\setminus C^*)) \cup (L \times [n])) & \le r_N(S \times ([n] \setminus C^*)) + r_N(L \times [n])  - r_N(L \times ([n] \setminus C^*)) \\
            & = 3 \cdot (r_N([n])-1) + 2 \cdot r_N([n]) - 2 \cdot (r_N([n])-1) \\
            & = 3 r_N([n]) -1,
        \end{align*}
        contradicting $A$ being a basis of $N$. This finishes the proof of the claim.
    \end{claimproof}

    Consider a uniform matroid $U_{k,n}$ on ground set $[n]$. We define $\cB$ as the family of subsets of $S \times [n]$ satisfying \ref{it:triv}, \ref{it:cl}, and \ref{it:generator} for $N=U_{k,n}$, that is, 
    \begin{align*}
        \cB \coloneqq 
        \{A_1^1 \cup \dots \cup A_n^n  \mid &  A_i\text{ is an independent set of $M$ for $i \in [n]$}, \\
        & \sum_{i=1}^n |A_i| = 3\cdot k, \\
        & \bigcap_{i \in I} \tpm\cl(A_i) = \emptyset \text{ for each $I\subseteq [n]$ with $|I| = k+1$}, \\
        & \bigcup_{i \in J} A_i \text{ is a generator of $M$ for each $J\subseteq [n]$ with $|J| = n-k+1$}\}.
    \end{align*}

    We will show that $\cB$ forms the family of bases of a matroid on $S \times [n]$ which is a tensor product of $M$ and $U_{k,n}$. The following easy observation will be used later.

\begin{cla} \label{cl:AtimesK}
    Let $A$ be a basis of $M$ and $K\subseteq [n]$ be a subset with $|K| = k$. Then, $A \times K \in \cB$.
\end{cla}
\begin{claimproof}
    The statement follows by observing that each $(k+1)$-element subset $I\subseteq [n]$ intersects $[n] \setminus K$ and each $(n-k+1)$-element subset $J \subseteq [n]$ intersects $K$. 
\end{claimproof}

Next we show two technical  claims that will be used in proving that $\cB$ forms the basis family of a matroid.

\begin{cla} \label{cl:one_nongen}
    Let $A'_1,\dots, A'_n \subseteq S$, $p \in [n]$, and $x \in S \setminus A'_p$ such that $A_1^{\prime 1} \cup\dots\cup A^{\prime n}_n + (x, p) \in \cB$. Then, there exists at most one subset $J \subseteq [n]$ such that $|J| = n-k+1$ and $\bigcup_{i\in J} A'_i$ is not a generator of $M$. 
\end{cla}
\begin{claimproof} 
    Define $A_i \coloneqq A'_i$ for $i \in [n]\setminus \{p\}$, $A_p \coloneqq A'_p + x$, and $A \coloneqq A_1^1 \cup \dots \cup A_n^n$. 
    Suppose to the contrary that there exist subsets $J', J'' \subseteq [n]$ with $J_1 \ne J_2$ such that $|J_1| = |J_2| = n-k+1$ and neither $\bigcup_{i \in J_1} A'_i$ nor $\bigcup_{i \in J_2} A'_i$ is a generator of $M$.    
    Observe that $p \in J_1 \cap J_2$ since $A'_i = A_i$ for $i \in [n]\setminus \{p\}$ and both $\bigcup_{i \in J_1} A_i$ and $\bigcup_{i \in J_2} A_i$ are generators of $M$. 
    For $t \in [2]$, let $L_t$ be a line in $M$ such that $\bigcup_{i \in J_t} A'_i \subseteq L_t$. 
    Note that $L_1 \ne L_2$, since $\bigcup_{i \in (J_1\cup J_2) \setminus \{p\}} A'_i = \bigcup_{i \in (J_1\cup J_2) \setminus \{p\}} A_i$ is a generator of $M$ contained in $L_1 \cup L_2$. 
    Let $e$ be the unique element of  $\tpm{L_1} \cap \tpm{L_2}$. For $i \in [n]$, define $s_i \coloneqq 1$ if $e \in \tpm \cl(A_i)$, and $s_i \coloneqq 0$ otherwise. Since $\bigcap_{i \in I} \tpm\cl(A_i) = \emptyset$ for each $I \subseteq [n]$ with $|I| = k+1$, there exist at most $k$ indices $i$ with $e \in \tpm\cl(A_i)$, that is, we have $\sum_{i=1}^n s_i \le k$.
    For an index $i \in J_1 \setminus J_2$, we have that $A'_i \subseteq L_1$ and $A'_i$ is an independent set, thus $|A'_i| \le r(L_1) = 2$. If $|A'_i| = 2$, then $A'_i$ spans $L_1$ in $M$ and thus spans $\tpm{L_1}$ in $M'$, hence $|A'_i|\le s_i+1$ holds as well. Similarly, $|A'_i| \le s_i+1$ holds for $i \in J_2 \setminus J_1$. For $i \in J_1 \cap J_2$, we have $A'_i \subseteq L_1 \cap L_2 \subseteq \{e\}$, thus $|A'_i| \le s_i$. Finally, it is clear that $|A'_i| \le s_i+2$ holds for $i \in [n] \setminus (J_1 \cup J_2)$. These together imply that
    \begin{align*}
        \sum_{i=1}^n |A'_i| & \le \sum_{i \in J_1 \triangle J_2} (s_i + 1) + \sum_{i \in J_1 \cap J_2} s_i + \sum_{i \in [n]\setminus (J_1 \cup J_2)} (s_i+2) \\
        & = \sum_{i=1}^n s_i + 2n - |J_1|-|J_2| \\
        & \le k + 2n - (n-k+1) - (n-k+1) \\
        & = 3\cdot k-2.
    \end{align*}
    This contradicts $\sum_{i=1}^n |A'_i| = |A|-1 = 3\cdot k-1$. 
\end{claimproof} 

\begin{cla} \label{cl:one_intersecting}
    Let $A'_1,\dots, A'_n \subseteq S$, $p \in [n]$, and $x \in S \setminus A'_p$ such that $A^{\prime 1}_1 \cup\dots\cup A^{\prime n}_n + (x, p) \in \cB$.
    Assume that $\bigcup_{i \in J} A'_i$ is a generator of $M$ for each subset $J\subseteq [n]$ with $|J|=n-k+1$. Then, $|\bigcap_{i \in I} \tpm{\cl}(A'_i)| \le 1$ holds for each $I\subseteq [n]$ with $|I|=k$, and there exists at most one $k$-element subset $I\subseteq [n]$ for which equality holds.
\end{cla}
\begin{claimproof} 
    Define $A_i \coloneqq A'_i$ for $i \in [n]\setminus \{p\}$, $A_p \coloneqq A'_p + x$, and $A \coloneqq A^1_1 \cup \dots \cup A^n_n$.
    Suppose first that $|\bigcap_{i \in I} \tpm{\cl}(A'_i)| \ge 2$ for a subset $I\subseteq [n]$ with $|I|=k$. As $\tpm{M}$ is simple, this implies that there exists a line $\tpm{L}$ of $\tpm{M}$ with $\tpm L \subseteq \bigcap_{i \in I} \tpm{\cl}(A'_i)$.     
    For $i \in [n]$, let $s_i \coloneqq 1$ if $\tpm \cl (A'_i) \setminus \tpm L \ne \emptyset$ and $s_i \coloneqq 0$ otherwise. It is clear that $|A'_i| = \tpm r(\tpm \cl(A'_i)) \ge s_i + 2$ for $i \in I$ and $|A'_i| \ge s_i$ for $i \in [n]\setminus I$. This shows that \[3\cdot k-1 = \sum_{i=1}^n |A'_i| \ge 2\cdot|I| + \sum_{i=1}^n s_i = 2\cdot k + \sum_{i=1}^n s_i,\]
    thus $\sum_{i=1}^n s_i \le k-1$.     Let $J\coloneqq \{i \in [n] \mid A'_i \subseteq \tpm{L}\}$, then $|[n]\setminus J| \le \sum_{i=1}^n s_i \le k-1$, thus $|J| \ge n-k+1$. Therefore, $\bigcup_{i \in J} A'_i$ is a generator by our assumption, contradicting $\bigcup_{i \in J} A'_i \subseteq \tpm{L}$. 
 
    Suppose now that there exist subsets $I_1, I_2 \subseteq [n]$ with $I_1 \ne I_2$ such that $|I_1|=|I_2| = k$ and neither $\bigcap_{i\in I_1} \tpm{\cl}(A'_i)$ nor $\bigcap_{i \in I_2} \tpm{\cl}(A'_i)$ is empty. 
    Let $e_1 \in \bigcap_{i \in I_1} \tpm{\cl}(A'_i)$ and $e_2 \in \bigcap_{i \in I_2} \tpm{\cl}(A'_i)$.
    We have $e_1 \ne e_2$, as otherwise $e_1 = e_2 \in \bigcap_{i \in I_1 \cup I_2} \tpm{\cl}(A'_i) \subseteq \bigcap_{i \in I_1 \cup I_2} \tpm{\cl}(A_i)$ where $|I_1 \cup I_2| \ge k+1$, contradicting $A \in \cB$.
    Define $\tpm L\coloneqq \tpm{\cl}(\{e_1, e_2\})$.
    For $i \in I_1\cup I_2$, we have $e_1 \in \tpm{\cl}(A'_i)$ or $e_2 \in \tpm{\cl}(A'_i)$, thus $\tpm{r}(\tpm{\cl}(A'_i) \cap \tpm L) \ge 1$.
    For $i \in I_1 \cap I_2$, we have $\{e_1, e_2\} \subseteq \tpm{\cl}(A'_i)$, hence $\tpm{r} (\tpm{\cl}(A'_i) \cap \tpm L) \ge 2$.
    These imply that \[\sum_{i=1}^n \tpm{r}(\tpm{\cl}(A'_i) \cap \tpm L) \ge |I_1 \cup I_2| + |I_1 \cap I_2| = |I_1|+|I_2| = 2\cdot k.\]
    Since $\bigcup_{i \in J} A'_i$ is a generator of $M$ for each $J \subseteq [n]$ with $|J| = n-k+1$, we have $|J| \le n-k$ for $J\coloneqq \{i \in [n] \mid A'_i \subseteq \tpm L\}$. For each $i \in [n] \setminus J$, we have $\tpm{\cl}(A'_i) \cap \tpm L \subsetneq \tpm{\cl}(A'_i)$ for the flats $\tpm{\cl}(A'_i) \cap \tpm L$ and $\tpm{\cl}(A'_i)$, hence $\tpm{r}(\tpm{\cl}(A'_i) \cap \tpm L) + 1 \le  \tpm{r}(\tpm{\cl}(A'_i)) = |A'_i|$.  It is also clear that $\tpm{r}(\tpm{\cl}(A'_i) \cap \tpm L) \le |A'_i|$ for each $i \in [n]$.
    These together imply that
    \begin{align*}
        \sum_{i=1}^n |A'_i| & \ge \sum_{i \in [n] \setminus J} (\tpm{r}(\tpm{\cl}(A'_i) \cap \tpm L) +1) + \sum_{i \in J} \tpm{r}(\tpm{\cl}(A'_i) \cap \tpm L) \\
        & \ge |[n]\setminus J| + \sum_{i=1}^n \tpm{r}(\tpm{\cl}(A'_i) \cap \tpm L) \\
        & \ge k + 2\cdot k = 3\cdot k, 
    \end{align*}
    contradicting $\sum_{i=1}^n |A'_i| = |A|-1 = 3\cdot k-1$. This finishes the proof of the claim. 
\end{claimproof}

Using these two claims, we are ready to show that $\cB$ satisfies the basis exchange axiom.

\begin{cla} \label{cl:matroid}
    $\cB$ forms the family of bases of a matroid on $S$. 
\end{cla}
\begin{claimproof}
    $\cB \ne \emptyset$ follows from \cref{cl:AtimesK}.
    To show that $\cB$ satisfies the basis exchange axiom,
    let $A=A^1_1\cup \dots \cup A^n_n$ and $B=B^1_1\cup \dots \cup B^n_n$ be members of $\cB$ and let $(x, p) \in A\setminus B$. We need to show that there exists $(y, q) \in B\setminus A$ such that $A - (x, p) + (y, q) \in \cB$. Let $A'_i \coloneqq A_i$ for $i \in [n]\setminus \{p\}$ and $A'_p \coloneqq A_p - x$.

    Consider the case first when there exists a subset $J\subseteq [n]$ such that $|J| = n-k+1$ and $\bigcup_{i \in J} A'_i$ does not generate $M$. 
    Since $B\in \cB$, $\bigcup_{i \in J} B_i$ is a generator of $M$, thus there exists an index $q \in J$ and an element $y \in B_q \setminus \cl(\bigcup_{i \in J} A'_i)$.
    Define $A''_i \coloneqq A'_i$ for $i \in [n] \setminus \{q\}$ and $A''_q \coloneqq A'_q + y$.
    We claim that $A''\coloneqq A^{\prime\prime 1}_1 \cup \dots \cup A^{\prime\prime n}_n = A-(x, p)+ (y, q) \in \cB$.
    We have $|A''| = |A| = 3\cdot k$ and it is also clear that $A''_i$ is independent for each $i \in [n]$ since $y \not \in \cl(A'_q)$.
    Since $\bigcup_{i \in J} A_i$ generates $M$, we have $r(\bigcup_{i \in J} A'_i ) = 2$, thus $\bigcup_{i \in J} A''_i$ is a generator by the choice of $y$.
    By \cref{cl:one_nongen}, $\bigcup_{i  \in J'} A'_i$ generates $M$ for each $J'\subseteq [n]$ such that $|J| = n-k+1$ and $J'\ne J$, hence so does $\bigcup_{i \in J'} A''_i$.
    It remains to show that $\bigcap_{i \in I} \tpm{\cl}(A''_i) = \emptyset$ for each $I \subseteq [n]$ with $|I|=k+1$.
    Suppose to the contrary that $e \in \bigcap_{i \in I} \tpm{\cl}(A''_i) $ for some $I\subseteq [n]$ with $|I| = k+1$.
    Since $\bigcap_{i \in I} \tpm \cl(A_i) = \emptyset$ and $A''_i \subseteq A_i$ for $i \in [n]\setminus \{q\}$, we have $q \in I$ and $e \not \in \tpm{\cl}(A_q)$.
    The assertions $e \not \in \tpm{\cl}(A'_q)$ and $e\in \tpm\cl(A'_q + y)$ together imply that $y \in \tpm\cl (A'_q+e)$.
    Since $|J| = n-k+1$ and $|I\setminus \{q\}| = k$, there exists an index $i_0 \in (I \cap J) \setminus \{q\}$.
    Then, $e \in \tpm \cl(A''_{i_0}) = \tpm \cl(A'_{i_0})$ holds, thus $\tpm \cl(A'_q \cup A'_{i_0}) \supseteq \tpm \cl(A'_q+e) \ni y$. Since $\{q, i_0\} \subseteq J$, this contradicts $y \not \in \tpm \cl(\bigcup_{i \in J} A'_i)$.
    
    It remains to consider the cases when $\bigcup_{i \in J} A'_i$ is a generator of $M$ for each $J \subseteq [n]$ with $|J| = n-k+1$. 
    Assume first that there exists a subset $I\subseteq [n]$ such that $|I|=k$ and $\bigcap_{i \in I} \tpm \cl(A'_i) \ne \emptyset$.
    By \cref{cl:one_intersecting}, $\bigcap_{i \in I'} \cl (A'_i) = \emptyset$ holds for each $I'\subseteq [n]$ with $|I'| = k$ and $I \ne I'$ and there is an element $e \in \tpm S$ such that $\bigcap_{i \in I} \tpm \cl(A'_i) = \{e\}$.
    We claim that there is an index $q\in [n]$ such that $B_q\setminus \tpm \cl(A'_q+e) \ne \emptyset$.
    Suppose to the contrary that $B_i \subseteq \tpm\cl (A'_i+e)$ for each $i \in [n]$.
    Let $s_i \coloneqq 1$ if $B_i$ spans $e$ in $\tpm M$ and $s_i \coloneqq 0$ otherwise.
    As $B \in \cB$, we have $\sum_{i=1}^n s_i \le k$. For $i \in I$, we have $e \in A'_i$ and $B'_i \subseteq \tpm \cl (A'_i)$, thus $|B'_i| \le |A'_i|-1+s_i$.  For $i \in [n] \setminus I$, we have $B_i \subseteq \tpm \cl(A'_i+e)$, thus $|B'_i| \le |A'_i| + s_i$. Combining these, we get 
    \begin{align*}
        3\cdot k 
        &= \sum_{i=1}^n |B_i|\\
        &\le \sum_{i \in I} (|A'_i|-1+s_i) + \sum_{i \in [n]\setminus I} (|A'_i|+s_i)\\
        &= \sum_{i=1}^n |A'_i| -|I| + \sum_{i = 1}^n s_i\\
        &\le 3\cdot k-1-k+k\\
        &= 3\cdot k-1,
    \end{align*}
    a contradiction.
    This shows our claim that there exists an index $q \in [n]$ such that $B_q \setminus \tpm{\cl}(A'_q+e) \ne \emptyset$.
    Let $y \in B_q \setminus \tpm{\cl}(A'_q+e)$, we show that $A''\coloneqq A-(x,p)+(y,q) \in \cB$.
    Define $A''_i \coloneqq A'_i$ for $i \in [n] \setminus \{q\}$ and $A''_q \coloneqq A'_q+y$.
    It is clear that $|A''| = 3\cdot k $ and $A''_i$ is independent for each $i \in [n]$ as $y \not \in \cl(A'_q)$.
    It is also clear that $\bigcup_{i \in J} A''_i$ is a generator for each $J\subseteq [n]$ with $|J|=n-k+1$ since $\bigcup_{i \in J} A'_i$ is already a generator. 
    For $I' \subseteq [n]$ with $|I'| = k+1$ and $q \not \in I'$, we have $\bigcap_{i \in I'} \tpm \cl(A''_i) = \bigcap_{i \in I'} \tpm \cl(A'_i) = \emptyset$.
    For $I'\subseteq [n]$ with $|I'| = k+1$, $q \in I'$ and $I'\setminus \{q\} \ne I$, we have $\bigcap_{i \in I'} \tpm \cl(A''_i) \subseteq \bigcap_{i \in I'\setminus \{q\}} \tpm \cl(A''_i) = \bigcap_{i \in I'\setminus \{q\}} \tpm \cl(A'_i) = \emptyset$.
    Therefore, we only have to consider the case $q \not \in I$ and show that $\bigcap_{i \in I\cup \{q\}} \tpm \cl (A''_i) = \emptyset$.
    Note that $\bigcap_{i \in I} \tpm \cl (A''_i) = \bigcap_{i \in I} \tpm \cl (A'_i) = \{e\}$.
    As $A \in \cB$, we have $\bigcap_{i \in I\cup \{q\}} \tpm \cl (A'_i) \subseteq \bigcap_{i \in I\cup \{q\}} \tpm \cl (A_i) = \emptyset$, thus $e \not \in \tpm \cl(A'_q)$.
    Since $y \not \in \tpm \cl (A'_q+e)$, we get that $e \not \in \tpm \cl (A'_q+y)=\tpm \cl (A''_q)$.
    This shows $\bigcap_{i \in I\cup \{q\}} \tpm\cl (A''_i) = \emptyset$, finishing the proof of the case.

    Finally, in the only remaining case $\bigcup_{i \in J'} A'_i$ is a generator of $M$ for each $J\subseteq [n]$ with $|J|=n-k+1$ and $\bigcap_{i \in I} \tpm \cl(A'_i) = \emptyset$ for each $I \subseteq [n]$ with $|I|=k$. In this case, $A-(x, p) + (y, q) \in \cB$ for each $q\in [n]$ and $y \in B_q \setminus \cl(A'_q)$. Since $\sum_{i=1}^n |B_i| = 3\cdot k > 3\cdot k-1 = \sum_{i=1}^n |A'_i| = \sum_{i=1}^n  r(\cl(A'_i))$, there exists an index $q \in [n]$ with $B_q \setminus \cl(A'_q) \ne \emptyset$, finishing the proof of the claim.
\end{claimproof}

By \cref{cl:matroid}, $\cB$ forms the basis family of a matroid $P$ on $S$. The next claim shows that it is a tensor product of $M$ and $U_{k,n}$.

\begin{cla} \label{cl:Ntensor}
    $P$ is a tensor product of $M$ and $U_{k,n}$.
\end{cla}
\begin{claimproof}
    For $i \in [n]$, any independent set of $P|S^i$ corresponds to an independent set of $M$ by the definition of $\cB$, and any basis of $M$ gives a basis of $P|S^i$ by \cref{cl:AtimesK}, thus $P|S^i$ is isomorphic to $M$.
    Since $\bigcap_{i \in I} \tpm{\cl}(A_i) = \emptyset$ holds for each $(k+1)$-element subset $I \subseteq [n]$ and basis $A^1_1\cup \dots \cup A^n_n \in \cB$, the restriction $P|(\{e\} \times [n])$ has rank at most $k$ for each $e \in S$. Then, \cref{cl:AtimesK} implies that $P|^e[n]$ is isomorphic to $U_{k,n}$ for each $e \in S$. Finally, the rank of $P$ is $3\cdot k$, thus \cref{prop:tensor} proves our claim.
\end{claimproof}

The theorem follows by combining \cref{cl:freest} and \cref{cl:Ntensor}.
\end{proof}

\begin{rem}
    Note that a rank-3 matroid might have multiple different tensor products with a uniform matroid, in particular, the tensor product $U_{3,6} \otimes U_{2,3}$ is not unique. To see this, consider $U_{3,6}$ and $U_{2,3}$ on ground sets $[6]$ and $[3]$, let $M$ denote their freest tensor product constructed in \cref{thm:uniform}, and let $\cB$ denote the family of bases of $M$. It is not difficult to check that $B\coloneqq \{(1,1), (2,1), (3,2), (4,2), (5,3), (6,3)\}$ is a so-called {\it free basis} of $M$, that is, $B$ is a basis such that $(B\setminus \{e\})\cup \{f\}$ is also a basis of $M$ for each $e \in B$ and $f \in ([6] \times [3])\setminus B$. Then, $\cB \setminus \{B\}$ is the family of bases of a matroid $M'$ on $[6] \times [3]$, see \cite[Lemma~4.2]{ferroni2022matroid}. As $M$ is freer than $M'$ and $B_1 \times B_2$ is a basis of $M'$ for any bases $B_1$ of $U_{3,6}$ and $U_{2,3}$, it follows that $M'$ is a tensor product of $U_{3,6}$ and $U_{2,3}$.  
\end{rem}

As a kind of converse to the statement of Corollary~\ref{cor:1me}, we show that fully modular extendability implies the existence of a tensor product with uniform matroids.

\begin{cor}\label{cor:tensor} 
    Let $M=(S,r)$ be a fully modular extendable matroid. Then, $M$ has a tensor product with any uniform matroid.
\end{cor}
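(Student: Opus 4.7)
The plan is to reduce to the two cases that appear in the structural characterization of fully modular extendable matroids, handle each of them with an already established tensor product result, and reassemble the pieces via direct sums. First I would invoke Theorem~\ref{thm:equiv}, which tells us that every connected component $M_i$ of $M$ is either of rank $3$ or skew-representable over some skew field $\bF_i$. By repeated application of Lemma~\ref{lem:direct_sum}, a matroid $M=M_1\oplus\dots\oplus M_q$ has a tensor product with a fixed matroid $N$ if and only if each $M_i$ does. Hence it suffices to prove the statement component by component for an arbitrary fixed uniform matroid $U_{k,n}$.

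For the rank-$3$ components, Theorem~\ref{thm:uniform} immediately delivers a tensor product with $U_{k,n}$ (in fact the freest one). For a connected skew-representable component $M_i$ of characteristic $p$, I would apply Lemma~\ref{lem:skew_large}: this lemma requires that $U_{k,n}$ be representable over every infinite field of characteristic $p$, which is a standard fact, since a Vandermonde-type construction (taking $n$ distinct scalars and forming the corresponding $k$-dimensional vectors) places any $n$ vectors of $\bF^k$ in general position whenever $\bF$ is infinite. Consequently Lemma~\ref{lem:skew_large} produces a tensor product of $M_i$ and $U_{k,n}$.

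Putting these pieces together using Lemma~\ref{lem:direct_sum} yields the required tensor product of $M$ with $U_{k,n}$. There is no substantive obstacle here: the argument is purely a matter of combining Theorem~\ref{thm:equiv}, Theorem~\ref{thm:uniform}, Lemma~\ref{lem:skew_large}, and Lemma~\ref{lem:direct_sum} in the right order. The only minor point worth noting explicitly in the final write-up is that components of rank at most $2$ fall under the skew-representable case (indeed they are regular), so the dichotomy in Theorem~\ref{thm:equiv} covers all possibilities.
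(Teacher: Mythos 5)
Your proposal is correct and follows essentially the same route as the paper: Theorem~\ref{thm:equiv} to decompose into rank-3 and skew-representable components, Theorem~\ref{thm:uniform} and Lemma~\ref{lem:skew_large} (the latter using that uniform matroids are representable over all infinite fields) to handle each case, and Lemma~\ref{lem:direct_sum} to reassemble. Your added remark that rank-$\leq 2$ components are regular and hence fall under the skew-representable case is a correct and worthwhile clarification of a point the paper leaves implicit.
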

\begin{proof} 
    By \cref{thm:equiv}, every connected component of $M$ has either rank $3$ or is skew-representable. Since any uniform matroid $U$ is representable over any large enough field, by \cref{lem:skew_large} and \cref{thm:uniform}, each of these components has a tensor product with $U$. Hence the statement follows by \cref{lem:direct_sum}.
\end{proof}

\section{Rank inequalities}
\label{sec:rank}

In this section, we focus on linear rank inequalities and introduce a new method for deriving them. As an illustration, we first present a simpler proof of Ingleton’s inequality in \cref{sec:ing}. Building on this approach, \cref{sec:two} derives two inequalities for polymatroid functions that admit tensor products with the rank functions of the Fano and non-Fano matroids, respectively, leading to characteristic-dependent linear rank inequalities for polymatroid functions that are skew-representable over skew fields of characteristic 2 or different from 2. The main result of the section appears in \cref{sec:new}, where we derive a characteristic-independent linear rank inequality that does not follow from the common information property -- to the best of our knowledge, the first such inequality.

\subsection{Ingleton's inequality}
\label{sec:ing}

Ingleton~\cite{ingleton1971representation} observed that the rank function of any skew-representable polymatroid function $\varphi\colon 2^S\to\bR$ satisfies the following inequality:
\begin{equation}
\begin{aligned}
\varphi(A\cup B) + \varphi(A\cup C)+\varphi(A\cup D)+\varphi(B\cup C)+r(B\cup D) \ge  \\
\varphi(A)+\varphi(B)+\varphi(A\cup B \cup C) + \varphi(A \cup B \cup D)+\varphi(C\cup D) 
\end{aligned}
\tag*{\text{\textsc{(Ing)}}}
\label{ineq:ingleton}
\end{equation}
The goal of this section is to present a new proof of the inequality using tensor products. While several proofs exist -- including two that rely on tensor product techniques~\cite{padro2025tensor, berczi2025matroid} -- our argument is more intuitive and offers a framework that leads to new linear rank inequalities. Our proof relies on a similar argument to that of Las Vergnas~\cite{las1981products}, who showed that the Vámos matroid $V_8$ and the uniform matroid $U_{2,3}$ do not admit a tensor product. Since any skew-representable matroid admits a matroid tensor product with the uniform matroid $U_{2,3}$, it suffices to verify the following, more general statement that was first shown in~\cite{berczi2025matroid}. The reason for including the proof is twofold: it is significantly simpler than the one given there, and the idea will serve as a starting point for deriving new linear rank inequalities in Sections~\ref{sec:two} and~\ref{sec:new}.

\begin{figure}[t!]
    \centering

    \begin{subfigure}{0.24\textwidth}
        \centering
        \begin{tikzpicture}[scale=0.7]
            \draw[step=1cm,black,thin] (0,0) grid (4,3);

    \node at (-0.5,2.5) {$u$};
    \node at (-0.5,1.5) {$v$};
    \node at (-0.5,0.5) {$w$};
    
    \node at (0.5,3.5) {$A$};
    \node at (1.5,3.5) {$B$};
    \node at (2.5,3.5) {$C$};
    \node at (3.5,3.5) {$D$};

    \fill[gray, opacity=0.4] (0,0) rectangle (1,3); 
    \fill[gray, opacity=0.4] (1,2) rectangle (2,3);
    \fill[gray, opacity=0.4] (2,1) rectangle (3,2);
    \fill[gray, opacity=0.4] (3,0) rectangle (4,1);

        \end{tikzpicture}
        \caption{Definition of $X$.}
        \label{fig:ingletona}
    \end{subfigure}
    \hfill
    \begin{subfigure}{0.24\textwidth}
        \centering
        \begin{tikzpicture}[scale=0.7]
            \draw[step=1cm,black,thin] (0,0) grid (4,3);

    \node at (-0.5,2.5) {$u$};
    \node at (-0.5,1.5) {$v$};
    \node at (-0.5,0.5) {$w$};
    
    \node at (0.5,3.5) {$A$};
    \node at (1.5,3.5) {$B$};
    \node at (2.5,3.5) {$C$};
    \node at (3.5,3.5) {$D$};

    \fill[gray, opacity=0.4] (0,2) rectangle (1,3); 
    \fill[gray, opacity=0.4] (1,0) rectangle (2,3);
    \fill[gray, opacity=0.4] (2,1) rectangle (3,2);
    \fill[gray, opacity=0.4] (3,0) rectangle (4,1);
    
        \end{tikzpicture}
        \caption{Definition of $Y$.}
        \label{fig:ingletonb}
    \end{subfigure}
    \hfill
    \begin{subfigure}{0.24\textwidth}
        \centering
        \begin{tikzpicture}[scale=0.7]
            \draw[step=1cm,black,thin] (0,0) grid (4,3);

    \node at (-0.5,2.5) {$u$};
    \node at (-0.5,1.5) {$v$};
    \node at (-0.5,0.5) {$w$};
    
    \node at (0.5,3.5) {$A$};
    \node at (1.5,3.5) {$B$};
    \node at (2.5,3.5) {$C$};
    \node at (3.5,3.5) {$D$};

    \fill[gray, opacity=0.4] (0,2) rectangle (1,3); 
    \fill[gray, opacity=0.4] (1,2) rectangle (2,3);
    \fill[gray, opacity=0.4] (2,1) rectangle (3,2);
    \fill[gray, opacity=0.4] (3,0) rectangle (4,1);
    
        \end{tikzpicture}
        \caption{Intersection of $X$ and $Y$.}
        \label{fig:ingletonc}
    \end{subfigure}
    \hfill
    \begin{subfigure}{0.24\textwidth}
        \centering
        \begin{tikzpicture}[scale=0.7]
            \draw[step=1cm,black,thin] (0,0) grid (4,3);

    \node at (-0.5,2.5) {$u$};
    \node at (-0.5,1.5) {$v$};
    \node at (-0.5,0.5) {$w$};
    
    \node at (0.5,3.5) {$A$};
    \node at (1.5,3.5) {$B$};
    \node at (2.5,3.5) {$C$};
    \node at (3.5,3.5) {$D$};

    \fill[gray, opacity=0.4] (0,0) rectangle (1,3); 
    \fill[gray, opacity=0.4] (1,0) rectangle (2,3);
    \fill[gray, opacity=0.4] (2,1) rectangle (3,2);
    \fill[gray, opacity=0.4] (3,0) rectangle (4,1);
    
        \end{tikzpicture}
        \caption{Union of $X$ and $Y$.}
        \label{fig:ingletond}
    \end{subfigure}
    \caption{Illustration of the proof of \cref{thm:ingleton}. Note that, in general, the sets $A$, $B$, $C$ and $D$ are not necessarily disjoint.}
    \label{fig:ingleton}
\end{figure}
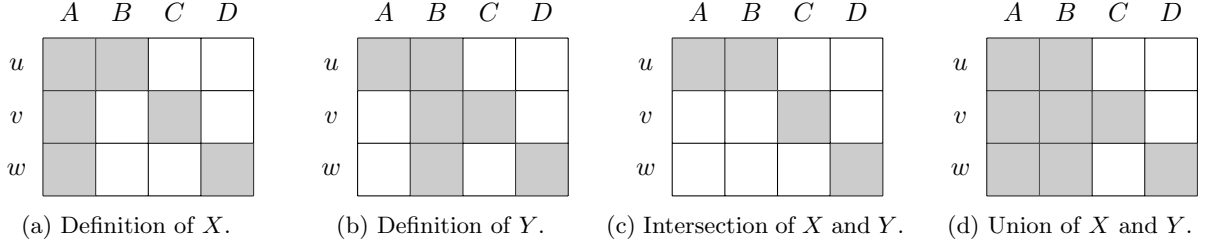

\begin{thm}[Bérczi, Gehér, Imolay, Lovász, Maga, and Schwarcz] \label{thm:ingleton}
Let $\varphi_1\colon 2^{S_1}\to \bR$ denote the rank function of the uniform matroid $U_{2,3}$, and let $\varphi_2\colon 2^{S_2} \to \bR$ be a polymatroid function. If $\varphi_1$ and $\varphi_2$ have a tensor product, then  $\varphi_2$ satisfies Ingleton's inequality \ref{ineq:ingleton} for all sets $A, B, C, D \subseteq S_2$.
\end{thm}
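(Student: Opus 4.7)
The plan is to exploit submodularity of the tensor product $\varphi$ by choosing two subsets $X, Y \subseteq S_1 \times S_2$ with $S_1 = \{u, v, w\}$, namely
\[X \coloneqq (S_1 \times A) \cup \prd{u}{B} \cup \prd{v}{C} \cup \prd{w}{D}, \qquad Y \coloneqq \prd{u}{A} \cup (S_1 \times B) \cup \prd{v}{C} \cup \prd{w}{D}.\]
Row-by-row, these sets can equivalently be written as $X = \prd{u}{(A \cup B)} \cup \prd{v}{(A \cup C)} \cup \prd{w}{(A \cup D)}$ and $Y = \prd{u}{(A \cup B)} \cup \prd{v}{(B \cup C)} \cup \prd{w}{(B \cup D)}$. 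The entire argument rests on the single submodular inequality $\varphi(X) + \varphi(Y) \geq \varphi(X \cap Y) + \varphi(X \cup Y)$, and on translating each of the four terms back to $\varphi_2$ via the tensor-product identities.

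For the first two terms, I would establish the upper bound $\varphi(X) \leq \varphi_2(A \cup B) + \varphi_2(A \cup C) + \varphi_2(A \cup D) - \varphi_2(A)$ (and symmetrically for $\varphi(Y)$) by decomposing $X$ as $(S_1 \times A \cup \prd{u}{B}) \cup (S_1 \times A \cup \prd{v}{C} \cup \prd{w}{D})$, applying submodularity twice, and invoking \cref{prop:crosses} on pairs of rectangles to obtain exact identities such as $\varphi(S_1 \times A \cup \prd{u}{B}) = \varphi_2(A \cup B) + \varphi_2(A)$. For $\varphi(X \cup Y)$, the observation that $X \cup Y$ contains $\prd{v}{(A \cup B \cup C)} \cup \prd{w}{(A \cup B \cup D)}$ together with the fact that $\{v, w\}$ is independent in $U_{2,3}$ allows \cref{lem:direct} to immediately give $\varphi(X \cup Y) \geq \varphi_2(A \cup B \cup C) + \varphi_2(A \cup B \cup D)$.

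The main obstacle is the lower bound $\varphi(X \cap Y) \geq \varphi_2(A \cup B) + \varphi_2(C \cup D)$. Since $X \cap Y$ contains $\prd{u}{(A \cup B)} \cup \prd{v}{C} \cup \prd{w}{D}$, this reduces to proving $\varphi(\prd{u}{E} \cup \prd{v}{C} \cup \prd{w}{D}) \geq \varphi_2(E) + \varphi_2(C \cup D)$ for any $E \subseteq S_2$. When $\varphi_2$ is representable, this follows easily from the modular law on subspace dimensions, but in the general polymatroid setting it must be deduced purely from submodularity of $\varphi$ together with \cref{prop:crosses} and \cref{lem:direct}. My plan is to start from the identity $\varphi(\prd{u}{E} \cup \{v, w\} \times (C \cup D)) = \varphi_2(E \cup C \cup D) + \varphi_2(C \cup D)$ (obtainable from \cref{prop:crosses} with $X_1 = \{u\}$, $X_2 = S_1$, $Y_1 = E \cup C \cup D$, $Y_2 = C \cup D$) and then undo the enlargement $\prd{v}{C} \cup \prd{w}{D} \rightsquigarrow \{v, w\} \times (C \cup D)$ via a submodular exchange that uses $\varphi(\prd{v}{C} \cup \prd{w}{D}) = \varphi_2(C) + \varphi_2(D)$ from \cref{lem:direct}. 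Plugging the four bounds into the submodular inequality and cancelling one copy of $\varphi_2(A \cup B)$ from each side then yields exactly Ingleton's inequality.
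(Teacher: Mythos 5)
Your choice of $X$ and $Y$, the row-by-row rewriting, and the plan to translate the single submodular inequality $\varphi(X)+\varphi(Y)\geq \varphi(X\cap Y)+\varphi(X\cup Y)$ into Ingleton all match the paper's proof exactly. The upper bounds on $\varphi(X)$ and $\varphi(Y)$ go through as you describe (though note that \cref{prop:crosses} is stated for matroids; the analogue for polymatroid tensor products is what you actually need, and it follows readily from \cref{lem:gen}). Likewise, your lower bound on $\varphi(X\cup Y)$ via \cref{lem:direct} and monotonicity is exactly the paper's \cref{cl:xcupy}.

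The gap is in the lower bound on $\varphi(X\cap Y)$. Write $E=A\cup B$, $P=\prd{u}{E}\cup\prd{v}{C}\cup\prd{w}{D}$, and $Q=\{v,w\}\times(C\cup D)$. The submodular exchange you propose is $\varphi(P)\geq \varphi(P\cup Q)+\varphi(P\cap Q)-\varphi(Q)$, which evaluates to
\[
\varphi(X\cap Y)\;\geq\; \bigl(\varphi_2(E\cup C\cup D)+\varphi_2(C\cup D)\bigr)+\bigl(\varphi_2(C)+\varphi_2(D)\bigr)-2\varphi_2(C\cup D)
=\varphi_2(E\cup C\cup D)+\varphi_2(C)+\varphi_2(D)-\varphi_2(C\cup D).
\]
This is strictly weaker than the needed $\varphi_2(A\cup B)+\varphi_2(C\cup D)$: the two bounds differ by $\varphi_2(A\cup B)+2\varphi_2(C\cup D)-\varphi_2(A\cup B\cup C\cup D)-\varphi_2(C)-\varphi_2(D)$, which can be negative (for instance when $C$ and $D$ are parallel and $A\cup B$ is a nonempty independent set disjoint from them). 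Plugging your weaker bound into the master submodular inequality does not cancel the extra copy of $\varphi_2(A\cup B)$ and does not yield Ingleton.

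What the submodular exchange cannot see — and what the paper's \cref{cl:xcapy} exploits — is that in a tensor product with $U_{2,3}$, the set $Z_1=\prd{u}{S_2}\cup\prd{v}{C}\cup\prd{w}{D}$ already spans $\prd{w}{C}$ and $\prd{v}{D}$ \emph{for free}: since $\varphi_1(\{u,v\})=\varphi_1(S_1)$ and $\{u,v\}\times C\subseteq Z_1$, adding $\prd{w}{C}$ does not increase $\varphi$ (by \cref{lem:gen}), and symmetrically for $\prd{v}{D}$. This gives the equality $\varphi(Z_1)=\varphi(Z_2)$ with $Z_2=\prd{u}{S_2}\cup(S_1\times(C\cup D))$, and then two further applications of \cref{lem:gen} close the chain to $\varphi(S_1\times S_2)=2\varphi_2(S_2)$. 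Without this spanning step — which genuinely uses the circuit structure of $U_{2,3}$ rather than just submodularity of $\varphi$ — the lower bound on $\varphi(X\cap Y)$ cannot be recovered.
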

\begin{proof}
Let $S_1=\{u,v,w\}$ denote the elements of $U_{2,3}$ and let $\varphi\colon 2^{S_1\times S_2}\to\bR$ be a polymatroid tensor product of $\varphi_1$ and $\varphi_2$. Consider four sets $A,B,C,D \subseteq S_2$. Since we can restrict $\varphi_2$ to the union of these sets, we may assume without loss of generality that $S_2=A\cup B\cup C\cup D$. We define
\[X \coloneqq (S_1 \times A) \cup \prd{u}{B} \cup \prd{v}{C} \cup \prd{w}{D}\] 
and 
\[Y \coloneqq \prd{u}{A} \cup (S_1 \times B) \cup \prd{v}{C} \cup \prd{w}{D};\]
see Figures~\ref{fig:ingletona} and~\ref{fig:ingletonb} for illustrations of these sets. We aim to bound the values of $\varphi$ on $X$, $Y$, $X \cap Y$, and $X \cup Y$ so that, together with the submodular inequality $\varphi(X) + \varphi(Y) \geq \varphi(X \cap Y) + \varphi(X \cup Y)$, these bounds yield Ingleton's inequality. We begin with an upper bound on the values $\varphi(X)$ and $\varphi(Y)$.

\begin{cla} \label{cl:xrang}
    $\varphi(X) \leq \varphi_2(A \cup B)+\varphi_2(A \cup C)+\varphi_2(A \cup D)-\varphi_2(A)$ and $\varphi(Y) \leq \varphi_2(A \cup B)+\varphi_2(B \cup C)+\varphi_2(B \cup D)-\varphi_2(D)$.
\end{cla}
\begin{claimproof}
    We prove the bound for $\varphi(X)$; the argument for $\varphi(Y)$ is analogous.
    Since $\varphi_1(\{u,v,w\})=\varphi_1(\{u,v\})$ and $\varphi_1(\{u,v\})=\varphi_1(\{u\})+\varphi_1(\{v\})$, \cref{lem:gen} and \cref{lem:direct} show that \[\varphi((S_1 \times A)\cup \prd{u}{B} \cup \prd{v}{C}) = \varphi(\prd{u}{(A\cup B)\cup \prd{v}{(A\cup C)}}) = \varphi_2(A\cup B)+\varphi_2(A\cup C).\]
    Using \cref{lem:gen}, 
    we have 
    \[\varphi((S_1 \times A) \cup \prd{u}{B} \cup \prd{v}{C} \cup \prd{w}{D}) \le \varphi((S_1 \times A)\cup \prd{u}{B} \cup \prd{v}{C}) + \varphi_2(A\cup D)-\varphi_2(A).\]  
    Combining the two (in)equalities, the claim follows.
\end{claimproof}

Now we lower bound $\varphi(X\cap Y)$ and $\varphi(X\cup Y)$; see Figures~\ref{fig:ingletonc} and~\ref{fig:ingletond} for illustrations of these sets.

\begin{cla} \label{cl:xcapy}
    $\varphi(X \cap Y) \geq \varphi_2(A \cup B)+\varphi_2(C \cup D)$.
\end{cla}
\begin{claimproof} 
    Note that $X \cap Y = \prd{u}{(A\cup B)} \cup \prd{v}{C} \cup \prd{w}{D}$.
    Let $Z_1 \coloneqq \prd{u}{S_2}\cup \prd{v}{C} \cup \prd{w}{D} =  (X\cap Y) \cup \prd{u}{(C\cup D)}$.  By \cref{lem:gen}, we have
    \begin{align} \label{eq:XcapY_1}
        \varphi(Z_1)\le \varphi(X\cap Y)+\varphi_2(S_2)-\varphi_2(A\cup B).
    \end{align}
    Let $Z_2 \coloneqq \prd{u}{S_2} \cup (S_1 \times (C\cup D)) = Z_1 \cup \prd{v}{D}\cup \prd{w}{C}$. Since $\varphi_1(S_1) = \varphi_1(\{u,v\}) = \varphi_1(\{u,w\})$, by \cref{lem:gen} we have $\varphi(Z_2)=\varphi(Z_1)$. Let $Z_3 \coloneqq Z_2 \cup \prd{v}{(A\cup B)} = (\{u,v\}\times S_2) \cup (S_1 \times (C\cup D))$. By \cref{lem:gen}, we have  
    \begin{align} \label{eq:XcapY_2}
        \varphi(Z_3) \le \varphi(Z_2)+\varphi_2(S_2)-\varphi_2(C\cup D).
    \end{align}
    Finally, as $\varphi_1(\{u,v\})=\varphi_1(S_1)$, by \cref{lem:gen} we have
    \begin{align} \label{eq:XcapY_3}
        2 \cdot \varphi_2(S_2) = \varphi(S_1 \times S_2) = \varphi(Z_3). 
    \end{align}
    Then, the claim follows after cancellations by combining \eqref{eq:XcapY_1}, $\varphi(Z_2)=\varphi(Z_1)$, \eqref{eq:XcapY_2}, and \eqref{eq:XcapY_3}. 
\end{claimproof}

\begin{cla} \label{cl:xcupy}
    $\varphi(X \cup Y) \geq \varphi_2(A \cup B \cup C)+\varphi_2(A \cup B \cup D)$.
\end{cla}
\begin{claimproof}
     Note that $X \cup Y = \prd{u}{(A\cup B)} \cup \prd{v}{(A\cup B\cup C)} \cup \prd{w}{(A\cup B\cup D)}$.
    By \cref{lem:gen}, we have \[\varphi(X\cup Y \cup \prd{v}{D} \cup \prd{w}{C})\le \varphi(X\cup Y)+(\varphi_2(S_2)-\varphi_2(A\cup B \cup C)) + (\varphi_2(S_2)-\varphi_2(A\cup B \cup D)).\]
    Since $\varphi_1(S_1)=\varphi_1(\{v,w\})$, by \cref{lem:gen} we have \[2\cdot\varphi_2(S_2)=\varphi(S_1\times S_2)=\varphi(X\cup Y \cup \prd{v}{D} \cup \prd{w}{C}).\]
    The claim follows by combining the two (in)equalities.
\end{claimproof}

Combining Claims~\ref{cl:xrang}, \ref{cl:xcapy}, \ref{cl:xcupy} and the submodular inequality $\varphi(X) + \varphi(Y) \geq \varphi(X \cap Y)+\varphi(X \cup Y)$, we get exactly \ref{ineq:ingleton}.
\end{proof}

\subsection{Fields of characteristic two}
\label{sec:two}


We define the {\it Fano matroid} $F_7$ on ground set $S=\{s_1,\dots, s_7\}$ as the simple rank-3 matroid with set of lines \[\{\{s_1,s_2,s_6\}, \{s_1,s_3,s_5\}, \{s_1,s_4,s_7\}, \{s_2,s_3,s_4\}, \{s_2,s_5,s_7\}, \{s_3,s_6,s_7\}, \{s_4,s_5,s_6\}\},\]
see also Figure~\ref{fig:fano}. Similarly, we define the {\it non-Fano matroid} $F_7^-$ on ground set $T=\{t_1,\dots, t_7\}$ with set of lines 
 \[\big\{\{t_1,t_2,t_6\}, \{t_1,t_3,t_5\}, \{t_1,t_4,t_7\}, \{t_2,t_3,t_4\}, \{t_2,t_5,t_7\}, \{t_3,t_6,t_7\}\big\},\]
It is known that $F_7$ is representable over a field exactly if it has characteristic 2, while $F_7^-$ is representable over a field exactly if it has characteristic different from 2 \cite[Proposition~6.4.8]{oxley2011matroid}. As the analogous result also holds for skew fields, 
\cref{thm:char} shows that $T_k(F_7, F_7^-) = \emptyset$ and $T_\ell(F_7^-, F_7) = \emptyset$ for some positive integers $k$ and $\ell$. We strengthen this by showing $T_1(F_7, F_7^-) = T_1(F_7^-, F_7)=\emptyset$, that is, $F_7$ and $F_7^-$ do not admit a tensor product. More generally, we use the idea of our proof to derive linear inequalities satisfied by any polymatroid function having a tensor product with the rank function of $F_7$ or $F_7^-$, respectively.

\begin{thm} \label{thm:Fano_non_Fano}
Let $\varphi_1\colon 2^{S_1}\to \bR$ and $\varphi_2 \colon 2^{S_2} \to \bR$ be polymatroid functions admitting a tensor product.
\begin{enumerate}[label={\textup{(\alph*)}}]\itemsep0em
    \item \label{it:Fano} If $\varphi_1$ is the rank function of the Fano matroid $F_7$, then 
    \begin{align} \label{eq:Fano}
        & \varphi_2(B_4) + \varphi_2(B_6) + \varphi_2(B_{234}) + \varphi_2(B_{135}) + \varphi_2(B_{126}) + \varphi_2(B_{367}) + \varphi_2(B_{147}) + \varphi_2(B_{257}) \nonumber \\
        \geq\ & \varphi_2(B_{36}) + \varphi_2(B_{14}) + \varphi_2(B_{27}) + \varphi_2(B_{456}) + \varphi_2(B_{12347}) + \varphi_2(B_{123567}).
    \end{align}
    holds for any subsets $B_1,\dots, B_7 \subseteq S_2$, where we define $B_{i_1\dots i_k} \coloneqq \bigcup_{j=1}^k B_{i_j}$ for $\{i_1,\dots, i_k\}\subseteq [7]$. Furthermore, this inequality is violated if $\varphi_2$ is the rank function of the non-Fano matroid $F_7^-$ and $B_i = \{t_i\}$ for $i \in [7]$.

    \item \label{it:non_Fano} If $\varphi_2$ is the rank function of the non-Fano matroid $F_7^-$, then      
    \begin{align} \label{eq:non_Fano}
        & \varphi_1(A_4) + \varphi_1(A_6) + \varphi_1(A_{126}) + \varphi_1(A_{135}) + \varphi_1(A_{234}) + \varphi_1(A_{367}) + \varphi_1(A_{147}) + \varphi_1(A_{456}) + \varphi_1(A_{257}) \nonumber \\
        \geq\ & \varphi_1(A_{36}) + \varphi_1(A_{14}) + \varphi_1(A_{46}) + \varphi_1(A_{25}) + \varphi_1(A_{4567}) + \varphi_1(A_{12347}) + \varphi_1(A_{123567}).
    \end{align}
    holds for any subsets $A_1,\dots, A_7 \subseteq S_1$ where  we define $A_{i_1\dots i_k} \coloneqq \bigcup_{j=1}^k A_{i_j}$ for $\{i_1,\dots, i_k\}\subseteq [7]$.
    Furthermore, this inequality is violated if $\varphi_1$ is the rank function of the Fano matroid $F_7$ and $A_i=\{s_i\}$ for $i \in [7]$.
\end{enumerate}
\end{thm}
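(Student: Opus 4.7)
The plan is to mimic, for both parts, the proof of \cref{thm:ingleton}: let $\varphi \in \varphi_1 \otimes \varphi_2$ be a tensor product, construct carefully chosen subsets $X,Y \subseteq S_1 \times S_2$ (or a finite collection of such pairs), apply submodularity $\varphi(X)+\varphi(Y) \ge \varphi(X \cap Y) + \varphi(X \cup Y)$, and bound each term by combining \cref{lem:direct} (to decompose a union $\bigcup_i \prd{s_i}{B_i}$ as a sum of $\varphi_2$-values whenever the $s_i$ range over an independent set of $F_7$, resp.\ $F_7^-$) with \cref{lem:gen} (to add or remove a copy $\prd{s_k}{B}$ to a set at no cost in $\varphi$ whenever $s_k$ is spanned in $\varphi_1$ by the $s_i$'s already present). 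The key combinatorial input is that $\{s_4,s_5,s_6\}$ is a line in $F_7$ but a basis in $F_7^-$: this is exactly the dependency that produces the asymmetric terms $\varphi_2(B_{456})$ in \eqref{eq:Fano} and $\varphi_1(A_{456})$ in \eqref{eq:non_Fano}, and it will be the source of the final inequality as well as of the violation examples.

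For part \ref{it:Fano}, the plan is as follows. For each of the seven lines $\ell = \{s_i,s_j,s_k\}$ of $F_7$, any set of the form $\prd{s_i}{B_i}\cup\prd{s_j}{B_j}\cup\prd{s_k}{B_k}$ can be ``closed up'' along $\ell$ using \cref{lem:gen} without increasing $\varphi$, producing terms of the form $\varphi_2(B_i \cup B_j \cup B_k)$; for each basis $\{s_i,s_j,s_k\}$ of $F_7$, the same union decomposes by \cref{lem:direct}. I would first define $X$ as a union $\bigcup_{i \in I}\prd{s_i}{B_i}$ for a suitable index set $I \subseteq [7]$ chosen so that, after closing along the lines of $F_7$ that lie inside $I$, the upper bound on $\varphi(X)$ is a sum of several $\varphi_2$-values appearing on the LHS of \eqref{eq:Fano}, and analogously for $\varphi(Y)$; then lower bound $\varphi(X \cap Y)$ and $\varphi(X \cup Y)$ by the RHS terms $\varphi_2(B_{36}),\ \varphi_2(B_{14}),\ \varphi_2(B_{27}),\ \varphi_2(B_{456}),\ \varphi_2(B_{12347}),\ \varphi_2(B_{123567})$, using the fact that $\{s_4,s_5,s_6\}$ is a line of $F_7$ to account for $B_{456}$. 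Because 8 LHS terms and 6 RHS terms do not balance with a single application of submodularity, the plan is to sum several submodular inequalities -- one for each of the three lines $\{s_3,s_6,s_7\}$, $\{s_1,s_4,s_7\}$, $\{s_2,s_5,s_7\}$ of $F_7^-$ through $s_7$, say -- and let the remaining six lines of $F_7$ generate the spanning relations used in \cref{lem:gen}.

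For part \ref{it:non_Fano}, the structure is the same but the roles of $\varphi_1$ and $\varphi_2$ are reversed, so the combinatorial identities on the second factor are now dictated by the line set of $F_7^-$. Here $\{t_4,t_5,t_6\}$ is independent in $F_7^-$, so by \cref{lem:direct} the triple $\prd{t_4}{A_4}\cup\prd{t_5}{A_5}\cup\prd{t_6}{A_6}$ decomposes additively; this is precisely what forces $\varphi_1(A_{456})$ to appear as an LHS term in \eqref{eq:non_Fano}, in contrast with \eqref{eq:Fano}. The counting of LHS and RHS terms (9 vs.\ 7) suggests summing three submodular inequalities analogous to the ones used in part \ref{it:Fano}, but with the closures now driven by the six lines of $F_7^-$; the extra LHS term $\varphi_1(A_{456})$ and the extra RHS term $\varphi_1(A_{4567})$ correspond to the absent line $\{t_4,t_5,t_6\}$.

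Finally, the ``furthermore'' clauses are direct computations using the rank functions of $F_7^-$ and $F_7$. With $B_i=\{t_i\}$, each LHS term of \eqref{eq:Fano} equals $1$ (for $B_4,B_6$) or $2$ (each of the six triples is a line of $F_7^-$, so has rank $2$), summing to $14$; on the RHS, $\varphi_2(B_{36})=\varphi_2(B_{14})=\varphi_2(B_{27})=2$, $\varphi_2(B_{456})=3$ since $\{t_4,t_5,t_6\}$ is independent in $F_7^-$, and $\varphi_2(B_{12347})=\varphi_2(B_{123567})=3$, summing to $15$, which gives the desired violation. The analogous evaluation of \eqref{eq:non_Fano} at $r_{F_7}$ with $A_i=\{s_i\}$ produces the symmetric violation. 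The main obstacle will be the bookkeeping in the first two paragraphs: identifying the right pairs $(X,Y)$ (or the right linear combination of submodular inequalities) so that the telescoping produces precisely the eight LHS and six RHS terms of \eqref{eq:Fano}, respectively the nine LHS and seven RHS terms of \eqref{eq:non_Fano}, with no spurious slack -- the fact that $r_{F_7^-}$ achieves equality in \eqref{eq:Fano} minus $1$ shows the inequalities are tight, so the construction must be exact.
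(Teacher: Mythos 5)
Your proposal correctly nails the surrounding facts -- the key combinatorial input (that $\{s_4,s_5,s_6\}$ is a line of $F_7$ but a basis of $F_7^-$), the roles of \cref{lem:gen} and \cref{lem:direct}, and the numeric check of the ``furthermore'' clauses (LHS $= 14$, RHS $= 15$ for \eqref{eq:Fano} at $r_{F_7^-}$, and $16$ vs.\ $17$ for \eqref{eq:non_Fano} at $r_{F_7}$) -- but the core derivation of \eqref{eq:Fano} and \eqref{eq:non_Fano} is never actually carried out. You propose to identify several pairs $(X,Y)$, apply $\varphi(X)+\varphi(Y)\ge\varphi(X\cap Y)+\varphi(X\cup Y)$ to each, and sum, explicitly acknowledging that the ``bookkeeping'' (choosing the pairs so the telescoping yields exactly the eight-vs-six and nine-vs-seven terms) is the ``main obstacle.'' That obstacle is precisely the proof, and it is left open, so the proposal has a genuine gap.

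It is also worth flagging that the mechanism you envisage differs from the paper's. The paper does \emph{not} replay the Ingleton pair-submodularity argument here. It fixes a single set $X$ (a union $\bigcup_i \{s_i\}\times B'_i$ with some $B'_i$ of the form $B_j\cup B_k$, not a pure $\bigcup_i \{s_i\}\times B_i$), and builds a \emph{telescoping chain} $X=X_0\subseteq X_1\subseteq\cdots\subseteq X_{12}=S_1\times S_2$ where each step adds a slab $\{s_i\}\times B$ or $A\times\{t_j\}$ at a bounded cost via \cref{lem:gen}; steps where the added element is spanned by the current index set in $F_7$ (resp.\ $F_7^-$) are free. Since $\varphi(X_{12})=3\cdot\varphi_2(S_2)$ is known, this produces a \emph{lower} bound on $\varphi(X)$. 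A single direct application of submodularity to $X$ gives an \emph{upper} bound $\varphi(X)\le \varphi_2(B_{12})+\varphi_2(B_{13})+\varphi_2(B_{23})+\varphi_2(B_6)+\varphi_2(B_4)$. Comparing the two bounds and cancelling $\varphi(X)$ yields \eqref{eq:Fano}. Your proposed sum-of-pair-submodularities might also succeed, but it is a different route and you have not exhibited the pairs or verified that the slack vanishes; until that is done the argument is incomplete.
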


\begin{figure}[t!]
    \centering
    \begin{minipage}[t]{0.45\textwidth}
        \centering
        \begin{tikzpicture}[scale=1.3]
            \fill (0,2) circle (3.3pt);
            \node at (0.23,2.23) {$s_1$};
            \fill (-1.732,-1) circle (3.3pt);
            \node at (-1.962,-0.77) {$s_2$};
            \fill (1.732,-1) circle (3.3pt);
            \node at (1.962,-0.77) {$s_3$};
            \fill (0,0) circle (3.3pt);
            \node at (0.16,0.3) {$s_7$};
            \fill (0,-1) circle (3.3pt);
            \node at (0.2,-0.75) {$s_4$};
            \fill (-1.732/2,1/2) circle (3.3pt);
            \node at (-1.732/2-0.23,1/2+0.23) {$s_6$};
            \fill (1.732/2,1/2) circle (3.3pt);
            \node at (1.732/2+0.23,1/2+0.23) {$s_5$};
        
            \draw (0,2) -- (-1.732,-1) -- (1.732,-1) -- (0,2);
            \draw (0,2) -- (0,-1);
            \draw (-1.732,-1) -- (1.732/2,1/2);
            \draw (1.732,-1) -- (-1.732/2,1/2);
            \draw (-1.732/2,1/2) -- (0,0) -- (1.732/2,1/2);
            \draw (0,-1) -- (0,0);
            \draw[dashed] (0,0) circle [radius=1];        
        \end{tikzpicture}
        \subcaption{Point-line representation of the Fano and non-Fano matroids. Note that $\{s_4,s_5,s_6\}$ is a circuit in $F_7$ while it is a basis in $F_7^-$.} \label{fig:fano}
    \end{minipage}
    \hfill
    \begin{minipage}[t]{0.45\textwidth}
        \centering
        \begin{tikzpicture}[scale=0.7]
            \draw[step=1cm,black,thin] (0,0) grid (7,7); 
        
            \node at (-0.55,6.5) {$A_1$};
            \node at (-0.55,5.5) {$A_2$};
            \node at (-0.55,4.5) {$A_3$};
            \node at (-0.55,3.5) {$A_4$};
            \node at (-0.55,2.5) {$A_5$};
            \node at (-0.55,1.5) {$A_6$};
            \node at (-0.55,0.5) {$A_7$};
            
            \node at (0.5,7.45) {$B_1$};
            \node at (1.5,7.45) {$B_2$};
            \node at (2.5,7.45) {$B_3$};
            \node at (3.5,7.45) {$B_4$};
            \node at (4.5,7.45) {$B_5$};
            \node at (5.5,7.45) {$B_6$};
            \node at (6.5,7.45) {$B_7$};
        
            \fill[black] (0,6) rectangle (2,7);
            \fill[black] (0,5) rectangle (1,6);
            \fill[black] (1,4) rectangle (3,5);
            \fill[black] (2,5) rectangle (3,6);
            \fill[black] (3,1) rectangle (4,2);
            \fill[black] (5,3) rectangle (6,4);

            \node at (0.5,1.5) {\small $X_{2,1}$};
            \node at (1.5,2.5) {\small $X_{2,2}$};
            \node at (2.5,3.5) {\small $X_{2,3}$};
            \node at (3.5,4.5) {\small $X_{1,3}$};
            \node at (4.5,5.5) {\small $X_{1,2}$};
            \node at (5.5,6.5) {\small $X_{1,1}$};
            \node at (3.5,0.5) {\small $X_{2,4}$};
            \node at (6.5,3.5) {\small $X_{3,1}$};
            \node at (5.5,0.5) {\small $X_{2,5}$};
            \node at (6.5,1.5) {\small $X_{3,2}$};
            \node at (6.5,2.5) {\small $X_{4,1}$};
            \node at (4.5,2.5) {\small $X_{5,1}$};
            \node at (4.5,0.5) {\small $X_{6,1}$};
            \node at (0.5,0.5) {\small $X_{7,1}$};
            \node at (1.5,0.5) {\small $X_{7,2}$};
            \node at (2.5,0.5) {\small $X_{7,3}$};
            \node at (6.5,0.5) {\small $X_{7,4}$};

            \fill[gray, opacity=0.4] (0,1) rectangle (1,2); 
            \fill[gray, opacity=0.4] (1,2) rectangle (2,3); 
            \fill[gray, opacity=0.4] (2,3) rectangle (3,4); 
            \fill[gray, opacity=0.4] (3,4) rectangle (4,5); 
            \fill[gray, opacity=0.4] (4,5) rectangle (5,6); 
            \fill[gray, opacity=0.4] (5,6) rectangle (6,7); 
            \fill[gray, opacity=0.4] (3,0) rectangle (4,1); 
            \fill[gray, opacity=0.4] (4,0) rectangle (5,1); 
            \fill[gray, opacity=0.4] (5,0) rectangle (6,1);
            \fill[gray, opacity=0.4] (6,1) rectangle (7,2);
            \fill[gray, opacity=0.4] (6,2) rectangle (7,3); 
            \fill[gray, opacity=0.4] (6,3) rectangle (7,4); 
            \fill[gray, opacity=0.4] (4,2) rectangle (5,3); 
        
            \fill[gray, opacity=0.2] (0,0) rectangle (3,1); 
            \fill[gray, opacity=0.2] (6,0) rectangle (7,1); 
        \end{tikzpicture}
        \subcaption{Illustration of $X$ (black cells) and $X_1,\dots, X_7$. $X_i \setminus X_{i-1} = X_{i,1}\cup \dots \cup X_{i,j}$ for $i \in [7]$, where $X_0 \coloneqq X$.} \label{fig:Fano_non_Fano_X}
    \end{minipage}
    \vspace{0.5cm}
    
    \begin{minipage}[t]{0.45\textwidth}
        \centering
    \begin{tikzpicture}[scale=0.7]
        \draw[step=1cm,black,thin] (0,0) grid (7,7); 
    
        \node at (-0.55,6.5) {$A_1$};
        \node at (-0.55,5.5) {$A_2$};
        \node at (-0.55,4.5) {$A_3$};
        \node at (-0.55,3.5) {$A_4$};
        \node at (-0.55,2.5) {$A_5$};
        \node at (-0.55,1.5) {$A_6$};
        \node at (-0.55,0.5) {$A_7$};
        
        \node at (0.5,7.45) {$B_1$};
        \node at (1.5,7.45) {$B_2$};
        \node at (2.5,7.45) {$B_3$};
        \node at (3.5,7.45) {$B_4$};
        \node at (4.5,7.45) {$B_5$};
        \node at (5.5,7.45) {$B_6$};
        \node at (6.5,7.45) {$B_7$};
    
        \fill[black] (0,6) rectangle (2,7);
        \fill[black] (0,5) rectangle (1,6);
        \fill[black] (1,4) rectangle (3,5);
        \fill[black] (2,5) rectangle (3,6);
        \fill[black] (3,1) rectangle (4,2);
        \fill[black] (5,3) rectangle (6,4);

        \node at (0.5,4.5) {\scriptsize $X_{8,1}$};
        \node at (0.5,3.5) {\scriptsize $X_{8,2}$};
        \node at (0.5,2.5) {\scriptsize $X_{8,3}$};
        \node at (1.5,5.5) {\scriptsize $X_{8,4}$};
        \node at (1.5,3.5) {\scriptsize $X_{8,5}$};
        \node at (1.5,1.5) {\scriptsize $X_{8,6}$};
        \node at (2.5,6.5) {\scriptsize $X_{8,7}$};
        \node at (2.5,2.5) {\scriptsize $X_{8,8}$};
        \node at (2.5,1.5) {\scriptsize $X_{8,9}$};
        \node at (6.5,6.5) {\scriptsize $X_{8,10}$};
        \node at (6.5,5.5) {\scriptsize $X_{8,11}$};
        \node at (6.5,4.5) {\scriptsize $X_{8,12}$};
        \node at (4.5,1.5) {\scriptsize $X_{9,1}$};
        \node at (5.5,1.5) {\scriptsize $X_{9,2}$};
        \node at (4.5,6.5) {\scriptsize $X_{10,1}$};
        \node at (4.5,4.5) {\scriptsize $X_{10,2}$};
        \node at (4.5,3.5) {\scriptsize $X_{10,3}$};
        \node at (5.5,5.5) {\scriptsize $X_{10,4}$};
        \node at (5.5,4.5) {\scriptsize $X_{10,5}$};
        \node at (5.5,2.5) {\scriptsize $X_{10,6}$};
        \node at (3.5,2.5) {\scriptsize $X_{11,1}$};
        \node at (3.5,6.5) {\scriptsize $X_{12,1}$};
        \node at (3.5,5.5) {\scriptsize $X_{12,2}$};
        \node at (3.5,3.5) {\scriptsize $X_{12,3}$};

        \fill[black] (0,1) rectangle (1,2); 
        \fill[black] (1,2) rectangle (2,3); 
        \fill[black] (2,3) rectangle (3,4); 
        \fill[black] (3,4) rectangle (4,5); 
        \fill[black] (4,5) rectangle (5,6); 
        \fill[black] (5,6) rectangle (6,7); 
        \fill[black] (3,0) rectangle (4,1); 
        \fill[black] (4,0) rectangle (5,1); 
        \fill[black] (5,0) rectangle (6,1);
        \fill[black] (6,1) rectangle (7,2);
        \fill[black] (6,2) rectangle (7,3); 
        \fill[black] (6,3) rectangle (7,4); 
        \fill[black] (4,2) rectangle (5,3); 
    
        \fill[black] (0,0) rectangle (3,1); 
        \fill[black] (6,0) rectangle (7,1); 
    \end{tikzpicture}
    \subcaption{Illustration of $X_7$ (black cells) and $X_8,\dots, X_{12}$. $X_i \setminus X_{i-1} = X_{i,1}\cup \dots \cup X_{i,j}$ for $i \in \{8,\dots, 12\}$. \phantom{where $X'_7\coloneqq X_7$.} } \label{fig:Fano_proof}
    \end{minipage} \hfill
    \begin{minipage}[t]{0.45\textwidth}
        \centering
    \begin{tikzpicture}[scale=0.7]
        \draw[step=1cm,black,thin] (0,0) grid (7,7); 
    
        \node at (-0.55,6.5) {$A_1$};
        \node at (-0.55,5.5) {$A_2$};
        \node at (-0.55,4.5) {$A_3$};
        \node at (-0.55,3.5) {$A_4$};
        \node at (-0.55,2.5) {$A_5$};
        \node at (-0.55,1.5) {$A_6$};
        \node at (-0.55,0.5) {$A_7$};
        
        \node at (0.5,7.45) {$B_1$};
        \node at (1.5,7.45) {$B_2$};
        \node at (2.5,7.45) {$B_3$};
        \node at (3.5,7.45) {$B_4$};
        \node at (4.5,7.45) {$B_5$};
        \node at (5.5,7.45) {$B_6$};
        \node at (6.5,7.45) {$B_7$};
    
        \fill[black] (0,6) rectangle (2,7);
        \fill[black] (0,5) rectangle (1,6);
        \fill[black] (1,4) rectangle (3,5);
        \fill[black] (2,5) rectangle (3,6);
        \fill[black] (3,1) rectangle (4,2);
        \fill[black] (5,3) rectangle (6,4);
    
        \node at (6.5,6.5) {\scriptsize $X'_{8,1}$};
        \node at (6.5,5.5) {\scriptsize $X'_{8,2}$};
        \node at (6.5,4.5) {\scriptsize $X'_{8,3}$};
        \node at (2.5,6.5) {\scriptsize $X'_{9,1}$};
        \node at (3.5,6.5) {\scriptsize $X'_{9,2}$};
        \node at (4.5,6.5) {\scriptsize $X'_{9,3}$};
        \node at (1.5,5.5) {\scriptsize $X'_{9,4}$};
        \node at (3.5,5.5) {\scriptsize $X'_{9,5}$};
        \node at (5.5,5.5) {\scriptsize $X'_{9,6}$};
        \node at (0.5,4.5) {\scriptsize $X'_{9,7}$};
        \node at (4.5,4.5) {\scriptsize $X'_{9,8}$};
        \node at (5.5,4.5) {\scriptsize $X'_{9,9}$};
        \node at (5.5,2.5) {\scriptsize $X'_{10,1}$};
        \node at (5.5,1.5) {\scriptsize $X'_{10,2}$};
        \node at (0.5,2.5) {\scriptsize $X'_{11,1}$};
        \node at (2.5,2.5) {\scriptsize $X'_{11,2}$};
        \node at (3.5,2.5) {\scriptsize $X'_{11,3}$};
        \node at (1.5,1.5) {\scriptsize $X'_{11,4}$};
        \node at (2.5,1.5) {\scriptsize $X'_{11,5}$};
        \node at (4.5,1.5) {\scriptsize $X'_{11,6}$};
        \node at (4.5,3.5) {\scriptsize $X'_{12,1}$};
        \node at (0.5,3.5) {\scriptsize $X'_{13,1}$};
        \node at (1.5,3.5) {\scriptsize $X'_{13,2}$};     
        \node at (3.5,3.5) {\scriptsize $X'_{13,3}$};

        \fill[black] (0,1) rectangle (1,2); 
        \fill[black] (1,2) rectangle (2,3); 
        \fill[black] (2,3) rectangle (3,4); 
        \fill[black] (3,4) rectangle (4,5); 
        \fill[black] (4,5) rectangle (5,6); 
        \fill[black] (5,6) rectangle (6,7); 
        \fill[black] (3,0) rectangle (4,1); 
        \fill[black] (4,0) rectangle (5,1); 
        \fill[black] (5,0) rectangle (6,1);
        \fill[black] (6,1) rectangle (7,2);
        \fill[black] (6,2) rectangle (7,3); 
        \fill[black] (6,3) rectangle (7,4); 
        \fill[black] (4,2) rectangle (5,3); 
    
        \fill[black] (0,0) rectangle (3,1); 
        \fill[black] (6,0) rectangle (7,1); 
    \end{tikzpicture}
    \subcaption{Illustration of $X_7$ (black cells) and $X'_8,\dots, X'_{12}$. $X'_i \setminus X'_{i-1} = X'_{i,1}\cup \dots \cup X'_{i,j}$ for $i \in \{8,\dots, 12\}$, where $X'_7\coloneqq X_7$.} \label{fig:non_Fano_proof}
    \end{minipage}
    \caption{Illustration of the proof of \cref{thm:Fano_non_Fano}. Note that, in general, the sets $A_i$ are not necessarily disjoint, and neither are the sets $B_i$.} 
    \label{fig:fano7grid}  
\end{figure}

\begin{proof}
    First we prove some general inequalities for any polymatroid functions $\varphi_1\colon 2^{S_1}\to \bR$ and $\varphi_2\colon 2^{S_2}\to \bR$ admitting a tensor product $\varphi\colon 2^{S_1\times S_2} \to \bR$. Let $A_1,\dots, A_7 \subseteq S_1$ and $B_1,\dots, B_7 \subseteq S_2$ and define $A_{i_1\dots i_k} \coloneqq \bigcup_{j=1}^k A_{i_j}$ and $B_{i_1\dots i_k} \coloneqq \bigcup_{j=1}^k B_{i_j}$. Define 
    \begin{align*}
        X & \coloneqq  (A_1 \times B_{12}) \cup (A_2 \times B_{13}) \cup (A_3 \times B_{23})\cup (A_4 \times B_6) \cup (A_6 \times B_4) \\
        & = (A_{12} \times B_1) \cup (A_{13} \times B_2) \cup  (A_{23} \times B_3) \cup (A_6 \times B_4) \cup (A_4 \times B_6),
    \end{align*}
    see also Figure~\ref{fig:Fano_non_Fano_X} for an illustration.
    We will give a lower bound on $\varphi(X)$ by providing an upper bound on $\varphi(S_1 \times S_2)$ using $\varphi(X)$.
    Let $X_1 \coloneqq X \cup (A_1 \times B_6)\cup (A_2 \times B_5) \cup (A_3 \times B_4)$.
    Applying \cref{lem:gen} three times,
    we obtain 
    \begin{align} \label{eq:X1}
        \varphi(X_1) \le \varphi(X) & + \varphi_1(A_1)\cdot (\varphi_2(B_{126})-\varphi_2(B_{12})) \nonumber \\ & + \varphi_1(A_2)\cdot (\varphi_2(B_{135})-\varphi_2(B_{13})) \\ & + \varphi_1(A_3)\cdot (\varphi_2(B_{234})-\varphi_2(B_{23})). \nonumber
    \end{align}
    Let $X_2 \coloneqq X_1 \cup (A_6 \times B_1)\cup (A_5 \times B_2) \cup (A_4 \times B_3) \cup (A_7 \times B_4) \cup (A_7 \times B_6)$.
    Applying \cref{lem:gen} five times, we obtain
    \begin{align} \label{eq:X2}
        \varphi(X_2) \le \varphi(X_1) & + (\varphi_1(A_{126})-\varphi_1(A_{12}))\cdot \varphi_2(B_1) \nonumber \\
        & + (\varphi_1(A_{135})-\varphi_1(A_{13})) \cdot \varphi_2(B_2) \nonumber \\
        & +(\varphi_1(A_{234})-\varphi_1(A_{23})) \cdot \varphi_2(B_3)  \\
        & + (\varphi_1(A_{367})-\varphi_1(A_{36})) \cdot \varphi_2(B_4)\nonumber \\
        & + (\varphi_1(A_{147})-\varphi_1(A_{14})) \cdot \varphi_2(B_6). \nonumber  
    \end{align}
    Let $X_3 \coloneqq X_2 \cup (A_4 \times B_7) \cup (A_6 \times B_7)$.
    Applying \cref{lem:gen} twice, we obtain
    \begin{align} \label{eq:X3}
        \varphi(X_3) \le \varphi(X_2) + \varphi_1(A_4)\cdot (\varphi_2(B_{367})-\varphi_2(B_{36})) + \varphi_1(A_6)\cdot (\varphi_2(B_{147})-\varphi_2(B_{14})).
    \end{align}
    Let $X_4 \coloneqq X_3 \cup (A_5\times B_7)$.
    Applying \cref{lem:gen}, we obtain
    \begin{align} \label{eq:X4}
        \varphi(X_4) \le \varphi(X_3) + (\varphi_1(A_{456})-\varphi_1(A_{46}))\cdot \varphi_2(B_7).
    \end{align}
    Let $X_5 \coloneqq X_4 \cup (A_5 \times B_5)$. Applying \cref{lem:gen}, we obtain
    \begin{align} \label{eq:X5}
        \varphi(X_5)\le \varphi(X_4) + \varphi_1(A_5) \cdot (\varphi_2(B_{257})-\varphi_2(B_{27})).
    \end{align}
    Let $X_6 \coloneqq X_5 \cup  (A_7 \times B_5)$. Applying  \cref{lem:gen}, we obtain
    \begin{align} \label{eq:X6}
        \varphi(X_6) \le \varphi(X_5) + (\varphi_1(A_{257})-\varphi_1(A_{25}))\cdot \varphi_2(B_5).
    \end{align}
    Let $X_7 \coloneqq  X_6 \cup (A_7 \times B_{1237}) = X_6 \cup (A_7 \times S_2)$. Applying \cref{lem:gen}, we obtain
    \begin{align} \label{eq:X7}
        \varphi(X_7) \le \varphi(X_6) + \varphi_1(A_7)\cdot (\varphi_2(B_{1234567})-\varphi_2(B_{456})).
    \end{align}

From now on our proof of \ref{it:Fano} and \ref{it:non_Fano} will be different, see Figures~\ref{fig:Fano_proof} and \ref{fig:non_Fano_proof} for illustrations. To prove \ref{it:Fano}, assume that $\varphi_1$ is the rank function of the Fano matroid and $A_i = \{s_i\}$ for $i \in [7]$. Observe that in this case inequalities \eqref{eq:X2}, \eqref{eq:X4}, and \eqref{eq:X6} simplify to $\varphi(X_{2i}) \le \varphi(X_{2i-1})$ for $i \in [3]$, implying $\varphi(X_{2i}) = \varphi(X_{2i-1})$ for $i \in [3]$.
Let $X_8 \coloneqq X_7 \cup (A_{345}\times B_1)\cup(A_{246}\times B_2)\cup(A_{156}\times B_{3})\cup(A_{123}\times B_7)$. Applying \cref{lem:gen} and using $\varphi_1(A_{1267}) = \varphi_1(A_{1357}) = \varphi_1(A_{2347}) = \varphi_1(A_{4567}) = \varphi_1(A_{1234567})$, we obtain $\varphi(X_8)=\varphi(X_7)$.
Let $X_9 \coloneqq X_8 \cup (A_6 \times B_{56})$. Applying \cref{lem:gen} and using $\varphi_1(A_6)=1$, we obtain
\begin{align} \label{eq:X9}
    \varphi(X_9) \le \varphi(X_8)+\varphi_2(B_{1234567})-\varphi_2(B_{12347}).
\end{align}
Let $X_{10} \coloneqq X_9 \cup (A_{134}\times B_5)\cup (A_{235}\times B_6)$. Applying \cref{lem:gen} and using $\varphi_1(A_{2567})=\varphi_1(A_{1467})=\varphi_1(A_{1234567})$, we obtain $\varphi(X_{10})=\varphi(X_9)$. Let $X_{11}\coloneqq X_{10} \cup (A_5 \times B_4)$. Applying \cref{lem:gen}, we obtain
\begin{align} \label{eq:X11}
    \varphi(X_{11}) \le \varphi(X_{10}) + \varphi_2(B_{1234567})-\varphi_2(B_{123567}).
\end{align}
Finally, let $X_{12} \coloneqq X_{11} \cup (A_{124}\times B_4) = A_{1234567}\times B_{1234567}$ and observe that $\varphi_1(A_{3567})=\varphi_1(A_{1234567})$, thus \cref{lem:gen} shows that
\begin{align} \label{eq:Fano_whole}
    3\cdot\varphi_2(B_{1234567}) = \varphi(X_{12}) = \varphi(X_{11}).
\end{align}
Summing up the (in)equalities \eqref{eq:X1}, \eqref{eq:X3}, \eqref{eq:X5}, \eqref{eq:X7}, \eqref{eq:X9}, \eqref{eq:X11}, and \eqref{eq:Fano_whole} and using that $\varphi(X_{2i})=\varphi(X_{2i-1})$ for $i \in [5]$ and $\varphi_1(A_j)=1$ for $j \in [7]$, we obtain
\begin{align} \label{eq:Fano_final}
    3\cdot\varphi_2(B_{1234567})\le \varphi(X) & + (\varphi_2(B_{126})-\varphi_2(B_{12}) + \varphi_2(B_{135})-\varphi_2(B_{13})+\varphi_2(B_{234})-\varphi_2(B_{23})) \nonumber \\ & + (\varphi_2(B_{367})-\varphi_2(B_{36})+\varphi_2(B_{147})-\varphi_2(B_{14})) + (\varphi_2(B_{257})-\varphi_2(B_{27})) \\
    & + (\varphi_2(B_{1234567})-\varphi_2(B_{456})) + (\varphi_2(B_{1234567})-\varphi_2(B_{12347})) \nonumber \\& +(\varphi_2(B_{1234567})-\varphi_2(B_{123567})). \nonumber
\end{align}
after cancellations.
Using the submodularity of $\varphi$, it is clear that
\begin{align} \label{eq:FanoX}
    \varphi(X) \le \varphi_2(B_{12})+\varphi_2(B_{13})+\varphi_2(B_{23})+\varphi_2(B_6)+\varphi_2(B_4).
\end{align}
Combining \eqref{eq:Fano_final} and \eqref{eq:FanoX}, the inequality \eqref{eq:Fano} follows after cancellations. If $\varphi_2$ is the rank function of the non-Fano matroid and $B_i = \{t_i\}$ for $i \in [7]$, \eqref{eq:Fano_final} is equivalent to $9 \le \varphi(X)$, while \eqref{eq:FanoX} evaluates to $\varphi(X) \le 8$. These together show that \eqref{eq:Fano} is violated in this case. This finishes the proof \ref{it:Fano}.

We turn to the proof of \ref{it:non_Fano} which will be similar to that of \ref{it:Fano}. 
We assume that $\varphi_2$ is the rank function of the non-Fano matroid and $B_i=\{t_i\}$ for $i \in [7]$. 
In this case, inequalities \eqref{eq:X1}, \eqref{eq:X3}, \eqref{eq:X5}, and \eqref{eq:X7} imply $\varphi(X_1)=\varphi(X)$ and $\varphi(X_{2i+1})=\varphi(X_{2i})$ for $i \in [3]$. Let $X'_8\coloneqq X_7\cup (A_{123}\times B_7)$. Applying \cref{lem:gen}, we obtain
\begin{align} \label{eq:Xp8}
    \varphi(X'_8) \le \varphi(X_7)+\varphi_1(A_{1234567})-\varphi(A_{4567}).
\end{align}
Let $X'_9 \coloneqq X'_8 \cup (A_1\times B_{345})\cup (A_2 \times B_{246})\cup (A_3 \times B_{156})$, then \cref{lem:gen} shows $\varphi(X'_9)=\varphi(X'_8)$. Let $X'_{10} \coloneqq X'_9 \cup (A_{56}\times B_6)$. Applying \cref{lem:gen}, we obtain
\begin{align} \label{eq:Xp10}
    \varphi(X'_{10}) \le \varphi(X'_9) + \varphi_1(A_{1234567})-\varphi_1(A_{12347}).
\end{align}
Let $X'_{11} \coloneqq X'_{10} \cup (A_5 \times B_{134})\cup (A_6\times B_{235})$, then \cref{lem:gen} shows $\varphi(X'_{11}) = \varphi(X'_{10})$. Let $X'_{12} \coloneqq X'_{11}\cup (A_4 \times B_5)$. Applying \cref{lem:gen}, we obtain
\begin{align} \label{eq:Xp12}
    \varphi(X'_{12}) \le \varphi(X'_{11}) + \varphi_1(A_{1234567})-\varphi_1(A_{123567}).
\end{align}
Finally, let $X'_{13}\coloneqq X'_{12} \cup (A_4 \times B_{1245}) =A_{1234567}\times B_{1234567}$, then \cref{lem:gen} shows
\begin{align} \label{eq:non_fano_total}
    3\cdot\varphi_1(A_{1234567}) = \varphi(A_{1234567} \times B_{1234567}) = \varphi(X'_{12}).
\end{align}
Summing up the (in)equalities \eqref{eq:X2}, \eqref{eq:X4}, \eqref{eq:X6}, \eqref{eq:Xp8}, \eqref{eq:Xp10}, \eqref{eq:Xp12}, \eqref{eq:non_fano_total} and using $\varphi(X_1)=\varphi(X)$, $\varphi(X_{2i+1})=\varphi(X_{2i})$ for $i \in [3]$, $\varphi(X'_9)=\varphi(X'_8)$, and $\varphi(X'_{11})=\varphi(X'_{10})$, we obtain
\begin{align}
    3\cdot\varphi_1(A_{1234567}) \le \varphi(X) & +(\varphi_1(A_{126})-\varphi_1(A_{12})+\varphi_1(A_{135})-\varphi_1(A_{13})+\varphi_1(A_{234})-\varphi_1(A_{23}) \nonumber
    \\ & +\varphi_1(A_{367})-\varphi_1(A_{36})+\varphi_1(A_{147})-\varphi_1(A_{14})) + (\varphi_1(A_{456})-\varphi_1(A_{46})) \label{eq:non_fano_final} \\ & +(\varphi_1(A_{257})-\varphi_1(A_{25})) + (\varphi_1(A_{1234567})-\varphi_1(A_{4567})) \nonumber \\ & + (\varphi_1(A_{1234567})-\varphi_1(A_{12347}))
 + (\varphi_1(A_{1234567})-\varphi_1(A_{123567})) \nonumber
\end{align}
after cancellations. Using the submodularity of $\varphi$, it is clear that 
\begin{align} \label{eq:non_FanoX}
    \varphi(X) \leq \varphi_1(A_{12})+\varphi_1(A_{13})+\varphi_1(A_{23})+\varphi_1(A_6)+\varphi_1(A_4).
\end{align}
Combining \eqref{eq:non_fano_final} and \eqref{eq:non_FanoX}, the inequality \eqref{eq:non_Fano} follows after cancellations. If $\varphi_1$ is the rank function of the Fano matroid and $A_i=\{s_i\}$ for $i \in [7]$, then \ref{eq:non_fano_final} turns into $9 \le \varphi_1(X)$ while \eqref{eq:non_FanoX} turns into $\varphi(X)\le 8$ showing that \eqref{eq:non_Fano} is violated. This finishes the proof. 
\end{proof}

As a corollary, we get the following.

\begin{cor} \label{cor:characteristic_dependent}
Let $\varphi_1\colon 2^{S_1}\to \bR$ and $\varphi_2\colon 2^{S_2}\to \bR$ be a polymatroid functions.
    \begin{enumerate}[label=(\alph*)] \itemsep0em
        \item \label{it:cor_two} If $k \cdot \varphi_2$ is representable over a skew field of characteristic 2 for some positive integer $k$, then $\varphi_2$ satisfies inequality \eqref{eq:Fano} for any subsets $B_1,\dots, B_7 \subseteq S_2$, where we define $B_{i_1\dots i_k} \coloneqq \bigcup_{j=1}^k B_{i_j}$ for $\{i_1,\dots, i_k\}\subseteq [7]$. Moreover, this inequality is violated if $\varphi_2$ is the rank function of the non-Fano matroid $F_7^-$ and $B_i = \{t_i\}$ for $i \in [7]$.        
        \item \label{it:cor_not_two} If $k \cdot \varphi_1$ is representable over a skew field of characteristic different from 2 for some positive integer $k$, then $\varphi_1$ satisfies inequality \eqref{eq:Fano} for any subsets $A_1,\dots, A_7 \subseteq S_1$, where we define $A_{i_1\dots i_k} \coloneqq \bigcup_{j=1}^k A_{i_j}$ for $\{i_1,\dots, i_k\}\subseteq [7]$.
        Moreover, this inequality is violated if $\varphi_1$ is the rank function of the Fano matroid $F_7$ and $A_i = \{s_i\}$ for $i \in [7]$.        
    \end{enumerate}
\end{cor}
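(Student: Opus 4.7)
The plan is to reduce both parts to Theorem~\ref{thm:Fano_non_Fano} by producing, in each case, the required tensor product via Lemma~\ref{lem:skew_large}. The only mild twist is that we are given representability of a scalar \emph{multiple} $k\cdot\varphi_i$, so after invoking Lemma~\ref{lem:skew_large} we will rescale the resulting polymatroid function by $1/k$ to obtain a tensor product of $\varphi_i$ itself with the rank function of the Fano or non-Fano matroid.

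For part~\ref{it:cor_two}, suppose $k\cdot\varphi_2$ is representable over a skew field $\bF$ of characteristic $2$. Since $F_7$ is representable over $\bF_2$, it is representable over every (in particular every infinite) field of characteristic $2$. Apply Lemma~\ref{lem:skew_large} to the pair $(k\cdot\varphi_2,\,r_{F_7})$: we obtain a polymatroid function $\varphi$ on $2^{S_2\times S}$ with
\[
\varphi(X_2\times Y)=k\cdot\varphi_2(X_2)\cdot r_{F_7}(Y)
\quad\text{for all }X_2\subseteq S_2,\;Y\subseteq S.
\]
Set $\varphi':=\varphi/k$. Positive scaling preserves monotonicity, submodularity, and the value $0$ at $\emptyset$, so $\varphi'$ is again a polymatroid function; moreover $\varphi'(X_2\times Y)=\varphi_2(X_2)\cdot r_{F_7}(Y)$, so $\varphi'\in\varphi_2\otimes r_{F_7}$. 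Therefore Theorem~\ref{thm:Fano_non_Fano}\ref{it:Fano} applies to $\varphi_2$ and yields inequality~\eqref{eq:Fano} for all choices of $B_1,\dots,B_7\subseteq S_2$. The violation statement with $\varphi_2=r_{F_7^-}$ and $B_i=\{t_i\}$ is exactly the one already recorded in Theorem~\ref{thm:Fano_non_Fano}\ref{it:Fano}.

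Part~\ref{it:cor_not_two} is proved by the symmetric argument. Suppose $k\cdot\varphi_1$ is representable over a skew field $\bF$ of characteristic $p\ne 2$ (allowing $p=0$). The non-Fano matroid $F_7^-$ is representable over every field of characteristic different from $2$, hence over every infinite field of characteristic $p$. Applying Lemma~\ref{lem:skew_large} to $(k\cdot\varphi_1,\,r_{F_7^-})$ produces a polymatroid function $\psi$ with $\psi(X_1\times Y)=k\cdot\varphi_1(X_1)\cdot r_{F_7^-}(Y)$; rescaling by $1/k$ gives a member of $\varphi_1\otimes r_{F_7^-}$. Theorem~\ref{thm:Fano_non_Fano}\ref{it:non_Fano} then yields inequality~\eqref{eq:non_Fano} for $\varphi_1$, and the violation for $\varphi_1=r_{F_7}$ with $A_i=\{s_i\}$ is again immediate from that theorem.

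I do not anticipate any genuine obstacle here: once Lemma~\ref{lem:skew_large} and Theorem~\ref{thm:Fano_non_Fano} are in place, the only thing to verify is that scaling a tensor product by $1/k$ still gives a tensor product of the unscaled factors with $r_{F_7}$ (resp.\ $r_{F_7^-}$), which is immediate from the definition. The characteristic conditions on $F_7$ and $F_7^-$ are exactly what makes the hypothesis of Lemma~\ref{lem:skew_large} match the assumptions on $\bF$.
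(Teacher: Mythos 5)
Your proof is correct and follows essentially the same strategy as the paper: furnish a tensor product of the relevant pair via one of the preparatory lemmas, then invoke Theorem~\ref{thm:Fano_non_Fano}. The only differences are cosmetic. The paper applies Lemma~\ref{lem:representable} directly (using that $F_7$, being representable over $\bF_2 \subseteq Z(\bF)$, is $Z(\bF)$-representable), obtains a tensor product of $r_{F_7}$ with $k\cdot\varphi_2$, applies Theorem~\ref{thm:Fano_non_Fano}\ref{it:Fano} to the scaled function $k\cdot\varphi_2$, and then divides the resulting linear inequality by $k$; you instead invoke Lemma~\ref{lem:skew_large} (which is itself proved via Lemma~\ref{lem:representable}) and rescale the \emph{tensor product} by $1/k$ before applying the theorem to $\varphi_2$ directly. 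Both scalings are legitimate—the inequality is homogeneous linear, and dividing a tensor product by a positive constant clearly yields a tensor product of the correspondingly rescaled factors—so the two routes are interchangeable. One small note: you correctly use \eqref{eq:non_Fano} in part~\ref{it:cor_not_two}, whereas the corollary as printed says \eqref{eq:Fano} there; the printed statement appears to contain a reference typo, and your reading matches the intended content of Theorem~\ref{thm:Fano_non_Fano}\ref{it:non_Fano}.
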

\begin{proof}
    To prove \ref{it:cor_two}, assume that $k \cdot \varphi_2$ is representable over a skew field of characteristic 2 for some positive integer $k$. Let $\varphi_1$ be the rank function of the Fano matroid $F_7$. It is known that $F_7$ is represented by the matrix
       \[
        A= \begin{pmatrix}
            1 & 0 & 0 & 1 & 1 & 0 & 1\\
            0 & 1 & 0 & 1 & 0 & 1 & 1\\
            0 & 0 & 1 & 0 & 1 & 1 & 1
        \end{pmatrix}
    \] 
    over any field of characteristic 2, see \cite[Lemma~6.4.4.]{oxley2011matroid}. Therefore,  by \cref{lem:representable}, $\varphi_1$ and $k\cdot \varphi_2$ admit a tensor product, thus $k \cdot \varphi_2$ satisfies inequality \eqref{eq:Fano} by \cref{thm:Fano_non_Fano}\ref{it:Fano}, hence so does $\varphi_2$. This finishes the proof of \ref{it:cor_two}.
    Statement \ref{it:cor_not_two} similarly follows \cref{thm:Fano_non_Fano}\ref{it:non_Fano} as $F_7^-$ is represented by $A$ over any field of characteristic different from 2, see \cite[Lemma~6.4.4.]{oxley2011matroid}.
\end{proof}

\begin{rem}
    We note that our proof of \cref{thm:Fano_non_Fano} and \cref{cor:characteristic_dependent} gives a new proof of the well-known fact that $F_7$ is representable over a skew field $\bF$ if and only if $\bF$ has characteristic 2, while $F_7^-$ is representable over a skew field $\bF$ if and only if $\bF$ has characteristic different from 2 (see \cite[Proposition~6.4.8]{oxley2011matroid} for fields).
\end{rem}

\begin{figure}[t!]
    \centering

\begin{tikzpicture}[scale=0.7]
    \draw[step=1cm,black,thin] (0,0) grid (7,7); 

    \node at (-0.5,6.5) {$s_1$};
    \node at (-0.5,5.5) {$s_2$};
    \node at (-0.5,4.5) {$s_3$};
    \node at (-0.5,3.5) {$s_4$};
    \node at (-0.5,2.5) {$s_5$};
    \node at (-0.5,1.5) {$s_6$};
    \node at (-0.5,0.5) {$s_7$};
    
    \node at (0.5,7.5) {$t_1$};
    \node at (1.5,7.5) {$t_2$};
    \node at (2.5,7.5) {$t_3$};
    \node at (3.5,7.5) {$t_4$};
    \node at (4.5,7.5) {$t_5$};
    \node at (5.5,7.5) {$t_6$};
    \node at (6.5,7.5) {$t_7$};

    \fill[black] (0,2) rectangle (1,5);
    \fill[black] (1,5) rectangle (2,6);
    \fill[black] (2,6) rectangle (5,7);
    \fill[black] (1,3) rectangle (2,4);
    \fill[black] (3,5) rectangle (4,6);
    \fill[black] (1,1) rectangle (2,2);
    \fill[black] (2,2) rectangle (3,3);
    \fill[black] (3,3) rectangle (4,4);
    \fill[black] (4,4) rectangle (5,5);
    \fill[black] (5,5) rectangle (6,6);
    \fill[black] (6,0) rectangle (7,1);

    \node at (0.5,0.5) {$y_1$};
    \node at (6.5,6.5) {$y_2$};
    \node at (1.5,0.5) {$y_3$};
    \node at (6.5,5.5) {$y_4$};
    \node at (3.5,2.5) {$y_5$};
    \node at (4.5,3.5) {$y_6$};
    \node at (5.5,2.5) {$y_7$};
    \node at (4.5,1.5) {$y_8$};
    \node at (6.5,4.5) {$y_9$};
    \node at (5.5,4.5) {$y_{10}$};
    \node at (5.5,1.5) {$y_{11}$};
    \node at (2.5,1.5) {$y_{12}$};
    \node at (2.5,0.5) {$y_{13}$};

    \fill[gray, opacity=0.4] (0,0) rectangle (3,1); 
    \fill[gray, opacity=0.4] (2,1) rectangle (3,2); 
    \fill[gray, opacity=0.4] (3,2) rectangle (4,3); 
    \fill[gray, opacity=0.4] (4,3) rectangle (5,4); 
    \fill[gray, opacity=0.4] (5,4) rectangle (6,5); 
    \fill[gray, opacity=0.4] (6,4) rectangle (7,7); 
    \fill[gray, opacity=0.4] (4,1) rectangle (6,2); 
    \fill[gray, opacity=0.4] (5,2) rectangle (6,3);  
\end{tikzpicture}

\caption{Illustration of \cref{rem:dual_fano}. The set $X$ corresponding to the black cells has size 15 and it would span $S\times T$ in a tensor product $F_7\otimes F_7^-$. One might check this by first showing that $y_i$ would be spanned by $X\cup \{y_1,\dots, y_{i-1}\}$ for $i \in [13]$.} 
\label{fig:dual_fano}
\end{figure}

At present, we do not know whether the characteristic-dependent linear rank inequalities obtained above are consequences of previously known inequalities; verifying this seems difficult as those might be the combination of known inequalities, submodularity, and monotonicity. We leave this question as a subject for future research.

\begin{rem} \label{rem:dual_fano}
    We obtained the proof of \cref{thm:Fano_non_Fano} by generalizing a proof that $F_7$ and $F_7^-$ do not admit a matroid tensor product. For this result, our proof would specialize to the following: we assume that $F_7$ and $F_7^-$ admit a tensor product $M$ and we find a set $X\subseteq S\times T$ having size less than the rank of $M$ which would span $S\times T$ in $M$. We note that the same idea can be applied the duals of these matroids to show that $(F_7)^*$ and $(F_7^-)^*$ do not admit a (matroid) tensor product, see Figure~\ref{fig:dual_fano}. One can also use the idea of this proof to obtain linear inequalities satisfied by any polymatroid function admitting a tensor product with the rank function of $(F_7)^*$ or $(F_7^-)^*$, respectively.
\end{rem}

\subsection{Beyond known inequalities}
\label{sec:new}

As we have seen earlier, rank-3 matroids are particularly interesting from the viewpoint of skew-rep\-re\-sen\-ta\-bi\-li\-ty, since they all satisfy the common information property -- which therefore cannot be used to determine whether such a matroid is skew-representable or not.  This motivates the study of one of the classical rank-3 matroids: the non-Desargues matroid. The {\it non-Desargues matroid} is a simple rank-3 matroid on 10 elements that arises from the classical Desargues configuration in projective geometry by omitting one line. We denote the ground set of the matroid by $\{d,a_1,a_2,a_3,b_1,b_2,b_3,c_1,c_2,c_3\}$ with set of lines
\[\big\{\{d,a_1,b_1\}, \{d,a_2,b_2\}, \{d,a_3,b_3\}, \{a_1,a_2,c_3\}, \{a_1,a_3,c_2\}, \{a_2,a_3,c_1\}, \{b_1,b_2,c_3\}, \{b_1,b_3,c_2\}, \{b_2,b_3,c_1\}\big\},\]
see also Figure~\ref{fig:non-desargues}. 

The non-Desargues matroid is not skew-representable, so by \cref{cor:char_spec}, there exists $k\in\bZ_+$ such that it is not $k$-tensor-compatible with $U_{2,3}$. By \cref{thm:uniform}, this value of $k$ must be at least 2. We show that the non-Desargues matroid is not 2-tensor-compatible with $U_{2,3}$; more strongly, it does not even admit a tensor product with the graphic matroid of the complete graph on four vertices. We will see in \cref{cor:non_Desargues_not_2_compatible} that this is a stronger statement, as the existence of the latter tensor product would imply the former. Using the framework developed in Sections~\ref{sec:ing} and~\ref{sec:two}, we derive from this a new linear rank inequality that holds for folded skew-representable matroids. Since the non-Desargues matroid satisfies the common information property, this is, to the best of our knowledge, the first characteristic-independent inequality that does not follow from that property. 

Throughout this section, for ease of presentation, we often use compact notation for set unions, writing, for example, $XYZ$ instead of $X \cup Y \cup Z$ -- a compromise made necessary by the otherwise unmanageably long formulas.

\begin{thm} \label{thm:new}
    Let $\varphi_1 \colon S \to \bR$ denote the rank function of $M(K_4)$, and let $\varphi_2\colon S_2 \to \bR$ be a polymatroid function. If $\varphi_1$ and $\varphi_2$ have a tensor product $\varphi\colon 2^{S_1\times S_2} \to \bR$, then  
    \begin{align} \label{eq:newlinrank}
        & \sum_{i=1}^3 (2\cdot\varphi_2(A_{i+1}A_{i+2}B_{i+1}B_{i+2}C_i) + \varphi_2(A_iD) + \varphi_2(B_iD) + \varphi_2(C_i))  \\  & + \varphi_2(C_2) + \varphi_2(A_3B_3C_1) + \varphi_2(A_1A_2B_1B_2) + \varphi_2(C_1C_2C_3) \nonumber \\ 
        \le & 2\cdot \sum_{i=1}^3 (\varphi_2(A_{i+1}A_{i+2}C_i) + \varphi_2(B_{i+1}B_{i+2}C_i) + \varphi_2(A_iB_iD))
        + \varphi_2(C_1C_2) + 4\varphi_2(D) \nonumber
    \end{align}
    holds for all sets $A_1,A_2,A_3,B_1,B_2,B_3,C_1,C_2,C_3,D \subseteq S_2$ where indices of $A_j$, $B_j$, and $C_j$ are meant in a cyclic order (e.g.\ $A_4 = A_1$ and $A_5 = A_2$). 
    This inequality is violated if $\varphi_2$ is the rank function of the non-Desargues matroid, $A_i = \{a_i\}$, $B_i = \{b_i\}$, $C_i = \{c_i\}$ for $i \in [3]$, and $D = \{d\}$. 
\end{thm}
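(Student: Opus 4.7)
Let $S_1$ denote the 6-element edge set of $K_4$, viewed as the ground set of $M(K_4)$. The plan is to follow the template used in the proofs of \cref{thm:ingleton} and \cref{thm:Fano_non_Fano}\ref{it:Fano}. First I would construct a specific starting set $X \subseteq S_1 \times S_2$, with each row $\{e\} \times Y_e$ corresponding to an edge $e$ of $K_4$ and a suitable union $Y_e$ of some of the sets $A_i, B_i, C_i, D$. Then I would incrementally grow $X$ using \cref{lem:gen}, ultimately reaching all of $S_1 \times S_2$ whose $\varphi$-value equals $3 \cdot \varphi_2(S_2)$ because $\varphi_1(S_1) = 3$. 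Each enlargement step contributes a term of the form $\varphi_1(\cdot)\cdot(\varphi_2(\cdot)-\varphi_2(\cdot))$ or $(\varphi_1(\cdot)-\varphi_1(\cdot))\cdot\varphi_2(\cdot)$ to an accumulating upper bound on $\varphi(S_1 \times S_2)$. Combining this chain with a submodular upper bound on the initial $\varphi(X)$ (expressed via the 10 target sets) and rearranging should yield \eqref{eq:newlinrank}.

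The crucial leverage comes from the four triangle circuits of $M(K_4)$: whenever $F \subseteq S_1$ already contains two edges of a triangle, adding the third edge $e$ leaves $\varphi_1$ unchanged, so by \cref{lem:gen} the row $\{e\} \times Y$ can be appended to the current set for free for any $Y \subseteq S_2$. These free additions form the backbone of the expansion. The design of $X$ and the sequence of enlargements must be calibrated so that the upper bound on $\varphi(X)$ generates exactly the coefficients on the right-hand side of \eqref{eq:newlinrank}, including the factors of $2$ and $4$, while the enlargement costs sum to the left-hand side. Since the non-Desargues matroid encodes the Desargues configuration with the line through $c_1,c_2,c_3$ removed, a natural ansatz is to identify the three perfect matchings of $K_4$ (its $4$-element cocircuits) with the three $\{A_i,B_i\}$-pairs, while routing the row-enlargements involving $D$ and the $C_i$ through the four triangular circuits of $M(K_4)$.

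The main obstacle is finding this explicit combinatorial design: unlike the Ingleton case (with only $U_{2,3}$ in the row direction) or the Fano case (with a tight symmetry between the 7 rows and 7 target sets), here the 6 edges of $K_4$ and the 10-element non-Desargues target are not in obvious correspondence, so the assignment and the exact order of the \cref{lem:gen} applications will have to be found by matching coefficients on both sides of \eqref{eq:newlinrank} — a largely mechanical but intricate reverse-engineering task. The violation claim is then a direct numerical check: substituting singletons for $A_i, B_i, C_i, D$ and evaluating both sides using the rank function $r_{\mathrm{nD}}$ of the non-Desargues matroid should reveal a strict discrepancy that is attributable to $r_{\mathrm{nD}}(\{c_1,c_2,c_3\}) = 3$, whereas in the Desargues configuration the triple is collinear with rank $2$. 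This captures precisely the feature that obstructs a tensor product of the non-Desargues matroid with $M(K_4)$, thereby witnessing that \eqref{eq:newlinrank} is genuinely new and not a consequence of its satisfying the common information property.
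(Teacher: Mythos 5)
Your proposal is a strategy outline rather than a proof: the decisive step — the explicit construction of the starting set and the exact sequence of $\cref{lem:gen}$-enlargements that reproduce the coefficients on both sides of \eqref{eq:newlinrank} — is left as what you yourself call a ``largely mechanical but intricate reverse-engineering task.'' That task \emph{is} the theorem. Without it there is no verification that a one-set chain of the Fano type can even realize this particular inequality, nor that the budget of coefficients (the $2$'s, the $4\varphi_2(D)$, the lone $\varphi_2(C_2)$ and $\varphi_2(C_1C_2)$) closes up. Asserting that the numbers can be made to match is not the same as exhibiting a design in which they do.

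The paper's proof is also structurally different from your sketch. It introduces \emph{two} symmetric sets $X$ and $Y$ (obtained by swapping the roles of $A_i$ and $B_i$) and hinges on the single submodular inequality $\varphi(X)+\varphi(Y)\ge\varphi(X\cap Y)+\varphi(X\cup Y)$, much as in the Ingleton proof, rather than on a single-set Fano-style expansion to $S_1\times S_2$. The $\cref{lem:gen}$ chains then appear inside the estimates of $\varphi(X)$, $\varphi(Y)$, and $\varphi(X\cup Y)$, and even the bound on $\varphi(X)$ uses a further nested submodular inequality $\varphi(X)\le\varphi(H)+\varphi(I)-\varphi(H\cup I)$ for auxiliary sets $H,I$ with $X=H\cap I$. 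Your one-set plan might in principle produce \emph{an} inequality, but nothing in your write-up shows it produces \emph{this} one, or indeed any inequality violated by the non-Desargues matroid. Your closing observation that the violation is traceable to $r_{\mathrm{nD}}(\{c_1,c_2,c_3\})=3$ (versus $2$ in the Desargues case) is a correct and pleasant remark, but it is an a posteriori diagnosis of an inequality you have not derived; as written the numerical check in the paper ($0\le 5+5-3-8=-1$) is what actually certifies the violation.

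Incidentally, a perfect matching of $K_4$ has $2$ edges, not $4$: the $4$-element cocircuits of $M(K_4)$ are the complements of perfect matchings (the four-cycles). This does not affect the substance of the gap above, but it signals that the combinatorial identification driving your intended design needs to be pinned down more carefully before the construction can be attempted.
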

\begin{proof} 
    We may assume that $S_2=A_1A_2A_3B_1B_2B_3C_1C_2C_3D$ by restricting $\varphi_2$ to this set. 
    Consider $M(K_4)$ on ground set $S_1=\{e_1,\dots, e_6\}$ with triangles $\{e_1,e_2, e_6\}$, $\{e_1, e_3, e_5\}$, $\{e_2,e_3,e_4\}$, and $\{e_4,e_5,e_6\}$, see also Figure~\ref{fig:k4}.
    Define 
    \begin{align*}
        X & \coloneqq \prd{e_1}{C_1} \cup \prd{e_2}{C_2} \cup \prd{e_3}{C_3} \cup \prd{e_4}{A_1} \cup \prd{e_5}{A_2} \cup \prd{e_6}{A_3} \\
        Y & \coloneqq \prd{e_1}{C_1} \cup \prd{e_2}{C_2} \cup \prd{e_3}{C_3} \cup \prd{e_4}{B_1} \cup \prd{e_5}{B_2} \cup \prd{e_6}{B_3}.
    \end{align*}

    We will use the submodular inequality $\varphi(X) + \varphi(Y) \ge \varphi(X\cap Y) + \varphi(X \cup Y)$. First, we give upper bounds on $\varphi(X)$ and $\varphi(Y)$. 

    \begin{cla} \label{cl:pX}
        We have
        \begin{align*}
        \varphi(X) &\le 2\cdot\varphi_2(D) + \sum_{i=1}^3 \big( 
            \varphi_2(A_{i+1}A_{i+2}C_i) 
            + \varphi_2(B_{i+1}B_{i+2}C_i) 
            + \varphi_2(A_iB_iD) \\
        &\hspace{3.5cm}
            - \varphi_2(A_{i+1}A_{i+2}B_{i+1}B_{i+2}C_i) 
            - \varphi_2(B_iD) \big), \\
        \varphi(Y) &\le 2\cdot\varphi_2(D) + \sum_{i=1}^3 \big( 
            \varphi_2(A_{i+1}A_{i+2}C_i) 
            + \varphi_2(B_{i+1}B_{i+2}C_i) 
            + \varphi_2(A_iB_iD) \\
        &\hspace{3.5cm}
            - \varphi_2(A_{i+1}A_{i+2}B_{i+1}B_{i+2}C_i) 
            - \varphi_2(A_iD) \big).
        \end{align*}
    \end{cla}
    \begin{claimproof}
        By symmetry it is enough to prove the first inequality.
        Define 
        \begin{align*}
            H & \coloneqq \prd{e_1}{(A_2A_3C_1)} \cup \prd{e_2}{(A_1A_3C_2)} \cup \prd{e_3}{(A_1A_2C_3)} \cup \prd{e_4}{A_1} \cup \prd{e_5}{A_2} \cup \prd{e_6}{A_3}, \\
            I & \coloneqq \prd{e_1}{(B_2B_3C_1) \cup \prd{e_2}{(B_1B_3C_2)} \cup \prd{e_3}{(B_1B_2C_3)} \cup \prd{e_4}{(A_1B_1D)} \cup \prd{e_5}{(A_2B_2D)} \cup \prd{e_6}{(A_3B_3D)}}.
        \end{align*}
        and observe that $X = H \cap I$. 
        We will use the submodular inequality $\varphi(X) \le \varphi(H)+\varphi(I)-\varphi(H\cap I)$. 
        
        First we compute $\varphi(H)$. 
        Using \cref{lem:gen} with $\varphi_1(\{e_2,e_3, e_4\}) = \varphi_1(\{e_2,e_3\})$, $\varphi_1(\{e_1,e_3, e_5\}) = \varphi_1(\{e_1,e_3\})$, and $\varphi_1(\{e_2,e_3, e_6\}) = \varphi_1(\{e_2,e_3\})$ and \cref{lem:direct} with $\varphi_1(\{e_1, e_2,e_3\}) = \varphi_1(\{e_1\}) + \varphi_1(\{e_2\}) + \varphi_3(\{e_3\})$, we obtain
        \begin{equation} \label{eq:phiH}
          \varphi(H) = \varphi(\prd{e_1}{(A_2A_3C_1)} \cup \prd{e_2}{(A_1A_3C_2)} \cup \prd{e_3}{(A_1A_2C_3)}) = \sum_{i=1}^3 \varphi(\prd{e_i}{(A_{i+1}A_{i+2}C_i)}) = \sum_{i=1}^3 \varphi_2(A_{i+1}A_{i+2}C_i).
        \end{equation}
        
        Next, we give an upper bound on $\varphi(I)$. Define \[ J \coloneqq \prd{e_1}{(B_2B_3C_1)} \cup \prd{e_2}{(B_1B_3C_1)} \cup \prd{e_3}{(B_1B_2C_3)} \cup \prd{e_4}{D} \cup \prd{e_5}{D}\]
        It is clear by the submodularity of $\varphi$ that
        \begin{equation} \label{eq:phiJ}
            \varphi(J) \le \sum_{i=1}^3 \varphi_2(B_{i+1}B_{i+2}C_i) + 2\cdot\varphi_2(D).
        \end{equation}
        Using \cref{lem:gen} with $\varphi_1(\{e_2,e_3,e_4\}) = \varphi_1(\{e_2,e_3\})$, $\varphi_1(\{e_1,e_3,e_5\}) = \varphi_1(\{e_1,e_3\})$, $\varphi_1(\{e_1,e_2,e_6\}) = \varphi_1(\{e_1,e_2\})$, and $\varphi_1(\{e_4,e_5,e_6\}) = \varphi_1(\{e_4,e_5\})$, we have $\varphi(J) = \varphi(J')$ for \[ J' \coloneqq \prd{e_1}{(B_2B_3C_1) \cup \prd{e_2}{(B_1B_3C_1)} \cup \prd{e_3}{(B_1B_2C_3)} \cup \prd{e_4}{(B_1D)} \cup \prd{e_5}{(B_2D)} \cup \prd{e_6}{(B_3D)}}.\]
        Using \cref{lem:gen} three times, we have
        \begin{align}
           \varphi(I)  = 
          \varphi(J'\cup \prd{e_4}{A_1}\cup \prd{e_5}{A_2}\cup \prd{e_6}{A_3}) =\varphi(J') + \sum_{i=1}^3 ({\varphi_2(A_iB_iD)} - \varphi_2(B_iD)). \label{eq:phiI}
        \end{align}
        Combining \eqref{eq:phiJ}, $\varphi(J) = \varphi(J')$, and \eqref{eq:phiI}, we get
        \begin{equation} \label{eq:pI}
            \varphi(I) \le 2\cdot\varphi_2(D) + \sum_{i=1}^3 (\varphi_2(B_{i+1}B_{i+2}C_i) + \varphi_2(A_iB_iD)-\varphi_2(B_iD)).
        \end{equation}

        Finally, we give a lower bound on $\varphi(H\cup I)$. Using \cref{lem:gen} 
        three times with the fact that $(H\cup I)\cap \prd{e_i}{S_2} = A_{i+1}A_{i+2}B_{i+1}B_{i+2}C_i$ holds for $i \in [3]$, we get
        \begin{align*}
            3\cdot\varphi_2(S_2) & = \varphi(\{e_1,e_2,e_3\}\times S_2)\\
            & = \varphi((H\cup I)\cup \prd{e_1}{S_2} \cup \prd{e_2}{S_2} \cup \prd{e_3}{S_2}) \\
            & \le \varphi(H\cup I)+\sum_{i=1}^3 (\varphi_2(S)-\varphi_2(A_{i+1}A_{i+2}B_{i+1}B_{i+2}C_i)),
        \end{align*}
        implying
        \begin{equation} \label{eq:phiHI}
            \varphi(H\cap I) \ge \sum_{i=1}^3 \varphi_2(A_{i+1}A_{i+2}B_{i+1}B_{i+2}C_i).
        \end{equation}
        Using \eqref{eq:phiH}, \eqref{eq:phiI}, and \eqref{eq:phiHI} together with the submodular inequality $\varphi(X) \le \varphi(H)+\varphi(I)-\varphi(H\cup I)$, we obtain the claimed inequality.
    \end{claimproof}

    Next, we observe that $\varphi(X\cap Y)$ can be easily computed.
    \begin{cla} \label{cl:pXcapY}
        $\varphi(X\cap Y) = \sum_{i=1}^3 \varphi_2(C_i)$.
    \end{cla}
    \begin{claimproof}
        We have $X \cap Y = \prd{e_1}{C_1} \cup \prd{e_2}{C_2} \cup \prd{e_3}{C_3}$. As $\{e_1, e_2, e_3\}$ is a basis of $M(K_4)$, \cref{lem:direct} implies
        $\varphi(X \cap Y) = \sum_{i=1}^3 \varphi(\prd{e_i}{C_i}) = \sum_{i=1}^3 \varphi_2(C_i)$, proving our claim.
    \end{claimproof}

    Finally, we give a lower bound on $\varphi(X\cup Y)$. 
    \begin{cla} \label{cl:pXcupY}
        $\varphi(X\cup Y) \ge \varphi_2(C_2)+\varphi_2(A_3B_3C_1) + \varphi_2(A_1A_2B_1B_2)+\varphi_2(C_1C_2C_3)-\varphi_2(C_1C_2)$.
    \end{cla}
    
    \begin{claimproof}    
        Note that \[X \cup Y = \prd{e_1}{C_1} \cup \prd{e_2}{C_2} \cup \prd{e_3}{C_3} \cup \prd{e_4}{(A_1B_1)} \cup \prd{e_5}{(A_2B_2)} \cup \prd{e_6}{(A_3B_3)}.\]
        Let $Z_1 \coloneqq (X\cup Y) \cup \prd{e_2}{C_1}$. 
        Using \cref{lem:gen}, we get        
        \begin{align} \label{eq:Z1}
            \varphi(Z_1)  \le \varphi(X\cup Y) + \varphi_2(C_1C_2) - \varphi_2(C_2). 
        \end{align}
        Let $Z_2 \coloneqq Z_1 \cup \prd{e_6}{C_1}$. As $\varphi_1(\{e_1,e_2,e_6\}) = \varphi_1(\{e_1,e_2\})$ and $\{e_1,e_2\}\times C_1 \subseteq Z_1$, \cref{lem:gen} shows that $\varphi(Z_1)=\varphi(Z_2)$.  
        Let $Z_3\coloneqq Z_2\cup \prd{e_6}{S_2}$. Using \cref{lem:gen}, we get
        \begin{align} \label{eq:Z3}
            \varphi(Z_3)\le \varphi(Z_2) + \varphi_2(S_2)-\varphi_2(A_3B_3C_1).
        \end{align}
        Let $Z_4\coloneqq Z_3 \cup \prd{e_5}{(A_1B_1)}$. As $\varphi_1(\{e_4,e_5,e_6\}) = \varphi_1(\{e_4,e_6\})$ and $\{e_4,e_6\} \times (A_1B_1)\subseteq Z_3$, \cref{lem:gen} shows that $\varphi(Z_3)=\varphi(Z_4)$.
        Let $Z_5 \coloneqq Z_4 \cup \prd{e_5}{S_2}$. Since $\prd{e_5}{A_1A_2B_1B_2} \subseteq Z_4$, \cref{lem:gen} yields
        \begin{align} \label{eq:Z5}
            \varphi(Z_5) \le \varphi(Z_4) + \varphi_2(S_2)-\varphi_2(A_1A_2B_1B_2). 
        \end{align}
        Let $Z_6\coloneqq Z_5 \cup (S_1 \times C_1) \cup (S_1 \times C_2)\cup (S_1 \times C_3) \cup (\{e_4,e_5,e_6\}\times S_2) = (\{e_4,e_5,e_6\}\times S_2) \cup (S_1 \times C_1C_2C_3)$.
        Using \cref{lem:gen} four times, together with the facts $\varphi_1(S_1) = \varphi_1(\{e_1,e_5,e_6\})$, $\{e_1,e_5,e_6\} \times C_1 \subseteq Z_5$, $\varphi_1(S_1)=\varphi_1(\{e_2,e_5,e_6\})$, $\{e_2,e_5,e_6\}\times C_2 \subseteq Z_2$, $\varphi_1(S_1) = \varphi_1(\{e_3,e_5,e_6\})$, $\{e_3,e_5,e_6\}\times C_3 \subseteq Z_5$, $\varphi_1(\{e_4,e_5,e_6\}) = \varphi_1(\{e_5,e_6\})$, and $\{e_5,e_6\}\times S_2 \subseteq Z_5$, we conclude that $\varphi(Z_5)=\varphi(Z_6)$.
        Let $Z_7 \coloneqq Z_6 \cup \prd{e_1}{S_2}$. Since $\prd{e_1}{C_1C_2C_3}\subseteq Z_6$, \cref{lem:gen} yields
        \begin{align} \label{eq:Z7}
            \varphi(Z_7) \le \varphi(Z_6)+ \varphi_2(S_2)-\varphi_2(C_1C_2C_3).
        \end{align}
        Finally, using $\{e_1,e_4,e_5\}\times S_2 \subseteq Z_6$ and $\varphi_1(\{e_1,e_5,e_5\}) = \varphi_1(S_2)$, \cref{lem:gen} shows that $\varphi(S_1\times S_2) = \varphi(Z_7)$.
        Summing up the (in)equalities \eqref{eq:Z1} -- \eqref{eq:Z7} and using the fact that $\varphi(Z_{2i-1})=\varphi(Z_{2i})$ for $i\in[3]$ and that $\varphi(Z_7)=\varphi(S_1\times S_2)$, we get
        \begin{align*}
            \varphi(S_1\times S_2) \le \varphi(X\cup Y) + 3\cdot \varphi_2(S_2) + \varphi_2(C_1C_2) -\varphi_2(C_2)-\varphi_2(A_3B_3C_1)-\varphi_2(A_1A_2B_1B_2)-\varphi_2(C_1C_2C_3).
        \end{align*}
        Using $\varphi(S_1\times S_2) = 3\cdot \varphi_2(S_2)$, the claim follows.
   \end{claimproof}

   To finish the proof, we use the submodular inequality $0 \le \varphi(X)+\varphi(Y)- \varphi(X \cap Y) - \varphi(X\cup Y)$ together with Claims~\ref{cl:pX}, \ref{cl:pXcapY}, and \ref{cl:pXcupY}. In the aforementioned special case with the non-Desargues matroid assuming that it has a tensor product with $M(K_4)$, we would get the inequality $0 \le 5 + 5 - 3 - 8 = -1$, which would be a contradiction.  In general, we obtain
   \begin{align*}
0 &\le 
\bigg( 
  2\cdot\varphi_2(D) 
  + \sum_{i=1}^3 \big( 
      \varphi_2(A_{i+1}A_{i+2}C_i) 
      + \varphi_2(B_{i+1}B_{i+2}C_i) 
      + \varphi_2(A_iB_iD) \\
&\hspace{3cm}
      - \varphi_2(A_{i+1}A_{i+2}B_{i+1}B_{i+2}C_i) 
      - \varphi_2(B_iD) 
  \big) 
\bigg) \\
&\quad + \bigg( 
  2\cdot\varphi_2(D) 
  + \sum_{i=1}^3 \big( 
      \varphi_2(A_{i+1}A_{i+2}C_i) 
      + \varphi_2(B_{i+1}B_{i+2}C_i) 
      + \varphi_2(A_iB_iD) \\
&\hspace{3cm}
      - \varphi_2(A_{i+1}A_{i+2}B_{i+1}B_{i+2}C_i) 
      - \varphi_2(A_iD) 
  \big) 
\bigg) \\
&\quad - \sum_{i=1}^3 \varphi_2(C_i) 
 - \big( 
    \varphi_2(C_2) 
    + \varphi_2(A_3B_3C_1) 
    + \varphi_2(A_1A_2B_1B_2) 
    + \varphi_2(C_1C_2C_3) 
    - \varphi_2(C_1C_2) 
\big).
\end{align*}
   which proves our claim after rearranging the terms.
\end{proof}

\begin{figure}[t!]
    \centering
    \begin{minipage}[t]{0.45\textwidth}
    \centering
    \begin{tikzpicture}[scale=1.3]
            \fill (0,2) circle (3.3pt);
            \node at (0.2,2.2) {$d$};
            \fill (-1.75,-0.6) circle (3.3pt);
            \node at (-2,-0.4) {$c_3$};
            \fill (0.25,-0.7) circle (3.3pt);
            \node at (0.02,-0.47) {$b_1$};
            \fill (3.25,-0.85) circle (3.3pt);
            \node at (3.45,-0.65) {$b_2$};
            \fill (0.45,1.6) circle (3.3pt);
            \node at (0.65,1.8) {$a_2$};
            \fill (0.07,1.2) circle (3.3pt);
            \node at (-0.2,1.3) {$a_1$}; 
            \fill (0.4,1) circle (3.3pt);
            \node at (0.6,1.2) {$a_3$}; 
            \fill (0.7,0.25) circle (3.3pt);
            \node at (0.9,0.05) {$b_3$};
            \fill (0.9,0.7) circle (3.3pt);
            \node at (1.15,0.85) {$c_2$};
            \fill (0.35,0.4) circle (3.3pt);
            \node at (0.35,0.15) {$c_1$};
            
            \draw (0,2) -- (0.25,-0.7);
            \draw (0,2) -- (0.7,0.25);
            \draw (0,2) -- (3.25,-0.85);
            \draw (-1.75,-0.6) -- (0.45,1.6);
            \draw (-1.75,-0.6) -- (3.25,-0.85);
            \draw (0.07,1.2) -- (0.9,0.7);
            \draw (0.25,-0.7) -- (0.9,0.7);
            \draw (0.45,1.6) -- (0.35,0.4);
            \draw (3.25,-0.85) -- (0.35,0.4);
    \end{tikzpicture}
    \subcaption{The non-Desargues matroid.} \label{fig:non-desargues}
\end{minipage}
\hfill
    \begin{minipage}[t]{0.45\textwidth}
        \centering
        \begin{tikzpicture}[scale=1.3]
            \fill (0,2) circle (3.3pt);
            \node at (0.23,2.23) {$s_1$};
            \fill (-1.732,-1) circle (3.3pt);
            \node at (-1.962,-0.77) {$s_2$};
            \fill (1.732,-1) circle (3.3pt);
            \node at (1.962,-0.77) {$s_3$};
            \fill (0,-1) circle (3.3pt);
            \node at (0.2,-0.75) {$s_4$};
            \fill (-1.732/2,1/2) circle (3.3pt);
            \node at (-1.732/2-0.23,1/2+0.23) {$s_6$};
            \fill (1.732/2,1/2) circle (3.3pt);
            \node at (1.732/2+0.23,1/2+0.23) {$s_5$};
        
            \draw (0,2) -- (-1.732,-1) -- (1.732,-1) -- (0,2);
            \draw (0,0) circle [radius=1];        
        \end{tikzpicture}
        \subcaption{The graphic matroid $M(K_4)$.} \label{fig:k4}
    \end{minipage}
    \caption{Illustrations of the non-Desargues matroid and of the graphic matroid of $K_4$.} 
    \label{fig:ksjdvvn}  
\end{figure}
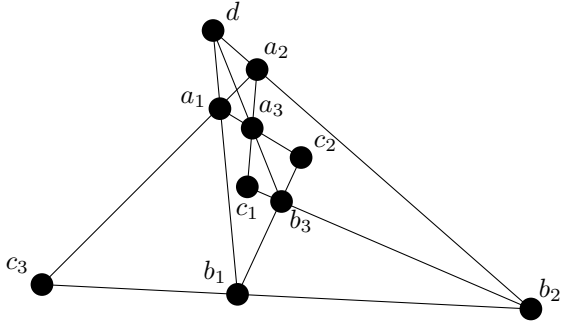
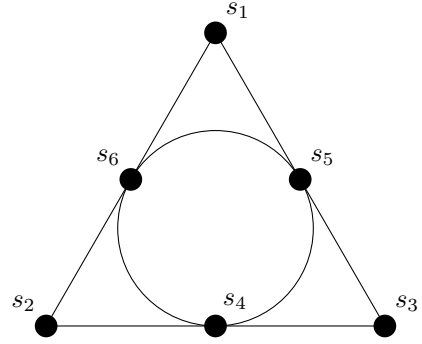

As a corollary, we get a new linear rank inequality for folded skew-representable matroids.

\begin{cor} \label{cor:skdf}
    Let $\varphi_2\colon 2^{S_2}\to \bR$ be a folded skew-representable polymatroid function. Then, inequality \eqref{eq:newlinrank} holds for any subsets $A_1,A_2,A_3,B_1,B_2,B_3,C_1,C_2,C_3,D \subseteq S$.
    Moreover, the inequality is violated if $\varphi_2$ is the rank function of the non-Desargues matroid, $A_i = \{a_i\}$, $B_i=\{b_i\}$, $C_i = \{c_i\}$, and $D=\{d\}$ for $i \in [3]$.
\end{cor}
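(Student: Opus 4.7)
The plan is to derive \cref{cor:skdf} as a direct consequence of \cref{thm:new}, using the regularity of $M(K_4)$ together with \cref{lem:representable} to guarantee that the hypothesis of \cref{thm:new} is met whenever $\varphi_2$ is folded skew-representable. The second (``moreover'') assertion is already the violation statement of \cref{thm:new}, so it requires no further argument.

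For the main assertion, I would start by unpacking the definition: since $\varphi_2$ is folded skew-representable, there exists a positive integer $k$ and a skew field $\bF$ such that $k\cdot\varphi_2$ is representable over $\bF$. The graphic matroid $M(K_4)$ is regular, so its rank function $\varphi_1$ is representable over every field, in particular over the prime subfield of $Z(\bF)$, and hence over $Z(\bF)$ itself. Thus I can invoke \cref{lem:representable} with the $\bF$-representable polymatroid function $k\cdot\varphi_2$ in the role of ``$\varphi_1$'' of that lemma and the $Z(\bF)$-representable rank function $\varphi_1 = r_{M(K_4)}$ in the role of ``$\varphi_2$''; this yields that $k\cdot\varphi_2$ and $r_{M(K_4)}$ admit a tensor product. (Alternatively, one may simply note that taking tensor products of matroids/polymatroids is symmetric in the two factors up to relabelling $S_1\times S_2$ as $S_2\times S_1$, so the ordering convention of \cref{lem:representable} is harmless.)

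With the tensor product in hand, \cref{thm:new} applied to $k\cdot\varphi_2$ (instead of $\varphi_2$) gives that $k\cdot\varphi_2$ satisfies inequality \eqref{eq:newlinrank}. Since \eqref{eq:newlinrank} is homogeneous of degree one in the polymatroid function, I can divide both sides by the positive integer $k$ and conclude that $\varphi_2$ itself satisfies \eqref{eq:newlinrank} for all choices of $A_1,A_2,A_3,B_1,B_2,B_3,C_1,C_2,C_3,D \subseteq S_2$. Finally, for the ``moreover'' part, I would simply quote the last sentence of \cref{thm:new}, which verifies that the specific substitution using the non-Desargues matroid makes \eqref{eq:newlinrank} fail; this shows in particular that the non-Desargues matroid is neither skew-representable nor folded skew-representable.

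There is essentially no obstacle here: the entire content has been put into place by \cref{thm:new} and \cref{lem:representable}, and the only things to verify are the easy facts that $M(K_4)$ is regular and that \eqref{eq:newlinrank} is homogeneous in $\varphi_2$, both of which are immediate. The main work of the section, of course, is already contained in the proof of \cref{thm:new}.
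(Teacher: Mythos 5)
Your proof is correct and follows essentially the same route as the paper: invoke \cref{lem:representable} (via regularity of $M(K_4)$) to obtain a tensor product with $k\cdot\varphi_2$, apply \cref{thm:new}, and divide by $k$ using homogeneity of \eqref{eq:newlinrank}. The only addition beyond the paper's terse argument is your careful note about the asymmetric labelling of $\varphi_1,\varphi_2$ in \cref{lem:representable}, which is a harmless and correct observation.
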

\begin{proof}
    Let $k$ be a positive integer such that $k\cdot \varphi_2$ is skew-representable. As $M(K_4)$ is regular, its rank function has a tensor product with $k\cdot \varphi_2$ by \cref{lem:representable}. Then, the result follows from \cref{thm:new} by dividing the obtained inequality by $k$.
\end{proof}

We close this section by showing that \cref{thm:new} implies that the non-Desargues matroid is not 2-tensor-compatible with $U_{2,3}$. First, we show that the tensor product $U_{2,3}\otimes U_{2,3}$ is unique. In the next lemma, $M^*(K_{3,3})$ denotes the cographic matroid of $K_{3,3}$.

\begin{lem} \label{lem:U232_unique}
    The tensor product $U_{2,3}\otimes U_{2,3}$ is unique and is isomorphic to $M^*(K_{3,3})$.
\end{lem}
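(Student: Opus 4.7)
My plan is to establish existence by exhibiting $M^*(K_{3,3})$ as a tensor product, and then show uniqueness via a rank determination on all $4$-element subsets. For existence, I would identify $[3]\times[3]$ with $E(K_{3,3})$ via the bipartition $L=\{l_1,l_2,l_3\}$ and $R=\{r_1,r_2,r_3\}$ and the correspondence $(i,j)\leftrightarrow l_ir_j$. Under this identification $\{e_1\}\times S_2$ is the star at $l_{e_1}$, which is a $3$-cocircuit of $K_{3,3}$ and hence a $3$-circuit of $M^*(K_{3,3})$; therefore $r_{M^*}(\{e_1\}\times Y_2)=\min(|Y_2|,2)=r_1(\{e_1\})\cdot r_2(Y_2)$ for every $Y_2\subseteq S_2$, and the analogous identity holds for $Y_1\times\{e_2\}$. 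Since also $r(M^*(K_{3,3}))=|E(K_{3,3})|-|V(K_{3,3})|+1=4=r_1(S_1)\cdot r_2(S_2)$, \cref{prop:tensor} yields $M^*(K_{3,3})\in U_{2,3}\otimes U_{2,3}$.

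For uniqueness, let $M$ be an arbitrary tensor product of $U_{2,3}$ with $U_{2,3}$; since $M$ has rank $4$ on $9$ elements, it is enough to determine $r_M(F)$ for every $4$-subset $F\subseteq[3]\times[3]$. Two inputs will drive the case analysis: the tensor formula $r_M(X\times Y)=r_1(X)\cdot r_2(Y)$, which forces each row $R_i=\{i\}\times S_2$ and each column $C_j=S_1\times\{j\}$ to be a $3$-circuit and each $2\times2$ block to have rank $4$; and \cref{prop:crosses} applied with $X_1=\{a\}\subseteq S_1=X_2$ and $Y_2=\{b\}\subseteq S_2=Y_1$, which gives $r_M(R_a\cup C_b)=r_1(\{a\})\cdot r_2(S_2)+r_1(S_1)\cdot r_2(\{b\})-r_1(\{a\})\cdot r_2(\{b\})=3$ and, since all four of $\{a\},\{b\},S_1,S_2$ are flats of $U_{2,3}$, that $R_a\cup C_b$ is a flat of $M$. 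I would then split into cases on the row-count and column-count multisets of $F$. If $F$ contains some $R_i$ or $C_j$, then $r_M(F)=3$. Otherwise each row and column meets $F$ in at most two elements, so the row-count multiset is $(2,2,0)$ or $(2,1,1)$, and similarly for columns. If the row counts are $(2,2,0)$ with double rows $a$ and $b$, each $2$-subset of $F$ in a row spans that row (as two elements of a $3$-circuit), so $F$ spans $R_a\cup R_b$ of rank $4$; the column $(2,2,0)$ case is symmetric. In the remaining case both counts are $(2,1,1)$ with double-row $a$ and double-column $b$: if $(a,b)\notin F$, the degree constraints force $F=(R_a\cup C_b)\setminus\{(a,b)\}$, which spans both $R_a$ and $C_b$ and lies in the rank-$3$ flat $R_a\cup C_b$, giving $r_M(F)=3$; if $(a,b)\in F$, the same counting shows $F=\{(a,b),(a,c),(d,b),(e,f)\}$ with $e\notin\{a,d\}$ and $f\notin\{b,c\}$, so $(e,f)\notin R_a\cup C_b$, and since the first three elements of $F$ already span $R_a\cup C_b$, adding $(e,f)$ increases the rank to $4$.

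The main obstacle is the bookkeeping in the last case, in particular verifying that whenever both row- and column-counts of $F$ are $(2,1,1)$ and $(a,b)\notin F$, one is forced to have $F=(R_a\cup C_b)\setminus\{(a,b)\}$. Once $r_M$ is pinned down on every $4$-subset, the rank function of $M$ is uniquely determined, and by the existence part it must coincide with $M^*(K_{3,3})$.
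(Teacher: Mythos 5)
Your proof is correct and follows essentially the same strategy as the paper's: establish $M^*(K_{3,3})\in U_{2,3}\otimes U_{2,3}$ via \cref{prop:tensor}, then pin down the rank of every $4$-subset of $[3]\times[3]$ by a case analysis on how many elements lie in each row and column, using the fact that rows and columns are $3$-circuits and that $R_a\cup C_b$ has rank $3$. The paper organizes the cases as ``contains a full row/column,'' ``misses a row/column,'' and ``everything else,'' while you phrase the middle case as row/column counts $(2,2,0)$; you also invoke \cref{prop:crosses} explicitly to get that $R_a\cup C_b$ is a rank-$3$ flat (the paper only needs the rank bound), but these are cosmetic differences and both arguments are sound.
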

\begin{proof}
    Consider the graph $K_{3,3}$ on vertex set $\{s_1,s_2,s_3,t_1,t_2,t_3\}$ and edge set $\{(s_i,t_j)\mid i, j \in [3]\}$, and let $N$ be its cographic matroid. Let $S\coloneqq \{s_1,s_2,s_3\}$, $T\coloneqq \{t_1,t_2,t_3\}$, and $M_1=(S, r_1)$ and $M_2=(T,r_2)$ be two copies of the uniform matroid $U_{2,3}$. Let $P=(S\times T, p) \in M_1 \otimes M_2$, we show that $P=N$. We have $N\in M_1 \otimes M_2$; this follows from e.g.\ \cref{prop:tensor} since $N|\prd{s_i}{T}$ and $N|S^{t_i}$ are isomorphic to $U_{2,3}$ for $i \in [3]$ and $N$ has rank 4. Therefore, it is enough to prove that $P$ is unique. We show this by determining for each subset $Z\subseteq S\times T$ of size 4 whether it is a basis of $P$.
    
    It is clear that $Z$ is dependent in $P$ if $\prd{s_i}{T} \subseteq Z$ or $S^{t_i} \subseteq Z$ for some $i \in [3]$. 
    Otherwise, if $\prd{s_i}{T}\cap Z = \emptyset$ or $S^{t_i} \cap Z = \emptyset$ for some $i \in [3]$, then $Z$ is a basis of both $P$. Indeed, if e.g.\ $\prd{s_1}{T}\cap Z = \emptyset$, then $Z\subseteq \{s_2,s_3\}\times T$ and $|Z\cap \prd{s_i}{T}| = 2$ for $i \in \{2,3\}$, thus $Z$ is a basis of $P$ by \cref{lem:direct}. 
    It remains to consider the cases when $1 \le |Z\cap \prd{s_i}{T}| \le 2$ and $1 \le |Z\cap S^{t_i}| \le 2$ for each $i \in [3]$. We may assume by symmetry that $|Z\cap \prd{s_1}{T}| = |Z\cap S^{t_1}| = 2$ and $|Z\cap \prd{s_i}{T}| = |Z\cap S^{t_i}| = 1$ for $i \in \{2,3\}$. If $(s_1, t_1) \not \in Z$, then $Z=\{(s_1,t_2), (s_1,t_3), (s_2, t_1), (s_3,t_1)\}$ is not a basis of $P$ since $p(\prd{s_1}{T}\cup S^{t_1}) = 3$ by \cref{prop:tensor}. Finally, if $(s_1, t_1) \in Z$, then we may assume by symmetry that $Z=\{(s_1,t_1), (s_1, t_2), (s_2, t_1), (s_3, t_3)\}$. Then, $Z$ spans $\prd{s_1}{T}$ and $S^{t_1}$ in $P$, thus it also spans $\prd{s_3}{T}$ and $S^{t_3}$ as $(s_3, t_3) \in Z$. This further implies that $Z$ is a generator in $P$, thus it is a basis. This finishes the proof. 
\end{proof}

The uniqueness of $U_{2,3}\otimes U_{2,3}$ implies the following.

\begin{lem} \label{lem:K33}
    Any matroid that is 2-tensor-compatible with $U_{2,3}$ admits a tensor product with $M^*(K_{3,3})$. 
\end{lem}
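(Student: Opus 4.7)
The plan is to use Las Vergnas's characterization of tensor products (Proposition~\ref{prop:tensor}) together with the uniqueness result from Lemma~\ref{lem:U232_unique}. Let $M=(S,r)$ be a matroid which is 2-tensor-compatible with $U_{2,3}$. By definition, there exists $M'\in M\otimes U_{2,3}$, on ground set $S\times T_1$ where $T_1$ is the ground set of the first copy of $U_{2,3}$, such that $M'\otimes U_{2,3}\ne\emptyset$. Pick $M''\in M'\otimes U_{2,3}$, and identify its ground set $(S\times T_1)\times T_2$ with $S\times(T_1\times T_2)$. Since $N\coloneqq U_{2,3}\otimes U_{2,3}$ is unique and equals $M^*(K_{3,3})$ by Lemma~\ref{lem:U232_unique}, I claim that $M''$ is already a tensor product of $M$ and $N$ after this reidentification of the ground set.

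To verify this, I would check the three conditions of Proposition~\ref{prop:tensor} for $M''$ viewed as a matroid on $S\times(T_1\times T_2)$. For condition (iii), the total rank, one computes $r_{M''}(S\times T_1\times T_2)=r_{M'}(S\times T_1)\cdot r_{U_{2,3}}(T_2)=r(S)\cdot 2\cdot 2=r(S)\cdot r_N(T_1\times T_2)$. For condition on one-element columns, given any $t=(u_1,u_2)\in T_1\times T_2$ and $X\subseteq S$, twice applying that $M''\in M'\otimes U_{2,3}$ and $M'\in M\otimes U_{2,3}$ gives $r_{M''}(X\times\{t\})=r_M(X)\cdot 1\cdot 1=r_M(X)\cdot r_N(\{t\})$, since $N$ has no loops.

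The genuinely interesting case is the one-element rows: for each $e\in S$ and $Y\subseteq T_1\times T_2$ I must show $r_{M''}(\{e\}\times Y)=r(\{e\})\cdot r_N(Y)$. If $e$ is a loop of $M$, the tensor property of $M'$ forces all of $\{e\}\times T_1$ to be loops of $M'$, and then all of $\{e\}\times T_1\times T_2$ to be loops of $M''$, so both sides vanish. If $e$ is not a loop of $M$, then $M'|(\{e\}\times T_1)\cong U_{2,3}$ by Proposition~\ref{prop:tensor} applied to $M'$. Restricting $M''$ to $\{e\}\times T_1\times T_2$ thus yields a tensor product of this $U_{2,3}$ with $U_{2,3}$, and by the uniqueness Lemma~\ref{lem:U232_unique} this restriction is isomorphic to $N=M^*(K_{3,3})$ via the natural bijection. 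Hence $r_{M''}(\{e\}\times Y)=r_N(Y)$, as required.

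Combining the three checks, Proposition~\ref{prop:tensor} implies that $M''$ is a tensor product of $M$ and $M^*(K_{3,3})$, so $M\otimes M^*(K_{3,3})\ne\emptyset$. I do not foresee a genuine obstacle here: the only nontrivial ingredient is the uniqueness of $U_{2,3}\otimes U_{2,3}$, which has already been established, and the rest is a straightforward verification of the Las Vergnas axioms on rectangular sets, row-slices, and column-slices.
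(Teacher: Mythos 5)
Your proposal is correct and follows essentially the same route as the paper's proof: both take the iterated tensor product $M''=P_2$, verify the three Las Vergnas conditions on the reidentified ground set $S\times(T_1\times T_2)$, and invoke the uniqueness of $U_{2,3}\otimes U_{2,3}$ (Lemma~\ref{lem:U232_unique}) to deduce that the row-slice $M''|(\{e\}\times T_1\times T_2)$ has the rank function of $M^*(K_{3,3})$. Your explicit treatment of the loop case for $e\in S$ is a minor but legitimate point that the paper glosses over.
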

\begin{proof}
    Let $M=(S,r_M)$ be a matroid that is 2-tensor compatible with $U=(T,r_U)$ where $U$ is isomorphic to $U_{2,3}$. 
    Let $N=(T\times T, r_N)$ be the unique tensor product of $U$ with itself provided by \cref{lem:U232_unique}.
    As $M$ is 2-tensor compatible with $U$, there are matroids $P_1=(S\times T, p_1) \in M\otimes U$ and $P_2=(S\times T \times T, p_2) \in P_1 \otimes U$. 
    For any $X \subseteq S$ and $Y, Y' \subseteq U$, we have
    \[p_2(X\times Y \times Y') = p_1(X\times Y) \cdot r_U(Y') = r_M(X)\cdot r_U(Y)\cdot r_U(Y').\]
    In particular, for each $e \in S$, $P_2|(\{e\}\times T \times T)$ is isomorphic to a tensor product of $U$ with itself under the natural isomorphism, thus $p_2(\{e\}\times Z) = r_N(Z)$ holds for each $Z\subseteq T\times T$ by the uniqueness of $N$. 
    It is clear that for $X\subseteq S$ and $(t_1, t_2)\in T\times T$, we have $p_2(X\times \{t_1\} \times \{t_2\}) = r_M(X)$, and we also have $p_2(S\times T \times T) =r_M(S)\cdot r_N(T\times T)$, thus \cref{prop:tensor} implies that $P_2$ is a tensor product of $M$ with $N$. 
    This finishes the proof as $N$ is isomorphic to $M^*(K_{3,3})$.
\end{proof}

\begin{cor} \label{cor:non_Desargues_not_2_compatible}
    Any matroid that is 2-tensor-compatible with $U_{2,3}$ admits a tensor product with $M(K_4)$. In particular, the non-Desargues matroid is not 2-tensor-compatible with $U_{2,3}$.
\end{cor}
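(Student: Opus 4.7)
The plan is to chain the results already in place. Starting from a matroid $M$ that is 2-tensor-compatible with $U_{2,3}$, I would first apply \cref{lem:K33} to conclude that $M$ admits a tensor product with $M^*(K_{3,3})$. The heart of the first assertion is then to exhibit $M(K_4)$ as a minor of $M^*(K_{3,3})$, after which \cref{lem:minor} closes the argument by producing a tensor product of $M$ with $M(K_4)$.

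To realize $M(K_4)$ as a minor of $M^*(K_{3,3})$, I would use the fact that $K_4$ is planar and self-dual, hence $M(K_4) \cong M^*(K_4)$. Since taking duals interchanges deletion and contraction, it suffices to show that $M(K_4)$ is a minor of $M(K_{3,3})$. For this I would choose two independent edges of $K_{3,3}$, say $(s_1,t_1)$ and $(s_2,t_2)$, and contract them. The resulting multigraph has four vertices and seven edges; the two edges $(s_1,t_2)$ and $(s_2,t_1)$ become parallel in the contraction, and all other edges form a copy of $K_4$ on the four vertices $\{s_1t_1, s_2t_2, s_3, t_3\}$. Deleting one of the two parallel edges produces exactly $K_4$, giving the desired minor relation and, after dualizing, the embedding $M(K_4)$ as a minor of $M^*(K_{3,3})$.

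For the second assertion, I would apply \cref{thm:new} to the non-Desargues matroid $N$. That theorem asserts that if the rank function of $M(K_4)$ admits a tensor product with a polymatroid function $\varphi_2$, then $\varphi_2$ must satisfy inequality \eqref{eq:newlinrank}; and the same theorem exhibits a concrete evaluation (namely $A_i=\{a_i\}$, $B_i=\{b_i\}$, $C_i=\{c_i\}$, $D=\{d\}$ for $i\in[3]$) on the rank function of $N$ that violates the inequality. Consequently, $N$ cannot admit a tensor product with $M(K_4)$; the first assertion then forbids $N$ from being 2-tensor-compatible with $U_{2,3}$.

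The substantive work is packaged inside \cref{lem:K33} and \cref{thm:new}, so I expect no real obstacle here; the only non-mechanical step is verifying the minor relation $M(K_4) \preceq M^*(K_{3,3})$, which is a short graph-theoretic bookkeeping via the planar self-duality of $K_4$ and the explicit contraction–deletion above.
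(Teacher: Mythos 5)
Your proposal is correct and follows essentially the same chain of reasoning as the paper: apply \cref{lem:K33} to get a tensor product with $M^*(K_{3,3})$, show $M(K_4)$ is a minor of $M^*(K_{3,3})$, invoke \cref{lem:minor}, and then rule out the non-Desargues matroid via \cref{thm:new}. The one small divergence is in verifying the minor relation: the paper cites the classification of $M(K_4)$-minor-free binary matroids (\cite[Theorem~10.4.8]{oxley2011matroid}), whereas you give a direct hands-on construction -- using the planar self-duality $M(K_4)\cong M^*(K_4)$ and contracting the two independent edges $(s_1,t_1)$ and $(s_2,t_2)$ in $K_{3,3}$, then deleting one of the resulting parallel edges -- which is a correct and more elementary verification of the same routine fact.
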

\begin{proof}
    It is not difficult to check that $M^*(K_{3,3})$ contains $M(K_4)$ as minor (this also follows from the fact that the only $M(K_4)$-minor-free binary matroids are the direct sums of graphic matroids of series-parallel graphs, see \cite[Theorem~10.4.8]{oxley2011matroid}).
    Lemmas~\ref{lem:minor} and \ref{lem:K33} imply that any matroid which is 2-tensor-compatible with $U_{2,3}$ admits a tensor product with any minor of $M^*(K_{3,3})$, in particular, they admit a tensor product with $M(K_4)$. This together with \cref{thm:new} finishes the proof.
\end{proof}

\section{Discussion and open problems}
\label{sec:open}

In this paper, we introduced a tensor product framework to study skew-representability of matroids and polymatroid functions. In particular, we provided a characterization of skew-representable matroids, as well as of those representable over skew fields of a given prime characteristic, in terms of tensor products, yielding co-recursively enumerable certificates for these problems. We gave a construction for the freest tensor product of a rank-3 and a uniform matroid. As an application of our framework, we derived new linear rank inequalities, including the first known inequality for folded skew-representable matroids that goes beyond the common information property. We close the paper by mentioning some open problems.

\begin{enumerate}\itemsep0em
    \item Theorem~\ref{thm:1modular} shows that if a matroid is 1-tensor-compatible with $U_{2,3}$, then it is also 1-modular extendable. It would be interesting to see whether the converse holds: Does $1$-modular extendability imply $1$-tensor-compatibility with $U_{2,3}$? 
    \item A key difference between the tensor product defined for matroids and for polymatroid functions is that, in the latter case, the product function may take fractional values, whereas in the former, we want the result to be a matroid. This naturally raises the question of whether, for matroids, the two interpretations differ in terms of existence: Is there a pair of matroids that does not admit a matroid tensor product, but whose rank functions, viewed as polymatroid functions, do admit a tensor product?
    \item \cref{cor:char_spec} implies that a connected matroid is skew-representable if it is $k$-tensor-compatible with $U_{2,3}$ for every $k\in\bZ_+$. It remains open whether this result can be extended to the folded skew-representable case: Is every connected matroid whose rank function is $k$-tensor-compatible with the rank function of $U_{2,3}$ for all $k \in \bZ_+$, in the polymatroid sense, folded skew-representable? 
    \item The proofs of Theorems~\ref{thm:ingleton},~\ref{thm:Fano_non_Fano} and~\ref{thm:new} relied on a similar idea: generating certain sets in a specific order. However, there is a considerable freedom in the choosing this order, and repeating the same argument with a different choice also lead to linear rank inequalities. This raises a natural question: Do all inequalities obtained in this way are essentially the same, or can different orders lead to genuinely new inequalities?
    \item Theorem~\ref{thm:uniform} shows that every rank-3 matroid admits a tensor product with any uniform matroid. In contrast, Theorem~\ref{thm:new} implies that not every rank-3 matroid admits a tensor product with $M(K_4)$. This matroid belongs to the class of sparse paving matroids, each of which can be constructed by removing some bases from a rank-$r$ uniform matroid such that the intersection of any two removed bases has size at most $r-2$. This raises the natural question of whether $M(K_4)$ is, in some sense, minimal with respect to the nonexistence of tensor products with rank-3 matroids: Is there a rank-3 sparse paving matroid on six elements that is freer than $M(K_4)$ but still fails to admit a tensor product with every rank-3 matroid?
\end{enumerate}

\medskip
\paragraph{Acknowledgement.}

The authors are grateful to Dan Král' and Tamás Szőnyi for helpful discussions. András Imolay was supported by the EKÖP-24 University Excellence Scholarship Program of the Ministry for Culture and Innovation from the source of the National Research, Development and Innovation Fund. The research was supported by the Lend\"ulet Programme of the Hungarian Academy of Sciences -- grant number LP2021-1/2021, by the Ministry of Innovation and Technology of Hungary from the National Research, Development and Innovation Fund -- grant numbers ADVANCED 150556 and ELTE TKP 2021-NKTA-62, and by Dynasnet European Research Council Synergy project -- grant number ERC-2018-SYG 810115. This work was supported in part by EPSRC grant EP/X030989/1.

\bibliographystyle{abbrv}
\bibliography{inequalities}

\end{document}